\documentclass[11pt,a4paper]{amsart}
\usepackage{verbatim}

\usepackage[toc]{appendix}
\usepackage[T1]{fontenc}
\newcommand{\tail}{\textnormal{\texttt{Tail}}}
\usepackage{graphicx}
\usepackage{mdframed}
\usepackage{enumerate}
\usepackage{amsmath,amsfonts,amssymb}
\usepackage{color}
\def\loc{\operatorname{loc}}
\usepackage{cite}
\usepackage{latexsym}
\definecolor{citation}{rgb}{0.11,0.67,0.84}
\definecolor{formula}{rgb}{0.1,0.2,0.6}
\definecolor{url}{rgb}{0.11,0.67,0.84}
\usepackage{pgf,tikz}
\def\aaat{\textnormal{\texttt{a}}}
\usepackage{mathrsfs}
\usepackage{fancyhdr}
\usepackage{dutchcal}

\newcommand{\medint}{-\kern -,375cm\int}
\newcommand{\epsb}[1]{\varepsilon_{\textnormal{\texttt{#1}}}}

\newcommand{\medintinrigo}{-\kern -,315cm\int}
\makeatletter
\newcommand{\linethrough}{\mathpalette\@thickbar}
\newcommand{\@thickbar}[2]{{#1\mkern0mu\vbox{
    \sbox\z@{$#1#2\mkern-0.5mu$}%
    \dimen@=\dimexpr\ht\tw@-\ht\z@+2\p@\relax 
    \hrule\@height0.5\p@ 
    \vskip\dimen@
    \box\z@}}
}
\makeatother

\newcommand{\mathstrike}[1]{\ensuremath{\linethrough{#1}}}

\newcommand{\nra}[1]{\mathstrike{\lVert} #1 \rVert}
\newcommand{\snra}[1]{\mathstrike{[} #1 ]}

\newtheorem{theorem}{Theorem}[section]
\newtheorem{lemma}[theorem]{Lemma}

\newtheorem{proposition}[theorem]{Proposition}

\newtheorem{corollary}[theorem]{Corollary}
\newtheorem{definition}[theorem]{Definition}
\newtheorem{remark}[theorem]{Remark}
\numberwithin{equation}{section}

\usepackage{dsfont}

\newcommand{\reqnomode}{\tagsleft@false}

\usepackage{hyperref}

 \usepackage{comment}
 
 \includecomment{comment}
 \excludecomment{comment}

\def\sss{\textnormal{\texttt{s}}}

\newcommand{\rad}{\textnormal{\texttt{rad}}}

\def\dxy{\,{\rm d}x{\rm d}y}
\def\dyx{\,{\rm d}y{\rm d}x}
\def\dx{\,{\rm d}x}
\def\dlam{\,{\rm d}\lambda}
\def\BBB{\textnormal{B}}
\def\dz{\,{\rm d}z}

\def\elly{\textnormal{\texttt{l}}}

\def\dy{\,{\rm d}y}
\def\qq{\mathfrak{q}}
\def\aaa{\mathfrak{a}}
\def\bbb{\mathfrak{b}}
\def\ccc{\mathfrak{c}}

\def\er{\mathbb R}
\def\en{\mathbb N}
\def\tet{\textnormal{\texttt{t}}}

\def \diver{\,{\rm div}}
\def\dist{\,{\rm dist}}

\newcommand\ttB{\textnormal{\texttt{B}}}

\allowdisplaybreaks
\makeatletter
\DeclareRobustCommand*{\bfseries}{%
  \not@math@alphabet\bfseries\mathbf
  \fontseries\bfdefault\selectfont
  \boldmath
}

\makeatother

\newlength{\defbaselineskip}
\newcommand{\mint}{\mathop{\int\hskip -1,05em -\, \!\!\!}\nolimits}

\def \diver{\,{\rm div}}

\newcommand{\R}{\mathbb{R}}

\def\cchh{c_{\textnormal{\texttt{h}}}}

\def\bbt{\textnormal{\texttt{b}}}
\newcommand{\eps}{\varepsilon}
\def\MMM{\textnormal{\texttt{M}}}

\newcommand{\ti}[1]{\tilde{#1}}
\newcommand{\cac}[1]{\mathfrak c_{#1}}
\newcommand{\mf}[1]{\textnormal{\texttt{#1}}}

\newcommand{\ddim}{\textnormal{dim}_{\mathcal H}}

\newcommand{\dd}{\,{\rm d}}

\newcommand{\rrr}{\textnormal{\texttt{r}}}

\newcommand{\sE}{\scalebox{0.88}{$\mathds{E}$}}
\newcommand{\sA}{\scalebox{1}{$\mathds{A}$}}
\newcommand{\sAA}{\scalebox{1.13}{$\textnormal{\texttt{A}}$}}
\newcommand{\sAAA}{\scalebox{0.88}{$\textnormal{\texttt{A}}$}}

\newcommand{\rr}{\varrho}
\newcommand{\snr}[1]{\lvert #1\rvert}
\newcommand{\nr}[1]{\lVert #1 \rVert}
\newcommand{\rif}[1]{(\ref{#1})}
\newcommand{\tx}[1]{\textnormal{\texttt{#1}}}
\newcommand{\stackleq}[1]{\stackrel{\eqref{#1}}{ \leq}}
\newcommand{\stackgeq}[1]{\stackrel{\eqref{#1}}{ \geq}}

\def\loc{\operatorname{loc}}

\def\eqn#1$$#2$${\begin{equation}\label#1#2\end{equation}}

\newcommand{\data}{\textnormal{\texttt{data}}}

\def\XXint#1#2#3{{\setbox0=\hbox{$#1{#2#3}{\int}$}
     \vcenter{\hbox{$#2#3$}}\kern-.5\wd0}}

\title{}

\author[De Filippis]{Cristiana De Filippis}  \address{Cristiana De Filippis\\Dipartimento SMFI, Universit\`a di Parma\\ Parco Area delle Scienze 53/A, 43124 Parma, Italy} \email{\url{cristiana.defilippis@unipr.it}}
\author[Mingione]{Giuseppe Mingione}  \address{Giuseppe Mingione\\Dipartimento SMFI, Universit\`a di Parma, Parco Area delle Scienze 53/a, Campus, 43124 Parma, Italy} \email{\url{giuseppe.mingione@unipr.it}}
\author[Nowak]{Simon Nowak}  \address{Simon Nowak\\Fakult\"at f\"ur Mathematik, Universit\"at Bielefeld\\Postfach 100131, 33501 Bielefeld, Germany} \email{\url{simon.nowak@uni-bielefeld.de}}

\begin{document}

\subjclass{(2020 classification) 35R11, 35J47} 

\keywords{Partial regularity, Nonlocal equations, Singular Sets}

\title{Partial regularity in nonlocal systems II}

\begin{abstract}
Solutions to nonlinear nonlocal systems of order $2s>1$ in $\er^n$ are $C^{1,\alpha}$, for every $\alpha <2s-1$, outside a closed singular set whose Hausdorff dimension is less than $n-2$, and which is empty when  $n=2$.
 \end{abstract}

\maketitle

\vspace{4mm}
 
 \centerline{To Francesco Leonetti on his 70th, with friendship and gratitude}

%
\setcounter{tocdepth}{1}
{\small \tableofcontents}

\section{Introduction}
Our aim is to prove partial regularity of solutions to vectorial nonlocal problems by addressing the most challenging case, where the nonlinear dependence occurs with respect to the natural fractional difference quotient of the solution, relative to the order of differentiation of the equation\footnote{This is the second paper in our study of partial regularity for solutions to nonlocal vectorial integrodifferential equations, building on \cite{parte1}. The results and methods here do not supersede those of \cite{parte1}, which addresses a different class of equations—namely, those in \rif{eqweaksolpre00}. While \cite{parte1} emphasizes the precise dependence on external ingredients and employs a comprehensive set of nonlinear potential-theoretic techniques, such methods are not used here. Nonetheless, a few results from that work are used as preliminary technical tools here.}. The final outcome will be a fully nonlocal counterpart of the classical partial regularity results known in the local case as for instance exposed in the standard treatises \cite{giaorange, giagreen,giu}. Specifically, we shall consider vectorial integrodifferential equations\footnote{As in \cite{parte1}, we call them systems, with a slight abuse of terminology.} of the type 
	\eqn{nonlocaleqn}
	$$
	-\mathcal{N}_{\sAAA}u=0\qquad \mbox{in} \ \ \Omega \subset \er^{n}, 
	$$
where $n\geq 2$, $\Omega\subset \er^n$ is a bounded open set, and the operator $-\mathcal{N}_{\sAAA}$ is defined in the sense of distributions via
\eqn{theone}
$$
\langle -\mathcal{N}_{\sAAA} u,\varphi\rangle  :=\int_{\mathbb{R}^{n}}\int_{\mathbb{R}^{n}}\langle\sAA \left(\frac{u(x)-u(y)}{|x-y|^s}\right),\frac{\varphi(x)-\varphi(y)}{|x-y|^s}\rangle\frac{\dxy}{|x-y|^{n}}\,, \quad  0 < s < 1
$$
for every $\varphi \in C^{\infty}(\er^n;\er^{N})$ with compact support in $\Omega$; see Definition \ref{def:weaksol} below. 
The vector field $\sAA  \colon \er^{N} \to \er^{N}$ here satisfies the following growth and ellipticity  assumptions:
\eqn{bs.1}
$$
\left\{
\begin{array}{c}
\displaystyle
\  \sAA  \in  C^{1}_{\loc}(\er^{N};\er^N), \quad \sAA (-w)=-\sAA (w)\\ [4pt]\displaystyle
\ \snr{\sAA (w)}+ \snr{\partial \sAA (w)}|w|\le\Lambda \snr{w},\quad  \Lambda^{-1}\snr{\xi}^{2}\leq \langle  \partial \sAA (w)\xi,\xi\rangle,\\ [4pt]\displaystyle
\ \snr{\partial \sAA (w_{1})-\partial \sAA (w_{2})}\le  \Lambda\omega\left(\snr{w_{1}-w_{2}}\right)
\end{array}
\right.
$$
whenever $w, w_1, w_2, \xi \in \er^{N}$, where $\Lambda \geq 1$ is a fixed constant, while $\omega\colon [0, \infty) \to [0, 1]$ is a modulus of continuity\footnote{that is, $\omega(\cdot)$ is a non-decreasing, continuous, concave function such that $\omega(0)=0$. Note that in this paper, when referring to a couple of vectors $w= (w^\alpha),v=(v^\alpha) \in \er^N$, we denote the usual scalar product as $\langle w,v \rangle =  \sum_{1\leq \alpha \leq N}w^\alpha v^\alpha$.}.  We refer to Section \ref{notsec}  and to \cite[Section 3.1]{parte1} for the notation used in this paper. 
The notion of solution to \eqref{nonlocaleqn} involves the $\tail$ space \cite{BK, DKP, kokupa, KMS1} 
$$
L^{1}_{2s} := \left\{w\in L^{1}_{\loc}(\er^n;\er^{N}) \, \colon \, \int_{\er^n} \frac{\snr{w(x)}}{1+\snr{x}^{n+2s}} \dx< \infty\right\}\footnote{H\"older's inequality implies $W^{s,2}(\er^n;\er^{N})\subset L^2(\er^n;\er^{N})\subset L^{1}_{2s}$.}.
$$ 
Weak (energy) solutions\footnote{We shall often call weak solutions, simply,  solutions.}  to \eqref{nonlocaleqn} are then defined as follows:
\begin{definition} \label{def:weaksol}
Under assumptions \eqref{bs.1}, a map $u \in W^{s,2}_{\loc}(\Omega;\er^{N}) \cap L^{1}_{2s}$ is a weak solution of \eqref{nonlocaleqn} when
\eqn{weaksol}
$$
\langle -\mathcal{N}_{\sAAA} u,\varphi\rangle  =0
$$ 
		holds for every $\varphi \in W^{s,2}(\er^n;\er^{N})$ with compact support in $\Omega$\footnote{Note that \eqref{weaksol} in particular holds whenever $\varphi$ is of the form $\varphi=\eta v$ with $v \in W^{s,2}_{\loc}(\Omega;\er^{N})$ and $\eta \in C^{\infty}$ has compact support in $\Omega$.}.
	\end{definition}
Gradient partial regularity concerns regularity of solutions $u$ outside a {\em singular set} \, $\Omega\setminus \Omega_u$, which is indeed defined as the complement of the {\em regular set} 
\eqn{regularset}
$$
\Omega_u:=\left\{x\in \Omega\, \colon\, \mbox{$Du$ is H\"older continuous in some neighbourhood of $x$}\right\}.$$ 
The regular set $\Omega_u$ is by definition open and might be empty\footnote{On top of that, the existence of $Du$, which is not ensured by Definition \ref{def:weaksol}, is in fact already a regularity result.}. The main problem in this setting is to establish how large the singular set might actually be. An answer is in
\begin{theorem}[Partial regularity]\label{mainth}
	Assuming \eqref{bs.1}, if $u$ is a weak solution to  \eqref{nonlocaleqn} with $s>1/2$, then 
	\eqn{mainresult}
$$
\begin{cases}
\, u\in C^{1,\alpha}_{\loc}(\Omega_{u};\er^{N})\  \mbox{for every $\alpha < 2s-1$}\\
\,  \ddim(\Omega\setminus \Omega_u)\leq n-2-\theta \ \  \mbox{when $n\geq 3$}\\
\, \Omega =\Omega_u  \quad \mbox{when $n=2$}
\end{cases}
$$
where $\theta \in (0,1)$ depends only on $n,N,s,\Lambda$ \footnote{As usual, $\ddim$ denotes the Hausdorff dimension.}. 
\end{theorem}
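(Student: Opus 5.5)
The plan is the classical blueprint of partial regularity, adapted to the nonlocal setting: an $\varepsilon$-regularity theorem in terms of an excess functional, followed by a dimension estimate coming from higher differentiability.

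\emph{Excess.} For $B_\varrho(x_0)\Subset\Omega$ I would set
\[
E(u;x_0,\varrho):=\Big(\medint_{B_\varrho(x_0)}\snr{Du-(Du)_{B_\varrho(x_0)}}^2\dx\Big)^{1/2}+\tail\big(u-\ell_{x_0,\varrho};x_0,\varrho\big),
\]
where $\ell_{x_0,\varrho}$ is the affine map best approximating $u$ in $B_\varrho(x_0)$ and $\tail$ is the usual nonlocal tail, rescaled by $\varrho^{-1}$. The first preliminary step is to show that $Du$ exists and lies in $L^2_{\loc}$ when $s>1/2$, and more precisely that $u\in W^{1+\sigma,2}_{\loc}$ for some $\sigma=\sigma(n,N,s,\Lambda)>0$ with $2s-1+\sigma>1$ locally in a suitable sense. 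This step does not rely on regularity. I would obtain it by a difference-quotient (Nirenberg) argument. Since $\sAA$ depends only on the difference quotient and is independent of $x$, one tests \eqref{weaksol} with $\varphi=\Delta_{-h}(\eta^2\Delta_h u)$. The monotonicity in \eqref{bs.1} gives a fractional Caccioppoli inequality for $\Delta_h u/\snr{h}^{\gamma}$, and iterating in $\gamma$ as in the scalar higher-differentiability results, with the tails controlled via the preliminary tools of \cite{parte1}, yields $u\in W^{2s-\epsilon,2}_{\loc}$. In particular $Du\in W^{2s-1-\epsilon,2}_{\loc}$ for every $\epsilon>0$.

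\emph{Linearisation and decay.} Next I would prove an excess decay lemma: for every $\alpha<2s-1$ there exist $\varepsilon_0,\tau$ such that $E(u;x_0,\varrho)\le\varepsilon_0$ implies $E(u;x_0,\tau\varrho)\le\tau^{\alpha}E(u;x_0,\varrho)$. The proof is by harmonic-type approximation, which I would carry out by contradiction and compactness. Consider the normalised blow-ups $v_k=(u-\ell_k)/E_k$ in $B_1$. Writing
\[
\sAA(a+E_k b)-\sAA(a)=E_k\int_0^1\partial\sAA(a+tE_kb)\,{\rm d}t\;b ,
\]
and noting that the difference quotient of $\ell_k$ is $\langle D\ell_k,(x-y)\rangle/\snr{x-y}^s$, one sees that the $v_k$ solve nonlocal linear systems with kernels depending on the direction $(x-y)/\snr{x-y}$. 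The modulus $\omega$ in \eqref{bs.1} controls the error $\int_0^1[\partial\sAA(\cdots)-\partial\sAA(D\ell_k\cdot\,)]$ in an averaged sense. Passing to the limit, the blow-ups converge to a solution of a translation-invariant linear nonlocal system with constant, direction-dependent coefficients. For such systems I expect $C^{1,\beta}$ estimates for every $\beta<2s-1$ to be available, via Fourier methods and difference quotients. These give the decay in the limit, and the contradiction follows. This is the main obstacle. The nonlinearity is evaluated at the full difference quotient, including the contribution $(u(x)-u(y))/\snr{x-y}^s$ for far-apart points, where the linearisation error is not small. One must therefore split the double integral into a near part, treated with $\omega$, the smallness of the excess, and the $W^{1+\sigma,2}$ bound (to gain compactness and uniform integrability of the difference quotients), and a far part absorbed into the tail term of $E$. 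The degenerate $a$-dependence of the limiting coefficients also has to be handled uniformly. Iterating the decay gives Campanato estimates, hence $Du\in C^{0,\alpha}$ near $x_0$. This proves the first line of \eqref{mainresult} and shows that
\[
\Omega\setminus\Omega_u\subset\Big\{x_0:\liminf_{\varrho\to0}E(u;x_0,\varrho)>0\Big\}.
\]

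\emph{Dimension.} Using fractional Poincaré, the smallness of the $L^2$ part of $E$ is implied by
\[
\varrho^{2(2s-1-\epsilon)-n}\,[Du]^2_{W^{2s-1-\epsilon,2}(B_\varrho)}\to0.
\]
The tail part becomes small once the local parts are small at all larger scales, by a standard iteration. Hence the singular set is contained in the set where $\limsup_{\varrho\to0}\varrho^{2(2s-1-\epsilon)-n}[Du]^2>0$. By the Giusti-type measure density lemma for Gagliardo seminorms, this set has Hausdorff dimension at most $n-2(2s-1-\epsilon)$. This is already below $n$, but to reach $n-2-\theta$ I would use the full second-order gain. Combining the difference-quotient argument with the $\Delta_h$-differentiated equation gives $Du\in W^{1,2}$-type integrability, more precisely $u\in W^{2,2+\delta}$-like control in the Sobolev sense provided by a Gehring improvement of the reverse Hölder inequality for second quotients. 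Then the standard estimate $\ddim\le n-2-\theta$ follows, with $\theta$ coming from Gehring's lemma as in the local theory \cite{giu}. When $n=2$, this higher integrability with exponent above $n$ gives $Du$ continuous, and smallness of the excess holds everywhere, so $\Omega=\Omega_u$.
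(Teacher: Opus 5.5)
Your proposal has a genuine gap in the dimension estimate, and it comes from your choice of excess. You normalize by $\rr^{-1}$, i.e.\ you use the gradient excess $\nra{Du-(Du)_{B_\rr}}_{L^2(B_\rr)}$ of the local problem $-\diver A(Du)=0$. But \eqref{theone} is invariant under $u\mapsto\rr^{-s}u(x_0+\rr\,\cdot)$. The $\varepsilon$-regularity criterion that actually holds is the much weaker hypothesis $\rr^{-s}\tx{E}_u(\ell;x_0,\rr)<\epsb{b}$ (Proposition \ref{cor.1}), and its vanishing is implied by $\rr^{1-s}\nra{Du}_{L^2(B_\rr(x_0))}\to0$ (Lemma \ref{prelirid}).

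With your excess, Poincar\'e and the fractional Giusti lemma bound the bad set by $n-2t$ when $Du\in W^{t,2}$. Reaching $n-2-\theta$ therefore forces $t>1$, which is why you need $u\in W^{2,2+\delta}$. That is not available. The only equation for derivatives is the one satisfied by $\tau_hu/|h|^{\beta}$, whose coefficients $\sA_h(x,y)$ in \eqref{hid.0} are merely measurable. Caccioppoli and the fractional Gehring lemma then give $\tau_hu/|h|\in W^{s+\delta_0,2}$ uniformly in $h$, i.e.\ $Du\in W^{\mathfrak s,2}$ only for $\mathfrak s<s+\delta_0<1$ (Theorem \ref{maggiore}). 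Second differences satisfy no equation of the type \eqref{solutio}: differentiating $\sA_h$ once more would require differentiability of $\partial\sAA$, which \eqref{bs.1} does not provide. So your argument yields at best $\ddim\le n-2\mathfrak s$, which exceeds $n-2$, and it does not give $\Omega=\Omega_u$ when $n=2$.

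The missing idea is that the factor $\rr^{1-s}$ from the correct scaling compensates for $\mathfrak s<1$. Apply Lemma \ref{riducilemma} to $w=D_iu\in W^{\mathfrak s,2}_{\loc}$ with $\gamma=1-s$, $t=\mathfrak s$, $p=2$. The set where $\limsup_{\rr\to0}\rr^{1-s}\nra{Du}_{L^2(B_\rr)}>0$ then has dimension at most $n-2(1-s+\mathfrak s)=n-2-2(\mathfrak s-s)$. It is empty when $n=2$, since then $2(1-s+\mathfrak s)>n$.

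This requires $\mathfrak s>s$, i.e.\ the Gehring gain. One runs the difference-quotient argument first with $\beta=s$, obtaining $\tau_h^2u/|h|^{2s+\delta_0}\in L^2$ with $2s+\delta_0>1$ and hence $Du\in L^2$. One then runs it with $\beta=1$. Your first step, which stops at $Du\in W^{2s-1-\epsilon,2}$, is far too weak for this.

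A secondary inaccuracy concerns the frozen kernel $\partial\sAA(\rr^{1-s}\bbt z/|z|^s)=\partial\sAA(\rr^{1-s}|z|^{1-s}\bbt\,z/|z|)$. It is translation invariant but not homogeneous in $z$, and in a compactness argument only weak-$*$ limits of such kernels survive. So it is not ``constant, direction-dependent'', and Fourier methods for homogeneous symbols do not apply. You need $C^{1,\beta}$ estimates for general translation-invariant kernels satisfying \eqref{condib}. Theorems \ref{maintra} and \ref{cdg} supply exactly these; your proposal only anticipates them.
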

The regular set can then be identified via additional regularity as in  the following:
\begin{theorem}[Regular set and higher differentiability]\label{mainth2} 
Under assumptions \eqref{bs.1}, let $u$ be a weak solution to  \eqref{nonlocaleqn}  with $s\geq 1/2$. There exists a number $\mathfrak s\equiv \mathfrak s(n,N,s,\Lambda) $ such that
\eqn{higherss} 
$$ Du\in W^{ \mathfrak s,2}_{\loc}(\Omega;\er^{N\times n}), \qquad s < \mathfrak s <1.$$ If $s>1/2$, then
\eqn{ilsingolare}
$$
\Omega_{u}= \Big\{
x \in \Omega \, \colon\, 
 \lim_{\rr\to 0}\,  \rr^{1-s}\nra{Du}_{L^2(B_{\rr}(x))}=0 \Big\}.
$$
\end{theorem}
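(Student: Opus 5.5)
The plan has three parts, carried out in this order: a global higher differentiability estimate, an excess-decay lemma near affine maps, and an identification of the regular set.

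\textbf{Higher differentiability \eqref{higherss}.} We use the Nirenberg difference-quotient method adapted to the nonlocal setting. For $h\in\er^n$ small and a cut-off $\eta$, we test \eqref{weaksol} with $\varphi=\tau_{-h}(\eta^2\tau_h u)$, where $\tau_h u(x)=u(x+h)-u(x)$.
\begin{itemize}
\item Monotonicity from \eqref{bs.1}, written as
$\langle \sAA(a)-\sAA(b),a-b\rangle\geq \Lambda^{-1}\snr{a-b}^2$, gives the coercive quantity
$\int\!\!\int \eta^2\snr{\tau_h u(x)-\tau_h u(y)}^2\snr{x-y}^{-n-2s}\dxy$.
\item The terms carrying $\eta(x)-\eta(y)$, and the far-off tail contributions, are controlled by $\snr{h}^{2\sigma}$ times norms of $u$ in $W^{\sigma,2}$ plus $\tail$ terms.
\item This yields $u\in W^{s+\sigma,2}_{\loc}$ for a small gain $\sigma$. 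Iterating finitely many times (with fractional Nikolskii-to-Sobolev embeddings), as in the scalar/vectorial literature and in \cite{parte1}, we reach $u\in W^{1+\mathfrak s,2}_{\loc}$ with $\mathfrak s>s$.
\end{itemize}
Two points need care. First, the iteration stalls at a gain $<1$ per step, so we need $1+\mathfrak s-s<1+s$ compatibility, which is where $\mathfrak s<1$ comes from. Second, for $s\geq1/2$ the threshold of differentiability $1$ can be crossed, so $Du$ exists. The vectorial nature is harmless here since only monotonicity is used.

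\textbf{Excess decay.} We define the excess
$$E(x_0,\rr):=\rr^{1-s}\nra{Du-(Du)_{B_\rr(x_0)}}_{L^2(B_\rr(x_0))}+\text{(a $\tail$ term of $u-\ell$)},$$
where $\ell$ is the affine map with gradient $(Du)_{B_\rr}$. Since the equation is invariant under adding affine maps ($\sAA$ acts on difference quotients, and affine differences are odd-symmetric, so they cancel against $\sAA(-w)=-\sAA(w)$ only at the linearized level), we linearize around $\ell$. In a blow-up/harmonic-approximation argument, $v=(u-\ell)/E$ solves an equation approximately governed by the linear kernel $\partial\sAA((\ell(x)-\ell(y))/\snr{x-y}^s)$. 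This kernel is elliptic and, by the $\omega$-continuity in \eqref{bs.1}, close to it. For that linear system we have $C^{1,\beta}$ estimates for every $\beta<2s-1$, which give decay of the linear excess. A nonlocal $\mathcal A$-harmonic approximation lemma then yields
$$E(\tau\rr)\leq \tau^{\alpha}E(\rr) \quad\text{whenever } E(\rr)\leq\varepsilon \text{ and } \snr{(Du)_{B_\rr}}\leq M.$$
Iterating this, with Campanato's characterization, gives $Du\in C^{0,\alpha}$ near points where the excess is small, hence the first line of \eqref{mainresult}.

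\textbf{Identification of the regular set \eqref{ilsingolare}.} The inclusion $\supset$ follows from the smallness criterion once the condition $\rr^{1-s}\nra{Du}\to0$ controls $E$ and the tail. The inclusion $\subset$ holds because Hölder continuity of $Du$ together with $s<1$ makes the average term vanish.

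\textbf{Dimension bounds in \eqref{mainresult}.} The singular set is contained in the set where $\limsup\rr^{1-s}\nra{Du}_{L^2(B_\rr)}>0$. Rescaled, this is the set where $\rr^{2-2s-n}\int_{B_\rr}\snr{Du}^2\not\to0$. From $Du\in W^{\mathfrak s,2}$, the standard Giusti measure-density lemma in the fractional form, together with fractional Poincaré, gives
$$\ddim\leq n-2\mathfrak s-(2-2s)+\dots,$$
that is, exponent $n-2-2(\mathfrak s-s)$. This gives $\theta=2(\mathfrak s-s)$ (capped below $1$), and for $n=2$ the exponent is negative, so the singular set is empty.

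\textbf{Main obstacle.} The hardest step is the linearization: controlling the nonlocal tails of $u-\ell$ uniformly, and proving the approximation lemma with tail excess, given that affine maps have infinite tails. We would first try localizing the excess through $W^{s,2}$ seminorms on annuli, with geometrically weighted sums.
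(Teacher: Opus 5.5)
There is a genuine gap in your higher differentiability step. It sits exactly at the nontrivial part of \eqref{higherss}, namely $\mathfrak s>s$.

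Testing with $\tau_{-h}(\eta^2\tau_h u)$ and using only monotonicity yields a Caccioppoli inequality for $\tau_h u$. That is a gain of exactly $s$ derivatives on a \emph{first-order} difference: $\nr{\tau_h u}_{L^2}\lesssim\snr{h}^{\sigma}$ with $\sigma\le 1$ gives $\nr{\tau_h^2u}_{L^2}\lesssim\snr{h}^{\sigma+s}$. With the far field properly localized, this does bring you to $Du\in L^2_{\loc}$ (for $s=1/2$ after one more round). But there it stops. The input cannot exceed order one, and you cannot difference the equation a second time: the system solved by $\tau_h u$ has coefficients $\int_0^1\partial\sAA(\cdots)\dlam$, which depend on $u$ through the merely continuous $\partial\sAA$ and are only measurable. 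So the iteration saturates at $u\in B^{1+s}_{2,\infty}$, i.e.\ $Du\in B^{s}_{2,\infty}$, and this embeds into $W^{\mathfrak s,2}$ only for $\mathfrak s<s$. Your remark about the compatibility $1+\mathfrak s-s<1+s$ does not address this.

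The missing ingredient is the nonlocal self-improving property (the fractional Gehring lemma of Kuusi, Mingione and Sire), applied to the difference quotients uniformly in $h$.
\begin{itemize}
\item Localize: set $w=\eta u$ and move the far field into a bounded right-hand side $f$ whose differences obey $\snr{\tau_h f}\lesssim\snr{\tau_h u}+\snr{h}\,(\snr{u(\cdot+h)}+\snr{u}+\tail(u;B))$.
\item Then $\texttt{w}_h=\tau_h w/\snr{h}^{\beta}$ solves $-\mathcal{L}_{\mathds{A}_{h}}\texttt{w}_h=\tau_h f/\snr{h}^{\beta}$ with bounded, elliptic, symmetric coefficients. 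Hence $\texttt{w}_h\in W^{s+\delta_0,2}$ with bounds independent of $h$.
\item With $\beta=s$ this gives $\sup_h\snr{h}^{-2s-\delta_0}\nr{\tau_h^2u}_{L^2}<\infty$, so $Du\in L^2$ directly, since $2s+\delta_0>1$.
\item With $\beta=1$ it gives $\sup_h\snr{h}^{-1-s-\delta_0}\nr{\tau_h^2u}_{L^2}<\infty$, so $Du\in W^{\mathfrak s,2}_{\loc}$ for every $\mathfrak s<s+\delta_0$.
\end{itemize}
That is where $\mathfrak s>s$ comes from, and $\mathfrak s<1$ holds simply because $\delta_0<1-s$. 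Without it, your dimension count only yields $\ddim\le n-2$: no $\theta$, and no emptiness for $n=2$. The same $W^{s+\delta_0,2}$ bound is also a hypothesis of the harmonic approximation lemma you invoke.

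For \eqref{ilsingolare}, your architecture matches the paper's: linearize $u-\ell$ around the translation-invariant kernel $\partial\sAA\big((\ell(x)-\ell(y))/\snr{x-y}^{s}\big)$, use $C^{1,\beta}$ estimates for $\beta<2s-1$, harmonic approximation, then Campanato. The inclusion of $\Omega_u$ in the right-hand side is correct. The reverse inclusion, however, is only asserted. You must show that $\rr^{1-s}\nra{Du}_{L^2(B_\rr)}\to0$ forces $\rr^{-s}\tail(u-\ell^u_{x,\rr};B_\rr)\to0$. This needs a multi-scale estimate bounding $\tail(u-\ell^u_{x,\rr};B_\rr)$ by $\int_\rr^{r}(\rr/\lambda)^{2s}\nra{Du-(Du)_{B_\lambda}}_{L^1(B_\lambda)}\dlam$ plus zero-order and outer-scale terms (this is where $s>1/2$ enters), followed by an l'H\^opital argument.

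Two smaller points. Drop the restriction $\snr{(Du)_{B_\rr}}\le M$: under the hypothesis, $\snr{(Du)_{B_\rr}}$ is a priori only $o(\rr^{s-1})$, so an $M$-dependent threshold might never be met. Since \eqref{bs.1} is uniform, the threshold need not depend on the slope of $\ell$. Also, with your normalization, $E(\tau\rr)\le\tau^{\alpha}E(\rr)$ gives $Du\in C^{0,\alpha+s-1}$, so $\alpha$ must range in $(1-s,s)$.
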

As in the local case, a further characterization of $\Omega_u$ can be given using the classical concept of $\eps$-regularity via suitable excess functionals. For this we  recall the by now classical notion of $\tail$  \cite{BK, DKP, kokupa, KMS1} of a map $w\in L^{1}_{\loc}(\er^n;\er^{N})$, with respect to the ball $B_{\rr}(x)\subset \er^n$, i.e., 
$$
\tail(w;B_{\rr}(x)):= \rr^{2s}\int_{\er^n\setminus B_{\rr}(x)} \frac{\snr{w(y)}}{\snr{y-x}^{n+2s}} \dy.
$$ 
It follows that $w\in L^{1}_{2s}$ iff $\tail(w;B)$ is finite for every ball $B \subset \er^n$. 
\begin{definition}[Nonlocal affine excess] \label{leccesso}
Let $w \in L^{2}(B_{\rr}(x);\er^{N})\cap L^{1}_{2s}$, $s>1/2$, where $B_{\rr}(x)\subset \er^n$ is a ball; let $\ell\colon \er^n\to \er^N$ be an affine map. The (affine) excess functional $\tx{E}_{w}(\ell;x,\rr)$ is defined by 
\eqn{excl}
$$
\tx{E}_{w}(\ell;x,\rr):=\sqrt{\nra{w-\ell}_{L^{2}(B_{\rr}(x))}^2+\tail(w-\ell;B_{\rr}(x))^2} .
$$
\end{definition}
\begin{theorem}[Regular points and $\eps$-regularity]\label{mainth3} Under assumptions \eqref{bs.1}, let $u$ be a weak solution to \eqref{nonlocaleqn} with $s >1/2$. For every positive $\alpha< 2s-1$ there exists a universal threshold $\epsb{b}>0$, depending only on $n,N,s,\Lambda,\omega(\cdot)$ and $\alpha$, such that the following conditions are equivalent whenever $x\in \Omega$:
\eqn{condreg}
$$\rr^{-s}\tx{E}_{u}(\ell;x,\rr)<\epsb{b} \ \mbox{holds for some affine map $\ell$ and some ball $B_{2\rr}(x)\Subset \Omega$
}$$
\eqn{condreg2}
$$\mbox{There exists a ball $B_{r_{x}}(x) \Subset \Omega$ such that } Du\in C^{0, \alpha}(B_{r_{x}}(x);\er^{N\times n}).
$$
\end{theorem}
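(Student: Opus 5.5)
The two implications are of very different difficulty. I will prove \rif{condreg2} $\Rightarrow$ \rif{condreg} directly, and \rif{condreg} $\Rightarrow$ \rif{condreg2} through an excess-decay iteration.

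\emph{From \rif{condreg2} to \rif{condreg}.} Take $x$ such that $Du \in C^{0,\alpha}(B_{r_x}(x))$ and choose $\ell(y):=(u)_{B_\rr(x)}+Du(x)(y-x)$. Then $\nra{u-\ell}_{L^2(B_\rr(x))}\lesssim \rr^{1+\alpha}$. For the tail, split $\er^n\setminus B_\rr(x)$ into the annulus $B_{r_x}(x)\setminus B_\rr(x)$ and the exterior region. On the annulus, $\snr{u-\ell}\lesssim \snr{y-x}^{1+\alpha}$, and this contributes $\rr^{2s}\int_\rr^{r_x} t^{1+\alpha-2s-1}\,{\rm d}t\lesssim \rr^{2s}\max\{\rr^{1+\alpha-2s},\,r_x^{1+\alpha-2s}\}$. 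The exterior region contributes $\lesssim \rr^{2s}(\tail(u;B_{r_x})+\snr{\ell}\text{-terms})$, which is finite because $s>1/2$ makes the affine map belong to $L^1_{2s}$. Since $\alpha<2s-1$, we have $1+\alpha<2s$, so the dominant term is $\rr^{1+\alpha}$. Combining the two pieces,
\[
\rr^{-s}\tx{E}_u(\ell;x,\rr)\lesssim \rr^{1+\alpha-s}+\rr^{s}\to 0,
\]
using $1+\alpha-s>1-s>0$. Hence \rif{condreg} holds once $\rr$ is small.

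\emph{From \rif{condreg} to \rif{condreg2}.} This is the core of the argument and follows the classical linearization scheme. The first step is a Caccioppoli inequality for $u-\ell$. Because $\sAA$ depends only on difference quotients, $u-\ell$ solves an equation of the same type: the affine part has constant difference quotient $D\ell\cdot\frac{x-y}{\snr{x-y}^{s}}$ along each direction. Testing \rif{weaksol} with $\eta^2(u-\ell)$ then controls $\rr^{-s}[u-\ell]_{W^{s,2}(B_{\rr/2})}$ by $\rr^{-s}\tx{E}_u(\ell;x,\rr)$.

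The second step is a harmonic-type approximation lemma. If the excess is small, $u-\ell$ is $L^2$-close to a solution $h$ of the linearized system with kernel $\partial\sAA\big(D\ell\cdot\tfrac{x-y}{\snr{x-y}^{s}}\big)$. Here the modulus $\omega$ controls the linearization error, and the lemma is proved by compactness or by direct comparison in $W^{s,2}$.

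The third step uses $C^{1,\beta}$ estimates for this linear, translation-invariant nonlocal system; these can be imported from \cite{parte1}, valid for every $\beta<2s-1$. With them, the excess at scale $\tau\rr$, measured against the first-order Taylor polynomial of $h$, decays like $\tau^{1+\beta}$, up to the approximation error. This gives
\[
\tx{E}_u(\ell';x,\tau\rr)\le C\tau^{s+1+\beta}\big(\tau^{-?}\big)\cdots \le \tau^{s+\alpha+1}\cdot\rr^{-s-1-\alpha}\cdots,
\]
that is, decay of the normalized excess $\rr^{-s}\tx{E}$ by a factor $\tau^{1+\alpha-s}$, after fixing $\beta\in(\alpha,2s-1)$ and then $\tau$ small.

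I expect the tail to be the main obstacle. At the new scale $\tau\rr$ the tail of $u-\ell'$ contains the contribution of $\ell-\ell'$ outside $B_{\tau\rr}$, together with the old tail rescaled by $\tau^{2s}$. To absorb these, I will use a geometric sum over dyadic annuli, exploiting $1+\alpha<2s$ so that affine corrections of size $\snr{D\ell-D\ell'}$ summed across scales still converge. I will also control the change $\snr{D\ell_{k+1}-D\ell_k}\lesssim \rr_k^{s-1}\rr_k^{-s}\tx{E}_k$ so that the affine maps converge. For the iteration to close, the smallness has to persist at every step, which in particular requires $\snr{D\ell_k}$ to stay bounded.

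Iterating gives $\rr_k^{-s}\tx{E}_u(\ell_k;x,\rr_k)\le \tau^{k(1+\alpha-s)}\epsb{b}$. Since $\tx{E}$ is continuous in the centre $x$ (by absolute continuity of the integrals), the smallness in \rif{condreg} holds on a whole neighbourhood $B_{r_x}(x)$. A Campanato characterization, namely $\nra{u-\ell_{z,r}}_{L^2(B_r(z))}\lesssim r^{1+\alpha}$ uniformly in $z$, then yields $Du\in C^{0,\alpha}(B_{r_x}(x))$.
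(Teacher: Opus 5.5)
Your proof of \eqref{condreg2}$\Rightarrow$\eqref{condreg} is correct, and it is more direct than the paper's. The paper deduces \eqref{limitino} and then invokes Lemma \ref{prelirid}, hence the tail comparison of Lemma \ref{dedicata}. Your Taylor expansion with an annular splitting of the tail needs only $1+\alpha<2s$, $s>1/2$ and $u\in L^1_{2s}$.

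The skeleton you propose for the converse is the paper's: linearize at $\ell$, approximate by a solution of the translation-invariant system with kernel $\partial\sAA(\bbt\frac{x-y}{\snr{x-y}^s})$, use decay with some $\beta\in(\alpha,2s-1)$, iterate, spread the smallness via \eqref{contexcess}, and conclude by Campanato. However, the two steps carrying the weight are asserted rather than proved, and one is credited to a source that does not contain it. So \eqref{condreg}$\Rightarrow$\eqref{condreg2} has a gap as written.

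\emph{The linear estimate.} $C^{1,\beta}$ bounds, with a tail on the right, for $-\mathcal{L}_{\mathds{B}}h=g$ with $\mathds{B}(z)$ only measurable, even and translation invariant are not in \cite{parte1}; the frozen systems there have constant coefficients. These bounds are Theorems \ref{maintra} and \ref{cdg} of this paper. Their proof uses:
\begin{itemize}
\item the fact that $\tau_h^2h$ solves the same system;
\item a dyadic decomposition controlling its tails (Lemma \ref{step1lemma});
\item a Besov/integrability bootstrap up to $W^{t,q}$ for all $t<2s$, $q<\infty$ (Lemmas \ref{step2lemma}--\ref{step4lemma});
\item the embedding \eqref{immergesmooth};
\item a Campanato iteration for the bounded right-hand side created by localization.
\end{itemize}
The scheme needs these constants to depend only on $\data,\beta$, not on $\bbt$. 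That is also why you do not need $\snr{D\ell_k}$ to stay bounded.

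\emph{The approximation step.} A direct comparison using only $W^{s,2}$ bounds does not close. The error $\iint\omega_\rr(\cdot)^2\snr{v(x)-v(y)}^2\snr{x-y}^{-n-2s}\dxy$ is small only if the difference quotients are higher integrable. That is the self-improvement of \cite{KMS} ($v_\rr\in W^{s+\delta_0,2}$, see \eqref{stimapp}), which your plan never invokes.

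A compactness proof has its own nonlocal difficulties:
\begin{itemize}
\item tails are bounded only in a weighted $L^1$ space, which is why the paper localizes to $\tilde v_\rr=\eta v_\rr$ with bounded right-hand side $g_\rr$;
\item the linearization error is small only against $C^{0,t}$ test functions with $t>s$ (Lemma \ref{notte});
\item along a sequence with $\snr{\bbt}\to\infty$ the kernels $\sA_{\rr,*}$ need not converge strongly.
\end{itemize}

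\emph{The decay step.} The one-step decay is left as a placeholder. The tail obstruction you expect needs no summation of affine corrections over scales, because the excess already contains the tail. Applying \eqref{scatailancora} at scale $\tet$ gives $\tail(v_\rr-\ell_h;\ttB_\tet)\lesssim\tet^{2s}+\eps\tet^{-n/2}+\tet^{2s}\int_\tet^{1/32}\lambda^{1+\beta-2s}\,\dlam/\lambda\lesssim\tet^{1+\beta}+\eps\tet^{-n/2}$. The last bound uses $1+\beta<2s$, and it closes each step of Lemma \ref{piccolina}.
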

Some remarks are in order. 
\begin{itemize}
\item Two features of Theorems \ref{mainth}--\ref{mainth3} are worth noting. The first is the Hausdorff dimension estimate in \eqref{mainresult}, which coincides with the one available for local problems \cite{giaorange, giagreen}, i.e., systems of the type
\eqn{integer2}
$$-\diver A(Du)=0,$$
and does not depend on the fractional order $s$. An explanation for this fact can be given observing that the lower degree of regularization of the operator $-\mathcal{N}_{\sAAA}$, compared with \eqref{integer2}, is compensated by the fact that, in the nonlocal case, the nonlinearity acts on a fractional difference quotient rather than on the full gradient, and it is, in a sense, weaker. We refer to Section \ref{comparisons} for further discussion. A second feature concerns the technical side. Partial regularity results based on  $\eps$-regularity theorems are available in the literature in the nonlocal case \cite{dalio1, millot1, millot2, roberts, schi1, schi2, schi3}, but they typically appear in special situations in which linearity or particular structural assumptions allow the use of linear tools, such as the Caffarelli–Silvestre extension. By contrast, the genuinely nonlinear nature of the problem considered here requires the development of an intrinsically nonlocal and nonlinear approach.

\item Unless otherwise stated, essentially all the main results in this paper, with the exception of \eqref{higherss}  from  Theorem \ref{mainth2}, will be proved assuming that $s>1/2$. Such a condition guarantees, among other things, that every affine map belongs to the tail space $L^1_{2s}$, and therefore the excess functional $\tx{E}_{w}(\ell;x,\rr)$ in \eqref{excl} is well-defined.
\item The excess is continuous in the sense that
\eqn{contexcess}
$$
\mbox{the function }\   (\ell,x) \mapsto \tx{E}_{w}(\ell;x,\rr) \ \mbox{is continuous}.
$$ 
Specifically, if $B_{\rr}(x_k)\Subset \Omega$ is a sequence of balls such that $x_k\to x$  and $B_{\rr}(x)\Subset \Omega$, and if $\ell_k(y)\equiv \texttt{b}_k(y-x_k)+\texttt{a}_k$ is a sequence of affine maps such that $\texttt{b}_k\to \texttt{b}$ in $\er^{N\times n}$ and $\texttt{a}_k\to \texttt{a}$ in $\er^{N}$, then $\tx{E}_{w}(\ell_k;x_k,\rr)\to\tx{E}_{w}(\ell;x,\rr)$. This is essentially the content of \cite[Proposition 3.4]{parte1}. 
\item A key role is played by the higher differentiability property first discovered in \cite{KMS}, according to which weak solutions improve their differentiability from $W^{s,2}_{\loc}$ to $W^{s+\delta_0,2}_{\loc}$, for some universal $\delta_0>0$, depending only on $n,N,s,\Lambda$. This result allows to prove everywhere regularity in two dimensions \eqref{mainresult}$_3$ and to improve the Hausdorff dimension estimate in \eqref{mainresult}$_2$ in that the positive number $\theta\equiv \theta(n,N,s,\Lambda)$ appearing in \eqref{mainresult}$_2$ directly relates to $\delta_0$. Moreover, the borderline case $s=1/2$ achieved in \eqref{higherss} is again a consequence of the results in \cite{KMS}. The higher differentiability in \cite{KMS} also (indirectly) intervenes in the formulation of the fractional Harmonic Approximation Lemma \ref{shar} which is a crucial tool in the partial regularity proof.
\item In Theorem \ref{mainth3} the quantity $\epsb{b}$ is universal, in the sense that it does not depend on the solution, while the radius $r_x$ in \eqref{condreg2} does.  
\item As a consequence of \eqref{bs.1}, it follows that\footnote{in this paper we shall always denote $0_{\er^{N}}\equiv 0$.}
\eqn{sim}
$$
\sAA (0)= 0\,, \qquad \partial \sAA (-w)=\partial \sAA (w) \qquad \forall \, w \in \er^{N}.
$$ 
Moreover, the second assumption in \eqref{bs.1}$_1$ (oddness of $\sAA $) can be omitted. Indeed, for every $w\in \er^{N}$, split $\sAA $ as follows 
$$
\begin{cases}
\sAA (w)=\sAA_{\textnormal{odd}}(w)+\sAA_{\textnormal{even}}(w)\\[2pt]
 \displaystyle \sAA_{\textnormal{odd}}(w):=\frac{1}{2}[\sAA (w)-\sAA (-w)] \,, \quad \sAA_{\textnormal{even}}(w):=\frac{1}{2}[\sAA (w)+\sAA (-w)]\,. 
 \end{cases}
 $$
If $u$ is a solution to $- \mathcal{N}_{\sAAA} u=0$ in the sense of Definition \ref{def:weaksol}, then   
$$0=\langle - \mathcal{N}_{\sAAA} u,\varphi\rangle= \langle -\mathcal{N}_{\sAA_{\textnormal{odd}}}u,\varphi\rangle + \underbrace{\langle-\mathcal{N}_{\sAA_{\textnormal{even}}}u,\varphi\rangle}_{=0} = \langle-\mathcal{N}_{\sAA_{\textnormal{odd}}}u,\varphi\rangle$$
and therefore $u$ is also a solution to $- \mathcal{N}_{\sAA_{\textnormal{odd}}} u=0$. 
Noting also that $\sAA_{\textnormal{odd}}(-w)=-\sAA_{\textnormal{odd}}(w)$ for every $w\in \er^{N}$, it is now easy to see that $\sAA_{\textnormal{odd}}$ satisfies all the assumptions in \eqref{bs.1} and therefore it is actually not necessary to assume the second condition in \eqref{bs.1}$_1$.
\end{itemize}
As clear from the above statements, several quantities and results do not depend on either the operator or the solution 
$u$ considered; these quantities are referred to as “universal” \footnote{More precisely, a quantity will be called universal when, given the system in \rif{weaksol} under the assumptions in \rif{bs.1}, it will depend only on $n,N,s,\Lambda$ and sometimes on the modulus of continuity $\omega(\cdot)$ and on some other external regularity parameter, but not on the solution or on the specific system considered. This is for instance the case of the quantity $\epsb{b}$ in Theorem \ref{mainth3}.}. Dependence will instead often occur on the parameters
 $$
 \data \equiv (n,N,s,\Lambda),
 $$
which we call the \emph{structural ellipticity data of the problem}. Sometimes the constants involved in the estimates will depend only on a subset of the parameters included in $\data$, but for simplicity we will nevertheless indicate a dependence on the full set. 

 \subsection{Minima} Consider the integral functional of the Calculus of Variations 
$$
w \mapsto \mathcal F(w, \Omega):=  \int_{(\er^n\times \er^n)\setminus (\Omega^{\texttt{c}}\times \Omega^{\texttt{c}})} F\left(\frac{w(x)-w(y)}{|x-y|^s}\right) \frac{\dd  \mathcal L^{2n}(x,y)}{|x-y|^{n}} 
$$
where $F\colon \er^{N} \to \er$ is $C^2$-regular and convex, and satisfies 
 \eqn{cresce}
 $$
\snr{w}^2/\Lambda  \leq F(w) \leq  \Lambda \snr{w}^2, \qquad \Lambda \geq 1
 $$
 whenever $w \in \er^{N}$. 
  \begin{definition}\label{defimin}
Assume \eqref{cresce}. A map 
$u\in W^{s,2}_{\loc}(\Omega;\er^N)\cap L^1_{2s}$ is a local minimizer of 
$\mathcal F$ if, for every open set $\Omega'\Subset\Omega$, 
$\mathcal F(u,\Omega')<\infty$ and $
\mathcal F(u,\Omega')\leq \mathcal F(w,\Omega')
$ holds 
whenever 
$w\in W^{s,2}_{\loc}(\Omega;\er^N)\cap L^1_{2s}$ and 
$\operatorname{spt}(w-u)\Subset\Omega'$.
\end{definition}

   It turns out that \eqref{weaksol} is precisely the Euler-Lagrange equation of $\mathcal F$. 
 \begin{proposition}\label{equaziona}
Assume that $F\in C^1(\er^N)$ is convex and satisfies \eqref{cresce}. Then any
local minimizer of $\mathcal F$ in the sense of Definition \ref{defimin} satisfies
the weak formulation \eqref{weaksol} with $\sAA\equiv \partial F$. In particular,
if $\sAA=\partial F$ satisfies \eqref{bs.1}, then $u$ is a weak solution of
\eqref{nonlocaleqn} in the sense of Definition \ref{def:weaksol}.
\end{proposition}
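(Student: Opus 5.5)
The plan is to derive the Euler--Lagrange equation by a first-variation argument. Fix $\varphi\in W^{s,2}(\er^n;\er^N)$ with compact support in $\Omega$, and choose an open set $\Omega'$ with $\operatorname{spt}\varphi\Subset\Omega'\Subset\Omega$. For $t\in\er$ the competitor $w_t:=u+t\varphi$ belongs to $W^{s,2}_{\loc}(\Omega;\er^N)\cap L^1_{2s}$ and satisfies $\operatorname{spt}(w_t-u)\Subset\Omega'$. Definition \ref{defimin} therefore gives $\mathcal F(u,\Omega')\le\mathcal F(w_t,\Omega')$ for every $t$. Once we know that $g(t):=\mathcal F(w_t,\Omega')$ is finite and differentiable at $t=0$, minimality forces $g'(0)=0$. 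The remaining task is to identify $g'(0)$ with $\langle-\mathcal N_{\partial F}u,\varphi\rangle$.

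Set $D:=(\er^n\times\er^n)\setminus(\Omega'^{\texttt{c}}\times\Omega'^{\texttt{c}})$, $d\mu:=|x-y|^{-n}\dxy$, $U:=(u(x)-u(y))/|x-y|^s$ and $\Phi:=(\varphi(x)-\varphi(y))/|x-y|^s$.

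First I would check that $g$ is finite. The growth bound \eqref{cresce} gives $F(U+t\Phi)\le 2\Lambda(|U|^2+t^2|\Phi|^2)$. The hypothesis $\mathcal F(u,\Omega')<\infty$, together with the lower bound in \eqref{cresce}, gives $\int_D|U|^2\,d\mu<\infty$. We also have $\int_D|\Phi|^2\,d\mu\le[\varphi]_{W^{s,2}(\er^n)}^2<\infty$.

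Next comes differentiation under the integral sign, which is the main (though mild) obstacle. Convexity makes the difference quotients $t^{-1}[F(U+t\Phi)-F(U)]$ monotone in $t$, and they converge pointwise to $\langle\partial F(U),\Phi\rangle$. To pass to the limit I need an integrable majorant. A convex function satisfying $F\le\Lambda|w|^2$ has gradient of linear growth: for any $w$ and unit vector $e$, convexity gives
$$
\langle\partial F(w),e\rangle\,|w|\le F(w+|w|e)-F(w)\le 4\Lambda|w|^2,
$$
using $F\ge0$, so $|\partial F(w)|\le c\Lambda|w|$. By the mean value theorem, for $|t|\le1$,
$$
|t|^{-1}|F(U+t\Phi)-F(U)|\le c\Lambda(|U|+|\Phi|)|\Phi|,
$$
and the right-hand side lies in $L^1(D,\mu)$ by Cauchy--Schwarz. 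Dominated convergence then yields
$$
0=g'(0)=\int_D\langle\partial F(U),\Phi\rangle\,d\mu .
$$

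Finally, observe that $\Phi=0$ on $\Omega'^{\texttt{c}}\times\Omega'^{\texttt{c}}$, since $\varphi$ vanishes there. The integral over $D$ therefore coincides with the integral over all of $\er^n\times\er^n$. This is exactly \eqref{weaksol} with $\sAA=\partial F$, and it holds for every admissible $\varphi$. If in addition $\partial F$ satisfies \eqref{bs.1}, then $u\in W^{s,2}_{\loc}(\Omega;\er^N)\cap L^1_{2s}$ is by definition a weak solution in the sense of Definition \ref{def:weaksol}.
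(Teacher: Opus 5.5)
Your proof is correct. Its skeleton is the paper's: the competitor $u+t\varphi$, linear growth of $\partial F$, dominated convergence, and vanishing of the first variation. The difference lies in how you justify the passage to the limit.

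The paper takes $\operatorname{spt}\varphi\subset\Omega''\Subset\Omega'$ and splits $\er^n\times\er^n$ into $\Omega'\times\Omega'$ and the two cross regions. It handles the first piece via $u,\varphi\in W^{s,2}(\Omega')$. On the cross regions it dominates by $c\,(|u(x)|+|u(y)|+|\varphi(x)|)|\varphi(x)|/(1+|y|^{n+2s})$, using $|x-y|\geq \dist(\Omega'',\er^n\setminus\Omega')$ and $u\in L^1_{2s}$. The bound $|\partial F(w)|\lesssim|w|$ is quoted from the literature.

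You instead use that $\mathcal F(u,\Omega')<\infty$ and the lower bound in \eqref{cresce} give $U\in L^2(D,\mu)$. Then the single majorant $c(|U|+|\Phi|)|\Phi|$ works on all of $D$ by Cauchy--Schwarz. You also derive the gradient bound yourself from convexity; dividing by $|w|$ needs $w\neq0$, but $\partial F(0)=0$ since $0$ minimizes $F$.

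Your route is shorter and needs neither $\Omega''$ nor the tail space. The paper's tail-based majorant shows something extra: $\int\langle\partial F(U),\Phi\rangle\,d\mu$ converges absolutely for every $u\in W^{s,2}_{\loc}(\Omega;\er^N)\cap L^1_{2s}$. These are exactly the hypotheses of Definition \ref{def:weaksol}, so the paper does not need the finiteness of the cross interactions between $\Omega'$ and its complement that minimality provides.
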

 This last fact, whose (easy) derivation is in Section \ref{equazionasec}, allows us to apply the result for systems. 
 \begin{corollary} The regularity assertions of Theorems \ref{mainth}, \ref{mainth2} and \ref{mainth3} hold for any local minimizer of the functional $\mathcal F$ in the sense of Definition \ref{defimin} provided $F$ satisfies \eqref{cresce} and $\partial F\equiv \sAA$ satisfies \eqref{bs.1}. 
 \end{corollary}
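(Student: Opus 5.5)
The corollary follows once we know that a local minimizer is a weak solution in the sense of Definition \ref{def:weaksol}. That is exactly the content of Proposition \ref{equaziona}, applied with $\sAA\equiv\partial F$. Theorems \ref{mainth}, \ref{mainth2} and \ref{mainth3} are stated for arbitrary weak solutions under \eqref{bs.1}, so they then apply verbatim. The only work is to check that the hypotheses line up.

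First, the hypotheses of Proposition \ref{equaziona}. We assume $\partial F\equiv\sAA$ satisfies \eqref{bs.1}, so in particular $\partial F\in C^1_{\loc}$ and $F\in C^1$, indeed $C^2$. Convexity of $F$ follows from the ellipticity condition $\Lambda^{-1}\snr{\xi}^2\le\langle\partial\sAA(w)\xi,\xi\rangle=\langle\partial^2F(w)\xi,\xi\rangle$; it is in any case assumed in the setting of the functional. The growth condition \eqref{cresce} is assumed directly.

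Next, the admissible class. A local minimizer $u$ in the sense of Definition \ref{defimin} belongs to $W^{s,2}_{\loc}(\Omega;\er^N)\cap L^1_{2s}$, which is the class required in Definition \ref{def:weaksol}. Proposition \ref{equaziona} then gives \eqref{weaksol} for all $\varphi\in W^{s,2}(\er^n;\er^N)$ with compact support in $\Omega$. Hence $u$ is a weak solution of \eqref{nonlocaleqn} with $\sAA=\partial F$.

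Finally, we invoke Theorems \ref{mainth}, \ref{mainth2} and \ref{mainth3}, with the restrictions $s>1/2$ or $s\ge1/2$ exactly as stated there. Their constants depend only on $\data$, on $\omega(\cdot)$ and on $\alpha$, and these are now determined by $F$ through $\partial F$.

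Two remarks complete the argument. The oddness requirement in \eqref{bs.1} is harmless: even if $\partial F$ is not odd, we can replace it by its odd part, as in the remark following Theorem \ref{mainth3}, and the odd part still satisfies \eqref{bs.1}. Also, the tail space membership needed for the excess in Theorem \ref{mainth3} is already part of Definition \ref{defimin}. There is no real obstacle here; the only step with any content is the Euler–Lagrange derivation, and that is deferred to Proposition \ref{equaziona}.
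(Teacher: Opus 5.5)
Your proposal is correct and follows the paper's argument. Proposition \ref{equaziona}, applied with $\sAA\equiv\partial F$, shows that local minimizers are weak solutions, after which Theorems \ref{mainth}, \ref{mainth2} and \ref{mainth3} apply directly. Your remark on replacing $\partial F$ by its odd part is unnecessary, since oddness is already part of the assumed \eqref{bs.1}, but it does no harm.
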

Local minimizers according to Definition \ref{defimin} typically arise when solving boundary value problems
 $$
 u \mapsto \min_{w \in X} \mathds{F}(w)\,, \quad X:=   \{w\in W^{s,2}(\er^n;\er^N)\, \colon \, w \equiv u_0 \ \mbox{on}\ \er^n\setminus \Omega\} 
 $$
 where $u_0\in W^{s,2}(\er^n;\er^{N})$ is a fixed boundary datum (note that $W^{s,2}(\er^n;\er^{N}) \subset L^{1}_{2s} $) and $\Omega$ is a bounded domain, 
that is, $u \in X$ is a minimizer in the sense that 
$$
\mathds{F}(u) :=\int_{\er^n}\int_{\er^n} F\left(\frac{u(x)-u(y)}{|x-y|^s}\right) \frac{\dxy}{|x-y|^{n}}  
\leq \int_{\er^n}\int_{\er^n} F\left(\frac{w(x)-w(y)}{|x-y|^s}\right) \frac{\dxy}{|x-y|^{n}} =: \mathds{F}(w)
$$
holds whenever $w\in W^{s,2}(\er^n;\er^N)$ is such that $w-u\equiv 0$ outside $\Omega$. Also keeping \eqref{cresce} in mind, existence theorems follow using Direct Methods of the Calculus of Variations, see also \cite{DKP}.  
\subsection{Strategy}\label{strasec} The proof of Theorems  \ref{mainth}--\ref{mainth3} is quite long and delicate. A central point is a {\em nonlocal linearization} approach that allows us to linearize, in small balls, nonlinear systems like \eqref{nonlocaleqn} around a class of treatable, linear systems. This leverages a variant of a harmonic type approximation lemma previously released in \cite{parte1}, which is Lemma \ref{shar}\footnote{Harmonic type approximation lemmas were first introduced by De Giorgi in his fundamental work on the regularity of minimal surfaces  \cite{deg}. For applications and extension to local problems we refer for instance to  \cite{dumi,dust,dsv}. }. Contrary to what one might expect, we are not going to linearize around constant-coefficient systems. Rather, we employ the natural class of translation-invariant coefficients systems. These are  of the type 
\eqn{linearizzata0}
$$-\mathcal{L}_{\mathds{B}}h =g \in L^{\infty}, $$ 
where 
\eqn{weaksolLL}
$$
\langle -\mathcal{L}_{\mathds{B}} h,\varphi\rangle :=\int_{\mathbb{R}^{n}}\int_{\mathbb{R}^{n}}\langle \mathds{B}(x-y)\frac{h(x)-h(y)}{|x-y|^s},\frac{\varphi(x)-\varphi(y)}{|x-y|^s}\rangle\frac{\dxy}{|x-y|^{n}} , \quad 0<s<1
$$
for every $\varphi \in C^{\infty}(\er^n;\er^{N})$ with compact support, and $\mathds{B} \colon  \er^{n} \to \er^{N\times N}$ is measurable and  satisfies
\eqn{condib}
$$
\Lambda^{-1}\snr{\xi}^{2}\leq \langle  \mathds{B}(z)\xi,  \xi \rangle, 
\qquad \snr{ \mathds{B}(z)}\le \Lambda, \qquad  \mathds{B}(z)=  \mathds{B}(-z)
$$
for every choice of $z \in \er^n$, $\xi \in \er^{N}$, as in \eqref{bs.1}$_2$\footnote{The third condition in \eqref{condib} can always be dropped. Indeed, denoting 
$$
\mathds{B}_{\textnormal{sym}}(z):= \frac12[\mathds{B}(z)+\mathds{B}(-z)], \quad z \in \er^n
$$
we have $-\mathcal{L}_{\mathds{B}_{\textnormal{sym}}} h=g$ and $\mathds{B}_{\textnormal{sym}}$ still satisfies the first two conditions in \eqref{condib}. Note also that, more in general, the terminology translation invariant coefficients alludes to general coefficients $\mathds{B}(x,y)$ such that $\mathds{B}(x+h,y+h)=\mathds{B}(x,y)$ holds whenever $x,y,h\in \er^n$. In this paper we shall deal only with translation invariant coefficients of the type $\mathds{B}(x,y)\equiv \mathds{B}(x-y)$.}. These are in a sense the ``tangent'' systems in a nonlocal blow-up procedure, that we are going to describe below. The matrix $\mathds{B}(\cdot)$ is found via
 \eqn{affineappare}
 $$
  \mathds{B}(x-y)=\partial \sAA \left(\frac{\ell(\rr x)-\ell(\rr y)}{\snr{\rr x-\rr y}^{s}}\right), \quad \ell(x)=\bbt ( x-x_0)+ \texttt{a}, \  \texttt{a} \in \er^{N}, \ \bbt \in\er^{N\times n}$$
  so that 
  $$
  \mathds{B}(z) = \partial \sAA \left(\rr^{1-s}\bbt \frac{z}{|z|^s}\right)\,.
  $$
The affine map $\ell$ appearing in \eqref{affineappare} is, roughly speaking, the one realizing condition \eqref{condreg}
\eqn{smally} 
$$\rr^{-s}\tx{E}_{u}(\ell;x_0,\rr)<\epsb{b} \,, \qquad \epsb{b}\equiv \epsb{b}(\data,\omega(\cdot), \alpha) \in (0,1)\,.$$ 
It is precisely this {\em smallness condition} that allows us to linearize equation \eqref{nonlocaleqn} around \eqref{linearizzata0}. With $\alpha \in (0, 2s-1)$ fixed, the procedure can be summarized as follows. We blow-up $u-\ell$ in $B_{\rr}(x_0)$ obtaining a map, $v$, that solves a linearized system with measurable coefficients, that is 
\eqn{elr0}
$$
    \langle -\mathcal{L}_{\mathds{A}} v,\varphi\rangle:=  \int_{\mathbb{R}^{n}}\int_{\mathbb{R}^{n}}\langle\mathds A(x,y)\frac{v(x)-v(y)}{|x-y|^s},\frac{\varphi(x)-\varphi(y)}{|x-y|^s}\rangle\frac{\dxy}{|x-y|^{n}}=0
$$
holds for every $\varphi \in W^{s,2}(\er^n;\er^{N})$ with compact support in $\ttB_1\equiv B_1(0)$ \footnote{Throughout the procedure we are in fact using that $-\mathcal{L}_{\mathds{B}} \tilde  \ell=0$ holds for every affine map $\tilde \ell$ when $s>1/2$. See Lemma \ref{remarkino34}.}. The next step is to get an $L^2$-closeness information of the type 
\eqn{vicina}
$$ 
\nra{v-h}_{L^2(\ttB_{1/16})}\lesssim \eps
$$
for a suitably small $\eps$ that can be quantitatively determined in terms of $\data,\alpha$, where $h$ solves \eqref{linearizzata0} in $\ttB_{1/16}$ for a suitable $L^\infty$-function $g$. For this, after a localization procedure, we replace $v$ by $\tilde v:=\eta v$, where 
$\eta\equiv1$ in $\ttB_{1/4}$ and $\eta$ is supported in $\ttB_{1/2}$. The localization produces a function 
$g\in L^\infty(\ttB_{1/16};\er^N)$ such that
\eqn{vicini}
$$
\snr{\langle -\mathcal L_{\mathds{B}}\tilde{v} - g , \varphi \rangle} \lesssim [\omega(\epsb{b})]^{\delta} 
, \qquad \nr{g}_{L^\infty(\ttB_{1/16})} \lesssim 1, \quad \delta\equiv\delta(\data)\in (0,1)
$$
holds for every $\varphi\in C^{0,t}(\er^n;\er^N)$ supported in $\ttB_{1/16}$, such that $ [\varphi]_{0,t;\er^n}\leq 1$, where $t>s$ is again a universal quantity depending only on $\data$.  Here $\omega(\cdot)$ is the modulus of continuity appearing in \eqref{bs.1}. Using \eqref{vicini} we invoke the $s$-harmonic approximation machinery from Lemma \ref{shar} and find a solution $h$ to \eqref{linearizzata0} which is $L^2$-close to $\tilde{v}$, and therefore to $v$, in the sense of  \eqref{vicina}. Note that at this point the number $\eps\equiv\eps(\data, \alpha)$ required in \eqref{vicina} influences the choice of  the number $\epsb{b}\equiv \epsb{b}(\data,  \omega(\cdot),\alpha)$  introduced in \eqref{smally}  via  \eqref{vicini}; it is important to note that all the constants along this procedure are universal and only depend on $\data, \omega(\cdot), \alpha$. The numbers $n,N,s,\Lambda$ are to be considered as structural \emph{data} of the operator considered as well as $\omega(\cdot)$, while $\alpha$ is the rate of H\"older continuity of the solutions we are aiming at in partial regularity. The closeness condition \eqref{vicina} can now be used to get an {\em improvement-of-flatness} lemma in the sense that once the smallness condition \eqref{smally} is satisfied at the scale $B_{\rr}(x)$, then it quantitatively propagates at each scale $B_{\tet^i\rr}(x)$, $i\in \en$, where $\tet\equiv \tet(\data, \alpha) \in (0,1)$. Specifically, for every $i \in \en$, there exists an affine map $\ell_i$ attaining 
\eqn{campimi}
$$\tet ^{-is}\rr^{-s}\tx{E}_{u}(\ell_i;x,\tet ^i\rr)<\epsb{b}.
$$ 
Schematically, this comes along with a decay estimate for the full excess of the type 
\eqn{campi1}
$$
\tx{E}_{u}(\ell_i;x,\tet ^i \rr) \lesssim \tet ^{(1+\alpha)i}, \quad \mbox{for every integer $i\geq 0$}.
$$
We are now in the realm of classical partial regularity. Thanks to \eqref{contexcess} it can now be shown that \eqref{smally} is an {\em open condition}. This means that there exists a small ball $B_{r_{x}}(x)\subset \Omega$ such that 
\eqref{smally} holds replacing $x$ with $y$ whenever $y \in B_{r_{x}}(x)$. As a consequence, both \eqref{campimi} and \eqref{campi1} hold with the same constants involved whenever we are replacing $x$ with $y$. It follows that 
\eqn{esticam}
$$
\sup_{y \in B_{r_{x}}(x)} \inf_{\elly \textnormal{ affine}}  \nra{u-\elly}_{L^2(B_{\sigma}(y))}\lesssim \sigma^{1+\alpha}
 $$
 holds whenever $0<\sigma \leq \rr$. The decay estimate \eqref{esticam} implies that 
 \eqn{leadsto}
$$Du \in C^{0, \alpha}(B_{r_{x}}(x);\er^{N\times n})\,.$$ We have therefore proved that a point $x\in \Omega$ satisfying the smallness condition \eqref{smally} for some affine map $\ell$ is indeed a regular point. This proves \eqref{condreg} $\Rightarrow$ \eqref{condreg2} in Theorem \ref{mainth3} and preludes to the complete proof of Theorem \ref{mainth}. Indeed, this has to be converted into \eqref{mainresult} asserting that a.e. point is a regular point with an estimate on the size of non-regular points. For the dimensional estimate \eqref{mainresult}$_{2}$, and the everywhere regularity in two dimensions \eqref{mainresult}$_{3}$, we need essentially two ingredients. First, the higher differentiability result in \eqref{higherss}. This goes via an indirect use of fractional difference quotient methods in the setting of the aforementioned nonlocal self-improving properties (fractional Gehring's lemma); see Section \ref{highsec}. Specifically, we use that maps of the type
\eqn{approssima} 
$$
\texttt{w}_h(x)\stackrel{\textnormal{modulo cut-off}}{\approx} \frac{u(x+h)-u(x)}{|h|^\beta}, \qquad 0<\beta \leq 1, \ h \in \er^n\setminus\{0\}
$$
are themselves solutions to linear nonlocal systems of the type in \eqref{elr0}, i.e., 
\eqn{approssima2} 
$$-\mathcal{L}_{\mathds{A}}\texttt{w}_h= \texttt{f}_h \in L^{\infty}.  $$
See Proposition \ref{gradpre} and Lemma \ref{el : lem.loc} for the meaning of the symbol $\approx$ in \eqref{approssima} and for the identity of $\texttt{f}_h$ in \eqref{approssima2}. 
Therefore $\texttt{w}_h$ enjoys the regularity properties typical of solutions to systems as \eqref{approssima2}, as for instance the aforementioned higher differentiability, i.e.,
\eqn{proge}
$$
\mbox{$\texttt{w}_h\in W^{s+\delta_0,2}$ uniformly with respect to $h$, with $\delta_0\equiv \delta_0 (\data)>0$}.
$$
The estimates implicit in \eqref{proge}, combined with suitable choices of $\beta$ in \eqref{approssima} and basic embeddings in Besov spaces, imply the required higher gradient differentiability of $u$ stated in \eqref{higherss}. In order to conclude with \eqref{mainresult} the result in \eqref{higherss} can now be combined with the explicit characterization in \eqref{ilsingolare} and certain abstract facts about fine properties of fractional Sobolev functions obtained via nonlinear potential theoretic ideas. These are  detailed in Section \ref{thefines}; see in particular Lemma \ref{riducilemma}. 

The partial regularity scheme connecting \eqref{smally} to \eqref{leadsto} involves several technical
intermediate steps. For instance, we present in Theorem \ref{cdg} from Section \ref{homsec2} a derivation of {\em a priori regularity estimates} for solutions to
linear vectorial equations such as \eqref{linearizzata0}. Although a main point in Theorem \ref{cdg} is the explicit a priori estimate \eqref{dg.2bis}, which has to fit the partial regularity scheme described above in order to deliver the necessary information, we also aim at providing a fractional version of classical Campanato's methods \cite{cam1,cam2} in the context of translation invariant kernels,  thereby making no use of fundamental solutions and relying only on energy estimates.  Let us mention that in the scalar case regularity results for translation invariant operators are available, see for instance \cite[Section 2.4]{feros2} and the interesting work \cite{rose}. 

\subsection{Comparisons with the classical, local case} \label{comparisons}For a general introduction to partial regularity in the setting of nonlocal problems, and for parallels with the local theory, we refer to \cite{parte1}. There we treated the case of nonlocal vectorial equations of the type 
\eqn{eqweaksolpre00}
$$
			\int_{\er^{n}} \int_{\er^{n}} \langle a(x,y,u(x),u(y))\frac{u(x)-u(y)}{|x-y|^s},\frac{\varphi(x)-\varphi(y)}{|x-y|^s} \rangle \frac{\dxy}{|x-y|^{n}} =\int_{\Omega} \langle f, \varphi\rangle \dx 
$$
under suitable ellipticity and continuity assumptions on the tensor field $a(\cdot)$.
Systems as in \eqref{eqweaksolpre00} with $f\equiv 0$ are the nonlocal analogues of local systems of the type 
\eqn{integer}
$$
 -\diver\,  (a(x,u)Du)=0, 
$$
where 
$ \langle a(x,w)\xi, \xi\rangle  \approx \snr{\xi}^2$, $\xi \in \er^{N\times n}, w \in \er^N$, and 
$(x,w) \mapsto a(x,w)$ is uniformly continuous. 
These were first treated in the classical work of Morrey \cite{morrey} and Giusti \& Miranda \cite{giumi}. In both cases  \eqref{eqweaksolpre00} and \eqref{integer} the nonlinearity is weaker, as it is generated by a nonlinear dependence on the solution, rather than on its gradient/difference quotient. The system we are considering here in \eqref{weaksol} is instead a much more delicate case than \eqref{eqweaksolpre00} as the nonlinearity appears through the natural difference quotient of solutions, i.e., it is of maximal order. It is in fact the natural nonlocal analog of local systems of the type
\eqref{integer2}, 
featuring a nonlinear dependence on the gradient. The assumptions on the $C^1$-regular vector field $A\colon \er^{N\times n}\to \er^{N\times n}$ when considering \eqref{integer2} are usually
\eqn{ipotesilocali}
$$
\left\{
\begin{array}{c}
\displaystyle
\ \snr{A(z)}+ \snr{\partial A(z)}|z|\le\Lambda \snr{z},\quad  \Lambda^{-1}\snr{\xi}^{2}\leq \langle  \partial A(z)\xi,\xi\rangle,\\ [4pt]\displaystyle
\ \snr{\partial A(z_{1})-\partial A(z_{2})}\le \Lambda \omega\left(\snr{z_{1}-z_{2}}\right)
\end{array}
\right.
$$
for every $z, z_1, z_2, \xi\in \er^{N\times n}$, 
which obviously parallel \eqref{bs.1} \footnote{Assumptions \eqref{ipotesilocali} can be sometimes weakened, but the version presented here already contains the essence of what is needed for vectorial partial regularity.}. Systems as in \eqref{integer2} arise for instance as Euler-Lagrange equations of functionals such as 
$
v \mapsto \int F(Dv)\dx$, where $\langle \partial^2F(z)\xi, \xi\rangle  \approx \snr{\xi}^2 $ for $z,\xi\in \er^{N\times n}
$,  which are typically defined in classical Sobolev spaces $W^{1,2}$. For systems as in \eqref{integer2} singular solutions do appear \cite{N, sverakyan, savin}, and the best thing one can hope for is partial regularity as in \eqref{mainresult}, which in fact holds exactly in the same terms for $W^{1,2}$-solutions to \eqref{integer2} (taking $s=1$ in \eqref{mainresult}$_1$). Everywhere regularity only follows under special structure conditions, both in the local \cite{uh}, and in the nonlocal case \cite{now1}.   We refer to \cite{giaorange, giagreen} for the basic and classical techniques concerning \eqref{integer2} and \eqref{integer}, and to \cite{min08} for a (relatively) updated overview. 

\begin{remark}[Hausdorff dimension and interpolative nature of regular set characterization]{\em The Hausdorff dimension estimate \eqref{mainresult}$_{2}$ marks a difference from the results of \cite{parte1} where, for solutions to vectorial equations as 
\eqref{eqweaksolpre00}, the dimension of the singular set was seen not to exceed $n-2s-\theta$. 
This is basically due to the fact that in \eqref{eqweaksolpre00} $(v, w) \mapsto a(\cdot, v,w)$ is nondifferentiable and this inhibits higher differentiability of $u$ beyond $W^{s+\delta_0,2}$, for some small $\delta_0$ coming from an application of \cite{KMS} as  in \eqref{proge}. By contrast, the higher (fractional) differentiability \eqref{higherss} allows for a better dimension reduction as in \eqref{mainresult}$_{2}$.  This estimate is in accordance with the singular set estimate for fractional harmonic maps obtained in \cite{millot2}. A point  worth remarking is that the results of \cite{millot2} rely on the linearity of the operator via the use of the Caffarelli-Silvestre extension \cite{caff}, a tool that cannot obviously be employed in nonlinear cases as the present one. On a more speculative side, the nonlocal system \eqref{weaksol} can be viewed, from the standpoint of the differentiation order appearing in the nonlinearity, as an interpolation between the system in \eqref{integer}, where the differentiation order in the nonlinearity is zero, and the one in \eqref{integer2}, where it is one. Dimensional analysis now reveals a corresponding analogy in the characterization of the regular sets $\Omega_u$. Indeed, in the cases \eqref{integer} and \eqref{integer2}, the regular sets are given by
\eqn{integer3}
$$
\Big\{
x \in \Omega \, \colon\, 
 \lim_{\rr\to 0}\,  \rr\, \nra{Du}_{L^2(B_{\rr}(x))}=0 \Big\} \ \ \mbox{and} \ \ 
 \Big\{
x \in \Omega \, \colon\, 
 \lim_{\rr\to 0}\,  \nra{Du-(Du)_{B_{\rr}(x)}}_{L^2(B_{\rr}(x))}=0 \Big\},
$$
respectively. From a dimensional point of view, that is, by comparing the powers of $\rr$ involved in \eqref{integer3}, $\Omega_u$ defined in \eqref{ilsingolare} interpolates exactly between the two sets in \eqref{integer3}.
}
\end{remark}

\subsection{Organization of the paper} Basic notation and tools are established in Section \ref{presec}. There, among other things, we collect the basic properties of fractional Sobolev spaces that we shall employ here, together with the fractional harmonic approximation lemma needed to prove partial regularity (see Section \ref{harmonica}). In Section \ref{thefines} we prove a number of results linking the excess functional defined in \eqref{excl} to certain fine properties of fractional Sobolev functions. These results will be essential for proving partial regularity of solutions and related estimates on the dimension of the singular set. Sections \ref{homsec1} and \ref{homsec2} are instead devoted to the proof of a priori estimates for solutions to equations of the type in \eqref{linearizzata0}. In particular, we prove Theorem \ref{cdg}. The homogeneous case $g\equiv0$ is treated in Section \ref{homsec1}; the
non-homogeneous case, building on those results, is addressed in Section
\ref{homsec2}. Their content is instrumental to the proof of Theorem \ref{mainth}. Section \ref{highsec} deals instead with the regularity of solutions to the original system \eqref{weaksol}. Here, a delicate use of difference quotients is combined with the self-improving properties developed in \cite{KMS}. The core of the partial regularity proof is contained in Section \ref{coresec}. The main result there is the flatness improvement property described in Proposition \ref{cor.1}, which is achieved via the nonlocal linearization method already described in Section \ref{strasec}. In the concluding Section \ref{rs.sec} we finally employ the previously developed tools to prove all the main results.
\vspace{.3cm}

{\bf Acknowledgments.} This work is supported by the European Research Council, through the ERC StG project NEW, nr.~101220121 and by the University of Parma through the actions "Bando di Ateneo 2023 per la ricerca" (Mingione) and "Bando di Ateneo 2024 per la ricerca" (De Filippis). Simon Nowak is supported by the Deutsche Forschungsgemeinschaft (DFG, German Research Foundation) - SFB 1283/2 2021 - 317210226. 

\section{Preliminary material}\label{presec}
Most of the notation used in this paper was introduced in Part I \cite[Section 3.1]{parte1}, and we shall rely on it throughout, recalling here only the minimal essentials for the sake of readability. In the same spirit, we shall make extensive use of the preliminary results established in \cite{parte1}, reporting here only those definitions and lemmas that are most frequently used in this paper and that need to be readily available. As usual, we denote by $c$ a generic constant with $c\geq 1$, whose value may change from line to line when its precise value is not important. Particular occurrences may also be denoted by symbols such as $c_0,  \tilde c$ and the like. When we write, for instance, $c\equiv c (a,b,\delta)$,  this emphasizes that the constant  $c$ depends on the parameters $a,b,\delta$. 

\subsection{Basics and notation}\label{notsec} With $\mathcal  A \subset \er^{n}$ being a measurable subset such that  $0<|\mathcal A|<\infty$, and $w \colon \mathcal  A \to \er^{k}$, $k\geq 1$, being an integrable map, we set  
\eqn{integral}
$$(w)_{\mathcal  A}:=\frac{1}{|\mathcal A|}\int_{\mathcal  A}w\dx:= \mint_{\mathcal  A}w\dx,$$ which  represents the integral average of $w$ over $\mathcal  A$. We denote 
$$
B_r(x_{0}):= \{x \in \er^n  :   |x-x_{0}|< r\}, \ \ \ttB_{r}:= B_r(0)= \{x \in \er^n  :   |x|< r\}.
$$
As usual, when the center is not relevant in the context, we shall omit denoting it and the balls in question will be concentric. In such cases, we shall often indicate with $\BBB\subset \er^n$ a general ball, eventually denoting by $\rad(\BBB)$ its radius. 
Given $x \equiv (x_{i})_{1\leq i\leq n}\in \er^n$, we denote $|x|_{\infty}=\max_{i} \, |x_{i}|$ and 
$$ 
\begin{cases}
\, Q_r(x_0):= \{x \in \er^n  :   |x-x_0|_{\infty}< r\},\\
\, Q_{\textnormal{inn}}(B_r(x_0)):= \{x \in \er^n  :   |x-x_0|_{\infty}< r/\sqrt{n}\}\subset B_r(x_0)\subset Q_r(x_0)\,.
\end{cases}
$$
In the second line of the above display we have the inner hypercube of the ball $B_r(x_0)$, that is, the largest hypercube concentric with $B_r(x_0)$, with sides parallel to the coordinate axes, which is contained in $B_r(x_0)$. We shall also need to use a standard lattice of cubes of mesh $r>0$, and for this we shall canonically use
$
\mathcal L_r := \{Q_{r}(y) \subset \er^n \, \colon \, y\in (2r) \mathbb Z^n\}.
$
These are mutually disjoint cubes whose closures cover $\er^n$. We shall use the following elementary 
\begin{lemma}\label{lemmacubi} With $t \in (0,1/2)$ and $\sigma \in (0,1/4)$, consider the lattice $\{Q_{\sigma/\sqrt{n}}(z)\}_{z\in (2\sigma/\sqrt{n})\mathbb Z^n}$. 
With $\tx{I}:=\{y\in  (2\sigma/\sqrt{n})\mathbb Z^n \, \colon \,  \snr{y} \leq  t+ 2\sigma\}$, the family $\{\overline{Q_{\sigma/\sqrt{n}}(z)}\}_{z\in \tx{I}}$  covers $\ttB_t$ and $\# \tx{I} \approx_{n} (t/\sigma+2)^{n}$. 
\end{lemma}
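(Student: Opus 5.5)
The plan is a direct check of the two claims, the covering property and the cardinality estimate; both come from the geometry of the lattice.

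\emph{Covering.} Write $r:=\sigma/\sqrt{n}$. The cubes $Q_r(z)$, $z\in (2r)\mathbb Z^n$, are the cubes of the lattice $\mathcal L_r$. Their closures cover $\er^n$, since every coordinate $x_i$ lies within distance $r$ of some point of $2r\mathbb Z$. So, given $x\in \ttB_t$, pick $z\in (2r)\mathbb Z^n$ with $|x-z|_\infty\le r$. Then
\[
|z|\le |x|+|x-z|\le t+\sqrt{n}\,|x-z|_\infty\le t+\sqrt{n}\,r=t+\sigma\le t+2\sigma ,
\]
so $z\in \tx{I}$ and $x\in \overline{Q_r(z)}$. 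This gives the covering.

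\emph{Upper bound on $\#\tx{I}$.} Use a volume argument. The cubes $Q_r(z)$ with $z\in \tx{I}$ are pairwise disjoint, each has measure $(2r)^n$, and each is contained in $B_{t+3\sigma}$, because points of $Q_r(z)$ lie within Euclidean distance $\sqrt{n}\,r=\sigma$ of $z$. Hence
\[
\#\tx{I}\,(2\sigma/\sqrt n)^n\le |B_1|(t+3\sigma)^n,
\]
that is, $\#\tx{I}\lesssim_n (t/\sigma+3)^n\lesssim_n (t/\sigma+2)^n$.

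\emph{Lower bound on $\#\tx{I}$.} Observe that the closed cubes $\overline{Q_r(z)}$, $z\in\tx{I}$, cover $\ttB_t$, and also cover $B_{2\sigma}$, since $\tx{I}$ contains every lattice point within distance $2\sigma\ge\sigma+\sigma$ of the origin. By the same covering argument applied to $B_{\max\{t,\sigma\}}$, they cover $B_{\max\{t,\sigma\}}$. Comparing volumes,
\[
\#\tx{I}\,(2\sigma/\sqrt n)^n\ge |B_1|\max\{t,\sigma\}^n ,
\]
so $\#\tx{I}\gtrsim_n \max\{t/\sigma,1\}^n\approx_n (t/\sigma+2)^n$.

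No step is a real obstacle. The only care needed is tracking the passage between $|\cdot|_\infty$ and $|\cdot|$ (the factor $\sqrt n$) and checking that the additive $2\sigma$ in the definition of $\tx{I}$ leaves enough room for both bounds.
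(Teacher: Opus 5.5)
Your argument is correct and follows the paper's route. The paper gets the covering by noting that a cube centred at $y$ with $|y|>t+2\sigma$ lies in $B_\sigma(y)$ and so misses $\ttB_t$, which is the contrapositive of your nearest-lattice-point estimate; it leaves the bound on $\#\tx{I}$ implicit, and your two volume comparisons supply it.

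One harmless slip: when $t$ is small compared with $\sigma$ (e.g.\ $n=3$), the closed cubes need not cover $B_{2\sigma}$. They do cover $B_{t+\sigma}\supset B_{\max\{t,\sigma\}}$, which is all your lower bound uses, so you presumably meant $B_\sigma$ there.
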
 
\begin{proof} It is sufficient to observe that $\{\overline{Q_{\sigma/\sqrt{n}}(z)}\}_{z\in (2\sigma/\sqrt{n})\mathbb Z^n}$ obviously covers $\ttB_t$ and that if $\snr{y} >  t+ 2\sigma$, then $Q_{\sigma/\sqrt{n}}(y) \cap \ttB_{t}=\emptyset$. For this just notice that $Q_{\sigma/\sqrt{n}}(y)\subset B_{\sigma}(y)$ and that $B_{\sigma}(y)\cap \ttB_{t}$ is empty. 
\end{proof}
\subsection{Fractional basics} For the basic properties of fractional Sobolev spaces we need we refer to \cite[Section 3.1]{parte1} and to the references therein. We only report the following basic definitions and inequalities. With $\mathcal  A \subset \er^{n}$ being this time an open subset and $w \colon \mathcal  A \to \er^{k}$, $k\geq 1$, being a measurable map, and with $t \in (0,1)$, $1\leq p<\infty$, we denote 
$$ 
[w]_{t,p;\mathcal A} :=\left(\int_{\mathcal A}\int_{\mathcal A}\frac{\snr{w(x)-w(y)}^{p}}{|x-y|^{n+pt}}\dxy\right)^{1/p} 
$$
which is the usual Gagliardo seminorm related to the fractional space $W^{t,p}(\mathcal A)$. Accordingly to \eqref{integral}, we denote 
$$
\nra{w}_{L^p(\mathcal A)} := \left(\mint_{\mathcal A}\snr{w}^p\dx\right)^{1/p} \,, \qquad 
\snra{w}_{t,p;\mathcal A} := \left(\frac{1}{\snr{\mathcal A}} \int_{\mathcal A}\int_{\mathcal A}\frac{\snr{w(x)-w(y)}^{p}}{|x-y|^{n+pt}}\dxy \right)^{1/p} 
$$
while the usual Gagliardo norm in $W^{t,p}(\mathcal A)$ is defined as 
$\nr{w}_{W^{t,p}(\mathcal A)}:= \nr{w}_{L^{p}(\mathcal A)}+[w]_{t,p;\mathcal A}$.
We shall employ several times the classical fractional Poincar\'e inequality \cite[(4.2)]{min03}, that is
\eqn{fp}
$$
\nra{w-(w)_{\BBB}}_{L^{p}(\BBB)} \lesssim_{n,t,p}  \snr{\BBB}^{t/n}  \snra{w}_{t,p;\BBB}
$$ 
whenever $\BBB \subset \er^n$ is a ball. With $\alpha \in (0,1]$ we denote the Hölder seminorm of $w$ as 
$$
[w]_{0,\alpha;\mathcal  A}:= \sup_{x,y\in \mathcal  A, x\neq y}\frac{\snr{w(x)-w(y)}}{|x-y|^{\alpha}}.
$$
We shall repeatedly use, even without mention, that
\eqn{faccia}
$$
\nra{w-(w)_{\mathcal  A}}_{L^{p}(\mathcal  A)} \leq 2 \nra{w-w_0}_{L^{p}(\mathcal  A)},
$$
 holds for every constant vector $w_0\in \er^k$ and $1 \leq p < \infty$. This is a direct consequence of the triangle inequality and Jensen’s inequality. In the special case $p=2$ the constant $2$ can be omitted. 

\subsection{Local minimizers and the Euler-Lagrange equation}\label{equazionasec}
For completeness, we rapidly sketch the proof of Proposition \ref{equaziona}, which is standard, but requires a few remarks due to the notion of minimality employed here with respect to the space of competitors used. We first  observe that the assumed convexity of $F$ and \eqref{cresce} imply that 
 \eqn{cresce2}
 $$
 |\partial F(w)| \lesssim_\Lambda  \snr{w}\,.
 $$
For this see for instance \cite{marc}.  
Next, consider open subsets $\Omega''\Subset \Omega'\Subset \Omega$ and $\varphi \in W^{s,2}(\er^n;\er^N)$ which is compactly supported in $\Omega''$; set $d:=\dist(\Omega'',\er^n\setminus \Omega')>0$. Again from \rif{cresce} and $ \mathcal F(u, \Omega') < \infty$, and the fact that $\varphi$ has compact support in $\Omega''$, it easily follows that, with $w_t:=u+t\varphi \in W^{s,2}_{\loc}(\Omega;\er^{N}) \cap L^{1}_{2s}$, $\mathcal F(w_t, \Omega')$ is finite for every choice of  $t\in (-1,1)\setminus\{0\}$.  We write
\begin{flalign*}
& F\left(\frac{w_t(x)-w_t(y)}{|x-y|^s}\right)-F\left(\frac{u(x)-u(y)}{|x-y|^s}\right)\\
& \qquad =
t \int_0^1 \langle \partial F\left(\frac{u(x)-u(y) +\lambda t (\varphi(x)-\varphi(y))}{|x-y|^s}\right), 
\frac{\varphi(x)-\varphi(y)}{|x-y|^s}\rangle \dd\lambda
\end{flalign*}
Integrating the above identity over $\er^{n}\times \er^{n}$, observing that $w_t\equiv u$ outside $\Omega'$, the minimality of $u$ yields
\begin{eqnarray}
\notag 
0 &\leq & \mathcal F(w_t, \Omega')-\mathcal F(u, \Omega') \\
&  = & t \int_{\er^n}\int_{\er^n}
\int_0^1 \langle \partial F\left(\frac{u(x)-u(y) +\lambda t (\varphi(x)-\varphi(y))}{|x-y|^s}\right), 
\frac{\varphi(x)-\varphi(y)}{|x-y|^s}\rangle \dd\lambda \,\frac{\dxy}{|x-y|^n}\notag \\
&  =: & t\textnormal{I}_t\,. \label{prometeo0} 
\end{eqnarray}
Observe that I$_t$ is finite for every choice of $t$; this will be proved below when eventually letting $t\to 0$. 
Dividing \eqref{prometeo0} by $t$ and letting $t\to 0$, by standard sign arguments we conclude with 
\eqn{prometeo}
$$
\int_{\mathbb{R}^{n}}\int_{\mathbb{R}^{n}}\langle\partial F \left(\frac{u(x)-u(y)}{|x-y|^s}\right),\frac{\varphi(x)-\varphi(y)}{|x-y|^s}\rangle\frac{\dxy}{|x-y|^{n}}=0
$$
provided we can justify the passage to the limit under the integral in \rif{prometeo0}. For this, we can split 
\begin{flalign*}
\textnormal{I}_t  &= \int_{\Omega'}\int_{\Omega'} [\ldots]_t
(x,y)\dxy +  \int_{\er^n\setminus \Omega'}\int_{\Omega' }  [\ldots]_t
(x,y)\dxy +  \int_{\Omega' } \int_{\er^n\setminus \Omega'} [\ldots]_t
(x,y)\dxy \\
& =: (\textnormal{II})_t +(\textnormal{III})_t +(\textnormal{IV})_t 
\end{flalign*}
and use Lebesgue's dominated convergence for each of the three pieces using the fact that $u \in W^{s,2}_{\loc}(\Omega;\er^N)\cap L^1_{2s}$ and \rif{cresce2}, ultimately proving that each term converges to the corresponding term coming from a similar splitting of the integral in \eqref{prometeo}.
The passage of $t\to 0$ in $(\textnormal{II})_t$ is indeed trivial as $u, \varphi \in W^{s,2}_{\loc}(\Omega;\er^N)$. The remaining terms are treated in the same way, so we simply demonstrate how to deal with $(\textnormal{III})_t$. On $\Omega'\times (\er^n\setminus \Omega')$ we have, again using that $\varphi$ is supported in $\Omega''$ and \rif{cresce2}, 
\begin{flalign*}
\snr{[\ldots]_t(x,y)} &\leq \frac{c(|u(x)|+|u(y)|)|\varphi(x)|}{|x-y|^{n+2s}} + \frac{c|\varphi(x)|^2}{|x-y|^{n+2s}}\\  & \leq 
c(n,s,d,\Omega', \Omega'')\left[\frac{(|u(x)|+|u(y)|)|\varphi(x)|}{1+|y|^{n+2s}} + \frac{|\varphi(x)|^2}{1+|y|^{n+2s}}\right]\,.
\end{flalign*}
Using Fubini and that $u\in L^1_{2s}$ it follows that the right-hand side is integrable and again Lebesgue dominated convergence applies. The term $(\textnormal{IV})_t$ can be treated similarly and \eqref{prometeo} is completely proven.

\subsection{Existence, solvability} In this section we consider a measurable matrix field $\sA\colon \er^{2n}\to \er^{N\times N}$ satisfying 
\eqn{linearell} 
$$
\begin{cases}
\Lambda^{-1}\snr{\xi}^{2} \leq \langle \sA(x,y)\xi,\xi\rangle\quad \mbox{and}\quad \snr{\sA(x,y)}
\le \Lambda,\\
\sA(x,y)=\sA(y,x)
\end{cases}
$$
hold for every choice of $\xi\in \er^{N}$ and for a.e. $(x,y)\in \er^{2n}$\footnote{As usual, assuming \eqref{linearell}$_2$ is not restrictive, as it is easily checked by passing to the symmetrized coefficients 
$$
\sA_{\textnormal{sym}}(x,y):= \frac12[\sA(x,y)+\sA(y,x)], \quad x,y \in \er^n.
$$}. Under assumptions \eqref{linearell}, we consider the linear operator 
$$
		\langle - \mathcal{L}_{\mathds{A}} u, \varphi \rangle =\int_{\er^{n}} \int_{\er^{n}}\langle  \sA(x,y)\frac{u(x)-u(y)}{|x-y|^s},\frac{\varphi(x)-\varphi(y)}{|x-y|^s}\rangle\frac{\dxy}{|x-y|^{n}}\,, 
$$
	 for every $\varphi\in C^{\infty}(\er^n;\er^{N})$ with compact support in $\Omega$. According to Definition \ref{def:weaksol}, we have 
	\begin{definition} \label{def:weaksol2}
Under assumptions \eqref{linearell} with $0<s<1$, a map $u \in W^{s,2}_{\loc}(\Omega;\er^{N}) \cap L^{1}_{2s}$ is a weak solution of  $-\mathcal{L}_{\scalebox{0.6}{$\sA$}}u=g$ in $\Omega$, with $g\in L^{2n/(n+2s)}(\Omega;\er^{N})$, when
	\eqn{solutio}
	$$
\langle - \mathcal{L}_{\mathds{A}} u, \varphi \rangle=
\int_{\Omega}\langle g,\varphi\rangle\dx$$
		holds for every $\varphi \in W^{s,2}(\er^n;\er^{N})$ with compact support in $\Omega$.
	\end{definition} 
	In addition to weak solutions, we recall the setting for solvability of Dirichlet problems of the type
\eqn{basicsolve}
$$
\begin{cases} 
\ -\mathcal{L}_{\mathds{A}}h=g\quad &\mbox{in} \ \ \Omega\\
\ h=v\quad &\mbox{in} \ \ \mathbb{R}^{n}\setminus \Omega\,,
\end{cases}
$$ 
where $\Omega \subset \er^n$ is a bounded domain. Our main reference here is \cite[Section 4]{parte1} along with \cite{kokupa} and \cite{bls}. With $\ti{\Omega}$ being another bounded open subset such that $\Omega \Subset \ti{\Omega}$, and $v\in L^{1}_{2s}$, we consider the nonlocal Dirichlet class 
$$
\mathbb{X}^{s,2}_{v}(\Omega ,\ti{\Omega}) := \left\{w\in W^{s,2}(\ti{\Omega};\er^{N})\cap L^{1}_{2s}\, \colon \, \mbox{$w\equiv v$ \, a.e. in $\mathbb{R}^{n}\setminus \Omega$}\right\}.
$$
Basic properties of this class are described in \cite[Section 4]{parte1} and related references; we denote $\mathbb{X}^{s,2}_{v}(\Omega, \ti{\Omega})\equiv \mathbb{X}^{s,2}_{0}(\Omega, \ti{\Omega})$ when $v\equiv 0$. Finally, we have that $h \in \mathbb{X}^{s,2}_{v}(\Omega, \ti{\Omega})$ iff $h-v\in \mathbb{X}^{s,2}_{0}(\Omega, \ti{\Omega})$ when  $v \in W^{s,2}(\ti{\Omega}; \er^{N})$. Note that, using a standard extension scheme - see for instance \cite[Lemma 2.11]{bls} or \cite[Lemma 5.1]{dpv} - any $w\in \mathbb{X}_{0}^{s,2}(\Omega,\ti{\Omega})$ can be extended to $\er^n$ in order to have $w \in W^{s,2}(\mathbb{R}^{n};\er^{N})$ with 
\eqn{ht.10}
$$
[w]_{s,2;\mathbb{R}^{n}}^{2}\le [w]_{s,2;\ti{\Omega}}^{2}+\frac{c\nr{w}_{L^{2}(\Omega)}^{2}}{\dist(\Omega,\er^n\setminus \ti{\Omega})^{2s}}\,,
$$
where $c\equiv c(n,N,s)$. Therefore when dealing with maps $w\in \mathbb{X}_{0}^{s,2}(\Omega,\ti{\Omega})$ we shall always assume that they are defined on $\er^n$ and that \eqref{ht.10} holds. 

	\begin{definition} \label{eqweaksol22}
A map $h\in \mathbb{X}^{s,2}_{v}(\Omega, \ti{\Omega})$ is a solution  to \eqref{basicsolve} with $g\in L^{2n/(n+2s)}(\Omega;\er^{N})$, when 
	\eqref{solutio}
		holds whenever $\varphi \in \mathbb{X}^{s,2}_{0}(\Omega, \ti{\Omega})$. 
	\end{definition} 
By the extension procedure recalled before \eqref{ht.10}, this is equivalent to require that in Definition \ref{eqweaksol22} identity \eqref{solutio} holds whenever $\varphi \in W^{s,2}(\er^n;\er^{N})$ is such that $\varphi \equiv 0$ outside $\Omega$. Note also that  every solution to \eqref{basicsolve} is a (local) solution to $-\mathcal{L}_{\mathds{A}}h=g$ in the sense of Definition \ref{solutio}. We shall use Dirichlet problems as in \eqref{basicsolve} to obtain comparison solutions. Therefore we recall two existence and comparison results that can be obtained adapting the analogous ones given in  \cite[Section 4]{parte1} for the case in which  the matrix $\sA(\cdot)$ is constant. Indeed, such results only depend on linearity and ellipticity but not on the fact that the coefficients involved are variable. 
\begin{lemma}\label{esiste} If $v\in W^{s,2}(\ti{\Omega};\er^{N})\cap L^1_{2s}$ and $g\in L^{2n/(n+2s)}(\Omega;\er^{N})$, there exists a unique solution $h\in \mathbb{X}^{s,2}_{v}(\Omega, \ti{\Omega})$ to \eqref{basicsolve} in the sense of
Definition \ref{eqweaksol22}. 
\end{lemma}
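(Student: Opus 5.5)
The plan is to reduce to homogeneous boundary data and then apply Lax--Milgram, i.e.\ the direct method for the associated quadratic functional, on the Hilbert space $\mathbb{X}^{s,2}_{0}(\Omega,\ti{\Omega})$. Write $h=v+w$ with $w\in \mathbb{X}^{s,2}_{0}(\Omega,\ti{\Omega})$. Then \eqref{solutio} for $h$ becomes
$$
\mathcal B(w,\varphi)=\int_\Omega\langle g,\varphi\rangle\dx-\mathcal B(v,\varphi)\qquad \forall\,\varphi\in \mathbb{X}^{s,2}_{0}(\Omega,\ti{\Omega}),
$$
where $\mathcal B(w,\varphi):=\langle-\mathcal L_{\mathds A}w,\varphi\rangle$ is the bilinear form of the operator. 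By the extension convention before \eqref{ht.10}, elements of $\mathbb{X}^{s,2}_{0}(\Omega,\ti{\Omega})$ lie in $W^{s,2}(\er^n;\er^N)$ and vanish outside $\Omega$. We norm this space by $[\cdot]_{s,2;\er^n}$, which is a norm thanks to the fractional Poincar\'e inequality for functions vanishing outside the bounded set $\Omega$. With this norm the space is a closed subspace of $W^{s,2}(\er^n)$, hence Hilbert.

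The next step is to check the hypotheses of Lax--Milgram.
\begin{itemize}
\item \emph{Boundedness and coercivity.} By \eqref{linearell}, $\snr{\mathcal B(w,\varphi)}\le \Lambda[w]_{s,2;\er^n}[\varphi]_{s,2;\er^n}$ and $\mathcal B(w,w)\ge \Lambda^{-1}[w]_{s,2;\er^n}^2$. Symmetry of $\sA$ is not even needed here.
\item \emph{The $g$-term.} The functional $\varphi\mapsto\int_\Omega\langle g,\varphi\rangle$ is bounded by H\"older's inequality and the fractional Sobolev embedding $\nr{\varphi}_{L^{2n/(n-2s)}}\lesssim[\varphi]_{s,2;\er^n}$ when $n>2s$, which always holds since $n\ge2$. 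This is exactly why $g\in L^{2n/(n+2s)}$.
\item \emph{The $v$-term.} This is the main obstacle, since $v$ is only in $W^{s,2}(\ti\Omega)\cap L^1_{2s}$ and not in $W^{s,2}(\er^n)$. Because $\varphi\equiv0$ off $\Omega$, I will split $\er^{2n}$ into $\ti\Omega\times\ti\Omega$ and the two symmetric pieces $\Omega\times(\er^n\setminus\ti\Omega)$. On $\ti\Omega\times\ti\Omega$ the integrand is bounded by $\Lambda[v]_{s,2;\ti\Omega}[\varphi]_{s,2;\ti\Omega}$. On $\Omega\times(\er^n\setminus\ti\Omega)$ we have $\snr{x-y}\ge d:=\dist(\Omega,\er^n\setminus\ti\Omega)>0$, and also $\snr{x-y}\gtrsim_{d,\Omega} 1+\snr{y}$. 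Hence the integrand is bounded by
$$
c\,\frac{(\snr{v(x)}+\snr{v(y)})\snr{\varphi(x)}}{1+\snr{y}^{n+2s}}.
$$
Integrating and using $v\in L^1_{2s}\cap L^2(\Omega)$ yields a bound $c(\nr{v}_{L^2(\Omega)}+\nr{v}_{L^1_{2s}})\nr{\varphi}_{L^1(\Omega)}$. This is in turn controlled by $[\varphi]_{s,2;\er^n}$ via H\"older's inequality and Poincar\'e. This is the same computation as in the treatment of $(\textnormal{III})_t$ in Section \ref{equazionasec}.
\end{itemize}

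Lax--Milgram then provides a unique $w$, and $h:=v+w\in\mathbb{X}^{s,2}_{v}(\Omega,\ti\Omega)$ solves \eqref{basicsolve} in the sense of Definition \ref{eqweaksol22}.

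For uniqueness, suppose $h_1,h_2$ are two solutions. Then $h_1-h_2\in\mathbb{X}^{s,2}_0(\Omega,\ti\Omega)$ is an admissible test function, and subtracting the two identities gives $\mathcal B(h_1-h_2,h_1-h_2)=0$. Coercivity yields $[h_1-h_2]_{s,2;\er^n}=0$, and since $h_1-h_2$ vanishes outside $\Omega$, this forces $h_1=h_2$ a.e.
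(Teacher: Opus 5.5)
Your proposal is correct and follows the standard Lax--Milgram route that the paper relies on: the paper gives no proof here, deferring to \cite[Section 4]{parte1} and noting that only linearity and ellipticity are used, and your reduction to zero exterior data on $\mathbb{X}^{s,2}_{0}(\Omega,\ti{\Omega})$, with the exterior interaction of $v$ controlled through $L^1_{2s}$, is exactly that argument. The one blemish is cosmetic: in the exterior piece, $\int_\Omega \snr{v}\snr{\varphi}\dx$ should be bounded by $\nr{v}_{L^2(\Omega)}\nr{\varphi}_{L^2(\Omega)}$, not by a product involving $\nr{\varphi}_{L^1(\Omega)}$; this changes nothing, since both norms of $\varphi$ are controlled by $[\varphi]_{s,2;\er^n}$.
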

 
\begin{lemma}\label{compg}
Let $u$ be a weak solution to $-\mathcal{L}_{\mathds{A}}u=g\in L^{2n/(n+2s)}(\Omega;\er^{N})$ in the sense of Definition \ref{def:weaksol2}. Take concentric balls $\BBB\Subset \tilde \BBB \Subset \Omega$ and define $v \in \mathbb{X}_{u}^{s,2}(\BBB,\tilde \BBB)$ as the solution to
$$
\begin{cases}
\ -\mathcal{L}_{\mathds{A}}v=0\quad &\mbox{in} \ \ \BBB\\
\ v=u\quad &\mbox{in} \ \ \mathbb{R}^{n}\setminus \BBB
\end{cases}
$$
in the sense of Definition \ref{eqweaksol22}. 
Then, whenever 
$
d >   2n/(n+2s),
$
 it holds that 
 \eqn{marri}
$$
\nra{u-v}_{L^2(\BBB)} \leq c\snr{\BBB}^{2s/n-1/d}\nr{g}_{\mathcal M^d(\BBB)}
$$
where $c\equiv c (n,N, \Lambda, s,d)$.
\end{lemma}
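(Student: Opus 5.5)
The plan is the standard energy comparison argument, tested with $u-v$. Set $w:=u-v$. Since $v\in\mathbb{X}^{s,2}_{u}(\BBB,\tilde\BBB)$, we have $w\in \mathbb{X}^{s,2}_0(\BBB,\tilde\BBB)$, so $w\equiv 0$ outside $\BBB$. By the extension remark around \eqref{ht.10}, $w\in W^{s,2}(\er^n;\er^N)$ is an admissible test function both in the weak formulation of $-\mathcal{L}_{\mathds{A}}u=g$ (it has compact support in $\Omega$ because $\BBB\Subset\Omega$) and in the Dirichlet problem defining $v$. Subtracting the two identities and using \eqref{linearell}$_1$ gives
\[
\Lambda^{-1}[w]_{s,2;\er^n}^2\le \langle -\mathcal{L}_{\mathds{A}}w,w\rangle=\int_{\BBB}\langle g,w\rangle\dx .
\]

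Next, control the right-hand side. I read $\nr{g}_{\mathcal M^d(\BBB)}$ as the Marcinkiewicz (weak-$L^d$) quasinorm on $\BBB$, with $d>2n/(n+2s)=(2^*_s)'$ when $2s<n$. Hölder's inequality in Lorentz spaces gives
\[
\int_{\BBB}|g||w|\dx\le c\,\nr{g}_{L^{(2^*_s)',\infty}(\BBB)}\nr{w}_{L^{2^*_s,1}(\BBB)} .
\]
To keep things elementary, I would avoid the Lorentz $L^{2^*_s,1}$ norm and pass through a strong norm instead. Pick $q\in((2^*_s)',d)$; then $\nr{g}_{L^q(\BBB)}\le c\,|\BBB|^{1/q-1/d}\nr{g}_{\mathcal M^d(\BBB)}$, which is the standard embedding of weak-$L^d$ into $L^q$ on sets of finite measure. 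Hölder gives $\int|g||w|\le \nr{g}_{L^q(\BBB)}\nr{w}_{L^{q'}(\BBB)}$, with $q'<2^*_s$. Hölder again and the fractional Sobolev inequality for functions vanishing outside $\BBB$ then give
\[
\nr{w}_{L^{q'}(\BBB)}\le |\BBB|^{1/q'-1/2^*_s}\nr{w}_{L^{2^*_s}(\er^n)}\le c|\BBB|^{1/q'-1/2^*_s}[w]_{s,2;\er^n}.
\]
When $n\le 2s$, which can only happen for $n=1$ and is excluded here since $n\ge 2$, one would use instead the Poincaré-type inequality for functions in $\mathbb{X}_0$. Combining the pieces yields
\[
[w]_{s,2;\er^n}\le c\,|\BBB|^{1/q-1/d+1/q'-1/2^*_s}\nr{g}_{\mathcal M^d}=c\,|\BBB|^{1-1/d-1/2^*_s}\nr{g}_{\mathcal M^d}.
\]

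Finally, convert this into the $L^2$ average. The fractional Poincaré inequality for $w$ vanishing off $\BBB$ (via Sobolev and Hölder), $\nr{w}_{L^2(\BBB)}\le c|\BBB|^{s/n}[w]_{s,2;\er^n}$, gives
\[
\nra{w}_{L^2(\BBB)}\le c|\BBB|^{-1/2+s/n}[w]_{s,2;\er^n}.
\]
Collecting exponents, with $1/2^*_s=1/2-s/n$, we get $-1/2+s/n+1-1/d-1/2+s/n=2s/n-1/d$. This is exactly \eqref{marri}, and the constant depends only on $n,N,\Lambda,s,d$, the dependence on $d$ coming through the choice of $q$.

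The only delicate point is the Marcinkiewicz-to-$L^q$ step and keeping scaling-consistent powers of $|\BBB|$. Everything else is routine once admissibility of $w$ as a test function is justified by the extension remark.
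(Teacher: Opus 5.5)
Your argument is correct and is the standard energy comparison the paper relies on; the paper gives no proof here and defers to the constant-coefficient version in Part I, noting that only linearity and ellipticity matter. You test with $u-v\in\mathbb{X}^{s,2}_0(\BBB,\tilde\BBB)$, use ellipticity, the fractional Sobolev inequality and the embedding of $\mathcal M^d(\BBB)$ into a Lebesgue space on $\BBB$, and your exponents correctly combine to $2s/n-1/d$. The intermediate exponent $q$ is superfluous: since $d>2n/(n+2s)$ strictly, you may take $q=2n/(n+2s)$ directly.
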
 
In \eqref{marri} there appears the Marcinkiewicz space $\mathcal{M}^{d}(\Omega;\er^{N})$, which is defined via
\eqn{mardef}
$$
g \in \mathcal M^{d}(\Omega;\er^{N})
\Longleftrightarrow
\nr{g}_{\mathcal M^{d}(\Omega)}^d
:=\sup_{\lambda\geq 0} \, \lambda^d
\snr{\{|g|> \lambda\}\cap \Omega}< \infty,
\quad d \geq 1.
$$
Finally, a nonlocal fractional Caccioppoli type inequality, see for instance \cite[Lemma 3.7]{parte1}. 
\begin{lemma}\label{caccioppola}
Under assumptions \eqref{linearell} with $0<s<1$, let $u$ be a weak solution to 
$-\mathcal{L}_{\scalebox{0.6}{$\sA$}}u=g\in L^2(\Omega;\er^{N})$ in $\Omega$ in the sense of Definition \ref{def:weaksol2}. 
Let $\BBB\Subset \Omega$ be a ball with radius $\rrr$; for all $0<\gamma<1$, the Caccioppoli inequality
\begin{flalign}\label{cacc}
&\notag \gamma^{n}\snra{u}_{s,2;\gamma \BBB}^2
\\ & \quad \le \frac{c}{(1-\gamma)^{2(n+s)}\rrr^{2s}}\left(\nra{u-u_0}_{L^{2}(\BBB)}^2+\tail(u-u_0;\BBB)\, \nra{u-u_0}_{L^{1}(\BBB)}\right)+c \rrr^{2s}\nra{g}_{L^{2}(\BBB)}^2
\end{flalign}
holds  with $c\equiv c(\data)$ for every $u_0 \in \er^{N}$. In particular, when $g\equiv0$, inequality \eqref{cacc} implies
\eqn{caccad}
$$
\rrr^{s}[u]_{s,2;\BBB/2} \leq c \nr{u}_{L^{2}(\BBB)} + c \rrr^{n/2}\tail(u;\BBB)
$$
with $c\equiv c (\data)$.
\end{lemma}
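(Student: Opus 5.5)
The plan is to test the equation with $\varphi:=\eta^{2}(u-u_0)$, where $\eta\in C^\infty_c(\BBB)$ satisfies $0\le\eta\le1$, $\eta\equiv1$ on $\gamma\BBB$, and $|D\eta|\lesssim 1/((1-\gamma)\rrr)$. By the footnote to Definition \ref{def:weaksol} this $\varphi$ is admissible. Since constants are annihilated by $-\mathcal L_{\sA}$, we may replace $u$ by $w:=u-u_0$. Inserting $\varphi$ into \eqref{solutio}, we split $\er^{2n}$ into the local region $\BBB\times\BBB$ and the two symmetric off-diagonal regions $\BBB\times(\er^n\setminus\BBB)$ and $(\er^n\setminus\BBB)\times\BBB$; the region outside $\BBB\times\BBB$ and both off-diagonal pieces contributes nothing from $\varphi$ except where one variable lies in $\operatorname{spt}\eta$.

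\emph{Local term.} We use the algebraic identity
\[
\eta^2(x)w(x)-\eta^2(y)w(y)=\tfrac12(\eta^2(x)+\eta^2(y))(w(x)-w(y))+\tfrac12(\eta^2(x)-\eta^2(y))(w(x)+w(y)).
\]
Together with \eqref{linearell}, the first piece yields the coercive quantity
\[
\Lambda^{-1}\int_{\BBB}\int_{\BBB}\max\{\eta(x),\eta(y)\}^2\,\frac{|w(x)-w(y)|^2}{|x-y|^{n+2s}}\dxy\ \ge\ \Lambda^{-1}[w]^2_{s,2;\gamma\BBB}.
\]
We then apply Young's inequality to the second piece, using $|\eta^2(x)-\eta^2(y)|\le(\eta(x)+\eta(y))|\eta(x)-\eta(y)|$. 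Part of it is reabsorbed into the coercive term, and what remains is
\[
c\int_{\BBB}\int_{\BBB}|\eta(x)-\eta(y)|^2\,\frac{|w(x)|^2+|w(y)|^2}{|x-y|^{n+2s}}\dxy\ \le\ \frac{c}{(1-\gamma)^2\rrr^{2s}}\|w\|^2_{L^2(\BBB)},
\]
which follows by integrating $\min\{|x-y|^2/((1-\gamma)\rrr)^2,1\}|x-y|^{-n-2s}$.

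\emph{Nonlocal term.} When $x\in\operatorname{spt}\eta$ and $y\notin\BBB$ we have $\varphi(y)=0$, so the integrand is bounded by
\[
\Lambda|w(x)-w(y)|\,\eta^2(x)|w(x)|\,|x-y|^{-n-2s}.
\]
Since $\operatorname{spt}\eta\subset(\frac{1+\gamma}{2})\BBB$ (after adjusting the cut-off), we get $|x-y|\ge\frac{1-\gamma}{2}\cdot\frac{|y-x_0|}{1}\cdot c$, and hence $|x-y|^{-n-2s}\le c(1-\gamma)^{-n-2s}|y-x_0|^{-n-2s}$. The $|w(x)|^2$ part gives $c(1-\gamma)^{-n-2s}\rrr^{-2s}\|w\|^2_{L^2(\BBB)}$. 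The $|w(y)|$ part gives
\[
c(1-\gamma)^{-n-2s}\rrr^{-2s}\,\tail(w;\BBB)\,\|w\|_{L^1(\BBB)}.
\]

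\emph{Right-hand side.} By Hölder and Young,
\[
\int g\cdot\eta^2 w\ \le\ \|g\|_{L^2(\BBB)}\|w\|_{L^2(\BBB)}\ \le\ c\,\rrr^{2s}\|g\|^2_{L^2(\BBB)}+\rrr^{-2s}\|w\|^2_{L^2(\BBB)}.
\]

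\emph{Conclusion.} Dividing by $|\BBB|$ converts the estimate to averages. The factor $\gamma^n$ on the left of \eqref{cacc} appears because $\snra{\cdot}_{\gamma\BBB}$ is normalized by $|\gamma\BBB|=\gamma^n|\BBB|$. The powers of $(1-\gamma)$ obtained above are bounded by $(1-\gamma)^{-2(n+s)}$, since $(1-\gamma)\le1$. For \eqref{caccad} we take $g=0$, $\gamma=1/2$, $u_0=0$, use $\nra{u}_{L^1}\le\nra{u}_{L^2}$ and Young's inequality on the product of the tail and the $L^1$ average, and multiply by $|\BBB|\approx\rrr^n$. This gives
\[
\rrr^{2s}[u]^2_{s,2;\BBB/2}\ \lesssim\ \|u\|^2_{L^2(\BBB)}+\rrr^n\tail(u;\BBB)^2,
\]
and taking square roots concludes. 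The main obstacle is the nonlocal cross term: one must keep $\operatorname{spt}\eta$ strictly inside $\BBB$ in order to compare $|x-y|$ with $|y-x_0|$. This comparison is the source of the $(1-\gamma)^{-n-2s}$ loss.
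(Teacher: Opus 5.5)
Your proof is correct. You test with $\eta^2(u-u_0)$, split $\eta^2(x)w(x)-\eta^2(y)w(y)$ symmetrically on $\BBB\times\BBB$, and compare $|x-y|$ with $|y-x_0|$ on $\operatorname{spt}\eta\times(\er^n\setminus\BBB)$ after shrinking the support of $\eta$; this is the standard energy proof of this estimate, which the paper does not reprove and instead quotes from \cite[Lemma 3.7]{parte1}. Your bookkeeping in fact gives a smaller power of $(1-\gamma)^{-1}$ than the $2(n+s)$ in \eqref{cacc}, and your deduction of \eqref{caccad} via $\nra{u}_{L^1}\le\nra{u}_{L^2}$ and Young's inequality is fine.
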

\subsection{Difference operators and Besov spaces} Here we collect a few known results allowing us to characterize local properties of Besov spaces via finite difference operators 
\eqn{enfasi}
$$
\tau_{h}^{k} \equiv \tau_{h, \mathcal A}^{k}
$$
where $k \in \en_0$,  $h \in \er^n$ and $\mathcal A \subset \er^n$ is a non-empty set. Let us recall their standard definitions, together with that of Besov spaces $B^{\tet}_{p,\infty}$. After defining the sets  $$\mathcal A_{k,\xi}:= \{ x \in \mathcal A \colon x+i\xi \in \mathcal A \textnormal{ for every } i=1,\ldots,k\}\,,$$ with $w \colon \mathcal A \to \er^{N}$, for every $k\geq 1$,  we define,
inductively, the operators 
\eqn{domania}
$$
\begin{cases}
\tau_{\xi}^{k+1}w(x):= \tau_{\xi}(\tau_{\xi}^{k} w)(x) \quad \mbox{for every $x\in \mathcal A_{k+1,\xi}$}, \\[3pt]
\tau_{\xi}w(x)\equiv \tau_{\xi}^1w(x):= w(x+\xi) -w(x)\quad \mbox{for every $x\in \mathcal A_{1,\xi}$},\\[3pt]
\tau_{\xi}^0w(x):=w(x)\quad \mbox{for every $x\in  \mathcal A=:  \mathcal  A_{0,\xi}$}   .
\end{cases}
$$
Instead, we let $\tau_{\xi}^{k}w(x)=0$ if $x \not \in \mathcal A_{k,\xi}$ for $k\geq 1$. See \cite[(1.384)]{Triebel3}. 
\begin{remark}\emph{In the following we shall often consider the operators in \eqref{enfasi} on subsets $\mathcal C$ such that $\mathcal C\subset \mathcal A_{k,\xi}$, in this case we shall omit denoting the dependence on $\mathcal A$ exactly as in \eqref{enfasi}. }
\end{remark}
The Besov space $B^{\tet}_{p,\infty}(\mathcal A)$ is here considered only for bounded, Lipschitz domains $\mathcal A \subset \er^n$ and the range of parameters $p \in [1,\infty)$, $\tet>0$. As explained in \cite[Proposition 4.6]{parte1}, an equivalent norm in
$B^{\tet}_{p,\infty}(\mathcal A)$ is given by 
\eqn{besovo0}
$$\nr{v}_{B^{\tet}_{p,\infty}(\mathcal A)}:=\nr{v}_{L^{p}(\mathcal A)}+\sup_{0<|h|\leq 
	 	1}|h|^{-\tet}\nr{\tau_{h, \mathcal A}^{l} v}_{L^{p}(\mathcal A)}$$
where $l > \tet$ is an integer; note that different choices of $l$ give rise to equivalent norms; see also  \cite[Theorem 1.118]{Triebel3}. The constants involved in the equivalences obviously depend on $l$ and the Lipschitz constant of $\mathcal A$; the dependence on $\mathcal A$ can be made explicit when $\mathcal A$ is a ball, via a simple scaling argument. A standard manipulation yields
\eqn{besovo}
$$\nr{v}_{B^{\tet}_{p,\infty}(\mathcal A)}\lesssim_{l,n,\tet}  \mathcal{h}_{0}^{-\tet}\nr{v}_{L^p(\mathcal A)}+\sup_{0<|h|\leq 
		\mathcal{h}_{0}}|h|^{-\tet}\nr{\tau_{h,\mathcal A}^{l} v}_{L^p(\mathcal A)}$$
whenever $\mathcal{h}_{0}\in (0,1)$. Besov spaces $B^{\tet}_{p,\infty}$  are special cases of the whole family of Besov  spaces $B^{\tet}_{p,q}$, $p,q>0$, for which we have $B^{\tet}_{p,p}\equiv W^{\tet,p}$ provided $\tet$ is not an integer and $p\geq 1$. Such spaces are described, for instance, in Triebel's classical books such as \cite{Triebel3,Triebel4}. 

We now present a few results aimed at formulating in a local fashion, and in a ready-to-use way, a few known embedding theorems in Besov spaces. The first lemma locally quantifies the Besov embedding $W^{t,q} \hookrightarrow B^{t}_{q,\infty}$. For the proof we refer for instance to \cite[Proposition 2.6, (2.11)]{bl}.  
\begin{lemma} \label{prop:embedding0} 
Let $B_{\rr} \Subset B_{r}\subset \er^n$ be concentric balls with $r\leq 1$, and let $w \in W^{t,q}(B_{r})$, with $0<t<1$ and $q\geq 1$. Then, with $\tau_{h}\equiv \tau_{h,B_r}$
 $$
\sup_{0 < |h|< r-\rr}\, \left\|\frac{\tau_{h} w}{ |h|^{t}}\right\|_{L^{q}(B_{\rr})} \lesssim_{n,q} (1-t)^{1/q} [w]_{t,q;B_{r}} + \left[\left(\frac{r}{r-\rr}\right)\frac{1}{r^t}+ \frac{1}{t^{1/q}(r-\rr)^t}\right] \nr{w}_{L^{q}(B_{r})}\,.
 $$
\end{lemma}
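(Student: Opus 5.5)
The statement is Lemma \ref{prop:embedding0}, a local quantitative form of the embedding $W^{t,q}\hookrightarrow B^{t}_{q,\infty}$. The plan is to compare $w(x+h)$ and $w(x)$ with an average of $w$ over a small ball whose radius is comparable to $|h|$. Fix $0<|h|<r-\rr$ and set $\delta:=r-\rr$. For $x\in B_{\rr}$ both $x$ and $x+h$ lie in $B_r$, so $\tau_h w(x)$ is defined. I split into two regimes.

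\emph{Small shifts}, say $|h|\leq \delta/2$. Let $D_x:=B_{|h|}(x+h/2)\cap B_r$. Because $x\in B_{\rr}$ and $|h|\le\delta/2$, this set contains a ball of radius $\approx_n |h|$, so $|D_x|\approx_n |h|^n$. Both $x$ and $x+h$ lie within distance $2|h|$ of every point of $D_x$. I write
$$
|\tau_h w(x)|\le \mint_{D_x}|w(x+h)-w(z)|\dz+\mint_{D_x}|w(x)-w(z)|\dz .
$$
Apply Jensen to each term, raise to the power $q$, and integrate over $x\in B_{\rr}$. Use $|x-z|\lesssim |h|$ to insert the weight $|x-z|^{-n-tq}|h|^{n+tq}$. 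For the first term, change variables $y=x+h$. This gives
$$
\nr{\tau_h w}_{L^q(B_{\rr})}^q\lesssim_{n,q} |h|^{-n}\int_{B_r}\int_{B_r\cap\{|y-z|\le 2|h|\}}|w(y)-w(z)|^q\dyx \le |h|^{tq}\int_{B_r}\int_{B_r,\,|y-z|\le 2|h|}\frac{|w(y)-w(z)|^q}{|y-z|^{n+tq}}\dyx .
$$
This already gives the bound $|h|^{t}[w]_{t,q;B_r}$, up to the sharp $(1-t)^{1/q}$ factor.

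\emph{The factor $(1-t)^{1/q}$.} This is the main obstacle. The crude estimate above loses it. To recover it, I would instead average over scales in the spirit of Bourgain--Brezis--Mironescu. Write $w(x+h)-w(x)$ as a telescoping sum of averages over dyadically shrinking balls $D^j_x$ of radius $2^{-j}|h|$ around $x$ and $x+h$. Each term is bounded via Jensen by a localized Gagliardo-type integral at scale $2^{-j}|h|$. One then gains the geometric weights: the contribution at scale $2^{-j}|h|$ carries $(2^{-j}|h|)^{tq}$, and the sum $\sum_j 2^{-j(1-t)}$ is balanced against the $q$-th power. An alternative I would try first is to quote the known sharp estimate $\sup_h |h|^{-t}\nr{\tau_h w}_{L^q}\lesssim (1-t)^{1/q}[w]_{t,q}$ for functions on $\er^n$. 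This is obtained by averaging in $h$ over the sphere and using $\int_0^{|h|}\rho^{(1-t)q-1}d\rho$. I would combine it with an extension/cut-off: multiply $w$ by a Lipschitz cut-off $\eta$ equal to $1$ on $B_{\rr+\delta/2}$, vanishing outside $B_r$, with $|D\eta|\lesssim 1/\delta$. The commutator $[\eta w]_{t,q}$ versus $[w]_{t,q}$ produces the lower-order term $\left(\frac{r}{\delta}\right) r^{-t}\nr{w}_{L^q(B_r)}$ (one uses $|\eta(x)-\eta(y)|\le\min\{1,|x-y|/\delta\}$ and integrates over $|x-y|<r$, then over $|x-y|\geq r$). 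The term $t^{-1/q}\delta^{-t}\nr{w}_{L^q}$ comes from the tail part $\int_{|x-y|>\delta}|x-y|^{-n-tq}\approx \frac{1}{tq}\delta^{-tq}$.

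\emph{Large shifts}, $\delta/2<|h|<\delta$. In this case I simply use
$$
|h|^{-t}\nr{\tau_h w}_{L^q(B_{\rr})}\le 2|h|^{-t}\nr{w}_{L^q(B_r)}\lesssim \delta^{-t}\nr{w}_{L^q(B_r)},
$$
which is absorbed into the $t^{-1/q}\delta^{-t}$ term since $t<1$. Collecting the two regimes yields the claimed inequality with constant depending only on $n,q$.
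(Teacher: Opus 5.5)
The paper gives no proof of this lemma; it only quotes \cite[Proposition 2.6, (2.11)]{bl}. Your cut-off route is correct and is, in substance, the argument behind that reference, and it reproduces exactly the constants in the statement.

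Set $\delta=r-\rr$ and take a Lipschitz $\eta$ with $\eta\equiv1$ on $B_{\rr+\delta/2}$, $\eta\equiv0$ outside $B_{\rr+3\delta/4}$ and $|D\eta|\lesssim1/\delta$. Then $\tau_hw=\tau_h(\eta w)$ on $B_{\rr}$ for $|h|<\delta/2$, and you apply the whole-space bound $\sup_{h\neq0}|h|^{-t}\nr{\tau_hv}_{L^q(\er^n)}\lesssim_{n,q}(1-t)^{1/q}[v]_{t,q;\er^n}$ to $v=\eta w$. The range $\delta/2\le|h|<\delta$ is trivial, as you say.

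You should state explicitly that the coefficient $\frac{r^{1-t}}{\delta}$ comes out only because of the prefactor $(1-t)$. The Leibniz term alone satisfies $\int_{B_r}\int_{B_r}|w(y)|^q|\eta(x)-\eta(y)|^q|x-y|^{-n-tq}\dxy\lesssim_{n,q}\frac{r^{(1-t)q}}{(1-t)q\,\delta^q}\nr{w}^q_{L^q(B_r)}$. The factor $1/(1-t)$ is cancelled only after multiplying by $(1-t)$; without it you would get $\frac{r^{1-t}}{(1-t)^{1/q}\delta}$. The part of $[\eta w]^q_{t,q;\er^n}$ over $B_r\times(\er^n\setminus B_r)$ gives $\frac{c(n,q)}{t\,\delta^{tq}}\nr{w}^q_{L^q(B_r)}$, where you simply discard $(1-t)\le1$. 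With this route, your first ``small shifts'' computation is superfluous.

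Your other sketches for the factor $(1-t)^{1/q}$ would not work as written.
\begin{itemize}
\item Telescoping ball averages with Jensen does not use the translation structure. The geometric sum $\sum_j2^{-j(1-t)}\approx\frac{1}{(1-t)\log2}$ you invoke is a loss, not a gain.
\item Your one-line account of the whole-space bound (sphere averaging plus $\int_0^{|h|}\rho^{(1-t)q-1}\dd\rho$) leaves out the step that actually produces $(1-t)$.
\end{itemize}
That step is the subadditivity $\nr{\tau_{k\xi}v}_{L^q}\le k\nr{\tau_\xi v}_{L^q}$ for $k\in\en$. Define $A(\rho):=\rho^{-n}\int_{B_\rho}\nr{\tau_\xi v}^q_{L^q(\er^n)}\dd\xi$. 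Taking $k=\lceil\rho/\lambda\rceil$, subadditivity gives $A(\rho)\le2^{n+q}(\rho/\lambda)^qA(\lambda)$ for $0<\lambda\le\rho$. Hence
\[
\frac{\rho^{-tq}A(\rho)}{(1-t)q}=\rho^{-q}A(\rho)\int_0^\rho\lambda^{(1-t)q-1}\dd\lambda\le2^{n+q}\int_0^\rho A(\lambda)\lambda^{-tq-1}\dd\lambda\le\frac{2^{n+q}}{n}[v]^q_{t,q;\er^n}.
\]
Moreover, passing through the intermediate points $x+\xi$ with $\xi\in B_{|h|}$ gives $\nr{\tau_hv}^q_{L^q(\er^n)}\lesssim_{n,q}A(2|h|)$. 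Together these yield the whole-space bound with a constant depending only on $n$ and $q$. Either cite that estimate explicitly or include this argument; the sphere average alone does not suffice.
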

Properties based on double difference operators are instead in the following lemma, for which we refer to  \cite[Lemma 2.17]{verena}, \cite[Proposition 2.4]{bl}, \cite[Lemma 2.4]{garain}, \cite[Lemma 3.2]{dm25} for \eqref{immersione2} and to \cite[Lemma 2.9]{DKLN} for \eqref{immersione2b}. 
\begin{lemma}\, \hspace{-2.5mm}\label{l4dd}  
Let $B_{\rr} \Subset B_{r}\subset \er^n$ be concentric balls with $r\leq 1$, let $w\in L^{q}(B_{r};\mathbb{R}^{k})$, $q  > 1$. With $M\geq 0$, $\tet \in (1,2)$ and $\mathcal{h}_0\in (0,1)$ such that $\rr+2\mathcal{h}_0<r$, if 
$$
\sup_{0< |h|<\mathcal{h}_{0}/2^4}\left\|\frac{\tau_{h}^2w}{|h|^{\tet}}\right\|_{L^{q}(B_{\rr+ \mathcal{h}_{0}})}\leq M,
$$
 with $\tau_{h}^2 \equiv \tau_{h,B_r}^2$, then 
\eqn{immersione2}
$$
\nr{Dw}_{L^{q}(B_{\rr+\mathcal{h}_{0}/2})}\lesssim_{n,q}  \frac{M}{(\tet-1)(2-\tet)}+\frac{\nr{w}_{L^{q}(B_{\rr+\mathcal{h}_{0}})}}{(\tet-1)(2-\tet)\mathcal{h}_{0}^{\tet}}
$$
holds together with 
\eqn{immersione2b}
$$
[Dw]_{\beta, q;B_{\rr}} \lesssim_{n,q}  
\frac{M}{(\tet-1)(2-\tet)(\tet-1-\beta)^{1/q}}
+\frac{\nr{Dw}_{L^{q}(B_{\rr+\mathcal{h}_{0}/2})}}
{(\tet-1)(2-\tet)(\tet-1-\beta)^{1/q}\beta^{1/q}\mathcal{h}_{0}^{\tet}}
$$
whenever $\beta \in (0,\tet-1)$. 
\end{lemma}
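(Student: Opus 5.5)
The proof relies on the elementary identity relating first and second differences,
$$
2\tau_h w(x)=\tau_{2h}w(x)-\tau_h^2 w(x),
$$
which follows from $\tau_{2h}w(x)=\tau_hw(x+h)+\tau_hw(x)$ and $\tau_h^2w(x)=\tau_hw(x+h)-\tau_hw(x)$. Dividing by $2|h|$ and iterating over the dyadic scales $2^kh$, $k=0,\dots,K-1$, gives
$$
\frac{\tau_h w}{|h|}=\frac{\tau_{2^Kh}w}{2^K|h|}-\sum_{k=0}^{K-1}\frac{\tau^2_{2^kh}w}{2^{k+1}|h|}.
$$
I would choose $K$ so that $2^K|h|\approx \mathcal h_0/2^4$. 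Then every second difference in the sum is controlled by the hypothesis, with $\|\tau^2_{2^kh}w\|_{L^q}\le M(2^k|h|)^{\tet}$. The domains $B_{\rr+\mathcal h_0}$ shrink slightly at each step, and the condition $\rr+2\mathcal h_0<r$ is there to absorb this. The sum is therefore bounded by
$$
M|h|^{\tet-1}\sum_k 2^{k(\tet-1)}\lesssim \frac{M\,\mathcal h_0^{\tet-1}}{\tet-1},
$$
and the first term is bounded by $c\,\mathcal h_0^{-1}\|w\|_{L^q}$. This yields a bound on $\|\tau_hw/|h|\|_{L^q(B_{\rr+\mathcal h_0/2})}$ that is uniform in small $h$. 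Since $q>1$, weak compactness of difference quotients then gives $Dw\in L^q$ with the same bound, which is \eqref{immersione2}. The factor $(2-\tet)^{-1}$ is harmless here, since it is $\geq 1/2$. Using $\mathcal h_0<1$, the remaining powers of $\mathcal h_0$ are dominated by $\mathcal h_0^{-\tet}$.

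For \eqref{immersione2b}, I would first prove the Besov-type bound
$$
\|\tau_hDw\|_{L^q(B_{\rr})}\lesssim \frac{M|h|^{\tet-1}}{(\tet-1)(2-\tet)} \qquad\text{for } |h|\le \mathcal h_0/2^5 .
$$
Fix a unit vector $e$ and set $g_t:=\tau_{te}w/t$. Letting $K\to\infty$ in the telescoping identity above, now with step $te$, gives
$$
\|D_ew-g_t\|_{L^q}\lesssim \frac{M\,t^{\tet-1}}{\tet-1}.
$$
Taking $t=|h|$ and splitting
$$
\tau_hD_ew=\tau_h(D_ew-g_t)+\frac{\tau_h\tau_{te}w}{t},
$$
it remains to control the mixed difference $\tau_h\tau_kw$ with $|k|=|h|$. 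I would write
$$
w(x+h+k)-w(x+h)-w(x+k)+w(x)
$$
as a finite combination of pure second differences $\tau^2_{\xi}w$ evaluated at shifted points, with $\xi\in\{h,k,(h+k)/2,(h-k)/2\}$. Each of these is $O(M|h|^{\tet})$ in $L^q$, which gives the Besov bound.

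Finally, the Besov bound is converted into a Gagliardo seminorm by splitting
$$
[Dw]_{\beta,q;B_{\rr}}^q\le\int_{|h|<\mathcal h_0/2^5}\frac{\|\tau_hDw\|_{L^q}^q}{|h|^{n+\beta q}}\,{\rm d}h+\int_{|h|\ge \mathcal h_0/2^5}\ldots
$$
In the first integral, $\int_0^{\mathcal h_0}\lambda^{q(\tet-1-\beta)-1}\,{\rm d}\lambda$ produces the factor $(\tet-1-\beta)^{-1/q}$. In the second integral, $\|\tau_hDw\|_{L^q}\le 2\|Dw\|_{L^q}$, and $\int_{\mathcal h_0}^\infty\lambda^{-\beta q-1}\,{\rm d}\lambda$ produces $\beta^{-1/q}\mathcal h_0^{-\beta}$. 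Inserting \eqref{immersione2} then gives the stated right-hand side.

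I expect the main obstacle to be the mixed-difference step, that is, bounding $\tau_h\tau_kw$ by pure second differences while keeping track of the shrinking domains. I would also need to verify that the constants come out exactly as $(\tet-1)(2-\tet)$. The factor $(2-\tet)^{-1}$ should appear precisely when $\tet\to2$ makes the increment $\tet-1$ of $Dw$ degenerate toward a Lipschitz bound. If the direct combinatorial identity proves awkward, my first alternative is to mollify $w$ at scale $|h|$ and bound $D^2$ of the mollified function by the second differences.
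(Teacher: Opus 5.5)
The paper does not prove Lemma \ref{l4dd}; it quotes \eqref{immersione2} from \cite{verena,bl,garain,dm25} and \eqref{immersione2b} from \cite{DKLN}. Your argument is a correct, self-contained alternative.

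The step you flag as the main obstacle closes with an exact two-term identity. The point $x+\frac{h+k}{2}$ is the common midpoint of the pairs $\{x,x+h+k\}$ and $\{x+k,x+h\}$, hence
\[
\tau_h\tau_k w(x)=\tau^2_{(h+k)/2}w(x)-\tau^2_{(h-k)/2}w(x+k).
\]
Take $|k|=|h|\le \mathcal{h}_0/2^5$. Then both steps have length at most $|h|$, and a zero step gives a zero term. Moreover $x+k\in B_{\rr+\mathcal{h}_0}$ for $x\in B_{\rr}$. The hypothesis therefore gives $\nr{\tau_h\tau_k w}_{L^q(B_{\rr})}\le 2M|h|^{\tet}$ directly, with no further combinatorics.

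Your route thus uses only this identity and $2\tau_h=\tau_{2h}-\tau_h^2$, and it yields a slightly sharper result: no factor $(2-\tet)^{-1}$ appears anywhere. You get $M/(\tet-1)$ and $M/((\tet-1)(\tet-1-\beta)^{1/q})$. The stated bounds follow because $(2-\tet)^{-1}\ge 1$, $(\tet-1)(2-\tet)(\tet-1-\beta)^{1/q}\le 1$, and $\mathcal{h}_0^{-1},\mathcal{h}_0^{-\beta}\le\mathcal{h}_0^{-\tet}$. So you need not search for a mechanism producing $(2-\tet)^{-1}$, and the mollification fallback is unnecessary. That factor comes from the proofs in the cited references and is not forced. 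What quoting those references buys the paper is a general Besov-space framework it can use without proof.

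Three small corrections.

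\emph{No shrinking domains.} The telescoping identity holds pointwise at a fixed base point $x$. You only need $2^{K-1}|h|<\mathcal{h}_0/2^4$ (take $K$ maximal with this property, so that $2^K|h|\ge \mathcal{h}_0/2^4$) and $x+2^Kh\in B_{\rr+\mathcal{h}_0}$.

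\emph{Direction of the limit.} In the second part, ``letting $K\to\infty$'' must mean refining downwards. Write the identity between the steps $2^{-K}te$ and $te$. Then use $Dw\in L^q(B_{\rr+\mathcal{h}_0/2})$, already obtained from \eqref{immersione2}, to get $2^{K}t^{-1}\tau_{2^{-K}te}w\to D_ew$ strongly in $L^q(B_{\rr+\mathcal{h}_0/4})$.

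\emph{Last step.} You do not need to insert \eqref{immersione2} at the end, since the right-hand side of \eqref{immersione2b} already contains $\nr{Dw}_{L^q(B_{\rr+\mathcal{h}_0/2})}$.
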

We conclude with a lemma connecting $\tail$s and translations. 
\begin{lemma} \label{sl1}  If $w \in L^{1}_{2s}$, then 
\eqn{sl1-1}
  $$ 
\tail(w(\cdot+h);B_{\rr}(z_0)) \lesssim_{n,s} \left(\frac{\rr}{\tau}\right)^{2s}\tail(w;B_{\tau}(x_0))
 $$
 holds provided $|z_0-x_0|\leq \rr/4$, $0\leq |h|\leq \rr/8$ and $0< \tau \leq \rr/2$.
\end{lemma}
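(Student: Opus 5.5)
Unfolding the definition, the quantity to estimate is
$$
\tail(w(\cdot+h);B_{\rr}(z_0))=\rr^{2s}\int_{\er^n\setminus B_{\rr}(z_0)}\frac{\snr{w(y+h)}}{\snr{y-z_0}^{n+2s}}\dy .
$$
The plan is a direct change of variables followed by a comparison of the two kernels. Setting $x=y+h$, the integral becomes $\int_{\er^n\setminus B_{\rr}(z_0+h)}\snr{w(x)}\snr{x-h-z_0}^{-n-2s}\dx$. The goal is then to show two facts. First, the domain $\er^n\setminus B_{\rr}(z_0+h)$ is contained in $\er^n\setminus B_{\tau}(x_0)$. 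Second, on that domain the kernel satisfies $\snr{x-h-z_0}\gtrsim \snr{x-x_0}$. Once both hold, multiplying by $\rr^{2s}$ and writing $\rr^{2s}=(\rr/\tau)^{2s}\tau^{2s}$ yields \eqref{sl1-1}.

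For the inclusion, take $x\notin B_{\rr}(z_0+h)$. Using $\snr{h}\le\rr/8$ and $\snr{z_0-x_0}\le\rr/4$,
$$
\snr{x-x_0}\geq \snr{x-z_0-h}-\snr{h}-\snr{z_0-x_0}\geq \rr-\rr/8-\rr/4=5\rr/8 .
$$
Since $5\rr/8>\rr/2\geq\tau$, the point $x$ lies outside $B_\tau(x_0)$, so the inclusion holds with room to spare.

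For the kernel comparison, on the same set we have $\snr{x-x_0}\le \snr{x-z_0-h}+3\rr/8$. Moreover $3\rr/8\le \tfrac38\snr{x-z_0-h}$, because $\snr{x-z_0-h}\ge\rr$ there. Together these give $\snr{x-x_0}\le \tfrac{11}{8}\snr{x-z_0-h}$, and hence
$$
\snr{x-h-z_0}^{-n-2s}\le (11/8)^{n+2s}\snr{x-x_0}^{-n-2s}.
$$

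Combining the two facts gives
$$
\tail(w(\cdot+h);B_{\rr}(z_0))\le (11/8)^{n+2s}\rr^{2s}\int_{\er^n\setminus B_\tau(x_0)}\frac{\snr{w(x)}}{\snr{x-x_0}^{n+2s}}\dx=(11/8)^{n+2s}\Big(\frac{\rr}{\tau}\Big)^{2s}\tail(w;B_\tau(x_0)).
$$
The extra region enlarged in passing from $\er^n\setminus B_{\rr}(z_0+h)$ to $\er^n\setminus B_\tau(x_0)$ only adds a nonnegative contribution, so the inequality is preserved. There is no real obstacle; the only point requiring care is the bookkeeping of the displacements $h$ and $z_0-x_0$ to secure both the inclusion and the uniform kernel comparison, and the stated smallness constraints are exactly what make this work. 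The resulting constant depends only on $n,s$.
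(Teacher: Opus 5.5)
Your proof is correct and follows essentially the same route as the paper: the change of variables $x=y+h$, the inclusion of the shifted exterior domain into the complement of a ball centred at $x_0$, and a uniform comparison between $\snr{x-z_0-h}$ and $\snr{x-x_0}$. The only difference is cosmetic: the paper first reduces to $\tau=\rr/2$ and compares the kernels on the larger set $\{\snr{x-x_0}\geq \rr/2\}$, getting the constant $4^{n+2s}$, whereas you treat general $\tau$ directly, compare on the actual integration domain, and obtain $(11/8)^{n+2s}$.
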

\begin{proof}
By the very definition of $\tail$, it is sufficient to prove \eqref{sl1-1} when $\tau = \rr/2$. Note that 
\eqn{sl11}
$$ 
[\er^n \setminus B_{\rr}(z_0)]+h \subset \er^n \setminus B_{\rr/2}(x_0) \subset \er^n \setminus B_{\rr/4}(x_0).
$$  
Indeed, if $|x-z_0|\geq \rr$, then 
$$|x+h-x_0| \geq |x-z_0|- |z_0-x_0|-|h|> \rr-\rr/4-\rr/4 =\rr/2.$$ 
Moreover, 
note that 
 \eqn{sl12}
 $$
 |x-x_0|\geq \frac{\rr}{2} \Longrightarrow \frac{1}{|x-z_0-h|^{n+2s}} \leq  \frac{4^{n+2s}}{|x-x_0|^{n+2s}}\,.
 $$ 
Indeed, observe that
$$ 
|x-x_0|
   \leq  |x-z_0-h|+  |z_0-x_0|+  |h|\leq  |x-z_0-h| + \frac{3\rr}{8}\leq  |x-z_0-h| + \frac{3}{4}|x-x_0|
$$
so that $|x-x_0| \leq 4  |x-z_0-h| $ from which \eqref{sl12} follows. 
Using \eqref{sl11} and \eqref{sl12} then we have 
 \begin{flalign*}  
 \tail(w(\cdot+h);B_{\rr}(z_0)) & = \rr^{2s} \int_{\er^n\setminus B_{\rr}(z_0)} \frac{|w(x+h)|}{|x+h -(z_0+h)|^{n+2s}}\dx \\
 & = \rr^{2s} \int_{[\er^n \setminus B_{\rr}(z_0)]+h} \frac{|w(x)|}{|x -(z_0+h)|^{n+2s}}\dx
  \\
 & \leq  \rr^{2s} \int_{\er^n \setminus B_{\rr/2}(x_0)} \frac{|w(x)|}{|x -z_0-h|^{n+2s}}\dx\\
  & \leq  4^{n+2s}\rr^{2s} \int_{\er^n \setminus B_{\rr/2}(x_0)} \frac{|w(x)|}{|x -x_0|^{n+2s}}\dx\\ & \leq 4^{n+4}\tail(w;B_{\rr/2}(x_0)), 
 \end{flalign*}
 that is \eqref{sl1-1} with $\tau =\rr/2$ and the proof is complete. 
\end{proof} 
We finally gather more standard embedding theorems in Besov spaces. These are special cases of more general facts that we report in the form and the notation that we shall need later on. As in the rest of the paper, the exponent $s$ denotes the one associated to the operator in \eqref{theone}. 
\begin{lemma}[Miscellanea of embeddings] Let $0 < t_1< t_2$, with $t_1$ being non-integer, $q \geq 1$ and $\BBB \subset \er^n$ be a ball, then 
\eqn{immergi}
$$\nr{v}_{W^{t_1,q}(\BBB)} \approx \nr{v}_{B^{t_1}_{q,q}(\BBB)} \leq c\nr{v}_{B^{t_2}_{q,\infty}(\BBB)}, 
$$
where $c\equiv c (n,N,t_1,t_2,q,\snr{\BBB})$. 
 Moreover, 
\eqn{immergi2}
$$
\nr{v}_{B^{s}_{q,\infty}(\BBB)}\leq c\nr{v}_{W^{s,q}(\BBB)}
$$
holds for $c\equiv c (n,N,s,q,\snr{\BBB})$. For  $\qq\geq  q \geq 1$,  $t >0$ the embedding inequality
\eqn{immergi01}
$$ 
 \nr{v}_{W^{s,\qq}(\BBB)}\leq c 
\nr{v}_{W^{t,q}(\BBB)}
$$
with $c\equiv c (n,N,s,t,q,\qq, \snr{\BBB})$ 
holds provided 
\eqn{immercond}
$$
 t -n/q \geq  s -n/\qq.
$$
In the case $ 1+n/\qq < t< 2$
\eqn{immergesmooth}
$$
		\nr{Dv}_{C^{0, t-1-n/\qq}(\BBB)}   \leq c \nr{v}_{W^{t,\qq}(\BBB)}\,, 
$$
with $c\equiv c (n,N,t,\qq, \snr{\BBB})$. 
\end{lemma}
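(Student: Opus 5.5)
The four inequalities in the miscellanea are special cases of standard embeddings between Besov and Triebel--Lizorkin spaces on Lipschitz domains. The plan is to reduce each one to $\er^n$ through a bounded extension operator and then quote the whole-space embedding. The dependence on $\snr{\BBB}$ comes from a scaling argument: rescale $\BBB$ to the unit ball $\ttB_1$, prove the inequality there with constants depending only on the listed parameters, and scale back. Since the norms are inhomogeneous, each seminorm scales like a power of $\rad(\BBB)$. This produces the dependence on $\snr{\BBB}$ and nothing more.

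On $\ttB_1$ we would use a universal extension operator $E$ that is simultaneously bounded $B^{t}_{q,r}(\ttB_1)\to B^{t}_{q,r}(\er^n)$ for all the relevant parameters, namely Rychkov's extension (see \cite{Triebel3}). This removes the domain, and each statement becomes a whole-space fact.
\begin{itemize}
\item For \eqref{immergi}, the equivalence $W^{t_1,q}=B^{t_1}_{q,q}$ for non-integer $t_1$ is classical; on $\ttB_1$ it follows from the intrinsic difference characterization \eqref{besovo0} together with the Gagliardo seminorm. The embedding $B^{t_2}_{q,\infty}\hookrightarrow B^{t_1}_{q,q}$ for $t_1<t_2$ is elementary. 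Split the dyadic sum $\sum_j 2^{jt_1q}\nr{\Delta_j v}^q_{L^q}$, bound each term by $2^{-j(t_2-t_1)q}\nr{v}^q_{B^{t_2}_{q,\infty}}$, and sum the geometric series. In intrinsic terms, integrate $\snr{h}^{-n-t_1q}\nr{\tau^l_h v}^q_{L^q}$ over $\snr{h}\le1$, using $\nr{\tau^l_hv}_{L^q}\le\snr{h}^{t_2}M$, and bound the region $\snr{h}>1$ by $\nr{v}_{L^q}$.
\item For \eqref{immergi2}, when $s<1$ is non-integer we have $W^{s,q}=B^s_{q,q}\hookrightarrow B^s_{q,\infty}$, since the $\ell^q$ norm dominates the $\ell^\infty$ norm. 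Directly, Lemma \ref{prop:embedding0} already bounds $\snr{h}^{-s}\nr{\tau_hv}$, and first differences control the second-difference norm \eqref{besovo0} with $l=2$ via $\tau^2_h=\tau_h\tau_h$.
\item For \eqref{immergi01}, we would use the Sobolev-type embedding $B^{t}_{q,q}\hookrightarrow B^{s}_{\qq,\qq}$ whenever $t-n/q\ge s-n/\qq$ and $\qq\ge q$. If $t$ is an integer, first lower $t$ slightly; this is allowed when the inequality is strict, and when equality holds we use $W^{t,q}\hookrightarrow B^{t}_{q,\max(q,2)}$ with the appropriate Jawerth--Franke embedding. Then apply the whole-space result and restrict.
\item For \eqref{immergesmooth}, write $W^{t,\qq}=B^{t}_{\qq,\qq}$ for $t\in(1,2)$, embed this into $B^{t-n/\qq}_{\infty,\infty}$, and recall that $B^{1+\gamma}_{\infty,\infty}$ with $\gamma=t-1-n/\qq\in(0,1)$ coincides with $C^{1,\gamma}$. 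Restricting to $\BBB$ gives the H\"older bound on $Dv$.
\end{itemize}

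The main obstacle is bookkeeping rather than substance. We must check that the local, difference-based norms \eqref{besovo0} used in the paper agree, up to constants depending only on the listed parameters, with the restriction norms to which the whole-space theorems apply. We must also handle the borderline equality case in \eqref{immercond} when $t$ is an integer. For both points we would cite \cite[Theorem 1.118]{Triebel3} and \cite{Triebel4} rather than reprove them.
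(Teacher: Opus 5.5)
Your proposal is correct and follows essentially the paper's route: reduce to $\ttB_1$ by scaling and invoke the standard Besov/Sobolev embeddings from Triebel. The paper quotes the domain versions directly instead of passing through an extension operator, and likewise uses $W^{t,\qq}=B^{t}_{\qq,\qq}\hookrightarrow B^{t-n/\qq}_{\infty,\infty}=C^{1,t-1-n/\qq}$ for \eqref{immergesmooth}. One small correction to your ``direct'' remark for \eqref{immergi2}: Lemma \ref{prop:embedding0} controls $\tau_h v$ only on a strictly smaller concentric ball and for small $\snr{h}$, so it yields only the different-balls variant the paper mentions, while the same-ball inequality comes from your $W^{s,q}=B^{s}_{q,q}\hookrightarrow B^{s}_{q,\infty}$ argument.
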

\begin{proof} All the inequalities from \rif{immergi} to \rif{immergesmooth} can be derived in the case $B\equiv \ttB_1$; for general balls $B$ they then follow by a standard scaling argument. Inequality \eqref{immergi} is \cite[Theorem 1.107]{Triebel3}, while inequality \eqref{immergi2} is \cite[Theorem 2.85]{Triebel4}. Moreover, a version on different balls of \eqref{immergi2}, which would be still fine for our purposes,  can be obtained using Lemma \ref{prop:embedding0}. For \eqref{immergi01} we refer to \cite[(1.300)-(1.301)]{Triebel3} and related comments and references. Finally, as in this case $t$ is non-integer, we have 
$W^{t,\qq}(\BBB) \equiv  B^t_{\qq, \qq}(\BBB) \hookrightarrow B^{t-n/\qq}_{\infty,\infty}(\BBB) =   C^{1,t-1-n/\qq}(\BBB)$ (see the comments in the proof of Proposition 4.6 from \cite{parte1}). Therefore \eqref{immergesmooth} follows. 
\end{proof}
\subsection{Doubled difference operators}\label{doppidiff}
In addition to the standard difference operators $\tau_{h}\equiv \tau_{h, \er^n}$ defined in \eqref{enfasi} and  \eqref{domania}, we shall use another finite difference operator $\tilde \tau_{h}$, this time acting on maps $\psi \colon  \er^n\times \er^n  \to \er^{N}$. This is defined by 
\eqn{ttau2}
$$ 
\tilde \tau_{h}\psi(x,y):=\psi(x+h, y+h)-\psi(x,y),\quad h \in \er^n.
$$
This implies
\eqn{matobo2}
$$
\tilde \tau_{h}[\psi_1(x)+\psi_2(y)] = \tau_{h} \psi_1(x) + \tau_{h}\psi_2(y).
$$
The usual finite difference integration-by-parts formula then reads 
\eqn{matobo1pre}
$$
\int_{\er^n}\int_{\er^n} \langle \mathds{G}(x,y),    \tilde \tau_{-h}\psi(x,y)\rangle\frac{\dxy}{|x-y|^n}  = \int_{\er^n}\int_{\er^n} 
 \langle \tilde  \tau_{h}\mathds{G}(x,y),   \psi(x,y)\rangle\frac{\dxy}{|x-y|^n}
$$
whenever $\mathds{G}, \psi \colon  \er^n\times \er^n  \to \er^{N}$ are measurable and 
provided that 
$$
\frac{ \langle \mathds{G}(x,y),\psi(x,y)\rangle}{|x-y|^n}, \frac{ \langle \mathds{G}(x+h,y+h),\psi(x,y)\rangle}{|x-y|^n} \in L^1(\er^{2n}).
$$  
In the special case of operators as in \eqref{weaksolLL}, under the assumptions \eqref{condib}, the identity in \eqref{matobo1pre} becomes
\begin{flalign}
& \notag \int_{\er^n}\int_{\er^n} \langle \mathds{B}(x-y)w(x,y),    \tilde \tau_{-h}\psi(x,y)\rangle \frac{\dxy}{|x-y|^n} \\ & \qquad 
 = \int_{\er^n}\int_{\er^n} 
 \langle \mathds{B}(x-y)\tilde \tau_{h}w(x,y),   \psi(x,y)\rangle\frac{\dxy}{|x-y|^n} \label{matobo1}
\end{flalign}
whenever $\psi, w\colon  \er^n\times \er^n  \to \er^{N}$ are measurable and 
provided that 
$$
\frac{ \langle \mathds{B}(x-y)w(x,y),\psi(x,y)\rangle}{|x-y|^n}, \frac{ \langle \mathds{B}(x-y)w(x+h,y+h),\psi(x,y)\rangle}{|x-y|^n} \in L^1(\er^{2n}).
$$

\subsection{$s$-harmonic approximation}\label{harmonica}
In this section we restate the $s$-harmonic approximation lemma stated and proved  in \cite{parte1}. The main difference is that here we need a version where the matrix coefficients of the operator are not necessarily constant. This causes no change in either the proof or the statement with respect to the version reported below.  
\begin{lemma}\label{shar}
Let $\sA\colon \er^{2n}\to \er^{N\times N}$ be a measurable matrix field satisfying \eqref{linearell}. Let $\delta_0$ be a number such that $0 <s <s+\delta_{0}<1$; let $B\Subset \ti{B}\Subset \mathbb{R}^{n}$ be concentric balls with radii belonging to  $(1/32,1)$ and denote  $\textnormal{\texttt{d}}:=\dist(B,\partial \ti{B})>0$. Let $v\in W^{s,2}(\er^n;\er^{N})$, $g\in L^{\infty}(\mathbb{R}^{n};\er^{N})$ be such that 
\eqn{sh.0}
$$
\nr{v}_{W^{s,2}(\mathbb{R}^{n})}+\nr{v}_{W^{s+\delta_{0},2}(\er^n)}+ \tail(v;\ti{B}) + \nr{g}_{L^{\infty}(\mathbb{R}^{n})}\le c_{0}
$$
for some $c_{0}>0$. There exist numbers $t,p$
\eqn{tpnumeri}
$$s < t \equiv t(\data, \delta_0)<1 , \quad  p\equiv p(\data, \delta_0)>2, $$ and a constant 
\eqn{chconst}
$$\cchh\equiv \cchh(\data,c_{0},\delta_{0}, \textnormal{\texttt{d}})\geq 1$$ such that, if for $\eps\in (0,1)$
\eqn{sh.1}
$$ 
\left|\int_{\mathbb{R}^{n}}\int_{\mathbb{R}^{n}}\langle \sA(x,y)\frac{v(x)-v(y)}{|x-y|^s},\frac{\varphi(x)-\varphi(y)}{|x-y|^s}\rangle\frac{\dxy}{|x-y|^{n}}-\int_{B}\langle g,\varphi\rangle\dx\right|\le\left(\frac{\eps}{\cchh}\right)^{\frac{2p}{p-2}} 
$$
holds for all $\varphi\in C^{0,t}(\er^n;\er^{N})$ vanishing outside $B$ such that $[\varphi]_{0,t;\er^n}\leq 1$, and if $h\in\mathbb{X}^{s,2}_{v}(B,\ti{B})$ solves
$$
\begin{cases}
\ -\mathcal{L}_{\mathds{A}}h=g\quad &\mbox{in} \ \ B\\
\ h=v\quad &\mbox{in} \ \ \mathbb{R}^{n}\setminus B\,,
\end{cases}
$$
in the sense of Definition \ref{eqweaksol22}, then 
\eqn{sh.5}
$$
\begin{cases}
\, \nr{h}_{W^{s,2}(\ti{B})}
+\tail(h-(h)_{B};B)\le c\equiv c(\data, c_{0})\\[4pt]
 \nr{h-v}_{L^2(B)}\leq \frac{c(n,s)}{\sqrt{\textnormal{\texttt{d}}}}\eps\,.
\end{cases}
$$
\end{lemma}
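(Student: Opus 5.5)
The proof follows the argument of the constant-coefficient version in \cite{parte1}. The plan is to reproduce that argument while checking that each step uses only linearity, ellipticity and boundedness of $\sA$ as in \eqref{linearell}, and never translation invariance or constancy of the coefficients.

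\emph{Step 1: uniform bounds on $h$.} Existence and uniqueness of $h\in\mathbb{X}^{s,2}_{v}(B,\ti B)$ come from Lemma \ref{esiste}. Test the equation with $\varphi=h-v\in\mathbb{X}^{s,2}_0(B,\ti B)$ and use ellipticity, Young's inequality, the fractional Sobolev/Poincar\'e inequality in $\mathbb{X}^{s,2}_0$ to absorb $\int_B\langle g,h-v\rangle$, and \eqref{ht.10}. This gives $[h-v]_{s,2;\er^n}+\nr{h-v}_{L^2}\lesssim \nr{v}_{W^{s,2}(\er^n)}+\nr{g}_{L^\infty}$. Since $h\equiv v$ outside $B$, the tail of $h-(h)_B$ is controlled by $\tail(v;\ti B)$, $\nr{v}_{L^2}$ and $|(h)_B|$, all bounded by $c_0$. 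This yields \eqref{sh.5}$_1$.

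\emph{Step 2: extra integrability of $h-v$.} Set $w:=v-h\in\mathbb{X}^{s,2}_0(B,\ti B)$. Then $w$ solves the linear system with coefficients $\sA$, with right-hand side the functional $\varphi\mapsto \langle -\mathcal L_{\sA}v,\varphi\rangle-\int_B\langle g,\varphi\rangle$. The aim is a bound $[w]_{t',2}\lesssim c(\data,c_0,\delta_0,\textnormal{\texttt{d}})$ for some $s<t'<s+\delta_0$, or, what serves the same purpose, $w\in L^p$ with $p>2$. This comes from the self-improving property of \cite{KMS} for linear systems with measurable coefficients, combined with the assumed bound $v\in W^{s+\delta_0,2}$. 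This is where $t$ and $p$ in \eqref{tpnumeri} arise, and only measurability of $\sA$ is used.

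\emph{Step 3: duality and interpolation.} Mollify $w$ to get $w_\varepsilon$, and use $\varphi:=w_\varepsilon$, suitably cut off and normalized in $C^{0,t}$, as a test function in \eqref{sh.1}. The resulting energy identity gives
\[
\Lambda^{-1}[w]_{s,2}^2\le \Big|\langle -\mathcal L_{\sA}v,w\rangle-\int_B\langle g,w\rangle\Big|,
\]
and the right-hand side splits into a smooth part, bounded by $[\varphi]_{0,t}(\eps/\cchh)^{2p/(p-2)}$, and a remainder $w-w_\varepsilon$. The remainder is estimated through the higher integrability or differentiability of Step 2 via H\"older's inequality with exponents $p$ and $p/(p-2)$. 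Optimizing in the mollification parameter converts the $C^{0,t}$ smallness into $\nr{w}_{L^2(B)}\lesssim \eps/\sqrt{\textnormal{\texttt{d}}}$; the factor $\textnormal{\texttt{d}}$ enters through the cut-off needed to keep $\operatorname{spt}\varphi\subset B$. This is exactly the interpolation from \cite{parte1}, and no property of $\sA$ beyond $|\sA|\le\Lambda$ is used there.

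\emph{Main obstacle.} The delicate step is Step 2: checking that the higher differentiability machinery gives constants depending only on $\data,\delta_0,c_0,\textnormal{\texttt{d}}$ for variable $\sA$. I would invoke \cite{KMS} directly, since it is formulated for general measurable kernels, so the extension from constant to variable coefficients costs nothing.
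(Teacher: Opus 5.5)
Your plan is the paper's: Lemma~\ref{shar} is taken over from \cite{parte1}, and the only point made here is that its proof uses nothing about the coefficients beyond linearity and \eqref{linearell} (no constancy or translation invariance), so it applies verbatim to measurable $\sA$. One caution on your reconstruction: in Step~3 the remainder $w-\varphi$ lives up to $\partial B$, so controlling it through extra regularity of $w=v-h$ needs higher differentiability up to the boundary, which the interior result of \cite{KMS} does not give. You can avoid Step~2 altogether by using $\snr{\langle -\mathcal{L}_{\mathds{A}}v,w-\varphi\rangle}\le \Lambda[v]_{s+\delta,2;\er^n}[w-\varphi]_{s-\delta,2;\er^n}$, with $\delta:=\min\{\delta_0,s/2\}$, together with a dilate-then-mollify approximation $\varphi$ supported in $B$ satisfying $[w-\varphi]_{s-\delta,2;\er^n}\lesssim \sigma^{\delta}\nr{w}_{W^{s,2}(\er^n)}$ at scale $\sigma$, so that only the assumed $W^{s+\delta_0,2}$-regularity of $v$ enters.
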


\section{Fine properties of fractional Sobolev functions}\label{thefines}
In this section we collect a few useful characterizations of the set where fractional Sobolev functions admit a precise representative and of where certain related convergence properties take place. As usual, in what follows, $\Omega \subset \er^n$ will denote a bounded open subset  and $n\geq 2$. 

In \cite{KMS1}  an inequality was proved allowing to connect the $\tail$s on two concentric balls $B_{\rr}\subset B_{\rrr}$,  i.e., 
\begin{flalign}\label{scatail}
\tail(w-(w)_{B_{\rr}};B_{\rr}) &\lesssim_{n,s} \left(\frac{\rr}{\rrr}\right)^{2s}\tail(w-(w)_{B_{\rrr}};B_{\rrr})+\left(\frac{\rr}{\rrr}\right)^{2s}\nra{w-(w)_{B_{\rrr}}}_{L^1(B_{\rrr})}\nonumber \\  
&\qquad +\int_{\rr}^{\rrr}\left(\frac{\rr}{\lambda}\right)^{2s}\nra{w-(w)_{B_{\lambda}}}_{L^1(B_{\lambda})}\frac{\dlam}{\lambda}, 
\end{flalign}
that holds for every $w\in L^{1}_{2s}$ with $s \in (0,1)$. This holds together with
\begin{flalign}\label{scatailancora}
	\tail(w;B_{\rr}) &\lesssim_{n,s} \left(\frac{\rr}{\rrr}\right)^{2s}\tail(w;B_{\rrr}) + \left(\frac{\rr}{\rrr}\right)^{2s} \nra{w}_{L^1(B_{\rrr})} +\int_{\rr}^{\rrr}\left(\frac{\rr}{\lambda}\right)^{2s}\nra{w}_{L^1(B_{\lambda})}\frac{\dlam}{\lambda}\,. 
\end{flalign}
See \cite[Section 3]{parte1}. 
As an easy consequence we have 
\eqn{scatailancora2}
$$
	\tail(w;B_{\rr}) \lesssim_{n,s} \left(\frac{\rr}{\rrr}\right)^{2s}\tail(w;B_{\rrr}) + \left(\frac{\rrr}{\rr}\right)^{n} \nra{w}_{L^1(B_{\rrr})}.
$$ 
Moreover, as in \cite[Lemma 3.2]{parte1}, if $B_{\rr}(x_{1})\subset B_{\rrr}(x_{2})$ are two not necessarily concentric balls, then
\begin{flalign}\label{scatail.1}
\tail(w-(w)_{B_{\rr}(x_{1})};B_{\rr}(x_{1}))&\lesssim_{n,s} \left(\frac{\rr}{\rrr}\right)^{2s}\left(\frac{\rrr}{\rrr-\snr{x_{1}-x_{2}}}\right)^{n+2s}\tail(w-(w)_{B_{\rrr}(x_{2})};B_{\rrr}(x_{2}))\nonumber \\
& \qquad +\left(\frac{\rrr}{\rr}\right)^{n}\nra{w-(w)_{B_{\rrr}(x_{2})}}_{L^{1}(B_{\rrr}(x_{2}))}\,. 
\end{flalign}
We now give, in Lemma \ref{dedicata} below, a higher order version of \eqref{scatail}, which, unlike \eqref{scatail}, requires the condition $s>1/2$. For this we need the following:
\begin{definition}[Affine approximation]
With $w\in W^{1,1}(B_{\rr}(x);\er^{N}) $, we define the affine map
\eqn{affinecanonica}
$$
\ell_{x, \rr}^w(y):=(Dw)_{B_{\rr}(x)}(y-x)+ (w)_{B_{\rr}(x)}, \qquad y \in \er^n.
$$ 
\end{definition}
\begin{lemma}\label{dedicata}
Let $w\in W^{1,1}(B_{\rrr}(x);\er^{N})\cap L^{1}_{2s}$, $s>1/2$. Then, for $0< \rr\leq \rrr$
\begin{flalign}
\notag & \tail(w-\ell_{x, \rr}^w;B_{\rr}(x)) \lesssim_{n,N,s} \left(\frac{\rr}{\rrr}\right)^{2s}\tail(w-\ell_{x, \rrr}^w;B_{\rrr}(x))
\\ & \qquad\qquad  +\rrr\left(\frac{\rr}{\rrr}\right)^{2s}\nra{Dw-(Dw)_{B_{\rrr}(x)}}_{L^1(B_{\rrr}(x))}+\left(\frac{\rr}{\rrr}\right)^{2s}\nra{w-(w)_{B_{\rrr}(x)}}_{L^1(B_{\rrr}(x))}\nonumber \\  
&\qquad \qquad  \qquad +\int_{\rr}^{\rrr}\left(\frac{\rr}{\lambda}\right)^{2s}\nra{Dw-(Dw)_{B_{\lambda}(x)}}_{L^1(B_{\lambda}(x))} \dlam\nonumber \\ & \qquad \qquad \qquad \quad  +\int_{\rr}^{\rrr}\left(\frac{\rr}{\lambda}\right)^{2s}\nra{w-(w)_{B_{\lambda}(x)}}_{L^1(B_{\lambda}(x))}\frac{\dlam}{\lambda}\,.\label{scatalunga}
\end{flalign}
\end{lemma}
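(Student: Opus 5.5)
The plan is to mimic the dyadic proof of \eqref{scatail}, now keeping track of affine maps rather than constants. Without loss of generality take $x=0$ and write $\ell_\lambda:=\ell^w_{0,\lambda}$. If $\rr>\rrr/2$ the estimate is immediate: bound $\tail(w-\ell_{\rr};B_{\rr})$ by $\tail(w-\ell_{\rrr};B_{\rrr})$ and terms involving $\ell_{\rr}-\ell_{\rrr}$ on $B_{\rrr}$, and treat the latter as in the dyadic step below. So assume $\rr\le \rrr/2$. Choose $k\ge1$ with $\rrr 2^{-k-1}<\rr\le \rrr2^{-k}$ and set $\rr_j:=2^{-j}\rrr$ for $j=0,\dots,k$. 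Split $\er^n\setminus B_{\rr}$ into the annuli $B_{\rr_{j}}\setminus B_{\rr_{j+1}}$ (the last one intersected with $\er^n\setminus B_\rr$), $j<k$, together with $\er^n\setminus B_{\rrr}$. This gives
$$
\tail(w-\ell_{\rr};B_{\rr})\lesssim \rr^{2s}\int_{\er^n\setminus B_{\rrr}}\frac{|w-\ell_{\rr}|}{|y|^{n+2s}}\dy+\sum_{j<k}\Big(\frac{\rr}{\rr_j}\Big)^{2s}\nra{w-\ell_{\rr}}_{L^1(B_{\rr_j})}.
$$

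In each piece I replace $\ell_\rr$ by a better-adapted affine map via the triangle inequality. On the far part use $w-\ell_\rr=(w-\ell_{\rrr})+(\ell_{\rrr}-\ell_\rr)$; the first term gives $(\rr/\rrr)^{2s}\tail(w-\ell_{\rrr};B_{\rrr})$. The affine difference $\ell_{\rrr}-\ell_\rr$ has the form $\mathbf b(y)+\mathbf a$. Since $s>1/2$, we have $\rr^{2s}\int_{|y|>\rrr}|\mathbf b y+\mathbf a||y|^{-n-2s}\dy\lesssim (\rr/\rrr)^{2s}(\rrr|\mathbf b|+|\mathbf a|)$, and this integrability is exactly where $s>1/2$ enters. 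On the annuli, $\nra{w-\ell_\rr}_{L^1(B_{\rr_j})}\le \nra{w-\ell_{\rr_j}}_{L^1(B_{\rr_j})}+\sup_{B_{\rr_j}}|\ell_{\rr_j}-\ell_\rr|$. For the first term I use the affine Poincar\'e-type bound $\nra{w-\ell_{\lambda}}_{L^1(B_\lambda)}\le \nra{w-(w)_{B_\lambda}}_{L^1(B_\lambda)}+\lambda\nra{Dw-(Dw)_{B_\lambda}}_{L^1(B_\lambda)}$; this is just the triangle inequality, with no Poincar\'e inequality needed.

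The main step is controlling the affine differences by telescoping over consecutive dyadic radii. Since $|(Dw)_{B_{\rr_{i+1}}}-(Dw)_{B_{\rr_i}}|\lesssim \nra{Dw-(Dw)_{B_{\rr_i}}}_{L^1(B_{\rr_i})}$, and similarly for the means of $w$, on $B_{\rr_j}$ we get
$$
|\ell_{\rr_j}-\ell_\rr|\lesssim \sum_{i=j}^{k}\Big(\rr_j\nra{Dw-(Dw)_{B_{\rr_i}}}_{L^1(B_{\rr_i})}+\nra{w-(w)_{B_{\rr_i}}}_{L^1(B_{\rr_i})}\Big),
$$
where the last transition from $\rr_k$ to $\rr$ is handled the same way because $\rr\approx\rr_k$. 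Multiplying by $(\rr/\rr_j)^{2s}$ and summing over $j$ leads to double sums. Exchanging the order of summation gives $\sum_{j\le i}(\rr/\rr_j)^{2s}\lesssim(\rr/\rr_i)^{2s}$ for the $w$-terms. For the gradient terms the corresponding sum is $\sum_{j\le i}(\rr/\rr_j)^{2s}\rr_j=\rr^{2s}\sum_{j\le i}\rr_j^{1-2s}\lesssim_s \rr^{2s}\rr_i^{1-2s}=(\rr/\rr_i)^{2s}\rr_i$, again because $2s>1$, so the geometric series is dominated by its smallest-radius term. I expect this bookkeeping to be the most delicate point. The $j=0$ and far-part contributions produce the explicit $\rrr(\rr/\rrr)^{2s}\nra{Dw-(Dw)_{B_\rrr}}$ and $(\rr/\rrr)^{2s}\nra{w-(w)_{B_\rrr}}$ terms in \eqref{scatalunga}.

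Finally, I convert the dyadic sums into integrals. Comparing averages on comparable balls, for $\lambda\in[\rr_{i+1},\rr_i]$ we have $\nra{Dw-(Dw)_{B_{\rr_{i+1}}}}_{L^1(B_{\rr_{i+1}})}\lesssim\nra{Dw-(Dw)_{B_\lambda}}_{L^1(B_\lambda)}$, and similarly for $w$. Hence $(\rr/\rr_i)^{2s}\rr_i\,\nra{\cdot}\lesssim\int_{\rr_{i+1}}^{\rr_i}(\rr/\lambda)^{2s}\nra{Dw-(Dw)_{B_\lambda}}\dlam$, and the $w$-terms produce the $\dlam/\lambda$ integrals. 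This matches \eqref{scatalunga}, with constants depending only on $n,N,s$, where the $s$-dependence blows up as $s\to1/2$.
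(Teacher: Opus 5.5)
Your route is essentially the paper's: dyadic annuli, telescoping the affine maps $\ell_{\rr_i}$ across consecutive radii, and exchanging the order of summation, where $2s>1$ makes the gradient sums dominated by their smallest-radius term. The dyadic sums are then turned into the two $\lambda$-integrals, with the largest-radius term kept explicit. The differences are cosmetic. You use ratio $2$ with $\rr\approx\rr_k$ instead of the paper's exact $\rr/\rrr=\gamma^{-k}$, $\gamma\in(2,4]$. You also cut the far region at $\rrr$ rather than $\rrr/4$, which gives $(\rr/\rrr)^{2s}\tail(w-\ell_{\rrr};B_{\rrr})$ directly and avoids the paper's detour through \eqref{scatailancora2}.

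One step, however, is wrongly justified. The bound $\nra{w-\ell_\lambda}_{L^1(B_\lambda)}\lesssim\nra{w-(w)_{B_\lambda}}_{L^1(B_\lambda)}+\lambda\nra{Dw-(Dw)_{B_\lambda}}_{L^1(B_\lambda)}$ does not follow from the triangle inequality. Write $w-\ell_\lambda=(w-(w)_{B_\lambda})-(Dw)_{B_\lambda}(\cdot-x)$. The triangle inequality then only gives $\nra{w-(w)_{B_\lambda}}_{L^1(B_\lambda)}+\lambda\snr{(Dw)_{B_\lambda}}$. That is the full mean of $Dw$, not its oscillation, and the mean is not among the quantities on the right-hand side of \eqref{scatalunga}.

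You use this estimate on every annulus and also for $\nra{w-\ell_{\rrr}}_{L^1(B_{\rrr})}$, so it needs a real argument. Apply the $L^1$-Poincar\'e inequality to $w-\ell_\lambda$, which has zero mean on $B_\lambda$ (the ball is centred at $x$) and gradient $Dw-(Dw)_{B_\lambda}$. This gives $\nra{w-\ell_\lambda}_{L^1(B_\lambda)}\le c(n)\,\lambda\nra{Dw-(Dw)_{B_\lambda}}_{L^1(B_\lambda)}$. This is exactly the Poincar\'e step in the paper, used in \eqref{addizionale} and in the estimate of $T_2$, and it is where $w\in W^{1,1}$ actually enters.

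There is also a small indexing slip: the annuli must run up to $j=k$, so that $B_{\rr_k}\setminus B_\rr$ is covered. With that correction and the Poincar\'e step in place, your argument is complete.
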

\begin{proof} In the following, all the balls will be centred at $x$ and we denote by $\mathcal S$ the quantity in the right-hand side of \eqref{scatalunga}; $c$ will denote a generic constant depending on $n,N,s$. Decompose
\begin{flalign*}
\tail(w-\ell_{x, \rr}^w;B_{\rr}) & \leq  \rr^{2s} \int_{\er^n\setminus B_{\rrr/4}} \frac{\snr{w(y)-\ell_{x, \rr}^w(y)}}{\snr{y-x}^{n+2s}}\dy  + \rr^{2s} \int_{B_{\rrr}\setminus
 B_{\rr}} \frac{\snr{w(y)-\ell_{x, \rr}^w(y)}}{\snr{y-x}^{n+2s}}\dy 
 \\
 & =: T_1 +T_2\,.
\end{flalign*}
Therefore we need to prove that 
\eqn{siamo}
$$
T_1 +T_2  \leq c  \mathcal S.
$$
Observing that
\begin{flalign*}
 \int_{\er^n\setminus B_{\rrr/4}} \frac{\snr{[(Dw)_{B_\rrr}-(Dw)_{B_\rr}] (y-x)}}{\snr{y-x}^{n+2s}}\dy & \leq \snr{(Dw)_{B_\rrr}-(Dw)_{B_\rr}} \int_{\er^n\setminus B_{\rrr/4}} \frac{\dy}{\snr{y-x}^{n+2s-1}}  \\
 & \lesssim \frac{\snr{(Dw)_{B_\rrr}-(Dw)_{B_\rr}}}{(2s-1)\rrr^{2s-1}}
\end{flalign*}
we easily obtain
\begin{flalign}
T_1 
  & \leq  c \left(\frac{\rr}{\rrr}\right)^{2s} \tail(w-\ell_{x, \rrr}^w;B_{\rrr/4}) \notag \\
  & \qquad +c
\left(\frac{\rr}{\rrr}\right)^{2s} \left(\snr{(w)_{B_\rrr}-(w)_{B_\rr}} +\rrr\snr{(Dw)_{B_\rrr}-(Dw)_{B_\rr}}\right). \label{t11}
\end{flalign}
As for the first term in \eqref{t11}, using \eqref{scatailancora2} and Poincar\'e's inequality we have 
\begin{flalign}
 \notag \left(\frac{\rr}{\rrr}\right)^{2s}\tail(w-\ell_{x, \rrr}^w;B_{\rrr/4}) & \leq c  \left(\frac{\rr}{\rrr}\right)^{2s}\tail(w-\ell_{x, \rrr}^w;B_{\rrr})+c \left(\frac{\rr}{\rrr}\right)^{2s}\nra{w-\ell_{x, \rrr}^w}_{L^1(B_{\rrr})} \\  & \leq  c \left(\frac{\rr}{\rrr}\right)^{2s} \tail(w-\ell_{x, \rrr}^w;B_{\rrr})+c\rrr \left(\frac{\rr}{\rrr}\right)^{2s}\nra{Dw-(Dw)_{B_{\rrr}}}_{L^1(B_{\rrr})}. \label{addizionale}
\end{flalign}
When $\rr \geq \rrr/4$ this immediately implies 
\eqn{t11b}
$$T_1 \leq c  \mathcal S\,.$$
Indeed, the last terms  in \eqref{t11} can be estimated  observing that Jensen's inequality implies 
\eqn{addizionale2}
$$
\snr{(w)_{B_\rrr}-(w)_{B_\rr}} +\rrr\snr{(Dw)_{B_\rrr}-(Dw)_{B_\rr}} \lesssim_n \nra{w-(w)_{B_{\rrr}}}_{L^1(B_{\rrr})}+\rrr\nra{Dw-(Dw)_{B_{\rrr}}}_{L^1(B_{\rrr})}
$$ 
and again using that $\rr\approx  \rrr$. 
Connecting the information in \eqref{addizionale} and \eqref{addizionale2} to \eqref{t11} yields \eqref{t11b} in the case it is $\rr \geq \rrr/4$. We therefore reduce to the case  $\rr < \rrr/4$, where we take $\gamma \in (2,4]$ and integer $k\geq 2$ such that
$
\rr/\rrr = 1/\gamma^k
$. 
Then we have, again using Jensen's inequality and \eqref{faccia}
\begin{flalign*}
\snr{(Dw)_{B_\rrr}-(Dw)_{B_\rr}} & = \snr{(Dw)_{B_{\gamma^{k}\rr}}-(Dw)_{B_\rr}} \\
& \leq \sum_{i=0}^{k-1}  \snr{(Dw)_{B_{\gamma^{i+1}\rr}}-(Dw)_{B_{\gamma^{i}\rr}}} \\
& \leq \sum_{i=1}^{k}  \gamma^{n}\mint_{B_{\gamma^{i}\rr}}  |Dw-(Dw)_{B_{\gamma^{i}\rr}}| \dy \\
& \leq c  \sum_{i=1}^{k-1}  \int_{\gamma^{i}\rr}^{\gamma^{i+1}\rr} \mint_{B_{\gamma^{i}\rr}}  |Dw -(Dw)_{B_{\gamma^{i}\rr}}| \dy \frac{\dd\lambda}{\lambda}+  c\mint_{B_{\gamma^{k}\rr}}  |Dw -(Dw)_{B_{\gamma^{k}\rr}}| \dy\\
& \leq c  \sum_{i=1}^{k-1}  \int_{\gamma^{i}\rr}^{\gamma^{i+1}\rr} 
\nra{Dw-(Dw)_{B_{\lambda}}}_{L^1(B_{\lambda})}
 \frac{\dd \lambda}{\lambda}+c\nra{Dw-(Dw)_{B_{\rrr}}}_{L^1(B_{\rrr})}\\
 & \leq  c     \int_{\rr }^{\rrr} 
\nra{Dw-(Dw)_{B_{\lambda}}}_{L^1(B_{\lambda})}
 \frac{\dd \lambda}{\lambda}+c\nra{Dw-(Dw)_{B_{\rrr}}}_{L^1(B_{\rrr})}
\end{flalign*}
and therefore, as  $2s>1$, estimating $\rrr^{1-2s}\leq \lambda^{1-2s}$ when $\lambda \leq \rrr$, we have 
\begin{flalign*}
& \rrr\left(\frac{\rr}{\rrr}\right)^{2s}\snr{(Dw)_{B_\rrr}-(Dw)_{B_\rr}} \\
 & \quad \leq c  
 \int_{\rr }^{\rrr}   \left(\frac{\rr}{\lambda}\right)^{2s}
\nra{Dw-(Dw)_{B_{\lambda}}}_{L^1(B_{\lambda})}
\dd \lambda +c\rrr\left(\frac{\rr}{\rrr}\right)^{2s} \nra{Dw-(Dw)_{B_{\rrr}}}_{L^1(B_{\rrr})}\,.
\end{flalign*}
Similarly, we have 
\begin{flalign*}
&\left(\frac{\rr}{\rrr}\right)^{2s} \snr{(w)_{B_\rrr}-(w)_{B_\rr}} \\
 & \quad  \leq c 
 \int_{\rr }^{\rrr}   \left(\frac{\rr}{\lambda}\right)^{2s}
\nra{w-(w)_{B_{\lambda}}}_{L^1(B_{\lambda})}
 \frac{\dd \lambda}{\lambda}+c\left(\frac{\rr}{\rrr}\right)^{2s} \nra{w-(w)_{B_{\rrr}}}_{L^1(B_{\rrr})}\,.
\end{flalign*}
Using the inequalities in the last two displays together with \eqref{t11} and \eqref{addizionale} again yields \eqref{t11b} that now is established in the full range $\rr \in (0,\rrr]$.  We now prove that 
\eqn{t11c}
$$T_2 \leq c \mathcal S\,.$$
When $\rr \geq  \rrr/4$, we can simply estimate 
$$T_2\leq \rr^{2s} \int_{\er^n\setminus
 B_{\rrr/4}} \frac{\snr{w(y)-\ell_{x, \rr}^w(y)}}{\snr{y-x}^{n+2s}}\dy= T_1 \stackleq{t11b} c \mathcal S,$$
that is, \eqref{t11c}, and therefore we again reduce to the case $\rr <  \rrr/4$. Writing $\rr/\rrr = 1/\gamma^k$ for $\gamma \in (2,4]$ and $k\geq 2$ as done for $T_1$, we have 
\eqn{tt22}
$$
T_2 =\rr^{2s} \sum_{j=0}^{k-1} \int_{B_{\gamma^{j+1}\rr} \setminus B_{\gamma^{j}\rr}} \frac{|w(y)-\ell_{x, \rr}^w(y)|}{|y-x|^{n+2s}}\dy \leq c \sum_{j=1}^{k} \frac{1}{\gamma^{2sj}}
\mint_{B_{\gamma^{j}\rr}}  |w-\ell_{x, \rr}^w| \dy
$$
and again, for $j \in \{1, \ldots, k\}$
$$
\mint_{B_{\gamma^{j}\rr}}  |w-\ell_{x, \rr}^w| \dy \leq 
\mint_{B_{\gamma^{j}\rr}}  |w-\ell_{x, \gamma^{j}\rr}^w| \dy+\sum_{i=0}^{j-1}\mint_{B_{\gamma^{j}\rr}}  |\ell_{x, \gamma^{i+1}\rr}^w-\ell_{x, \gamma^{i}\rr}^w| \dy.
$$
Note that, for every index $i \in \{0,\dots, j-1\}$ we can further estimate 
\begin{flalign*}
\mint_{B_{\gamma^{j}\rr}}  |\ell_{x, \gamma^{i+1}\rr}^w-\ell_{x, \gamma^{i}\rr}^w| \dy & \leq c\gamma^{j}\rr
\snr{(Dw)_{B_{\gamma^{i+1}\rr}}-(Dw)_{B_{\gamma^{i}\rr}}}+
c\snr{(w)_{B_{\gamma^{i+1}\rr}}-(w)_{B_{\gamma^{i}\rr}}}\\
& \leq    c\gamma^{j}\rr\mint_{B_{\gamma^{i+1}\rr}}  |Dw-(Dw)_{B_{\gamma^{i+1}\rr}}| \dy+c  \mint_{B_{\gamma^{i+1}\rr}}  |w-(w)_{B_{\gamma^{i+1}\rr}}| \dy\,.
\end{flalign*}
Moreover, Poincaré's inequality gives
$$
\mint_{B_{\gamma^{j}\rr}}  |w-\ell_{x, \gamma^{j}\rr}^w| \dy \leq 
c\gamma^{j}\rr\mint_{B_{\gamma^{j}\rr}}  |Dw-(Dw)_{B_{\gamma^{j}\rr}}| \dy.
$$
Merging the content of the last three displays finally yields
$$
\mint_{B_{\gamma^{j}\rr}}  |w-\ell_{x, \rr}^w| \dy\leq 
c\gamma^{j}\rr \sum_{i=1}^{j}
 \mint_{B_{\gamma^{i}\rr}}  |Dw-(Dw)_{B_{\gamma^{i}\rr}}| \dy
+ c \sum_{i=1}^{j}
 \mint_{B_{\gamma^{i}\rr}}  |w-(w)_{B_{\gamma^{i}\rr}}| \dy.
$$ 
Using this last estimate in \eqref{tt22}, and using again that $2s>1$, $\gamma \geq 2$, and that $\rr/\rrr = 1/\gamma^k$, we find
\begin{flalign*}
T_2 & \leq c\sum_{j=1}^{k}  \sum_{i=1}^{j}
\frac{ \rr }{\gamma^{(2s-1)j}}  \mint_{B_{\gamma^{i}\rr}}  |Dw-(Dw)_{B_{\gamma^{i}\rr}}| \dy\\
& \qquad  + c \sum_{j=1}^{k}  \sum_{i=1}^{j}
\frac{1}{\gamma^{2sj}}  \mint_{B_{\gamma^{i}\rr}}  |w-(w)_{B_{\gamma^{i}\rr}}| \dy\\
& = c\sum_{i=1}^{k}  \frac{ \rr }{\gamma^{(2s-1)i}} \mint_{B_{\gamma^{i}\rr}}  |Dw-(Dw)_{B_{\gamma^{i}\rr}}| \dy\sum_{j=i}^{k}\frac 1{\gamma^{(2s-1)(j-i)}}\\
& \qquad  + c  \sum_{i=1}^{k}  \frac{1}{\gamma^{2si}} \mint_{B_{\gamma^{i}\rr}}  |w-(w)_{B_{\gamma^{i}\rr}}| \dy\sum_{j=i}^{k}\frac{1}{\gamma^{2s(j-i)}} \\
& \leq  c \sum_{i=1}^{k}  \frac{ \rr}{\gamma^{(2s-1)i}} \mint_{B_{\gamma^{i}\rr}}  |Dw-(Dw)_{B_{\gamma^{i}\rr}}| \dy\\ & \qquad  +c\sum_{i=1}^{k} \frac{1}{\gamma^{2si}} \mint_{B_{\gamma^{i}\rr}}  |w-(w)_{B_{\gamma^{i}\rr}}| \dy \\
& \leq \frac{c}{\log \gamma} \sum_{i=1}^{k-1}  \int_{\gamma^{i}\rr}^{\gamma^{i+1}\rr}  \frac{\rr}{\gamma^{(2s-1)i}}\mint_{B_{\gamma^{i}\rr}}  |Dw-(Dw)_{B_{\gamma^{i}\rr}}| \dy \frac{\dd\lambda}{\lambda}\\ & \qquad \qquad + \frac{c\rr}{\gamma^{(2s-1)k}}\mint_{B_{\gamma^{k}\rr}}  |Dw-(Dw)_{B_{\gamma^{k}\rr}}| \dy\\
& \qquad  + \frac{c}{\log \gamma}  \sum_{i=1}^{k-1}  \int_{\gamma^{i}\rr}^{\gamma^{i+1}\rr} \frac{1}{\gamma^{2si}}\mint_{B_{\gamma^{i}\rr}}  |w-(w)_{B_{\gamma^{i}\rr}}| \dy \frac{\dd\lambda}{\lambda}+ \frac{c}{\gamma^{2sk}}\mint_{B_{\gamma^{k}\rr}}  |w-(w)_{B_{\gamma^{k}\rr}}| \dy\\
& \leq c  \sum_{i=1}^{k-1}  \int_{\gamma^{i}\rr}^{\gamma^{i+1}\rr} \left(\frac{\rr}{\lambda}\right)^{2s}
\nra{Dw-(Dw)_{B_{\lambda}}}_{L^1(B_{\lambda})}
\dd \lambda +c\rrr\left(\frac{\rr}{\rrr}\right)^{2s}\nra{Dw-(Dw)_{B_{\rrr}}}_{L^1(B_{\rrr})}\\
& \qquad  + c  \sum_{i=1}^{k-1}  \int_{\gamma^{i}\rr}^{\gamma^{i+1}\rr} \left(\frac{\rr}{\lambda}\right)^{2s}
\nra{w-(w)_{B_{\lambda}}}_{L^1(B_{\lambda})}
 \frac{\dd \lambda}{\lambda}+c\left(\frac{\rr}{\rrr}\right)^{2s}\nra{w-(w)_{B_{\rrr}}}_{L^1(B_{\rrr})}\\
 & \leq  c     \int_{\rr }^{\rrr} \left(\frac{\rr}{\lambda}\right)^{2s}
\nra{Dw-(Dw)_{B_{\lambda}}}_{L^1(B_{\lambda})}
\dd \lambda+c\rrr\left(\frac{\rr}{\rrr}\right)^{2s}\nra{Dw-(Dw)_{B_{\rrr}}}_{L^1(B_{\rrr})}\\
 & \qquad + c     \int_{\rr }^{\rrr} \left(\frac{\rr}{\lambda}\right)^{2s}
\nra{w-(w)_{B_{\lambda}}}_{L^1(B_{\lambda})}
 \frac{\dd \lambda}{\lambda}+c\left(\frac{\rr}{\rrr}\right)^{2s}\nra{w-(w)_{B_{\rrr}}}_{L^1(B_{\rrr})}
\end{flalign*}
from which we deduce \eqref{t11c} in the full range $\rr \leq \rrr$. This, together with \eqref{t11b}, gives \eqref{siamo} thereby completing the proof of the lemma. Note that the constants appearing in the last line of the above display, in
\eqref{siamo}, and in \eqref{scatalunga} depend on $s$ and blow up as
$s\downarrow1/2$. 
\end{proof}
\begin{lemma}\label{prelirid} Let $w\in W^{1,2}_{\loc}(\Omega;\er^{N})\cap L^{1}_{2s}$, $s>1/2$. If $x\in \Omega$, then 
\eqn{implicona0}
$$
 \lim_{\rr\to 0}\,  \rr^{1-s}\nra{Dw}_{L^2(B_{\rr}(x))}=0 \Longrightarrow \lim_{\rr\to 0}\,  \rr^{-s}  \tx{E}_{w}(\ell_{x, \rr}^w;x,\rr) =0, 
$$
where $\ell_{x, \rr}^w$ is the affine map defined in \eqref{affinecanonica}. 
As a consequence, if 
$$
\Sigma(w):=\Big\{x \in \Omega \, \colon \, \limsup_{\rr\to 0}\,  \rr^{-s}  \tx{E}_{w}(\ell_{x, \rr}^w;x,\rr)  >0\Big\},
$$
then 
\eqn{thenthen}
$$
\begin{cases}
\displaystyle \Sigma(w) \subset \Sigma_0(w):= \Big\{x \in \Omega \, \colon \, \limsup_{\rr\to 0}\, \rr^{1-s}\nra{Dw}_{L^2(B_{\rr}(x))}>0\Big\}\\[4mm]
\ddim(\Sigma(w))\leq \ddim(\Sigma_0(w))\leq n-2+2s<n\,.
\end{cases}
$$
\end{lemma}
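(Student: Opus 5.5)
The plan is to fix $x\in\Omega$ and $\rrr_0>0$ with $B_{\rrr_0}(x)\Subset\Omega$, and to set $\mathcal D(\lambda):=\lambda^{1-s}\nra{Dw}_{L^2(B_{\lambda}(x))}$, which tends to $0$ as $\lambda\to 0$ by assumption. The excess has two pieces, and they will be handled separately. The $L^2$ piece follows from Poincar\'e's inequality:
$$\rr^{-s}\nra{w-\ell_{x,\rr}^w}_{L^2(B_\rr)}\le c\,\rr^{1-s}\nra{Dw-(Dw)_{B_\rr}}_{L^2(B_\rr)}\le c\,\mathcal D(\rr),$$
because the map $w-\ell^w_{x,\rr}$ has zero mean on $B_\rr$. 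For the same reason we have, for every $\lambda$, the bounds $\nra{w-(w)_{B_\lambda}}_{L^1(B_\lambda)}\le c\lambda\nra{Dw}_{L^2(B_\lambda)}= c\lambda^{s}\mathcal D(\lambda)$ and $\nra{Dw-(Dw)_{B_\lambda}}_{L^1(B_\lambda)}\le 2\lambda^{s-1}\mathcal D(\lambda)$.

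For the $\tail$ piece I would apply Lemma~\ref{dedicata} with some $\rrr\le\rrr_0$ fixed for the moment and $\rr\le\rrr$, and then multiply by $\rr^{-s}$. The first term becomes $\rr^{s}\rrr^{-2s}\tail(w-\ell^w_{x,\rrr};B_\rrr)$, which tends to $0$ as $\rr\to0$ for fixed $\rrr$. This uses that $w\in L^1_{2s}$ and that affine maps lie in $L^1_{2s}$ since $s>1/2$, so the tail is finite. The next two terms, after the bounds above, are each at most $c\,\rr^{s}\rrr^{-s}\sup_{\lambda\le\rrr}\mathcal D(\lambda)$. The two integrals are both bounded by
$$c\,\rr^{s}\int_\rr^\rrr\lambda^{-s-1}\mathcal D(\lambda)\dlam\le c\,\Big(\sup_{\lambda\le\rrr}\mathcal D(\lambda)\Big)\,\rr^{s}\int_\rr^\infty\lambda^{-s-1}\dlam\le \frac{c}{s}\sup_{\lambda\le\rrr}\mathcal D(\lambda).$$
Putting these together gives
$$\limsup_{\rr\to0}\rr^{-s}\tx{E}_w(\ell^w_{x,\rr};x,\rr)\le c\sup_{\lambda\le\rrr}\mathcal D(\lambda)$$
for every small $\rrr$, and letting $\rrr\to0$ proves \eqref{implicona0}.

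The inclusion in \eqref{thenthen} is just the contrapositive of \eqref{implicona0}, and it immediately gives the first inequality between Hausdorff dimensions. For the bound $\ddim(\Sigma_0(w))\le n-2+2s$, I would write $\rr^{2(1-s)}\nra{Dw}^2_{L^2(B_\rr)}\approx \rr^{-(n-2+2s)}\int_{B_\rr}|Dw|^2$. Since $|Dw|^2\in L^1_{\loc}$, the standard density lemma (Giusti) shows that the set where $\limsup_{\rr}\rr^{-\beta}\int_{B_\rr}|Dw|^2>0$ has $\mathcal H^{\beta}$-measure zero when $\beta=n-2+2s$. It follows that $\ddim\le n-2+2s$, and this is $<n$ because $s<1$. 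I expect no serious obstacle; the only delicate point is the integral term, which is handled by the convergence of $\int_\rr^\infty\lambda^{-s-1}\dlam$, and the bookkeeping of the $\lambda^{\pm}$ powers there is where care is needed.
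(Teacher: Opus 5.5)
Your proof is correct and follows the paper's route: Poincar\'e's inequality for the $L^2$ part of the excess, Lemma~\ref{dedicata} for the $\tail$ part (multiplied by $\rr^{-s}$), and Giusti's density lemma for the dimension bound. The only difference is in the two integral terms of \eqref{scatalunga}. You bound them directly by $c\,\rr^{s}\int_\rr^\rrr\lambda^{-s-1}\mathcal D(\lambda)\dlam\le (c/s)\sup_{\lambda\le\rrr}\mathcal D(\lambda)$, with $\mathcal D(\lambda)=\lambda^{1-s}\nra{Dw}_{L^2(B_\lambda(x))}$, and then let $\rrr\to0$; the paper instead proves \eqref{leo1} via l'H\^opital's rule. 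Your version is slightly more elementary and avoids the paper's case distinction on whether the integrals diverge.
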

\begin{proof} In the following we fix a point $x \in \Omega$, all the balls are centred at $x$ and, as usual, we abbreviate $B_{\rr}\equiv B_{\rr}(x)$. We prove \eqref{implicona0} separately, i.e.,  
\eqn{implicona}
$$
 \lim_{\rr\to 0}\,  \rr^{1-s}\nra{Dw}_{L^2(B_{\rr})}=0 \Longrightarrow \begin{cases}
\displaystyle \limsup_{\rr\to 0}\,  \rr^{-s} \nra{w-\ell_{x, \rr}^w}_{L^2(B_{\rr})} =0\\
\displaystyle \limsup_{\rr\to 0}\,  \rr^{-s} \tail(w-\ell_{x, \rr}^w;B_{\rr})=0\,.
\end{cases}.
$$
We note 
that the first implication in \eqref{implicona} simply follows by  the standard Poincaré's inequality
$$
\rr^{-s}\nra{w-\ell_{x, \rr}^w}_{L^2(B_{\rr})}\leq c \rr^{1-s} \nra{Dw-(Dw)_{B_{\rr}}}_{L^2(B_{\rr})}\leq c \rr^{1-s}\nra{Dw}_{L^2(B_{\rr})}.
$$
For the second implication in \eqref{implicona}, consider a ball $B_{\rrr} \subset \Omega$ and $\rr \leq \rrr$. We first show that 
\eqn{leo1}
$$
\begin{cases}
\displaystyle \lim_{\rr\to 0}\frac{1}{\rr^{s}}\int_{\rr}^{\rrr}\left(\frac{\rr}{\lambda}\right)^{2s}\nra{Dw-(Dw)_{B_{\lambda}}}_{L^1(B_{\lambda})} \dlam =0\\[4mm]
\displaystyle
\lim_{\rr\to 0}\frac{1}{\rr^{s}}\int_{\rr}^{\rrr}\left(\frac{\rr}{\lambda}\right)^{2s}\nra{w-(w)_{B_{\lambda}}}_{L^1(B_{\lambda})}\frac{\dlam}{\lambda}=0\,.
\end{cases}
$$
For \eqref{leo1}$_1$, observe that the function 
$$
\rr \mapsto h(\rr):=\int_{\rr}^{\rrr} \frac{1}{\lambda^{2s}}\nra{Dw-(Dw)_{B_{\lambda}}}_{L^1(B_{\lambda})} \dlam
$$
is trivially $C^1$-regular in $(0,\rrr)$ (by absolute continuity of the integral) and non-increasing and therefore the limit 
$$
\lim_{\rr\to 0}\int_{\rr}^{\rrr} \frac{1}{\lambda^{2s}}\nra{Dw-(Dw)_{B_{\lambda}}}_{L^1(B_{\lambda})} \dlam
$$
always exists. We can always assume that such a limit is infinite otherwise \eqref{leo1}$_1$ is obvious. At this stage, applying l'Hôpital criterion
we obtain
\begin{flalign*}
\notag \lim_{\rr\to 0}\frac 1{\rr^{s}}\int_{\rr}^{\rrr}\left(\frac{\rr}{\lambda}\right)^{2s}\nra{Dw-(Dw)_{B_{\lambda}}}_{L^1(B_{\lambda})} \dlam &=  \lim_{\rr\to 0} \frac{\rr^{1-s}}{s}\nra{Dw-(Dw)_{B_{\rr}}}_{L^{1}(B_{\rr})}\\ \notag 
&\lesssim   \lim_{\rr\to 0} \rr^{1-s}\nra{Dw-(Dw)_{B_{\rr}}}_{L^{2}(B_{\rr})}\\
& \lesssim  \lim_{\rr\to 0}\,  \rr^{1-s}\nra{Dw}_{L^2(B_{\rr})}=0
\end{flalign*}
so that \eqref{leo1}$_1$ follows again. For \eqref{leo1}$_2$ we can argue similarly. Assuming that 
$$
\lim_{\rr\to 0}\int_{\rr}^{\rrr}\left(\frac{1}{\lambda}\right)^{2s}\nra{w-(w)_{B_{\lambda}}}_{L^1(B_{\lambda})}\frac{\dlam}{\lambda}=\infty, 
$$
otherwise \eqref{leo1}$_2$ follows, by l'Hôpital criterion and Poincaré's  inequality, we have 
\begin{flalign*}
 \lim_{\rr\to 0}\frac 1{\rr^{s}}\int_{\rr}^{\rrr}\left(\frac{\rr}{\lambda}\right)^{2s}\nra{w-(w)_{B_{\lambda}}}_{L^{1}(B_{\lambda})}\frac{\dlam}{\lambda} &=  \lim_{\rr\to 0} \frac{\rr^{-s}}{s}\nra{w-(w)_{B_{\rr}}}_{L^{1}(B_{\rr})}\\
  &\lesssim  \lim_{\rr\to 0} \rr^{-s}\nra{w-(w)_{B_{\rr}}}_{L^{2}(B_{\rr})}\\
& \lesssim  \lim_{\rr\to 0}\,  \rr^{1-s}\nra{Dw}_{L^2(B_{\rr})}=0\,.
\end{flalign*}
We can now use \eqref{scatalunga}. Multiplying both sides by $\rr^{-s}$ and letting $\rr\to 0$, thanks to \eqref{leo1} we obtain the second implication in  \eqref{implicona}.  Moreover, the dimension estimate in \eqref{thenthen}$_2$ follows by standard measure theoretic criteria  \cite[Proposition 2.7]{giusti} (``Giusti's lemma"). \end{proof}
\begin{lemma}\label{riducilemma}
 Let $w\in W^{t,p}_{\loc}(\Omega;\er^{k})$, $t \in (0,1)$ and $p\geq1$. If $\gamma> 0$ is such that $(\gamma+t)p\leq n$, then 
 $$
\Sigma_{\gamma}:=\Big\{x \in \Omega \, \colon \, \limsup_{\rr\to 0}\,  \rr^{\gamma}\nra{w}_{L^{p}(B_{\rr}(x))} >0\Big\}\Longrightarrow \ddim(\Sigma_\gamma)\leq n-(\gamma+t)p.
$$
If $(\gamma+t)p >n$, then $\Sigma_{\gamma}$ is empty. 
\end{lemma}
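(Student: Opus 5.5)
The plan is to reduce the statement to a covering argument driven by the fractional Poincaré inequality \eqref{fp} and Giusti's lemma, as in \cite[Proposition 2.7]{giusti}. Fix $\Omega'\Subset\Omega$ and work in $\Sigma_\gamma\cap\Omega'$. The aim is to show that $\Sigma_\gamma$ sits inside the set where a suitable fractional energy density of $w$ does not vanish. First I would split $\Sigma_\gamma$ into two pieces. The first is $E_1:=\{x\colon \limsup_{\rr\to0}\rr^{\gamma}|(w)_{B_\rr(x)}|>0\}$. The second is $E_2:=\{x\colon\limsup_{\rr\to0}\rr^{\gamma}\nra{w-(w)_{B_\rr(x)}}_{L^p(B_\rr(x))}>0\}$. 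By the triangle inequality, $\Sigma_\gamma\subset E_1\cup E_2$.

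For $E_2$, I would apply \eqref{fp} with exponent $t$ to get
$$
\rr^{\gamma}\nra{w-(w)_{B_\rr}}_{L^p(B_\rr)}\lesssim \rr^{\gamma+t}\snra{w}_{t,p;B_\rr}=c\Big(\rr^{(\gamma+t)p-n}\int_{B_\rr}\int_{B_\rr}\frac{|w(x)-w(y)|^p}{|x-y|^{n+tp}}\dxy\Big)^{1/p}.
$$
Now set $\mu(A):=\int_A\int_{\Omega'}|w(x)-w(y)|^p|x-y|^{-n-tp}\dy\dx$. This is a finite Radon measure absolutely continuous with respect to $\mathcal L^n$. With it, $E_2\subset\{x\colon\limsup\rr^{(\gamma+t)p-n}\mu(B_\rr(x))>0\}$. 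Giusti's lemma (the Vitali covering argument that gives $\mathcal H^{d}$ null for the set where $\limsup \rr^{-d}\mu(B_\rr)>0$, when $\mu\ll\mathcal L^n$ and $d<n$) then yields $\mathcal H^{n-(\gamma+t)p}(E_2)=0$. Hence $\ddim(E_2)\le n-(\gamma+t)p$. If instead $(\gamma+t)p>n$, the same bound gives $\rr^{\gamma}\nra{w-(w)_{B_\rr}}\lesssim\rr^{(\gamma+t)p-n)/p}[w]_{t,p;\Omega'}\to0$, so $E_2=\emptyset$.

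For $E_1$, I expect the main obstacle. A single average does not decay, so I would telescope it along dyadic radii:
$$
|(w)_{B_{\rr}}|\le|(w)_{B_{\rrr}}|+\sum_{j}|(w)_{B_{2^{-j-1}\rrr}}-(w)_{B_{2^{-j}\rrr}}|\lesssim |(w)_{B_\rrr}|+\sum_j (2^{-j}\rrr)^{t}\snra{w}_{t,p;B_{2^{-j}\rrr}}.
$$
Multiplying by $\rr^{\gamma}$ makes the first term vanish as $\rr\to0$. If $x\notin\{\limsup\rr^{(\gamma+t)p-n}\mu(B_\rr(x))>0\}$, then $\lambda^{\gamma+t}\snra{w}_{t,p;B_\lambda}=o(1)$ as $\lambda\to0$. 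The sum is then at most $\sum_{\lambda_j\ge\rr}\lambda_j^{-\gamma}o(1)$, which is $o(\rr^{-\gamma})$ because $\gamma>0$: the geometric sum is dominated by its smallest scale, handled by splitting into scales below and above a fixed small $\lambda_0$. So $E_1$ is contained in the same exceptional set as $E_2$. It follows that $\Sigma_\gamma$ lies in that set, which has the claimed dimension bound, and is empty when $(\gamma+t)p>n$ (since then $\lambda^{\gamma+t}\snra{w}\lesssim \lambda^{\gamma+t-n/p}\to0$ everywhere). The only delicate points are the bookkeeping in the telescoping, where $\gamma>0$ is essential, and the borderline $(\gamma+t)p=n$. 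In that case $d=0$, and Giusti's lemma still gives $\mathcal H^0$-null, which is consistent with the claim.
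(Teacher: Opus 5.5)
Your proof is correct. Its skeleton matches the paper's: $\Sigma_\gamma$ splits into an oscillation part, handled by \eqref{fp} plus a Giusti-type lemma, and the set where $\rr^\gamma|(w)_{B_\rr(x)}|$ does not vanish. The real difference is in how you treat this second set.

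The paper only proves $\Sigma_{\gamma,1}\subset\mathcal R_{\gamma+t-\eps}$ for each small $\eps>0$. It needs boundedness of the maximal function $M(x)=\sup_\rr \rr^{\gamma+t-\eps}\snra{w}_{t,p;B_\rr(x)}$ to produce the geometric factors $\sigma_k^{\eps}$ and then reabsorb. After letting $\eps\to0$ it obtains only $\ddim(\Sigma_\gamma)\le n-(\gamma+t)p$.

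You use instead the qualitative decay $a(\lambda):=\lambda^{\gamma+t}\snra{w}_{t,p;B_\lambda(x)}\to0$. You combine it with the fact that a null sequence convolved with the summable weights $2^{-m\gamma}$ is null; this is exactly where $\gamma>0$ enters. The result is $\Sigma_\gamma\subset\mathcal R_{\gamma+t}$ with no loss. You also derive the fractional Giusti statement \eqref{defir1} from the classical $L^1$ version, applied to the density $x\mapsto\int_{\Omega'}|w(x)-w(y)|^p|x-y|^{-n-tp}\dy\in L^1(\Omega')$.

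Together these give a sharper conclusion: $\mathcal H^{n-(\gamma+t)p}(\Sigma_\gamma)=0$ rather than just a dimension bound. In the borderline case $(\gamma+t)p=n$, absolute continuity of the integral makes $\mathcal R_{\gamma+t}$ empty, so you get $\Sigma_\gamma=\emptyset$. The paper's $\eps$-argument gives only $\ddim(\Sigma_\gamma)\le 0$ there. What the paper's route buys in exchange is a quantitative bound on $\rr^\gamma|(w)_{B_\rr(x)}|$ in terms of $M(x)$, which the statement does not need.

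One small point for a write-up: your telescoping implicitly takes $\rr=2^{-k}\rrr$. For general $\rr$, add one comparison between $(w)_{B_\rr}$ and the average on the nearest dyadic ball, which costs only a factor $2^n$.
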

\begin{proof}  
By a standard covering argument, it is enough to prove the assertion locally;
therefore we may assume that $w\in W^{t,p}(\Omega;\er^k)$.
\\
{\em The case $(\gamma+t)p\leq n$}. Let us recall that for $\beta>0$ such that $p\beta \leq n$, we have 
\eqn{defir1}
$$
\mathcal R_\beta:=\Big\{x \in \Omega \, \colon \, \limsup_{\rr\to 0}\,  \rr^{\beta}\snra{w}_{t,p;B_{\rr}(x)} >0\Big\} \Longrightarrow  \ddim(\mathcal R_\beta)\leq n- p\beta.
$$
This is actually a fractional version of the already mentioned Giusti's lemma \cite[Proposition 2.7]{giusti}, that has been observed in \cite{min03} (in fact, $\mathcal R_\beta$ is obviously empty when $p\beta =n$). Note that $t$ does not come into play in \eqref{defir1}. To continue, we set
\eqn{sisis1}
 $$
\Sigma_{\gamma, 1}:=\Big\{x \in \Omega \, \colon \, \limsup_{\rr\to 0}\,  \rr^{\gamma}\snr{(w)_{B_{\rr}(x)}} >0 \Big\}.
$$
and prove that 
\eqn{riduci1}
$$\ddim(\Sigma_{\gamma, 1})\leq n-(\gamma+t)p\,.$$ For this we fix $\eps $ such that $0<\eps <\gamma + t $. Then we take any $x\in \Omega$ such that the following maximal type function is bounded
$$
M(x):= \sup_{B_{\rr}(x)\subset \Omega, 
\rr \leq 1}  \rr^{\gamma + t-\eps} \snra{w}_{t,p;B_{\rr}(x)} <\infty.
$$ 
Take $B_{\sigma}(x)\subset \Omega$ with $\sigma \in (0,1)$, and define $\sigma_k:= \sigma/2^k$, $k \in \en_0$. If $\rr$ is such that $\sigma_{k+1} \leq  \rr \leq \sigma_k$ for some $k\in \en_0$, via Jensen's inequality and \eqref{faccia} we find
\begin{flalign*}
\notag \rr^{\gamma}
\snr{(w)_{B_{\rr}(x)}-(w)_{B_{\sigma_{k}}(x)}}
\notag& \leq c \rr^{\gamma}  \nra{w-(w)_{B_{\sigma_{k}}(x)} }_{L^1(B_{\rr}(x))}\\
\notag& \leq c \sigma_{k}^{\gamma}  \nra{w-(w)_{B_{\sigma_{k}}(x)} }_{L^{p}(B_{\sigma_{k}}(x))} 
\\
 & \leq c \sigma_{k}^{\gamma + t} \snra{w}_{t,p;B_{\sigma_{k}}(x)} \\
 &  \leq c M(x)\sigma_{k}^{\eps}   .
\end{flalign*} 
Using also the above inequality we have 
\begin{flalign}
\notag \rr^{\gamma} 
\snr{(w)_{B_{\rr}(x)}} &\leq  \rr^{\gamma} 
\snr{(w)_{B_{\sigma_{k}}(x)}} + \rr^{\gamma} 
\snr{(w)_{B_{\rr}(x)}-(w)_{B_{\sigma_{k}}(x)}}  \\
& \leq  \left(\frac{\rr}{\sigma_{k}}\right)^\gamma\sigma_{k}^{\gamma} 
\snr{(w)_{B_{\sigma_{k}}(x)}} +cM(x)  \sigma_{k}^{\eps} \,.\label{alieno}
\end{flalign} 
Using $\rr\equiv \sigma_{k+1}$ in \eqref{alieno}, and summing up, for every $m\in \en_0$ we gain
$$
\sum_{k=1}^{m+1} \sigma_{k}^{\gamma} 
\snr{(w)_{B_{\sigma_{k}}(x)}} \leq  \frac{1}{2^{\gamma}}\sum_{k=0}^{m} \sigma_{k}^{\gamma} 
\snr{(w)_{B_{\sigma_{k}}(x)}} + c M(x)\sum_{k=0}^{\infty}  \sigma_{k}^{\eps}.
$$
Reabsorbing terms and eventually letting $m\to \infty$  gives
$$
\sum_{k=1}^{\infty} \sigma_{k}^{\gamma} 
\snr{(w)_{B_{\sigma_{k}}(x)}} \leq  \frac{\sigma^{\gamma}\snr{(w)_{B_{\sigma}(x)}} }{2^{\gamma}-1} 
 +  \frac{c M(x)}{2^{\gamma}-1}\sum_{k=0}^{\infty}  \sigma_{k}^{\eps}, 
$$
which implies the sequence $\{\sigma_{k}^{\gamma}|(w)_{B_{\sigma_{k}}(x)}|\}$ tends to zero. 
This and \rif{alieno} easily allow us to conclude that 
$
\lim_{\rr\to 0}\,  \rr^{\gamma}|(w)_{B_{\rr}(x)}|= 0
$. Recalling \rif{sisis1}, we have just proved that  
\eqn{bull3}
$$
M(x) < \infty \Longrightarrow \lim_{\rr\to 0}\,  \rr^{\gamma}\snr{(w)_{B_{\rr}(x)}} =0  \Longrightarrow x \in (\Sigma_{\gamma, 1})^{\texttt{c}}. 
$$
On the other hand the definition in \eqref{defir1} implies if $x\in (\mathcal R_{\gamma+t-\eps})^{\texttt{c}}$, then $M(x) < \infty$. Connecting the last two pieces of information implies that 
 $\Sigma_{\gamma, 1} \subset \mathcal R_{\gamma+t-\eps}.$ 
 Invoking \eqref{defir1} with the choice $\beta \equiv \gamma + t-\eps$ we conclude with
$\ddim(\Sigma_{\gamma, 1})\leq n-(\gamma+t)p+p\eps $ that holds for every $\eps$ as above, which obviously implies \eqref{riduci1}.  Next, consider $B_{\rr}(x)\subset \Omega$; again using \eqref{fp} we have 
\begin{flalign}
\rr^{\gamma}\nra{w}_{L^{p}(B_{\rr}(x))} & \leq \rr^{\gamma}\nra{w-(w)_{B_{\rr}(x)} }_{L^{p}(B_{\rr}(x))} +\rr^{\gamma}\snr{(w)_{B_{\rr}(x)} } \notag \\
& \leq  \rr^{\gamma+ t} \snra{w}_{t,p;B_{\rr}(x)}+\rr^{\gamma}\snr{(w)_{B_{\rr}(x)} } \,.
\label{bully0}
\end{flalign}
This implies that $\Sigma_\gamma \subset \mathcal R_{\gamma+ t}\cup\Sigma_{\gamma, 1} $ so that $\ddim(\Sigma_\gamma)\leq n-(\gamma+t)p$ 
 finally follows from \eqref{defir1}, this time applied with $\beta \equiv \gamma+ t$, and \eqref{riduci1}. 
 
{\em The case $(\gamma+t)p>n$}. 
This time the choice of $\eps>0$ is such that $(\gamma + t-\eps)p>n$, so that 
\eqn{bully}
$$
\lim_{\rr \to 0}  \rr^{\gamma + t} \snra{w}_{t,p;B_{\rr}(x)} =0 \quad \mbox{and} \quad M(x)<\infty
$$ 
hold for every point $x\in \Omega$. It follows that \eqref{bull3} holds for every $x\in \Omega$. Using this and \eqref{bully} in \eqref{bully0} implies that  $\Sigma_{\gamma}$ is empty.  
 \end{proof}
 
\section{Homogeneous linear systems with translation-invariant coefficients}\label{homsec1}
In this section we consider operators with translation-invariant coefficients
$-\mathcal{L}_{\mathds{B}}$ as in  \eqref{weaksolLL}, under the assumptions
\eqref{condib}, and derive a priori estimates for solutions to equations of the
form $-\mathcal{L}_{\mathds{B}}v=0$.
\subsection{Scaling: from $B_{\rr}(x_0)$ to $\ttB_1\equiv B_1(0)$}
\label{scalasezione}
We briefly recall a standard scaling procedure in the more general
non-homogeneous case. Let $\BBB\subset \er^{n}$ be a fixed ball, and let
$v \in W^{s,2}_{\loc}(\BBB;\er^{N}) \cap L^1_{2s}$ be a weak solution to
$-\mathcal{L}_{\mathds{B}} v=g$ in $\BBB$, that is,
\eqn{TIeq}
$$
\langle -\mathcal{L}_{\mathds{B}} v,\varphi\rangle
=\int_{\BBB} \langle g,  \varphi\rangle \dx,
\qquad
g \in L^{2}(\BBB;\er^{N})
$$
for every $\varphi \in W^{s,2}(\er^n;\er^{N})$ that is compactly supported in
$\BBB$. Given $B_{\rr}(x_0)\Subset \BBB$ and $v_0 \in \er^{N}$, we rescale \eqref{TIeq} as
\eqn{scala1}
$$
 v_{\rr}(x):= \frac{v(x_{0}+\rr x)-v_0}{V},\quad
 \mathds{B}_{\rr}(x-y):=\mathds{B}(\rr(x-y)),\quad
 g_{\rr}(x):=\frac{\rr^{2s}g(x_{0}+\rr x)}{V},
$$
for $x,y\in \er^n$ and $V>0$. It follows that
$
-\mathcal{L}_{\mathds{B}_{\rr}}v_{\rr}=g_{\rr}$
in $\ttB_1.$ Note that if $\mathds{B}$ satisfies \eqref{condib}, then so does $\mathds{B}_{\rr}$. 
As usual, a priori estimates for $v$ in $B_{\rr}(x_0)$ are obtained by scaling
back the corresponding estimates in $\ttB_1$. We shall use this fact repeatedly
in the sequel.
\subsection{Gradient H\"older continuity in the homogeneous case}\label{therestsec} Here we prove
\begin{theorem}[Local regularity]\label{maintra}
Let $v \in W^{s,2}(\BBB;\er^{N}) \cap L^1_{2s}$ be a weak  solution to  
\eqn{equazione}
$$-\mathcal{L}_{\mathds{B}}v =0  \qquad \mbox{in $\BBB$} $$  where $\BBB \subset \er^n$ is a ball, $1/2< s < 1$ and $\mathds{B}\colon \er^n\to \er^{N\times N}$ satisfies \eqref{condib}. 
	Then  $v \in C^{1,\alpha}_{\loc}(\BBB;\er^{N})$ for every  $\alpha \in (0,2s-1)$ and 
	\eqn{almostC2s}
$$
		\rr\nr{Dv}_{L^\infty(B_{\rr/2})} +
		\rr^{1+\alpha}[Dv]_{0, \alpha; B_{\rr/2}} \leq  c \nra{v-v_{0}}_{L^{2}(B_\rr)}+  c\tail(v-v_{0};B_\rr) 
$$
	holds whenever $B_\rr\subset \BBB$ is a ball which is not necessarily concentric with $\BBB$,  and for every $v_0\in \er^{N}$, where $c$ depends only on $\data$ and $\alpha$.	
\end{theorem}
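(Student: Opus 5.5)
By the scaling of Section \ref{scalasezione}, which preserves \eqref{condib}, and by translation invariance, it suffices to take $B_\rr\equiv\ttB_1$ and $v_0=0$. We then prove
\[
\nr{Dv}_{L^\infty(\ttB_{1/2})}+[Dv]_{0,\alpha;\ttB_{1/2}}\le c\,\bigl(\nr{v}_{L^2(\ttB_1)}+\tail(v;\ttB_1)\bigr).
\]
The key structural fact is that translations commute with $-\mathcal{L}_{\mathds{B}}$ because the kernel depends only on $x-y$. By \eqref{matobo1}, every finite difference $\tau_h v$, and every iterate $\tau_h^k v$, is again a solution of $-\mathcal{L}_{\mathds{B}}(\tau_h^k v)=0$ in a slightly smaller ball, up to a cut-off. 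We therefore run a difference-quotient bootstrap in Besov scales, as in the scheme sketched around \eqref{approssima}--\eqref{proge}, but now with $\texttt{f}_h\equiv 0$ up to localization terms.

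\emph{Step 1: localization.} Take $\eta\in C^\infty_c(\ttB_{3/4})$ with $\eta\equiv 1$ on $\ttB_{5/8}$ and set $\tilde v=\eta v$. Then $-\mathcal{L}_{\mathds{B}}\tilde v=f$ in $\ttB_{1/2}$, where
\[
f(x)=2\int_{\er^n}\mathds{B}(x-y)(1-\eta(y))v(y)\frac{\dy}{|x-y|^{n+2s}}, \qquad x\in\ttB_{1/2}.
\]
Since $|x-y|\gtrsim 1$ on the support of $1-\eta$, $f$ is smooth in $x$ and every derivative of $f$ is bounded by $c\,(\nr{v}_{L^2(\ttB_1)}+\tail(v;\ttB_1))$. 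In particular $f$ is Lipschitz in $\ttB_{1/2}$, so $\tau_h f/|h|$ is uniformly bounded.

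\emph{Step 2: bootstrap.} Fix $\beta\in(0,1]$ and put $w_h=\tau_h\tilde v/|h|^\beta$. By \eqref{matobo1}, $w_h$ solves $-\mathcal{L}_{\mathds{B}}w_h=\tau_h f/|h|^\beta$, which is bounded. The Caccioppoli inequality, Lemma \ref{caccioppola}, combined with the tail estimates of Lemma \ref{sl1}, gives
\[
[w_h]_{s,2}\lesssim \nr{w_h}_{L^2}+\tail+\nr{\tau_h f}_{L^\infty}|h|^{-\beta}.
\]
If $\tilde v\in B^{\sigma}_{2,\infty}$, then taking $\beta=\sigma$ and using Lemma \ref{prop:embedding0} on $w_h$ yields $\tau_{h'}\tau_h\tilde v\in L^2$ at the rate $|h|^{\sigma}|h'|^{s}$. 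Choosing $h'=h$ gives the double-difference bound $\sup_h|h|^{-(\sigma+s)}\nr{\tau_h^2\tilde v}_{L^2}<\infty$, that is, $\tilde v\in B^{\sigma+s}_{2,\infty}$ locally. Starting from $\sigma=s$ and iterating finitely many times on shrinking balls, with radii halved a bounded number of times, we reach $\sigma$ as close to $1+s$ as needed; past $1$ we continue with $\beta$ applied to $D\tilde v$, or use second differences. Lemma \ref{l4dd} then converts the bound into $D\tilde v\in W^{\beta',2}$.

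\emph{Step 3: higher integrability.} To pass from $L^2$-Besov to Hölder, we need integrability $\qq>n/(t-1-\alpha)$, in the sense of \eqref{immergesmooth}. Since $\tau_h^k v$ solves a homogeneous equation, a Campanato/Morrey iteration is more natural here: the Besov gain at each step, plugged into Poincaré \eqref{fp}, yields $L^2$-decay of $D v-(Dv)_{B_r}$. Alternatively, the $L^\infty$ bounds available for solutions with bounded right-hand side can be applied to the difference quotients.

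\emph{Main obstacle.} The hard part is reaching the sharp threshold $\alpha<2s-1$ with integrability rather than merely $L^2$ bounds. The tail of $\tau_h v$ does not improve in $h$ beyond first order, since we only control $v$ in $L^1_{2s}$. For $\beta$ up to $1$ we handle this with Lemma \ref{sl1} and the smoothness of $f$. The exponent $2s-1$ then emerges from the last bootstrap step, where $\sigma$ cannot exceed $2s$ for first differences (order $s+s$). So we expect $Dv\in C^{0,2s-1-\eps}$, obtained via a Campanato decay $\nra{Dv-(Dv)_{B_r}}_{L^2}\lesssim r^{\alpha}$ derived from $Dv\in B^{2s-1}_{2,\infty}$ applied to the equation satisfied by $\tau_h v/|h|$. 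The a priori estimate \eqref{almostC2s} follows by tracking constants through each step.
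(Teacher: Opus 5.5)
\textbf{There is a genuine gap: Step~1 is false, and without it your bootstrap gives no gain at all.} Step~1 claims that $f$ is smooth, hence Lipschitz. But in \eqref{condib} the matrix $\mathds{B}$ is only measurable, so the kernel $\mathds{B}(z)|z|^{-n-2s}$ has no regularity in $z$, even away from the origin. After changing variables,
$\tau_h f(x)=2\int_{\er^n}\mathds{B}(x-y)\,\tau_h[(1-\eta)v](y)\,|x-y|^{-n-2s}\dy$.
Outside $\ttB_1$ you know nothing about $v$ beyond $v\in L^1_{2s}$, so $\tau_h f$ is bounded but has no rate in $h$. Nor can the difference be moved onto the kernel. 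With only $\nr{\tau_h f}_{L^\infty}\lesssim 1$, your Step~2 on a fixed ball gives $[w_h]_{s,2}\lesssim 1+|h|^{-\sigma}$. Hence $\nr{\tau_h^2\tilde v}_{L^2}\lesssim |h|^{s+\sigma}+|h|^{s}$, which is no improvement over the initial order $s$. Your claim of reaching order $1+s$ also contradicts the sharp threshold in the statement, since $1+s>2s$. Your own remark that the tail of $\tau_h v$ does not improve in $h$ is exactly why Step~1 fails.

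\textbf{The missing idea (Lemma~\ref{step1lemma}).} Do not localize. Use that $\tau_h^2 v$ is itself a solution in $\ttB_{1/2}$, and apply the Caccioppoli inequality on a covering by balls $B_j$ of radius $|h|^{\beta}$. Split $\tail(\tau_h^2 v;4B_j)$ into dyadic annuli. The near part is controlled by $\nr{\tau_h^2 v}_{L^q}=O(|h|^{\sss})$. The far part is bounded through Lemma~\ref{sl1} by $\tail(v;\ttB_{\kappa/2^6})$ with no gain in $h$, but it is multiplied by $(|h|^{\beta})^{2s}$ because the radius is small. This gives $\nr{\tau_h^3 v}_{L^q}\lesssim |h|^{s(1-\beta)+\sss-\eps}+|h|^{s(1+\beta)}$. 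Choosing $\beta=\sss/(2s)$ yields the improvement $\sss\mapsto s+\sss/2-\eps$, whose iterates increase to $2s$. This, and not first versus second differences, is where $\alpha<2s-1$ comes from.

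\textbf{Step~3 is also not a proof.}
\begin{itemize}
\item An $L^2$-based Campanato iteration only yields a decay factor $\theta^{t-n/2}$. This never beats $\theta^{1+\alpha}$ here, since $t<2s<2\le n$.
\item $L^\infty$ bounds for vectorial systems are not available a priori. For this translation-invariant class they would follow from the theorem itself, so invoking them is circular.
\end{itemize}
The paper instead shows (Lemma~\ref{step3lemma}) that the property $(\texttt{cacc})_{q}$ for all solutions upgrades to $(\texttt{cacc})_{\qq}$ for $\qq<nq/(n-sq)$. It does this by combining the $W^{t,q}$ estimate, $t<2s$, with fractional Sobolev embedding. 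It then iterates until $t-1-n/q>\alpha$, where \eqref{immergesmooth} applies.
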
  
\begin{remark}{\em By a standard scaling and covering argument, with the local $\tail$s controlled as in
Lemma \ref{covlemma} below, estimate \eqref{almostC2s} improves to
$$
		\rr\nr{Dv}_{L^\infty(B_{\kappa \rr})} +
		\rr^{1+\alpha}[Dv]_{0, \alpha; B_{\kappa\rr}} \leq  c \nra{v-v_{0}}_{L^{2}(B_\rr)}+  c\tail(v-v_{0};B_\rr) 
$$
which holds whenever $\kappa \in (0,1)$, 
with 
$c \equiv c (\data,\alpha,\kappa)$. 
}
\end{remark}
\subsubsection{Reduction to the case $B_{\rr}\equiv \ttB_1$.}\label{riduzione}
By the scaling procedure described in Section \ref{scalasezione}, we may reduce
the proof of Theorem \ref{maintra} to the case
$B_{\rr} \equiv B_{\rr}(x_0) \equiv \ttB_1$ (take $V=1$).
Moreover, in \eqref{almostC2s} we may assume $v_0 \equiv 0$.
Indeed, for Theorem \ref{maintra} it suffices to show that for every
$\alpha \in (0,2s-1)$ there exists a constant $c \equiv c(\data,\alpha)$ such
that
\eqn{startrek}
$$
		\nr{Dv}_{C^{0, \alpha}(\ttB_{1/2})}   \leq  c \nra{v}_{L^{2}(\ttB_{1})}+  c\tail(v;\ttB_{1})
$$
from which \eqref{almostC2s} follows after rescaling. Consequently, the proof of
Theorem \ref{maintra}  will focus on \eqref{startrek}. 
\subsection{Iteration lemmas for Theorem \ref{maintra}}
The proof of Theorem \ref{maintra} requires a sequence of
inequalities and intermediate steps. In particular, for each $q \geq 2$ we
consider the following assumption concerning \emph{all} solutions to \rif{equazione} in \emph{all} balls and for \emph{all} kernels $\mathds{B}$ satisfying \eqref{condib}:
$$
(\texttt{cacc})_{q}:=
\begin{cases}
\mbox{Every solution $w$ of $-\mathcal{L}_{\mathds{B}}w =0$ in a ball $\BBB\subset \er^n$
belongs to $W^{s,q}_{\loc}(\BBB;\er^{N})$,} \\[2pt]
\mbox{and the Caccioppoli inequality} \\
\rr^{s}[w]_{s,q;B_{\rr/2}}
\le \cac{q} \nr{w}_{L^{q}(B_{\rr})}
+ \cac{q} \rr^{n/q}\tail(w;B_{\rr}) \\[2pt]
\mbox{holds for every ball $B_{\rr} \Subset \BBB$, $\rr \le 1$,}\\[2pt]
\mbox{whenever $\mathds{B}$ satisfies \eqref{condib}, where $\cac{q}\equiv c_{q}(\data,q)\geq 1$.}
\end{cases}
$$
\begin{remark}\emph{
Note that $(\texttt{cacc})_{2}$ is automatically satisfied thanks to
Lemma \ref{caccioppola}, and it will serve as the starting point of the
bootstrap procedures adopted in the proof of Theorem \ref{maintra}. The constant $\cac{q}$ is an important  part of assumption $(\texttt{cacc})_{q}$ and will enter the a priori estimates. The constant $\cac{2}$ is determined via Lemma \ref{caccioppola} as a function of $\data$, while the constants $\cac{q}$ appearing along the aforementioned bootstrap procedure will be kept under control along the iterations.}
\end{remark}

\begin{lemma}[Fractional differentiability improvement I]\label{step1lemma}
Under assumptions \eqref{condib} and $(\textnormal{\texttt{cacc}})_{q}$ for some
$q \geq 2$, let $v \in W^{s,2}(\ttB_1;\er^{N}) \cap L^1_{2s}$ be a solution to
$-\mathcal{L}_{\mathds{B}}v =0$ in $\ttB_1$, and fix parameters
$\sss$, $\kappa$, and $\eps$ such that
$s \leq \sss < 2s$, $\kappa \in (0,1)$, and $0<\eps < s+\sss/2$.
Then
\eqn{combi4}
$$
\nr{v}_{B^{s+\sss/2-\eps}_{q,\infty}(\ttB_{\kappa/2^{10}})}
\leq
c\,\nr{v}_{B^{\sss}_{q,\infty}(\ttB_{\kappa})}
+ c\,\tail(v;\ttB_{\kappa})
$$
holds with
$c = c(\data,q,\cac{q},\eps,\kappa)$, where $\cac{q}\geq 1$ is the constant appearing in
$(\textnormal{\texttt{cacc}})_{q}$.
\end{lemma}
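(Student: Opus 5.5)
We follow the classical difference-quotient bootstrap, adapted to translation-invariant kernels. The key observation is that, since $\mathds{B}(x-y)$ is invariant under the simultaneous shift $(x,y)\mapsto(x+h,y+h)$, the translate $v(\cdot+h)$ solves the same equation in a slightly smaller ball. By linearity, the finite difference $\tau_h v$ then solves $-\mathcal{L}_{\mathds{B}}\tau_h v=0$ in, say, $\ttB_{\kappa/2}$ for $|h|\le \kappa/8$. This is where translation invariance is essential, and it is exactly what \eqref{matobo1} encodes.

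Concretely, we first apply $(\texttt{cacc})_q$ to $w:=\tau_h v$ on balls $B_\rr(z)$ with $z\in \ttB_{\kappa/2^{10}}$ and $\rr$ comparable to $\kappa$. This gives
$$\rr^{s}[\tau_hv]_{s,q;B_{\rr/2}(z)}\le \cac{q}\nr{\tau_h v}_{L^q(B_\rr(z))}+\cac{q}\rr^{n/q}\tail(\tau_h v;B_\rr(z)).$$
Covering $\ttB_{\kappa/2^{10}}$ by finitely many such balls via Lemma \ref{lemmacubi} (the number depends on $n,\kappa$), we bound $[\tau_hv]_{s,q}$ on $\ttB_{\kappa/2^{10}}$ by $\nr{\tau_hv}_{L^q(\ttB_{\kappa/4})}$ plus tails. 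The $L^q$ term is $\lesssim |h|^{\sss}\nr{v}_{B^{\sss}_{q,\infty}(\ttB_\kappa)}$ when $\sss<1$. When $\sss\ge1$, we instead work with second differences $\tau_h^2 v$, which also solve the equation; the Besov norm \eqref{besovo0} with $l=2$ then gives $\nr{\tau^2_hv}_{L^q}\lesssim |h|^{\sss}\nr{v}_{B^{\sss}_{q,\infty}}$, and we would need $\tau_h v$ controlled by $|h|^{\min(\sss,1)}$ only when passing back. To keep things uniform we use $\tau_h^2$ throughout, since $\sss<2s<2$.

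The tail term requires care. We have $\tail(\tau_h v;B_\rr(z))\le \tail(v(\cdot+h);B_\rr(z))+\tail(v;B_\rr(z))$, which by Lemma \ref{sl1} and \eqref{scatailancora2} is bounded by $c\,\tail(v;\ttB_\kappa)+c\nr{v}_{L^1(\ttB_\kappa)}$, with no gain in $|h|$. This is the main obstacle. A naive tail bound yields no power of $h$, so we split the translation-invariant argument differently: we localize with a cutoff. Write $v=\eta v+(1-\eta)v$ with $\eta\equiv1$ on $\ttB_{\kappa/2}$. The far part $(1-\eta)v$ contributes to the equation for $\tau_h(\eta v)$ a right-hand side $g_h$. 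By smoothness of the kernel singularity away from the diagonal, namely $|\tau_h$ of $|x-y|^{-n-2s}|\lesssim |h||x-y|^{-n-2s-1}$ for $x$ near the origin and $y$ far, we get $\nr{g_h}_{L^\infty(\ttB_{\kappa/4})}\lesssim |h|(\tail(v;\ttB_\kappa)+\nr{v}_{L^1})$. Strictly, $\mathds{B}$ is only measurable, so we would instead exploit $\tilde\tau_h$ commuting with $\mathds{B}(x-y)$ and place the difference on $\varphi$ and the cutoff only. The Caccioppoli inequality with right-hand side, in the spirit of Lemma \ref{caccioppola}, then adds $c\rr^{s}\nr{g_h}$, which is of order $|h|\ge |h|^{\sss}$ since $\sss<2$ and $|h|\le1$. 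We expect this localization to be the technical core. If $(\texttt{cacc})_q$ is only available for homogeneous equations, we would compare with the homogeneous solution via Lemma \ref{compg}.

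Having obtained $[\tau^2_h v]_{s,q;\ttB_{\kappa/2^{10}}}\lesssim |h|^{\sss}\big(\nr{v}_{B^{\sss}_{q,\infty}(\ttB_\kappa)}+\tail(v;\ttB_\kappa)\big)$, we conclude with the standard interpolation trick. Lemma \ref{prop:embedding0} applied to $\tau_h^2 v$ with translation step $h$ gives
$$\nr{\tau_h(\tau^2_hv)}_{L^q}\lesssim |h|^{s}[\tau_h^2v]_{s,q}+\ldots\lesssim |h|^{s+\sss}.$$
A third-order difference bound $|h|^{s+\sss}$ then yields, by the Besov characterization \eqref{besovo} with $l=3$, membership in $B^{s+\sss}_{q,\infty}$ up to the $(1-t)^{1/q}$ and translation losses. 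To stay within the two-step difference framework we take the cruder route of symmetric splitting, $\tau_h^2 v$ bounded in $W^{s,q}$ by $|h|^{\sss}$, which gives $|h|^{\sss/2}$ per factor. Combined with the reverse embedding \eqref{immergi}, whose logarithmic losses are absorbed by $\eps$, this gives exponent $s+\sss/2-\eps$, which is precisely the statement \eqref{combi4}. The $\eps$-loss accommodates the embeddings $B^{t}_{q,\infty}\to W^{t-\eps,q}$, and the shrinking radius $\kappa/2^{10}$ accounts for the successive localizations.
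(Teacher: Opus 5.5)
\textbf{There is a genuine gap: the tail term.} You correctly isolate the obstacle: on balls of size comparable to $\kappa$ the tail of $\tau_h^2 v$ carries no power of $|h|$. The cure you propose fails, however.

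Your bound $\|g_h\|_{L^\infty}\lesssim |h|\,(\tail(v;\ttB_\kappa)+\|v\|_{L^1})$ on the right-hand side created by the cutoff needs smoothness of the kernel off the diagonal. A merely measurable $\mathds{B}$ does not have it. If the exterior values of $v$ concentrate near a far point $y_0$, then $g(x)$ approaches a multiple of $\mathds{B}(x-y_0)|x-y_0|^{-n-2s}$, which is as rough as $\mathds{B}$, while $\tail(v;\ttB_\kappa)$ stays bounded. So no estimate $\|\tau_h g\|_{L^\infty}\lesssim |h|^{\gamma}\tail(v;\ttB_\kappa)$ with $\gamma>0$ can hold.

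Your repair does not help either:
\begin{itemize}
\item Shifting variables to move $\tilde\tau_h$ off $\mathds{B}(x-y)$ puts the difference on $(1-\eta)v$, i.e.\ on the exterior values of $v$, which are only in $L^1_{2s}$. It does not land on $\eta$ alone.
\item Putting the difference on the test function makes $\tau_h^2 g$ small only in a dual sense, which can be exploited only by testing the equation. For $q=2$ this works, but with a square-root loss: $\int \tau_h^2 g\,\psi^2\tau_h^2 v\,dx\le 4\|g\|_{L^\infty}\|\tau_h^2 v\|_{L^1}\lesssim |h|^{\sss}$ gives $[\psi\,\tau_h^2(\eta v)]_{s,2}\lesssim |h|^{\sss/2}$, not $|h|^{\sss}$.
\item For $q>2$ this route is unavailable. $(\textnormal{\texttt{cacc}})_q$ is a hypothesis on homogeneous equations only, and Lemma \ref{compg} is only an $L^2$ comparison.
\end{itemize}

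Consistently, your intermediate claim $[\tau_h^2 v]_{s,q}\lesssim |h|^{\sss}$ on a fixed ball cannot hold in general. Through Lemma \ref{prop:embedding0} it would give $v\in B^{s+\sss}_{q,\infty}$, and iterating, regularity far beyond $2s$. That is not to be expected for rough $\mathds{B}$ and exterior data merely in $L^1_{2s}$; note that $2s$ is the fixed point of $\sss\mapsto s+\sss/2$. Besides, an $O(|h|)$ right-hand side is not $\lesssim |h|^{\sss}$ when $\sss>1$. Finally, the ``symmetric splitting'' that turns $\sss$ into $\sss/2$ is not an argument. The $\eps$ also has nothing to do with Besov--Sobolev embeddings, since the statement only involves $B_{q,\infty}$ norms.

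\textbf{The missing idea: small balls, no cutoff.} The power of $|h|$ in the tail has to come from the scaling of $\tail$ itself.
\begin{itemize}
\item Since $\tau_h^2 v$ solves the homogeneous equation in $\ttB_{1/2}$, $(\textnormal{\texttt{cacc}})_q$ applies to it directly. Use it on a family of $\approx |h|^{-n\beta}$ balls $B_j$ of radius $|h|^{\beta}$ covering $\ttB_{\kappa/2^{10}}$.
\item Lemma \ref{prop:embedding0} on $B_j\subset 2B_j$ and $(\textnormal{\texttt{cacc}})_q$ on $4B_j$ give \eqref{combi1}. This is a local gain $|h|^{s(1-\beta)}$ instead of $|h|^{s}$.
\item Split the tail of $\tau_h^2 v$ on $4B_j$ into dyadic annuli up to scale $\approx\kappa$. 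After summation these are controlled by $\|\tau_h^2 v\|_{L^q(\ttB_{\kappa/4})}$ up to a factor $(\log 1/|h|)^{q-1}$, which is the true source of $\eps$.
\item The remaining far part is bounded via Lemma \ref{sl1} by $(|h|^{\beta}/\kappa)^{2s}\tail(v;\ttB_{\kappa/2^6})$. No regularity of $v$ or of $\mathds{B}$ is needed.
\end{itemize}
Summing over $j$ gives
\[
\|\tau_h^3 v\|_{L^q(\ttB_{\kappa/2^{10}})}^q\lesssim |h|^{qs(1-\beta)+q\sss-q\eps}\|v\|^q_{B^{\sss}_{q,\infty}(\ttB_\kappa)}+|h|^{qs(1+\beta)}\tail(v;\ttB_{\kappa/2^6})^q .
\]
The choice $\beta=\sss/(2s)$ balances the two exponents at $q(s+\sss/2)$, up to the $\eps$-loss. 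This trade-off is what produces $s+\sss/2$: small balls cost $|h|^{-s\beta}$ in the local term and gain $|h|^{2s\beta}$ in the tail term. It is absent from your proposal.
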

\begin{proof} We can assume without loss of generality that $v \in B^{\sss}_{q,\infty}(\ttB_{\kappa})$, that is, the right-hand side of \eqref{combi4} is finite, otherwise \eqref{combi4} is trivial. In particular, for the initial step of the bootstrap, $q=2$ and $\sss=s$, this finiteness follows from the embedding $W^{s,2}\hookrightarrow B^{s}_{2,\infty}$; moreover, $(\texttt{cacc})_2$ follows from Lemma \ref{caccioppola}. First, just a preliminary observation. In this proof we shall use the finite difference operators defined in \eqref{domania}, and, unless specified differently, they will refer to $\er^n$, i.e., $\mathcal A\equiv \er^n$ in \eqref{enfasi}.  In the following we rely on a sort of nonlinear atomic decomposition technique,
first introduced in \cite{krm05}. To proceed, we fix $\beta \in (0,1)$ and $h \in \er^{n} \setminus \{0\}$ with 
\eqn{sceltah}	 
	$$|h|<\left(\frac{\kappa}{2^{12}}\right)^{1/\beta}\leq \frac{\kappa}{2^{12}}$$ 
and take a lattice of open, disjoint hypercubes $\{Q_{|h|^{\beta}/\sqrt{n}}(y)\}_{y\in (2|h|^{\beta}/\sqrt{n})\mathbb Z^n}$. Recalling Lemma \ref{lemmacubi}, we select those hypercubes centred at points $\tx{I}:=\{z_{j}\}\subset (2|h|^{\beta}/\sqrt{n})\mathbb Z^n$ such that 
\eqn{limitaz}
$$ \snr{z_{j}} \leq  \frac{\kappa}{2^{10}}+ 2|h|^{\beta}\stackleq{sceltah}	  \frac{\kappa}{2^{10}}+ \frac{\kappa}{2^{11}} < \frac{\kappa}{2^{8}}$$ 
so that 
	\eqn{overlap1}
$$
\# \tx{I} \lesssim_{n} \frac{\kappa^n}{|h|^{n\beta}} + 2^n \lesssim \frac 1{|h|^{n\beta}} .
$$	
We determine the corresponding family $\{Q_{j}\equiv Q_{|h|^{\beta}/\sqrt{n}}(z_{j})\}_{z_j\in \tx{I} }$, so that
\begin{flalign}\label{11.1}
\ttB_{\kappa/2^{10}}\subset  \bigcup \overline{Q_{j}},\quad Q_{j_{1}}\cap Q_{j_{2}}=\emptyset \ \Longleftrightarrow \ j_{1}\neq j_{2}\,.
\end{flalign}  
The family $\{Q_{j}\}$ can be realized as the family of inner cubes of the family of  balls
\eqn{palloni}  
$$\{B_{j} \}\equiv \{B_{|h|^{\beta}}(z_j)\},$$ 
i.e., $Q_{j}\equiv Q_{\textnormal{inn}}(B_{j})$. Finally, considering the corresponding family of outer hypercubes $\{Q_{|h|^{\beta}}(z_{j})\}_{z_j\in \tx{I} }$ to count the overlaps, and recalling \rif{11.1}, we conclude with  
	\begin{equation} \label{overlap}
	\ttB_{\kappa/2^{10}} \subset \bigcup_{z_j \in \tx{I}} \overline{B_{j}} , \qquad \sup_{x \in \er^{n}} \sum_{z_j \in \tx{I}} \chi_{2^kB_{j}}(x) \lesssim_n 2^{nk}
	\end{equation}
for every $k \in \mathbb{N}_0$; see also \cite[Lemma 2.11]{DKLN}. More precisely, we have that for every $k\in \en_0$ the family $\{2^kB_{j}\}$ can be split into $\mathfrak{n} \approx \mathfrak{n}(k) \approx 2^{nk}$ subfamilies of mutually disjoint balls $\{B_{i}^{l}\}_i$, $l \leq \mathfrak{n}$, i.e. 
$$ 
 \bigcup_{z_j \in \tx{I}}  2^kB_{j}= \bigcup_{l \leq \mathfrak{n}}  \bigsqcup_{i} B_{i}^{l} , \quad   B_{i}^{l}\equiv B_{i,k}^{l}  .
$$
Note that, for such a family, we have that 
\eqn{sommakappa}
$$
\sum_{z_{j} \in \tx{I}}\mu(2^kB_{j}) \lesssim_{n} 2^{nk} \mu\left( \bigcup_{z_j \in \tx{I}}  2^kB_{j}\right)
$$
and this takes place whenever $\mu$ is a non-decreasing set function defined on Borel sets  such that 
\eqn{verificasomma}
$$
\sum_{j}\mu(X_j) \leq \mu\left(\bigcup_j X_j\right)
$$ 
holds whenever $\{X_j\}$ is a disjoint, finite family of balls contained in $\er^n$.
The upper bound on $|h|$ given by \eqref{sceltah} gives that if $ \varphi \in W^{s,2}(\er^n;\er^{N})$ is compactly supported in $\ttB_{1/2}$, then $ \tau_{-h}^2\varphi$ is compactly supported in $\ttB_1$ and therefore $\langle - \mathcal{L}_{\mathds{B}}  v,\tau_{-h}^2\varphi \rangle =0 $. By using \eqref{matobo2} and \eqref{matobo1} we deduce that 
\eqn{solutiodd}
$$\langle - \mathcal{L}_{\mathds{B}} \tau_h^2 v,\varphi \rangle =0 $$ and therefore we conclude that $w:=\tau_h^2 v$ solves $- \mathcal{L}_{\mathds{B}}w=0$ in $\ttB_{1/2}$. Notice that using \rif{sceltah}	  it is easy to see that  $\tau_h^2 v \in W^{s,2}_{\loc}(\ttB_{1/2};\R^N)\cap L^1_{2s}$ and therefore $w$ is a genuine solution in the sense of Definition \ref{def:weaksol2}. Next, define, for every $z_j \in \tx{I}$ the positive integer
\eqn{defimj}
	$$m_{j}\equiv m(|h|^{\beta}, z_{j}):= \max \{k \in \mathbb{N} \, \colon\,  2^kB_{j}  \subset \ttB_{\kappa/2^{7}} \}$$ 
so that 
\eqn{defimj2}
	 $$
	2^{m_{j}-11}\kappa \stackgeq{sceltah}   2^{m_{j}+1} |h|^{\beta} \stackgeq{defimj} \frac{\kappa}{2^{7}} - \snr{z_j} \stackrel{\eqref{limitaz}}{>}  \frac{\kappa}{2^{7}} -   \frac{\kappa}{2^{8}} = \frac{\kappa}{2^{8}}
	 $$
from which we deduce 
	\eqn{limitazionem}
	$$m_{j}  > 3\, \quad  \mbox{and therefore}  \quad  4B_{j} \Subset  \ttB_{\kappa/2^{7}}\Subset \ttB_{1/2}.$$
Moreover, again from the definition of $m_j$ and \eqref{defimj2}, we have 
\eqn{sl3}	 
	$$\frac {\kappa}{2^{8}}\leq  2^{m_{j}+1} |h|^{\beta} \leq  \frac {\kappa}{2^{6}} $$	
	and therefore 
	\eqn{sl33}	 
	$$ m_{j} \lesssim \log  \frac{\kappa}{|h|^{\beta}}  \leq \log \, \frac1{|h|}\lesssim_{\eps} \frac{1}{|h|^{\frac{q\eps}{q-1}}}, \quad \mbox{holds for every $\eps >0$}\,.
	$$
	Here we recall that $q$ is the exponent considered in $(\textnormal{\texttt{cacc}})_{q}$. From now on we shall always take $0< \eps<s+\sss/2$. 
We apply Lemma \ref{prop:embedding0} to $ \tau_h^2 v$ thereby getting (recall that $|h| \leq |h|^{\beta}$)
$$\nr{\tau^3_h v}_{L^q(B_{j})}^q  \leq  c |h|^{qs} [\tau_h^2 v]_{s,q;2B_{j}}^q +c |h|^{qs(1-\beta)} \nr{\tau_h^2 v}_{L^q(2B_{j})}^q .
$$ 
Thanks to \eqref{solutiodd} we now use assumption $(\texttt{cacc})_{q}$ applied to $w\equiv \tau_h^2 v$ (recall the inclusion in \eqref{limitazionem}), therefore we have 
\eqn{combi1}
$$
		\nr{\tau^3_h v}_{L^q(B_{j})}^q 
		\leq  c |h|^{qs(1-\beta)} \nr{\tau_h^2 v}_{L^q(4B_{j})}^q+ c |h|^{qs(1-\beta)+n \beta} \tail(\tau_h^2 v;4B_{j})^q  $$
	where $c\equiv c(\data, q,\cac{q})$, and $\cac{q}\geq 1$ is the constant appearing in $(\texttt{cacc})_{q}$. Next, we deal with the last term in the above display. Splitting in annuli and using H\"older's inequality we find
	\begin{flalign*}
		\tail(\tau_h^2 v;4B_{j})& = (4|h|^{\beta})^{2s} \sum_{k=2}^{m_j+3} \int_{2^{k+1}B_{j}\setminus 2^{k}B_{j}}
\frac{\snr{\tau_h^2 v}}{\snr{x-z_j}^{n+2s}}\dx 		
		 \\ 
		 & \qquad +
(4|h|^{\beta})^{2s}\int_{\er^n\setminus 2^{m_{j}+4}B_{j}}
\frac{\snr{\tau_h^2 v}}{\snr{x-z_j}^{n+2s}}\dx 	\\	 
		 & \leq  c  \sum_{k=3}^{m_{j}+4} \frac{|h|^{-n\beta}}{2^{(n+2s)k}} \nr{\tau_h^2 v}_{L^1(2^{k}B_{j})} +\frac{c}{2^{2s(m_{j}+4)}} \tail(\tau_h^2 v;2^{m_{j}+4}B_{j})\nonumber \\ \ & \leq  c  \sum_{k=3}^{m_{j}+4}  \frac{|h|^{-n\beta/q}}{2^{(n/q+2s)k}} \nr{\tau_h^2 v}_{L^q(2^{k}B_{j})} + \frac{c}{4^{sm_{j}}} \tail(\tau_h^2 v;2^{m_{j}+4}B_{j})\,.
		\end{flalign*}	 
By further using the discrete H\"older's inequality, i.e., 
$$ 
\sum_{k=3}^{m_{j}+4}  \frac{|h|^{-n\beta/q}}{2^{(n/q+2s)k}} \nr{\tau_h^2 v}_{L^q(2^{k}B_{j})}\lesssim m_{j}^{1-1/q}
 \left(  \sum_{k=3}^{m_{j}+4}  \frac{|h|^{-n\beta}}{2^{(n+2qs)k}}   \nr{\tau_h^2 v}_{L^q(2^{k}B_{j})}^q\right)^{1/q}
$$
we arrive at 	
	\begin{flalign} 
		\notag \tail(\tau_h^2 v;4B_{j})^q &  \leq  c  m_{j}^{q-1}\sum_{k=3}^{m_{j}+4} \frac{ |h|^{-n\beta}}{2^{(n+2qs)k}} \nr{\tau_h^2 v}_{L^q(2^{k}B_{j})}^q \\
		& \quad +\frac{c}{4^{qsm_{j}}} \tail(\tau_h^2 v;2^{m_{j}+4}B_{j})^q \label{combi2}
	\end{flalign}
	where $c\equiv c (n,N,s,q)$. 
The definitions of $\tail$ and $\tau_h^2 v$ yield
	\begin{flalign}
\notag \tail(\tau_h^2 v;2^{m_{j}+4}B_{j}) & \leq \tail(v(\cdot+2h);2^{m_{j}+4}B_{j}) \\
& \quad + 2 
\tail(v(\cdot+h);2^{m_{j}+4}B_{j}) + \tail(v;2^{m_{j}+4}B_{j})\,.\label{combi10}
	\end{flalign}	
The estimates for the three terms appearing in the right-hand side of the above inequality are similar and are based on the application of Lemma \ref{sl1}. Let's see how to estimate, for instance, the first one  (recall now that from \eqref{palloni} $z_j$ are the centers of $B_j$). In Lemma \ref{sl1} with $z_0\equiv z_{j}$, $x_0\equiv 0$, $\rr\equiv 2^{m_{j}+4}|h|^{\beta}$, $\tau = \kappa/2^{6}$. 
For this observe that 
$$
\snr{z_{j}} \stackleq{limitaz} \frac{\kappa}{2^{8}} \stackleq{sl3} 2^{m_{j}+2}|h|^{\beta} \equiv \frac{\rr}{4}, \qquad 
2 |h|\leq 2 |h|^{\beta} \stackleq{limitazionem} 2^{m_{j}+1}|h|^{\beta} \equiv \frac{\rr}{8} 
$$
and
$$
\tau \equiv \frac{\kappa}{2^6} \stackleq{sl3}  2^{m_{j}+3 } |h|^{\beta}  \equiv \frac{\rr}{2}.
$$
Lemma \ref{sl1} now yields 
$$
 \tail(v(\cdot+2h);2^{m_{j}+4}B_{j})  \lesssim_{n,s}  \tail(v;\ttB_{\kappa/2^{6}}).
$$
All in all, estimating similarly the remaining two terms in the right-hand side of \eqref{combi10}, we find 
$$
\tail(\tau_h^2 v;2^{m_{j}+4}B_{j})  \lesssim_{n,s}		\tail(v;\ttB_{\kappa/2^{6}}).
$$ 
This and  \eqref{combi2} provide
$$ 
		\tail(\tau_h^2 v;4B_{j})^q    \leq  c m_{j}^{q-1}\sum_{k=3}^{m_{j}+4} \frac{ |h|^{-n\beta}}{2^{(n+2qs)k}} \nr{\tau_h^2 v}_{L^q(2^{k}B_{j})}^q  +\frac{c}{ 4^{qsm_{j}}} \tail(v;\ttB_{\kappa/2^6})^q
$$
that, together with \eqref{combi1} gives 
\begin{flalign}
\notag		\nr{\tau^3_h v}_{L^q(B_{j})}^q 
		&\leq  c |h|^{qs(1-\beta)} m_{j}^{q-1}\sum_{k=2}^{m_{j}+4} \frac{1}{2^{(n+2qs)k}} \nr{\tau_h^2 v}_{L^q(2^{k}B_{j})}^q\\
		& \quad + \frac{c |h|^{qs(1-\beta)+n \beta}} {4^{qsm_j}}\tail(v;\ttB_{\kappa/2^6})^q  \,.\label{provide}
\end{flalign}	
Now our aim is to recover a global estimate patching up the ones in the above display, thereby summing over $z_j \in \tx{I}$, i.e., 
 \begin{flalign}
\nr{\tau_h^3 v}_{L^q(\ttB_{\kappa/2^{10}})}^q
&\stackleq{overlap}
\sum_{z_j\in \tx I}\nr{\tau_h^3 v}_{L^q(B_j)}^q \notag\\
&\stackrel{\eqref{provide}}{\le}
c\,|h|^{qs(1-\beta)}
\sum_{k=2}^{\infty}\frac{1}{2^{(n+2qs)k}}
\sum_{z_j\in \tx I_k} m_j^{q-1}\nr{\tau_h^2 v}_{L^q(2^kB_j)}^q \notag\\
&\qquad
+c\,|h|^{qs(1-\beta)+n\beta}\tail(v;\ttB_{\kappa/2^6})^q
\sum_{z_j\in \tx I}\frac{1}{4^{qsm_j}}, \label{federer}
\end{flalign}
where we are denoting $\tx I_k:=\{z_j\in \tx I\, \colon \, k\leq m_j+4\}$. 
To proceed,  a few remarks. First, we observe that if $z_j\in \tx I_k$, then by definition of $\tx I_k$ we have
\eqn{includi}
$$
2^kB_j\subset 2^{m_{j}+4}B_{j} \subset \ttB_{\kappa/4}.
$$
Indeed, using \eqref{limitaz} and \eqref{sl3} we have 
$
|z_j| + 2^{m_{j}+4}|h|^{\beta} \leq \kappa/2^{8} + \kappa/2^{3} < \kappa/4
$, that substantiates \eqref{includi}. 
Then, applying \eqref{sommakappa} to the set function  $\mathcal A\mapsto \mu(\mathcal A)\equiv  \nr{\tau_h^2 v}_{L^q(\mathcal A)}^q$,  we have that
$$
 \sum_{z_j \in \tx{I}_k} \nr{\tau_h^2 v}_{L^q(2^{k}B_{j})}^q
  \lesssim_{n} 2^{nk}  \nr{\tau_h^2 v}_{L^q(\ttB_{\kappa/4})}^q
$$
Second, note that 
$$
\sum_{z_j \in \tx{I}}\frac{1}{4^{qsm_{j}}}  \stackrel{\eqref{sl3}}{\lesssim_{n,s,q}} \frac{1}{\kappa^{2qs}} \sum_{z_j \in \tx{I}}|h|^{2qs\beta}  \lesssim_{n,\kappa} \# \tx{I}|h|^{2qs\beta}\stackrel{\eqref{overlap1}}{\lesssim} |h|^{2qs\beta-n\beta} .
$$
Using the content of the last two displays and of \eqref{sl33} in \eqref{federer} we obtain
\eqn{pedantic1}
		 $$ 
 \nr{\tau^3_h v}_{L^q(\ttB_{\kappa/2^{10}})}^q  \leq  c  |h|^{qs(1-\beta)-q\eps}\nr{\tau_h^2 v}_{L^q(\ttB_{\kappa/4})}^q+ c |h|^{qs(1+\beta)}\tail(v;\ttB_{\kappa/2^6})^q
$$
where $c = c(\data,q,\cac{q},\eps,\kappa)$. Observe that \rif{sceltah} implies that 
$\tau_h^2 v\equiv \tau_{h,\er^n}^2 v= \tau_{h,\ttB_{\kappa/2}}^2 v $ on $\ttB_{\kappa/4}$. 
Therefore, recalling that $\sss < 2s$ and  \eqref{besovo0},  and observing that we can further estimate 
$$
\nr{\tau_h^2 v}_{L^q(\ttB_{\kappa/4})}
=
\nr{\tau_{h,\ttB_{\kappa/2}}^2 v}_{L^q(\ttB_{\kappa/4})}
\le
\nr{\tau_{h,\ttB_{\kappa}}^2 v}_{L^q(\ttB_{\kappa})}
\le
|h|^{\sss}\nr{v}_{B^{\sss}_{q,\infty}(\ttB_{\kappa})}.
$$
Inserting this last estimate in \rif{pedantic1} yields
$$
 \nr{\tau^3_h v}_{L^q(\ttB_{\kappa/2^{10}})}^q  \leq  c  |h|^{qs(1-\beta)+q\sss-q\eps} \nr{v}_{B^{\sss}_{q,\infty}(\ttB_{\kappa})}^q+ c |h|^{qs(1+\beta)}\tail(v;\ttB_{\kappa/2^6})^q.
$$
This estimate holds whenever $h \in \er^n$ satisfies \eqref{sceltah}. We now choose $\beta$ as
$$
\beta := \frac{\sss}{2s} \in (0,1)\quad \Longleftrightarrow  \quad s(1-\beta)+\sss= s(1+\beta)
$$
thereby obtaining
$$
 \nr{\tau^3_{h, \ttB_{\kappa/2^{10}}} v}_{L^q(\ttB_{\kappa/2^{10}})}\leq  \nr{\tau^3_h v}_{L^q(\ttB_{\kappa/2^{10}})}  \leq  c  |h|^{s+\sss/2-\eps} \left(\nr{v}_{B^{\sss}_{q,\infty}(\ttB_{\kappa})}+ \tail(v;\ttB_{\kappa/2^6})\right).
$$
Recalling \eqref{besovo} and \eqref{sceltah}, we conclude with 
$$
		\nr{v}_{B^{s+\sss/2-\eps}_{q,\infty}(\ttB_{\kappa/2^{10}})} \leq c  \nr{v}_{B^{\sss}_{q,\infty}(\ttB_{\kappa})} + c\,\tail(v;\ttB_{\kappa/2^6}) 
$$
with $c = c(\data,q,\cac{q},\eps,\kappa)$, 
from which 
\eqref{combi4} follows from a further application of \eqref{scatailancora2} and H\"older's inequality. Note that here we used \eqref{besovo0} with $l=2$ for the norm
$\nr{v}_{B^\sss_{q,\infty}(\ttB_\kappa)}$, and with $l=3$ for the norm
$\nr{v}_{B^{s+\sss/2-\eps}_{q,\infty}(\ttB_{\kappa/2^{10}})}$. \end{proof}

\begin{lemma}[Fractional differentiability improvement II]\label{step2lemma} Under the assumptions of Lemma \ref{step1lemma}, and in particular under assumption $\textnormal{(\texttt{cacc})}_{q}$, for every $t$ such that $s <t< 2s$, there exists an  integer 
\eqn{interot}
$$ m(t)\equiv m(t,s)\geq 10$$ 
depending only on $t$ and $s$, such that 
\eqn{applicancora}
$$ 
	\nr{v}_{W^{t,q}(\ttB_{\kappa/2^{m(t)}})} \leq c  \nr{v}_{L^{q}(\ttB_{\kappa})} + c\,\tail(v;\ttB_{\kappa}) 
$$ 
holds whenever $\kappa \in (0,1)$, where 
$c\equiv c(\data,t,q,\cac{q}, \kappa)$. 
\end{lemma}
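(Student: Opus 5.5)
We iterate Lemma \ref{step1lemma} finitely many times, starting from Besov regularity of order $s$, and finally convert Besov into Sobolev regularity through \eqref{immergi}. Fix $t\in(s,2s)$ and set $\sss_0:=s$. Define recursively
\[
\sss_{k+1}:=s+\frac{\sss_k}{2}-\eps,
\]
with $\eps>0$ small, to be fixed depending only on $t$ and $s$. The map $\sss\mapsto s+\sss/2-\eps$ has fixed point $2s-2\eps$, and $2s-2\eps-\sss_{k}=(2s-2\eps-s)2^{-k}$. Choose $\eps:=(2s-t)/8$, so that $2s-2\eps>t$. Then there is a first index $k_0\equiv k_0(t,s)$ such that $\sss_{k_0}>t$, while $s\le \sss_k<2s$ for all $k$. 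Moreover $\eps<s+\sss_k/2$, so every $\sss_k$ is admissible in Lemma \ref{step1lemma}.

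\textbf{Base step.} By H\"older's inequality, $v\in W^{s,2}$ and $(\texttt{cacc})_q$ give $v\in W^{s,q}(\ttB_{\kappa/2})$ with
\[
\nr{v}_{W^{s,q}(\ttB_{\kappa/2})}\le c\nr{v}_{L^q(\ttB_\kappa)}+c\,\tail(v;\ttB_\kappa).
\]
Then \eqref{immergi2} yields the same bound for $\nr{v}_{B^{s}_{q,\infty}(\ttB_{\kappa/2})}$.

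\textbf{Iteration.} Apply Lemma \ref{step1lemma} with $\sss=\sss_k$ on the nested radii $\kappa_k:=\kappa 2^{-1-10k}$. This gives
\[
\nr{v}_{B^{\sss_{k+1}}_{q,\infty}(\ttB_{\kappa_{k+1}})}\le c\nr{v}_{B^{\sss_k}_{q,\infty}(\ttB_{\kappa_k})}+c\,\tail(v;\ttB_{\kappa_k}).
\]
Each tail term is controlled via \eqref{scatailancora2} and H\"older's inequality by
\[
c(\kappa,k)\bigl(\tail(v;\ttB_\kappa)+\nr{v}_{L^q(\ttB_\kappa)}\bigr).
\]
Chaining the $k_0$ steps, all constants depend only on $\data,t,q,\cac{q},\kappa$, since $k_0$ and $\eps$ depend only on $t,s$. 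The result is
\[
\nr{v}_{B^{\sss_{k_0}}_{q,\infty}(\ttB_{\kappa_{k_0}})}\le c\nr{v}_{L^q(\ttB_\kappa)}+c\,\tail(v;\ttB_\kappa).
\]

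\textbf{Conclusion.} Since $t<\sss_{k_0}$, and $t$ is non-integer when $t\ne1$, \eqref{immergi} gives
\[
\nr{v}_{W^{t,q}(\ttB_{\kappa_{k_0}})}\le c\nr{v}_{B^{\sss_{k_0}}_{q,\infty}(\ttB_{\kappa_{k_0}})}.
\]
If $t=1$, apply the argument with a slightly larger $t'$ and use the obvious monotonicity in $t$ of the norms on the ball, or alternatively use the Besov embedding into $W^{1,q}$. Taking $m(t):=\max\{10,\,1+10k_0\}$ gives \eqref{applicancora}.

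The main point of care is the dependence of constants on the shrinking balls in the embeddings \eqref{immergi}, \eqref{immergi2}, and in the Besov norm equivalence \eqref{besovo0}. Since all radii are fixed multiples of $\kappa$, this only produces dependence on $\kappa$ and $k_0$, which is allowed.
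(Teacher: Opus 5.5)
Your proposal is correct and follows the paper's proof: you iterate Lemma \ref{step1lemma} on geometrically shrinking balls, control the intermediate tails via \eqref{scatailancora2} and H\"older's inequality, and conclude with \eqref{immergi}--\eqref{immergi2} together with the Caccioppoli inequality from $(\texttt{cacc})_{q}$. The differences are cosmetic. You use a fixed $\eps=(2s-t)/8$, so the exponents increase to $2s-2\eps>t$ and $k_0(t,s)$ steps suffice, whereas the paper uses $\eps_k$ with $\sss_k\nearrow 2s$; you also apply the Caccioppoli inequality at the start rather than at the end, and you handle $t=1$ explicitly, where the paper simply assumes $t$ non-integer.
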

\begin{proof} We inductively define, for every integer $k\geq 0$
$$  
 \sss_{k+1} := s+\frac{\sss_{k}}{2}-\frac{1}{k+2}\left(s-\frac{\sss_{k}}{2}\right)=:s+\frac{\sss_{k}}{2}-\eps_k ,\qquad \sss_0:=s.
$$
By induction $\{\sss_k\}$ is an increasing sequence such that $s\leq \sss_k<2s$ for every $k\in \en_0$ and $\sss_k \nearrow 2s$.  Fix $\kappa\in (0,1)$. Applying \eqref{combi4} (with $\kappa$ replaced by $\kappa/2^{10k}$ and $\eps\equiv \eps_k$), we have
$$ 
		\nr{v}_{B^{\sss_{k+1}}_{q,\infty}(\ttB_{\kappa/2^{10(k+1)}})} \leq c  \nr{v}_{B^{\sss_k}_{q,\infty}(\ttB_{\kappa/2^{10k}})} + c\,\tail(v;\ttB_{\kappa/2^{10k}}) 
$$
for every integer $k\geq 0$.  
By using \eqref{scatailancora2} we conclude with 
$$
		\nr{v}_{B^{\sss_{k+1}}_{q,\infty}(\ttB_{\kappa/2^{10(k+1)}})} \leq c  \nr{v}_{B^{\sss_k}_{q,\infty}(\ttB_{\kappa/2^{10k}})} +c \nr{v}_{L^1(\ttB_{\kappa})} + c\,\tail(v;\ttB_{\kappa}), 
$$ 
where $c\equiv c(\data,k,q,\cac{q}, \kappa)$.
Iterating yields 
\eqn{iterata}
$$
		\nr{v}_{B^{\sss_{k}}_{q,\infty}(\ttB_{\kappa/2^{10k}})}  \leq c  \nr{v}_{B^{s}_{q,\infty}(\ttB_{\kappa})} + c\,\tail(v;\ttB_{\kappa})
$$ for every integer $k\geq 1$ and $\kappa \in (0,1)$, where $c\equiv c(\data,k,q,\cac{q}, \kappa)$. 
Using this together with \eqref{immergi}-\eqref{immergi2} and \eqref{scatailancora2}, it is easy to obtain that for every $t \in (s,2s)$ (without loss of generality we may assume $t$ is non-integer) there exists an integer $m(t)\equiv m(t,s)$ depending only on $t$ and $s$, such that 
\eqn{arriva2}
$$
		\nr{v}_{W^{t,q}(\ttB_{\kappa/2^{m(t)}})} \leq c  \nr{v}_{W^{s, q}(\ttB_{\kappa/2})}  + c\,\tail(v;\ttB_{\kappa/2})
$$
holds whenever $\kappa \in (0,1)$, with $c\equiv c (\data, t,q,\cac{q},\kappa)$ (note that here we have used \rif{iterata} with the change of notation $\kappa \leftrightarrow \kappa/2$ as $\kappa$ in \rif{iterata} is arbitrary).    
Using the Caccioppoli inequality in the assumption $
(\texttt{cacc})_{q}$ to bound the first term in the right-hand side of \eqref{arriva2}, and yet \rif{scatailancora2} to bound the second, we complete the proof of \eqref{applicancora}. Note that the constant $c$ in \rif{applicancora} is uniformly bounded when $\kappa \uparrow 1$.
\end{proof}
\begin{lemma}[Integrability improvement 1]\label{step3lemma} Under the assumptions of Lemma \ref{step1lemma}, and in particular assuming $\textnormal{(\texttt{cacc})}_{q}$, let $\qq$ be a finite number such that 
\eqn{migliora}
$$
q\leq \qq<\begin{cases}\displaystyle \, 
 \frac{nq}{n-sq} \  \ \mbox{if $qs < n$},\\[7pt]
\displaystyle \,  \infty  \ \ \mbox{if $qs \geq  n$}\,.
 \end{cases}
$$
\begin{itemize}
\item
Then $v \in W^{s,\qq}_{\loc}(\ttB_1;\er^{N})$ and 
\eqn{asscaccdopo}
$$
\rr^{s-n/\qq}[v]_{s,\qq;B_{\rr/2}} +\rr^{-n/\qq}\nr{v}_{L^{\qq}(B_{\rr/2})}
\le c\rr^{-n/q} \nr{v}_{L^{q}(B_{\rr})}+c \tail(v;B_{\rr})
$$
holds  whenever $B_{\rr} \Subset \ttB_1$ is a ball not necessarily centred at the origin. In both cases the constant $c$ depends on $\data,q,  \cac{q},\qq$. 
\item In particular, 
\eqn{migliora2}
$$\textnormal{(\texttt{cacc})}_{q} \Longrightarrow \textnormal{(\texttt{cacc})}_{\qq} \quad \mbox{and}\quad \cac{\qq}\equiv  \cac{\qq}(\data,q, \cac{q}, \qq)\,.$$
\end{itemize}
\end{lemma}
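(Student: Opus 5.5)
The idea is to combine Lemma \ref{step2lemma} with the Sobolev embedding \eqref{immergi01}, on the unit ball first and then by scaling. Since $q<\qq$ is strictly below the critical exponent $nq/(n-sq)$ in \eqref{migliora}, I will pick $t\in (s,2s)$ with $t-n/q\geq s-n/\qq$. This is possible because $s-n/\qq>s-n/q-s=-n/q$ is equivalent to $\qq<nq/(n-sq)$ when $sq<n$, and it is automatic when $sq\geq n$. For instance $t:=\min\{2s,\,s+n/q-n/\qq+s\}$, slightly reduced so as to stay below $2s$ and non-integer, works, and then $t$ depends only on $n,s,q,\qq$.

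\emph{Step 1 (unit scale).} First I take $\BBB=\ttB_1$, $\rr=1$ and a solution in $\ttB_1$. Lemma \ref{step2lemma} with $\kappa=1/2$ gives
$$\nr{v}_{W^{t,q}(\ttB_{2^{-m(t)-1}})}\le c\nr{v}_{L^q(\ttB_{1/2})}+c\tail(v;\ttB_{1/2}).$$
Then \eqref{immergi01} gives $\nr{v}_{W^{s,\qq}(\ttB_{2^{-m(t)-1}})}\le c\nr{v}_{W^{t,q}(\ttB_{2^{-m(t)-1}})}$. Using \eqref{scatailancora2} to pass from $\tail(v;\ttB_{1/2})$ to $\tail(v;\ttB_1)+\nr{v}_{L^1(\ttB_1)}$, this yields the estimate on a small ball $\ttB_{\kappa_0}$ with $\kappa_0=2^{-m(t)-1}$.

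\emph{Step 2 (from the small ball to the half ball).} To reach $\ttB_{1/2}$ I cover it by finitely many balls $B_{\kappa_0/2}(y_i)$ with $y_i\in \ttB_{1/2}$, the number of balls depending only on $n$ and $\kappa_0$. On each ball $B_{1/2}(y_i)\subset \ttB_1$ I apply Step 1 after translating and scaling; note that translation preserves the class \eqref{condib}. The local $\tail$s $\tail(v;B_{1/2}(y_i))$ are bounded by $\tail(v;\ttB_1)+\nr{v}_{L^1(\ttB_1)}$ via \eqref{scatail.1} in its version without averages, which follows in the same way as \eqref{scatailancora2}. The $L^{\qq}$ norm comes directly from the $W^{s,\qq}$ norm. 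For the Gagliardo seminorm on $\ttB_{1/2}$, the pairs $(x,y)$ with $|x-y|<\kappa_0/4$ lie in a common doubled cover ball and are handled there. The far pairs are bounded by $c\kappa_0^{-n-s\qq}\nr{v}_{L^{\qq}(\ttB_{1/2})}^{\qq}$, which is already controlled.

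\emph{Step 3 (general ball and conclusion).} For a general ball $B_\rr\Subset\ttB_1$, I rescale as in Section \ref{scalasezione} with $V=1$: set $v_\rr(x)=v(x_0+\rr x)$ with kernel $\mathds{B}_\rr$, which again satisfies \eqref{condib}. Undoing the scaling gives exactly the powers $\rr^{s-n/\qq}$, $\rr^{-n/\qq}$ and $\rr^{-n/q}$ in \eqref{asscaccdopo}, and the $\tail$ is scale invariant. The constant depends on $\data,q,\cac{q},\qq$ because $t$ and $m(t)$ depend only on $s,n,q,\qq$.

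For \eqref{migliora2}: since $(\texttt{cacc})_q$ is a hypothesis about all solutions, all balls and all kernels, the above applies to every solution $w$ in every ball $\BBB$ after scaling, so $w\in W^{s,\qq}_{\loc}$. To obtain the Caccioppoli form I bound $\rr^{-n/q}\nr{w}_{L^q(B_\rr)}\le \rr^{-n/\qq}\nr{w}_{L^{\qq}(B_\rr)}$ by H\"older's inequality and multiply by $\rr^{n/\qq}$; for the seminorm term I use $\rr\le 1$. This is $(\texttt{cacc})_{\qq}$.

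The main obstacle is Step 2, the covering argument: upgrading from the tiny ball $\ttB_{\kappa_0}$ to $\ttB_{\rr/2}$ while keeping the tails controlled and handling the nonlocal Gagliardo seminorm across different cover balls. The embedding step itself is routine.
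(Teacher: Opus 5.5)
Your proposal is correct and follows essentially the paper's route. You choose $t<2s$ close to $2s$ so that \eqref{immercond} holds, then apply Lemma \ref{step2lemma} and the embedding \eqref{immergi01}. You pass to arbitrary balls by scaling plus a covering; you do the covering by hand on the unit ball before scaling, whereas the paper first scales to get the estimate on $B_{\rr/2^{m}}$ and then invokes Lemma \ref{covlemma}. Finally you use H\"older's inequality for \eqref{migliora2}.

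The only slip is the explicit choice of $t$. Your $\min\{2s,\,2s+n/q-n/\qq\}$ always equals $2s$, and the inequality you need is not the one you wrote; it is $s+n/q-n/\qq<2s$, i.e.\ $\qq<nq/(n-sq)$. Your fallback of taking $t$ slightly below $2s$ is nonetheless exactly the right choice.
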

\begin{proof} We first give the proof of \eqref{asscaccdopo}, which implies  $v \in W^{s,\qq}_{\loc}(\ttB_1;\er^{N})$. Fix $\qq$ as in \eqref{migliora}. In the case $qs<n$ we take $t \equiv t(\qq) \in (s,2s)$ close enough to $2s$ in order to have $
\qq <nq/[n-(t-s)q]$, which is equivalent to \rif{immercond}. In the case $qs\geq n$ we take $t \equiv t(\qq)>s$ such that $2s-n/\qq<t<2s$ so that condition \eqref{immercond} is again fulfilled. We use such a choice of $t\equiv t(\qq)$ in Lemma \ref{step2lemma} and determine the integer $\mf{m} (\qq)$ via \eqref{interot} as follows 
$
\mf{m}\equiv \mf{m} (\qq) = m(t(\qq)).
$
Therefore \eqref{applicancora} translates into $
		\nr{v}_{W^{t(\qq),q}(\ttB_{\kappa/2^{\mf{m} }})} \leq c  \nr{v}_{L^{q}(\ttB_{\kappa})} + c\,\tail(v;\ttB_{\kappa})
$, 
where
$
 t(\qq) > s -n/\qq+ n/q,
$ 
that, combined with \eqref{immergi01}, gives 
\eqn{applicancora2} 
$$
\nr{v}_{W^{s
		, \qq}(\ttB_{\kappa/2^{\mf{m}  }})}\leq c  \nr{v}_{L^{q}(\ttB_{\kappa})} + c\,\tail(v;\ttB_{\kappa})  \,,
$$ 
where $c\equiv c (\data,q,\qq, \cac{q}, \kappa)$ remains bounded as $\kappa \to 1$. This is the basic inequality from which, via a combination of standard covering and scaling arguments, we recover \eqref{asscaccdopo}. Indeed, take an arbitrary ball $B_{\rr}\equiv B_{\rr}(x_0)\Subset \ttB_1$; we scale $v$ from $B_\rr$ to $\ttB_1$ as in Section \ref{scalasezione}, apply \eqref{applicancora2} with any $\kappa \in (1/2,1)$  to $v_{\rr}$, let $\kappa\uparrow1$ and finally scale back. This yields 
\eqn{scalata}
$$
\rr^{s-n/\qq}[v]_{s,\qq;B_{\rr/2^{\mf{m}}}}+  \rr^{-n/\qq} \nr{v}_{L^{\qq}(B_{\rr/2^{\mf{m}}})}\leq 	
 c \rr^{-n/q} \nr{v}_{L^{q}(B_{\rr})} + c\tail(v;B_{\rr}) \,,
$$
where $c\equiv c (\data,q,\qq, \cac{q})$. Using a covering argument, i.e., Lemma \ref{covlemma} below, we obtain 
$$
\nr{v}_{W^{s, \qq}(\ttB_{\kappa/2})}
\le c \nr{v}_{L^q(\ttB_\kappa)}+c  \tail(v;\ttB_\kappa),  \quad \kappa \in (1/2,1)
$$
with $c\equiv c (\data,q,\qq, \cac{q})$, 
so that \eqref{asscaccdopo} follows again scaling as just done in order to get \eqref{scalata}. Moreover, applying H\"older's inequality in the right-hand side of \eqref{asscaccdopo} and the same scaling argument described above allows us to conclude with \eqref{migliora2} (in this respect note that all the arguments employed are independent of the specific kernel $\mathds{B}$; the only important thing is \eqref{condib}). \end{proof} 
\begin{lemma}[Integrability improvement 2]\label{step4lemma} Under the assumptions of Theorem \ref{maintra} (with $B_{\rr} \equiv \ttB_1$ as in the reduction described in Section \ref{riduzione}), there exists an increasing sequence $\{q_k\}_{k\geq 0}$ with $q_0= 2$ and $q_k \nearrow  \infty$, such that \begin{itemize}
\item $v \in W^{s, q_k}_{\loc}(\ttB_1;\er^{N})$ and
\eqn{asscaccdopo2ss}
$$
		\nr{v}_{W^{s, q_k}(\ttB_{1/2})} \leq c  \nr{v}_{L^{2}(\ttB_1)} + c\,\tail(v;\ttB_1)
$$  
holds for every integer $k\geq 0$, where $c \equiv c (\data, k)\footnote{To be precise, the constant $c$ depends on $k$ in the sense that it depends on $\{q_1, \ldots, q_k\}$. In turn, via \eqref{exit0} these are inductively defined using parameters $n,s$.}$. 
\item Condition $\textnormal{(\texttt{cacc})}_{q_k}$ holds for every $k\geq 0$. 
\end{itemize}
\end{lemma}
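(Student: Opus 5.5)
We build the sequence inductively and run Lemma \ref{step3lemma} repeatedly, starting from $(\texttt{cacc})_{2}$, which holds by Lemma \ref{caccioppola}. Set $q_0:=2$. Given $q_k$, choose
\eqn{exit0}
$$
q_{k+1}:=\begin{cases}\displaystyle \frac{1}{2}\left(q_k+\frac{nq_k}{n-sq_k}\right) & \mbox{if } sq_k<n,\\[6pt]
2q_k & \mbox{if } sq_k\geq n,\end{cases}
$$
so that $q_{k+1}$ is admissible as $\qq$ in \eqref{migliora} with $q\equiv q_k$, and $q_{k+1}$ depends only on $n,s,q_k$.

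We first check that $q_k\nearrow\infty$. In the first case $q_{k+1}/q_k=\tfrac12\bigl(1+n/(n-sq_k)\bigr)\geq \tfrac12\bigl(1+n/(n-2s)\bigr)>1$, because $q_k\geq 2$. Hence the ratio is bounded below by a fixed constant larger than $1$, and after finitely many steps $sq_k\geq n$. From then on the sequence doubles, so it is increasing and unbounded. The one subtlety is to avoid a choice that converges geometrically to a finite limit; taking the midpoint of the admissible range with fixed ratio bounds, as above, handles this.

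Assume $(\texttt{cacc})_{q_k}$ holds with $\cac{q_k}$ depending only on $\data$ and $q_0,\dots,q_k$. By \eqref{migliora2} of Lemma \ref{step3lemma} with $q=q_k$ and $\qq=q_{k+1}$, we get $(\texttt{cacc})_{q_{k+1}}$ with $\cac{q_{k+1}}=\cac{q_{k+1}}(\data,q_k,\cac{q_k},q_{k+1})$. Induction therefore gives the second bullet, and the constants depend only on $\data$ and $k$. Lemma \ref{step3lemma} also applies to all solutions in all balls and for all admissible kernels, since its proof uses only \eqref{condib}; so the induction hypothesis is truly universal, as the definition of $(\texttt{cacc})_q$ requires.

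For \eqref{asscaccdopo2ss} we chain the estimates \eqref{asscaccdopo} along shrinking radii. Let $\rr_j:=1-\sum_{i\leq j}2^{-i-2}$, so that $\rr_j\in(1/2,1)$ and the radii decrease toward a limit above $1/2$. Applying \eqref{asscaccdopo}, combined with a covering argument as in the proof of Lemma \ref{step3lemma}, on $\ttB_{\rr_j}\Subset\ttB_{\rr_{j-1}}$ gives
$$\nr{v}_{W^{s,q_{j}}(\ttB_{\rr_j})}\leq c\nr{v}_{L^{q_{j-1}}(\ttB_{\rr_{j-1}})}+c\,\tail(v;\ttB_{\rr_{j-1}}).$$
The $L^{q_{j-1}}$ norm is bounded by the previous step. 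The tail is bounded by $c\,\tail(v;\ttB_1)+c\nr{v}_{L^1(\ttB_1)}$ via \eqref{scatailancora2}, with constants depending only on the ratio of radii, which lies in $(1/2,1)$. Iterating $k$ times and using H\"older's inequality to pass from $L^1$ to $L^2$ on $\ttB_1$ yields the estimate on $\ttB_{\rr_k}\supset\ttB_{1/2}$, with $c\equiv c(\data,k)$.
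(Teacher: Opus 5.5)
Your proof is correct and follows the paper's argument. You iterate Lemma \ref{step3lemma} starting from $(\texttt{cacc})_{2}$ to propagate $(\texttt{cacc})_{q_k}$, then chain \eqref{asscaccdopo} across nested balls, controlling tails by \eqref{scatailancora2} and closing with the covering Lemma \ref{covlemma}. The differences are inessential: your midpoint choice of $q_{k+1}$, with the uniform ratio bound $q_{k+1}/q_k\geq (n-s)/(n-2s)>1$, replaces the paper's $b_k$-correction in \eqref{exit0} and its fixed-point argument, and is somewhat cleaner. Likewise, you apply the covering at each step along radii decreasing to $1/2$, whereas the paper chains on $\ttB_{\kappa/2^k}$ and covers once at the end.
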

\begin{proof} The idea is to iterate Lemma \ref{step3lemma}. This leads to define the sequence $\{q_{k}\}_{k\in \en_0}$, which is actually built in two steps, as follows. First we define a finite sequence $\{\ti{q}_{k}\}_{k \leq \bar k+1}$, inductively, as 
\eqn{exit0}
$$
\begin{cases}
\displaystyle  \ti{q}_{k+1} := \frac{n\ti{q}_k}{n-s\ti{q}_k}-b_k  \quad  \mbox{provided $k\geq0$ and $s\ti{q}_k < n$}, \\[7pt] \ti{q}_0=2,\\[7pt]
\displaystyle b_k:=\frac 12\min\left\{ \frac{n\ti{q}_k}{n-s\ti{q}_k}-\ti{q}_k, \frac{1}{k+1}\right\} \,.
\end{cases}
$$ 
This sequence is increasing and stops at  $\bar k+1$, which is such that 
\eqn{exit}
$$s \ti{q}_{\bar k+1} \geq n, \quad \bar k := \max \{k \in \en_0 \, \colon \, s\ti{q}_k < n\}\,.$$
Note that this integer always exists, otherwise $\ti{q}_k< n/s$ would follow for every $k$, implying that $\{\ti{q}_k\}$ has a finite limit, which is impossible by a standard fixed point argument based on \eqref{exit0}. We finally define the sequence $\{q_k\}$ as 
$$
q_k := \begin{cases}
\ti{q}_k & \mbox{if $k\leq \bar k +1$}\\
k\ti{q}_{\bar k+1}  & \mbox{if $k > \bar k +1$}\,.
\end{cases}
$$ 
Recalling the content of Lemma \ref{step3lemma}, we have
\eqn{iterate} 
$$ 
\begin{cases}
\mbox{$(\texttt{cacc})_{q_0}=(\texttt{cacc})_{2}$ holds by \eqref{caccad}}\\
\mbox{$(\texttt{cacc})_{q}$} \Longrightarrow \mbox{$(\texttt{cacc})_{\qq}$ holds for any $\qq$ as in \eqref{migliora}, with $ \cac{\qq}\equiv  \cac{\qq}(\data,q, \cac{q}, \qq)$}\,.
\end{cases}
$$
Inductively using  Lemma \ref{step3lemma}, and in particular \eqref{asscaccdopo} and \eqref{iterate}, gives that 
\eqn{asscaccdopokk}
$$
\begin{cases}
 \rr^{s-n/q_{k+1}}[v]_{s,q_{k+1};B_{\rr/2}}+\rr^{-n/q_{k+1}}\nr{v}_{L^{q_{k+1}}(B_{\rr/2})} \le c\rr^{-n/q_k} \nr{v}_{L^{q_k}(B_{\rr})}+c \tail(v;B_{\rr})\\[5 pt]
 \mbox{$(\texttt{cacc})_{q_{k+1}}$ holds}
 \end{cases}
$$
for every $k$ such that $0\leq k\leq \bar k$, 
with \eqref{asscaccdopokk}$_1$ that holds for every $B_{\rr} \Subset \ttB_1$ with $c\equiv c(\data,k)$. By \eqref{exit}, we finally apply Lemma \ref{step3lemma} once more, this time with the choice $q\equiv q_{\bar k+1}$ and with any $\qq\geq q_{\bar k+1}$, that is
$$
\rr^{s-n/\qq}[v]_{s,\qq;B_{\rr/2}} +\rr^{-n/\qq}\nr{v}_{L^{\qq}(B_{\rr/2})}
\le c\rr^{-n/q_{\bar k+1}} \nr{v}_{L^{q_{\bar k+1}}(B_{\rr})}+c \tail(v;B_{\rr})
$$
holds whenever $\qq \geq q_{\bar k+1}$ and $B_{\rr} \Subset \ttB_1$. Using H\"older's inequality on the right-hand side it also follows that $(\texttt{cacc})_{\qq}$ holds whenever $\qq\geq q_{\bar k+1}$ thanks to \eqref{exit}. It follows that \eqref{asscaccdopokk} continues to hold for all indices $k \geq  \bar k+1$ and, ultimately, for every $k\geq 0$. Note that no inductive argument is needed here after index $\bar k+1$ as a consequence of \eqref{migliora}. 
By \eqref{asscaccdopokk} we find 
\eqn{asscaccdopokk22}
$$
\nr{v}_{W^{s, q_{k+1}}(\ttB_{\kappa/2^{k+1}})}
\le c \nr{v}_{L^{q_k}(\ttB_{\kappa/2^{k}})}+c \tail(v;\ttB_{\kappa/2^{k}})
$$
for every $k\geq 0$ and $\kappa \in (0,1)$.  
Applying \eqref{scatailancora2} to estimate 
$$
\tail(v;\ttB_{\kappa/2^{k}}) \lesssim_{n,s,k, \kappa} \nr{v}_{L^2(\ttB_{\kappa})}+ \tail(v;\ttB_{\kappa}) 
,$$ using this last inequality in \eqref{asscaccdopokk22} and iterating yields
$$
		\nr{v}_{W^{s, q_k}(\ttB_{\kappa/2^{k}})} \leq c  \nr{v}_{L^{2}(\ttB_{\kappa})} + c\,\tail(v;\ttB_{\kappa})
$$
for every integer $k \geq 1$, $\kappa \in (0,1]$, where  $c\equiv c (\data,k, \kappa)$. This last estimate is similar to \eqref{applicancora2}; using the same scaling and covering argument explained after \eqref{applicancora2} we arrive at \eqref{asscaccdopo2ss} for every $k\geq 1$. The same argument also proves that $v \in W^{s, q_k}_{\loc}(\ttB_1;\er^{N})$. Note that the case $k=0$ of  \eqref{asscaccdopo2ss} trivially holds by $(\texttt{cacc})_{2}$, recalling that $q_0=2$. \end{proof}
\subsection{Proof of Theorem \ref{maintra}}
We need to prove \eqref{startrek}. 
In turn, this follows by the standard covering and scaling arguments explained after \eqref{applicancora2}, once we prove that
\eqn{finale}
$$
		\nr{Dv}_{C^{0, \alpha}(\ttB_{1/2^{m}})}   \leq  c \nra{v}_{L^{2}(\ttB_{1})}+  c\tail(v;\ttB_{1})
$$
holds with $c\equiv c (\data, \alpha)$, where $m \equiv m (n,s,\alpha)$ is a positive integer. The proof of \eqref{finale} goes along the scheme of Table \ref{tabella} and we begin by applying Lemma \ref{step4lemma} obtaining that \eqref{asscaccdopo2ss}
and $\textnormal{(\texttt{cacc})}_{q_k}$ hold for every $k\geq 0$. This last fact allows us to apply Lemma  \ref{step2lemma} with $q\equiv q_k$ for every $k\geq 0$, and use \eqref{applicancora}, that is 
$
		\nr{v}_{W^{t,q_k}(\ttB_{1/2^{m}})} \leq c  \nr{v}_{L^{q_k}(\ttB_{1/2})} + c\,\tail(v;\ttB_{1/2}) 
,$ which 
holds for every $t \in (s,2s)$, with $m\equiv m(t,s)$ being a positive integer and  $c\equiv c (\data, t, k)$. This can be combined with \eqref{asscaccdopo2ss} which gives 
\eqn{buti1}
$$
		\nr{v}_{W^{t,q_k}(\ttB_{1/2^{m}})} \leq c  \nr{v}_{L^{2}(\ttB_{1})} + c\,\tail(v;\ttB_{1}) 
$$
with $c \equiv c (\data, t,k)$; note that we have used \rif{scatailancora2} and  H\"older's inequality to control
$\tail(v;\ttB_{1/2})$ by
$\tail(v;\ttB_1)+\nra{v}_{L^2(\ttB_1)}$.  As we are assuming $s>1/2$, we can choose $t\equiv t (n,s,\alpha)\in (s,2s)$ sufficiently close to $2s$ and $q_k$ large enough to have 
$
\alpha < t-1-n/q_k,
$ so that \eqref{immergesmooth} implies 
$$
	\nr{Dv}_{C^{0, \alpha}(\ttB_{1/2^{m}})} \leq  c	\nr{Dv}_{C^{0, t-1-n/q_k}(\ttB_{1/2^{m}})}
\leq   c\nr{v}_{W^{t,q_k}(\ttB_{1/2^{m}})},
$$	
where  $c$ depends on $\data$ and $ \alpha$, and $m$ ultimately depends on $n,s,\alpha$. Using this last inequality in conjunction to \eqref{buti1}  leads to \eqref{finale}. With the above choice of $t$ and $q_k$, the constant in \eqref{buti1} depends only on $\data$ and $\alpha$.
\begin{table}[h!]
\centering
\caption{Interaction of Lemmas  \ref{step1lemma}- \ref{step4lemma} towards the proof of Theorem \ref{maintra}.}
\label{tabella}
\begin{tabular}{|c|c|c|}
\hline
\makebox[4.5cm][c]{Lemma and assumption  \rule{0pt}{.6cm}} & 
\makebox[4.5cm][c]{Main outcome  \rule{0pt}{.6cm}} & 
\makebox[4.5cm][c]{Strategy  \rule{0pt}{.6cm}} \\
\hline
\makebox[4.5cm][c]{Lemma \ref{step1lemma} -- $\textnormal{(\texttt{cacc})}_{q}$ \rule{0pt}{.6cm}} & 
\makebox[4.5cm][c]{$v \in B^{\sss}_{q,\infty} \Longrightarrow v \in B^{s+\sss/2-\eps}_{q,\infty}$  \rule{0pt}{.6cm}} & 
\makebox[4.5cm][c]{Dyadic decomposition  \rule{0pt}{.6cm}} \\
\hline
\makebox[4.5cm][c]{Lemma \ref{step2lemma} -- $\textnormal{(\texttt{cacc})}_{q}$  \rule{0pt}{.6cm}} & 
\makebox[4.5cm][c]{$v \in W^{t,q}_{\loc}$ for every $t<2s$  \rule{0pt}{.6cm}} & 
\makebox[4.5cm][c]{Iterates Lemma \ref{step1lemma}  \rule{0pt}{.6cm}} \\
\hline
\makebox[4.5cm][c]{Lemma \ref{step3lemma} -- $\textnormal{(\texttt{cacc})}_{q}$  \rule{0pt}{.6cm}} & 
\makebox[4.5cm][c]{$\textnormal{(\texttt{cacc})}_{q} \Longrightarrow \textnormal{(\texttt{cacc})}_{\qq}$  \rule{0pt}{.6cm}} & 
\makebox[4.5cm][c]{Lemma \ref{step2lemma}, embedding \rule{0pt}{.6cm}} \\
\hline
\makebox[4.5cm][c]{Lemma \ref{step4lemma}   \rule{0pt}{.6cm}} & 
\makebox[4.5cm][c]{$\textnormal{(\texttt{cacc})}_{q}$ holds for large $q$  \rule{0pt}{.6cm}} & 
\makebox[4.5cm][c]{Iterates Lemma \ref{step3lemma}  \rule{0pt}{.6cm}} \\
\hline
\end{tabular}
\end{table}
\subsection{A covering lemma}
We end with a covering result already employed several times in the proof of Theorem \ref{maintra}. We report it in some detail as, although incorporating various arguments already scattered in the literature, it does not appear elsewhere in such a general form. We shall often use the lemma with the choices $v=w$ and $\sigma=s$. 
\begin{lemma} \label{covlemma}
Let $v \in L^{\qq}(\BBB;\er^{k})$, $k\geq 1$, where $\BBB\subset \er^n$ is a ball,  $w \in L^1_{2s}$, be such that 
\eqn{that}
$$
[v]_{\sigma,\qq;B_{\rr/2^m}}^\qq\leq 	
 c_0  \rr^{\aaa}[\mu(B_{\rr})]^{\bbb} + c_0\rr^{\ccc}\,\tail(w;B_{\rr})^{\qq} \,,
$$
holds whenever $B_{\rr}\Subset \BBB$, $\rr\leq \rr_0\rad(\BBB)$, where $\sigma \in (0,1)$, $m \geq 1$ is an integer, $\rr_0\in (0,1)$, $\qq,\bbb\geq 1$, $c_0\ge 1$, $\aaa,\ccc\in \er$, and where finally $\mu$ is a non-decreasing, non-negative set function verifying \eqref{verificasomma}. Then
\begin{flalign}
\notag 
[v]_{\sigma,\qq;\gamma\BBB}^\qq
&\leq c\,2^{mn+(m+1)\aaa}d^\aaa[\mu(\BBB)]^\bbb \\
&\quad +\frac{c\,2^{mn+(m+1)\ccc}d^{\ccc}}{\min\{(1-\gamma), \rr_0\}^{n(\qq+1)}}
\left(\tail(w;\BBB)^\qq + \nra{w}_{L^1(\BBB)}^\qq\right)
+\frac{c|\gamma \BBB|}{d^{n+\qq \sigma}} \|v\|_{L^\qq(\gamma \BBB)}^\qq \label{parteee}
\end{flalign}
holds whenever $0<\gamma <1$, where
\eqn{tortora}
$$
d:=2^{-(m+4)}\min\{(1-\gamma), \rr_0\}\rad(\BBB)\,,
$$
and $c\equiv c(n,s,\sigma,\qq,c_0)$. The same conclusion holds if \eqref{that} is replaced by
\[
\lambda(B_{\rr/2^m})
\le
c_0\rr^\aaa[\mu(B_\rr)]^\bbb
+c_0\rr^\ccc\tail(w;B_\rr)^\qq,
\]
where $\lambda$ is a non-negative Borel measure finite on bounded sets. In this
case $\lambda(\gamma\BBB)$ is bounded by the right-hand side of \eqref{parteee},
without the last term.
\end{lemma}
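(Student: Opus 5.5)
We prove Lemma \ref{covlemma} by covering $\gamma\BBB$ with small balls of fixed radius comparable to $d$ and splitting the seminorm into near-diagonal and far-from-diagonal parts. Set $\rr:=2^{m+1}d$. By \eqref{tortora}, $\rr\le \rr_0\rad(\BBB)/8$, and every ball $B_{\rr}(z)$ with $z\in\gamma\BBB$ satisfies $B_{\rr}(z)\Subset\BBB$, since $\rr<(1-\gamma)\rad(\BBB)$. Hence \eqref{that} applies to all such balls. We take a lattice of cubes of side comparable to $d$, as in Lemma \ref{lemmacubi}, whose centres $z_j$ lie in a $d$-neighbourhood of $\gamma\BBB$ and whose closures cover $\gamma\BBB$. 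The inner balls $B_{\rr/2^{m}}(z_j)=B_{2d}(z_j)$ then contain the $d$-neighbourhoods of the cubes. The number of such balls is at most of order $(\rad(\BBB)/d)^n$, and the enlarged balls $B_{\rr}(z_j)$ overlap at most $c(n)2^{mn}$ times.

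\textbf{Splitting the seminorm.} We write
\[
[v]^{\qq}_{\sigma,\qq;\gamma\BBB}\le \sum_j [v]^{\qq}_{\sigma,\qq;B_{2d}(z_j)}+\int\!\!\int_{\{x,y\in\gamma\BBB,\ |x-y|\ge d\}}\frac{|v(x)-v(y)|^{\qq}}{|x-y|^{n+\qq\sigma}}\dxy .
\]
The first sum captures the pairs with $|x-y|<d$: such a pair lies in a common cube neighbourhood, hence in one of the balls $B_{2d}(z_j)$. The far part is bounded by $c\,d^{-n-\qq\sigma}|\gamma\BBB|\,\|v\|^{\qq}_{L^{\qq}(\gamma\BBB)}$ after expanding $|v(x)-v(y)|^{\qq}\le 2^{\qq-1}(|v(x)|^{\qq}+|v(y)|^{\qq})$. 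This is exactly the last term of \eqref{parteee}, with its normalization, since $\|\cdot\|$ denotes the un-averaged norm.

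\textbf{The $\mu$-terms.} For the near part we apply \eqref{that} on each $B_{\rr}(z_j)$, which gives $c_0\rr^{\aaa}\mu(B_\rr(z_j))^{\bbb}+c_0\rr^{\ccc}\tail(w;B_\rr(z_j))^{\qq}$ for each $j$. For the $\mu$-terms we split the family $\{B_\rr(z_j)\}$ into about $2^{mn}$ subfamilies of disjoint balls, as in \eqref{sommakappa}. Since $\bbb\ge1$ and \eqref{verificasomma} holds, within each subfamily $\sum\mu^{\bbb}\le(\sum\mu)^{\bbb}\le\mu(\BBB)^{\bbb}$. Using $\rr^{\aaa}=(2^{m+1}d)^{\aaa}$, this produces the first term of \eqref{parteee}.

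\textbf{The tail terms (main obstacle).} Here we must control $\tail(w;B_\rr(z_j))$ uniformly in $j$ by quantities on $\BBB$. We use the non-concentric inequality in the spirit of \eqref{scatail.1}, in its form without averages, as in \eqref{scatailancora2}, with $B_\rr(z_j)\subset\BBB$. The gap from $B_\rr(z_j)$ to $\partial\BBB$ is at least $(1-\gamma)\rad(\BBB)/2$, so the proof of \eqref{scatailancora2}, splitting $\er^n\setminus B_\rr(z_j)$ into $\BBB\setminus B_\rr(z_j)$ and $\er^n\setminus\BBB$, yields
\[
\tail(w;B_\rr(z_j))\lesssim \Big(\frac{\rr}{\rad(\BBB)}\Big)^{2s}\Big(\frac{1}{1-\gamma}\Big)^{n+2s}\tail(w;\BBB)+\Big(\frac{\rad(\BBB)}{\rr}\Big)^{n}\nra{w}_{L^1(\BBB)} .
\]
Summing over the at most $c(\rad(\BBB)/d)^n$ indices, bounding $\rr/\rad(\BBB)\le1$, and noting that $\rad(\BBB)/d\approx 2^{m}/\min\{1-\gamma,\rr_0\}$, we get a factor of order $2^{mn}\min\{1-\gamma,\rr_0\}^{-n(\qq+1)}$ times $\rr^{\ccc}$. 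The exponents must be checked carefully here, absorbing the powers of $2^m$ into $2^{mn+(m+1)\ccc}$; this bookkeeping of exponents is where we expect the real work to lie.

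\textbf{The measure version.} The variant with a Borel measure $\lambda$ is identical but simpler. We have $\lambda(\gamma\BBB)\le\sum_j\lambda(B_{2d}(z_j))$ directly, so no far-diagonal term appears and the last term of \eqref{parteee} is dropped.
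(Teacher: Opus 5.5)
Your plan is essentially the paper's proof. It uses the same lattice of cubes of size $d$ and the choice $\rr=2^{m+1}d$, so that $B_{\rr/2^m}(z_j)=B_{2d}(z_j)$. It splits into near-diagonal and far pairs; the paper uses the threshold $d/4$ rather than $d$. It decomposes the balls into $\approx 2^{mn}$ disjoint subfamilies for the $\mu$-terms; your superadditivity of $t\mapsto t^{\bbb}$ is equivalent to the paper's bound $\mu(B_\rr(z_j))^{\bbb}\le \mu(\BBB)^{\bbb-1}\mu(B_\rr(z_j))$. Finally it applies an average-free, non-concentric version of \eqref{scatail.1} to the tails. Your tail inequality itself is correct.

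There is one step that fails as written, namely the tail bookkeeping you left open. Write $\varkappa:=\min\{1-\gamma,\rr_0\}$. If you bound $\rr/\rad(\BBB)\le 1$, the first term of your tail inequality becomes $(1-\gamma)^{-(n+2s)\qq}\tail(w;\BBB)^{\qq}$ after raising to the power $\qq$. This is not bounded by $c\,\varkappa^{-n\qq}$ with $c$ independent of $\gamma$. For instance, when $\rr_0\ge 1-\gamma$ it exceeds $\varkappa^{-n\qq}$ by the factor $(1-\gamma)^{-2s\qq}$, which is unbounded as $\gamma\to 1$. So you would not reach \eqref{parteee}, whose constant does not depend on $\gamma$.

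The fix is to keep that factor. Since $\rr/\rad(\BBB)=\varkappa/8$ exactly and $\varkappa\le 1-\gamma$,
\[
\Big(\frac{\rr}{\rad(\BBB)}\Big)^{2s}\Big(\frac{1}{1-\gamma}\Big)^{n+2s}\le \frac{\varkappa^{2s}}{\varkappa^{n+2s}}=\varkappa^{-n},\qquad \Big(\frac{\rad(\BBB)}{\rr}\Big)^{n}=8^n\varkappa^{-n}.
\]
Hence $\tail(w;B_\rr(z_j))^{\qq}\le c\,\varkappa^{-n\qq}\big(\tail(w;\BBB)^{\qq}+\nra{w}_{L^1(\BBB)}^{\qq}\big)$ uniformly in $j$. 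Now multiply by the number of balls, $\lesssim_n(\rad(\BBB)/d)^n\approx 2^{mn}\varkappa^{-n}$, and by $c_0\rr^{\ccc}=c_0\,2^{(m+1)\ccc}d^{\ccc}$. This gives exactly the second term of \eqref{parteee}, and it is what the paper's appeal to \eqref{scatail.1} amounts to. No other powers of $2^m$ need absorbing, since $\rad(\BBB)/\rr$ does not depend on $m$.
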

\begin{proof}

We can obviously assume that $\BBB$ is centred at the origin, that is, $\BBB\equiv \ttB_\kappa$ for some $\kappa >0$. Recalling Lemmas \ref{lemmacubi}, we consider the lattice of hypercubes $\{Q_{d/\sqrt{n}}(y)\}_{y\in (2d/\sqrt{n})\mathbb Z^n}$ with $d$ as in \rif{tortora}, and select those hypercubes centred at points $I:=\{z_{j}\}\subset (2d/\sqrt{n})\mathbb Z^n$ such that $ \snr{z_{j}} \leq  \gamma \kappa+ 2d $.	
We set  $\{Q_{j}\equiv Q_{d/\sqrt{n}}(z_{j})\}_{z_j\in I }$, so that, this time 
$$
\ttB_{\gamma \kappa}\subset  \bigcup \overline{Q_{j}},\quad Q_{j_{1}}\cap Q_{j_{2}}=\emptyset \ \Longleftrightarrow \ j_{1}\not =j_{2}\,, \quad \# I \lesssim_n \frac{2^{mn}}{\min\{(1-\gamma)^n, \rr_0^n\}}.
$$
Using the family of  balls  $\{B_{d}(z_{j}) \}$ we have
\eqn{that2}
$$ 
	\ttB_{\gamma\kappa} \subset \bigcup_{j \in I} \overline{B_{d}(z_{j})}   \subset \bigcup_{j \in I} \overline{B_{2^{m+1}d}(z_{j})}\Subset \ttB_{\kappa}, \quad \{ B_{2^{m+1}d}(z_{j})\} = \bigcup_{l \leq \mathfrak{n}}  \bigsqcup_{j} B_{j}^{l}, \quad \mathfrak{n}\approx 2^{mn}.
$$
Let $\mathcal D:= \{(x,y) \in \ttB_{\gamma\kappa} \times \ttB_{\gamma\kappa}\, \colon \, |x-y| < d/4\}$ and note that
$$
\mathcal D \subset \bigcup_{j} B_{2d}(z_{j})\times B_{2d}(z_{j})
$$
therefore, using \eqref{that} we obtain
\begin{flalign}
\notag [v]_{\sigma,\qq;\ttB_{\gamma\kappa}}^\qq & \leq  \int_{\mathcal D} \frac{\snr{v(x)-v(y)}^\qq}{|x-y|^{n+\qq \sigma}} \dd \mathcal L^{2n}(x,y) + \int_{(\ttB_{\gamma\kappa}\times\ttB_{\gamma\kappa})\setminus \mathcal D} \frac{\snr{v(x)-v(y)}^\qq}{|x-y|^{n+\qq \sigma}}\dd \mathcal L^{2n}(x,y)  \\
\notag& \leq \sum_{j} [v]_{\sigma,\qq;B_{2d}(z_{j})}^\qq  + \frac{c\gamma^n\kappa^n}{d^{n+\qq \sigma}} \|v\|_{L^\qq(\ttB_{\gamma\kappa})}^\qq\\ 
\notag& \leq c 2^{(m+1)\aaa}d^{\aaa}[\mu(\ttB_\kappa)]^{\bbb-1} \sum_{j}  \mu(B_{2^{m+1}d}(z_{j}))\notag \\ &
\qquad + \frac{c\gamma^n\kappa^n}{d^{n+\qq \sigma}} \|v\|_{L^\qq(\ttB_{\gamma\kappa})}^\qq + c2^{(m+1)\ccc}d^{\ccc}\sum_{j}\tail(w;B_{2^{m+1}d}(z_{j}))^{\qq}.
\label{trtr}
\end{flalign}
Using \eqref{verificasomma} and \eqref{that2} we have
\eqn{trtr22}
$$
\sum_{j}  \mu(B_{2^{m+1}d}(z_{j})) \lesssim 2^{mn} \ \mu(\ttB_{\kappa}).
$$
Recalling the definition of $d$ and that  $\snr{z_{j}} \leq  \gamma \kappa+ 2d $, the terms appearing in the last sum can be easily estimated via \eqref{scatail.1} as follows:
$$
\tail(w;B_{2^{m+1}d}(z_{j}))^\qq\leq  \frac{c}{\min\{(1-\gamma), \rr_0\}^{n\qq}} \left(
\tail(w;\ttB_{\kappa})^\qq + \nra{w}_{L^1(\ttB_{\kappa})}^\qq
\right)
$$
and therefore
\eqn{trtr2}
$$
\sum_{j}\tail(w;B_{2^{m+1}d}(z_{j}))^\qq\leq \frac{c2^{mn}}{\min\{(1-\gamma), \rr_0\}^{n(\qq+1)}}\left(
\tail(w;\ttB_{\kappa})^\qq+  \nra{w}_{L^1(\ttB_{\kappa})}^\qq
\right).
$$ 
Connecting the content of \eqref{trtr22} and \eqref{trtr2} to the one of \eqref{trtr} we get \eqref{parteee}. The part of the lemma addressing the case of the general Borel measure  $\lambda(\cdot)$ is analogous, and, actually, simpler. 
\end{proof}

\section{Nonhomogeneous linear systems}\label{homsec2}
Using Theorem \ref{maintra}, and recalling \rif{mardef}, we prove\footnote{Formally taking $s=1$, Theorem \ref{cdg} parallels the classical result
$
\Delta w \in \mathcal{M}^{\frac{n}{1-\beta}} \Longrightarrow w \in C^{1,\beta}
$
which holds whenever $\beta\in (0,1)$ and which is sharp, according to Morrey's embedding and Calderón-Zygmund theory.  Indeed, a less known variant of Morrey's embedding claims that a function $v$ such that $Dv \in \mathcal M^{\frac{n}{1-\beta}}$ locally belongs to $C^{0,\beta}$. On the other hand, classical Calder\'on-Zygmund theory and off-diagonal interpolation give that  $\Delta w \in \mathcal{M}^{\frac{n}{1-\beta}} $ implies $D^2w \in \mathcal{M}^{\frac{n}{1-\beta}}$. At this point $w \in C^{1,\beta}$ follows by these two facts. See also \cite[Corollary 1, (C12)]{KM0}.}
\begin{theorem}\label{cdg}
Let $u\in W^{s,2}(\BBB;\er^{N})\cap L^{1}_{2s}$ be a weak solution to
\eqn{2.3.0}
$$
-\mathcal{L}_{\mathds{B}}u=g\qquad \mbox{in} \ \ \BBB,
$$
where $\BBB \subset \er^n$ is a ball, $-\mathcal{L}_{\mathds{B}}$ is as in \eqref{weaksolLL} under assumptions \eqref{condib} with $s>1/2$, and  
$$g\in \mathcal{M}^{\frac{n}{2s-1-\beta}}(\BBB;\er^{N})\,, \quad 0< \beta < 2s-1  \,.$$
Then $Du\in C^{0,\beta}_{\loc}(\BBB;\er^{N\times n})$ and 
\begin{flalign}
 & \snr{\BBB}^{\frac 1n}\nr{Du}_{L^{\infty}(\BBB/2)} +\snr{\BBB}^{\frac{1+\beta}{n}}[Du]_{0,\beta; \BBB/2} \nonumber\\
 & \qquad  \leq c \nra{u-u_{0}}_{L^{2}(\BBB)}+  c\tail(u-u_{0};\BBB)+c\snr{\BBB}^{\frac{1+\beta}{n}}\nr{g}_{\mathcal M^{\frac{n}{2s-1-\beta}}(\BBB)} \label{dg.2bis}
\end{flalign}
holds with $c\equiv c (\data,\beta )$, whenever $u_0\in \er^{N}$. 
\end{theorem}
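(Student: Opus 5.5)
We prove Theorem \ref{cdg} by a Campanato-type comparison argument. The homogeneous problem is controlled by Theorem \ref{maintra}, and the right-hand side $g$ is controlled by Lemma \ref{compg}. After the scaling of Section \ref{scalasezione} we may assume $\BBB\equiv\ttB_1$ and $u_0=0$; the factors $\snr{\BBB}^{1/n}$ in \eqref{dg.2bis} are restored by scaling back. Note that $g_\rr$ in \eqref{scala1} scales exactly so that $\rr^{1+\beta}\nr{g}_{\mathcal M^{n/(2s-1-\beta)}}$ is invariant. Indeed, $\nr{\rr^{2s}g(x_0+\rr\cdot)}_{\mathcal M^d(\ttB_1)}=\rr^{2s-n/d}\nr{g}_{\mathcal M^d(B_\rr)}$, and with $d=n/(2s-1-\beta)$ we get $2s-n/d=1+\beta$. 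Also $d>2n/(n+2s)$, since $d> n/(2s-1)\ge n$, so Lemma \ref{compg} applies.

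\textbf{Step 1 (one-scale comparison).} Fix $x_0\in\ttB_{1/2}$ and $0<r\le 1/4$. Let $v\in\mathbb X^{s,2}_u(B_r(x_0),B_{2r}(x_0))$ solve $-\mathcal L_{\mathds B}v=0$ in $B_r(x_0)$ with $v=u$ outside. Lemma \ref{compg} gives
$$\nra{u-v}_{L^2(B_r)}\le c\,r^{2s-(2s-1-\beta)}\nr{g}_{\mathcal M^d}=c\,r^{1+\beta}\nr{g}_{\mathcal M^d}.$$
Since $u-v\equiv0$ outside $B_r$, $\tail(u-v;B_r)=0$. For $v$ we apply Theorem \ref{maintra} with some $\alpha\in(\beta,2s-1)$, applied to $v-\ell$ for any affine $\ell$. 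This is allowed because $-\mathcal L_{\mathds B}\ell=0$ for $s>1/2$ (Lemma \ref{remarkino34}), and $Dv$ is the only thing estimated. We obtain the decay, for $\tau\in(0,1/4)$,
$$\inf_{\ell}\Big[\nra{v-\ell}_{L^2(B_{\tau r})}+\tail(v-\ell;B_{\tau r})\Big]\le c\,\tau^{1+\alpha}\Big[\nra{v-\ell_r}_{L^2(B_r)}+\tail(v-\ell_r;B_r)\Big].$$
The $L^2$ part follows from the $C^{1,\alpha}$ bound via Taylor expansion. The tail part is handled by splitting $\er^n\setminus B_{\tau r}$ into $B_{r/2}\setminus B_{\tau r}$, where we use the $C^{1,\alpha}$ bound, and the exterior of $B_{r/2}$, which is controlled by $\tau^{2s}\le\tau^{1+\alpha}$ times the data (recall $2s>1+\alpha$). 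The affine offsets produced here are harmless precisely because $2s>1$.

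\textbf{Step 2 (excess iteration).} Define the affine excess $\Phi(r):=\inf_\ell\big[\nra{u-\ell}_{L^2(B_r)}+\tail(u-\ell;B_r)\big]$. We transfer the decay from $v$ to $u$. The $L^2$ difference is given by Step 1. For the tail we use $\tail(u-v;B_{\tau r})\le c\tau^{-n}\nra{u-v}_{L^1(B_r)}$, since $u-v$ vanishes off $B_r$. Together these give
$$\Phi(\tau r)\le c\tau^{1+\alpha}\Phi(r)+c_\tau r^{1+\beta}\nr{g}.$$
Choosing $\tau$ so that $c\tau^{\alpha-\beta}\le 1/2$ and iterating yields $\Phi(r)\le c r^{1+\beta}\big(\Phi(1/4)+\nr{g}\big)$ uniformly in $x_0\in\ttB_{1/2}$. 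The key point is that the minimizing affine maps $\ell_r$ must have slopes converging, with $\snr{D\ell_{\tau^{k+1}r}-D\ell_{\tau^kr}}\lesssim(\tau^kr)^{-1}\Phi(\tau^k r)$. These increments are summable, so $Du(x_0)$ exists and the Campanato characterization of $C^{1,\beta}$ gives $Du\in C^{0,\beta}(\ttB_{1/2})$. The sup bound on $Du$ comes from summing the slope increments starting from $\ell_{1/4}$, whose slope is controlled by $\nra{u}_{L^2}$. Finally, $\Phi(1/4)$ is controlled by $\nra{u}_{L^2(\ttB_1)}+\tail(u;\ttB_1)$ via \eqref{scatailancora2} and the affine-tail estimate, which is valid since $s>1/2$.

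\textbf{Main obstacle.} The delicate point is handling the tail term within the affine iteration. One must check that $\tail(\cdot-\ell;B_{\tau r})$ decays at rate $\tau^{1+\alpha}$ even though the affine map changes at each scale. This works only because $\int_{|y|>\rho}|y|^{1-n-2s}\,dy<\infty$ for $s>1/2$, in the spirit of Lemma \ref{dedicata}. The remaining work is bookkeeping of constants depending on $\data$ and $\beta$ through the choice $\alpha=(\beta+2s-1)/2$.
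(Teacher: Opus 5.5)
Your proposal is correct and follows essentially the paper's route. The paper likewise compares $u$ with the homogeneous replacement, bounded in $L^2$ via Lemma \ref{compg}, obtains the $C^{1,\tilde\beta}$ decay of the replacement from Theorem \ref{maintra} with $\tilde\beta=(\beta+2s-1)/2$, and controls tails using $1+\tilde\beta<2s$. It then iterates the affine excess with $\tau$ chosen so that $c\tau^{\tilde\beta-\beta}\le 1/2$ and concludes by Campanato's characterization; the only cosmetic differences are that it subtracts an arbitrary affine map before solving the comparison problem (equivalent, by uniqueness and Lemma \ref{remarkino34}, to your applying Theorem \ref{maintra} to $v-\ell$) and handles the tail via \eqref{scatailancora} rather than an explicit annular splitting.
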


\begin{proof} By a standard covering argument, with the local $\tail$ terms controlled as in
Lemma \ref{covlemma}, it is enough to prove the estimate in \rif{dg.2bis}. We rescale equation \rif{2.3.0} in $\ttB_1$ as described in \rif{scala1}, where in this situation we apply \rif{scala1} to $\BBB= B_{\rr}(x_0)$, $v_0\equiv u_0$ and with 
$$
V= \nra{u-u_0}_{L^{2}(\BBB)}+  \tail(u-u_0;\BBB)+\rr^{ 1+\beta}\nr{g}_{\mathcal M^{\frac{n}{2s-1-\beta}}(\BBB)}. 
$$
Note that we can assume without loss of generality that $V>0$ otherwise \eqref{dg.2bis} is trivial. 
We are therefore reducing to the situation where we have a solution $u \in W^{s,2}(\ttB_1;\er^N)\cap L^1_{2s}$ to \rif{2.3.0} with $B\equiv \ttB_1$ and such that 
\eqn{riduzione1}
$$
\nra{u}_{L^{2}(\ttB_1)}+  \tail(u;\ttB_1)+\nr{g}_{\mathcal M^{\frac{n}{2s-1-\beta}}(\ttB_1)} \leq  1\,.
$$
Our goal is to prove that there exists a constant $c\equiv c (\data,\beta )\geq 1$ such that 
\eqn{riduzione2} 
$$
\nr{Du}_{L^{\infty}(\ttB_{1/2})} +[Du]_{0,\beta; \ttB_{1/2}}\leq c \,.
$$
We recall that if  $\elly(\ti{x}):= \bbt(\ti{x}-x)+\aaat$, $\bbt \in \mathbb{R}^{N \times n}$, $\aaat \in \mathbb{R}^N$, $\ti{x}, x\in \er^n$, is an affine map, we have 
\eqn{tritri}
$$
\begin{cases}
\displaystyle \nra{\elly}_{L^2(B_{r}(x))} \approx_{n}  \snr{\bbt} r + \snr{\aaat} \\[5pt]
\displaystyle 
\tail(\elly;B_{r}(x)) \lesssim_{n} \frac{\snr{\bbt} r}{2s-1} + \frac{\snr{\aaat}}{2s}  \lesssim_{n,s} \snr{\bbt} r + \snr{\aaat}\,.
\end{cases}
$$
Note that \rif{tritri}$_2$ is guaranteed by $s>1/2$. 
Fix an arbitrary affine map $\elly$, define 
$u_{\elly}:=u-\elly$ 
and consider two balls $B_{r}(x)\Subset B_{r_{\star}}(x)\Subset \ttB_1$. By Lemma \ref{esiste} we take  $v \in \mathbb{X}_{u_{\elly}}^{s,2}(B_{r},B_{r_*})$ such that 
$$
\begin{cases}
\displaystyle
\ -\mathcal{L}_{\mathds{B}}v=0\qquad &\mbox{in} \ \ B_{r}\\ \displaystyle 
\ v=u_{\elly}\qquad &\mbox{in} \ \ \mathbb{R}^{n}\setminus B_{r}\,.
\end{cases}
$$
Since $-\mathcal{L}_{\mathds{B}}\elly =0$ by Lemma \ref{remarkino34} below, we have 
$$
w:=u_{\elly}-v \Longrightarrow \begin{cases}
\displaystyle
\ -\mathcal{L}_{\mathds{B}}w=g\qquad &\mbox{in} \ \ B_{r}\\ \displaystyle 
\ w=0\qquad &\mbox{in} \ \ \mathbb{R}^{n}\setminus B_{r}\,.
\end{cases}
$$
Using Lemma \ref{compg}, estimate \eqref{marri}, we obtain 
\eqn{2.3.2}
$$
\nra{w}_{L^{2}(B_{r})} \leq c r^{1+\beta} \nr{g}_{\mathcal{M}^{\frac{n}{2s-1-\beta}}(B_{r})}\,.
$$
Recalling that by Theorem \ref{maintra} $Dv$ is H\"older continuous in $B_{r}\equiv B_{r}(x)$, we consider the affine map 
$\ell_{x}(\ti{x}):=D v(x)(\ti{x}-x) + v(x)$, $\ti{x}\in \er^n$
and \rif{tritri} implies 
$$
 \nra{\ell_x}_{L^2(B_{r}(x))} \approx  \snr{D v(x)} r + \snr{v(x)}. 
$$ 
With 
$$
 \ti{\beta} :=\frac{\beta +2s-1}{2} \Longrightarrow  \ti{\beta}-\beta= \frac{2s-1-\beta}{2}  >0, 
$$
using \rif{almostC2s} and Mean Value Theorem we obtain
\eqn{eq:higherreg}
$$
\begin{cases}
\,\displaystyle   \nra{v-\ell_{x}}_{L^2(B_{\lambda}) }\leq c  \left(\frac{\lambda}{r}\right)^{1+\ti{\beta}}  \left(\nra{v}_{L^{2}(B_{r})}+\tail(v;B_{r})\right), \quad 
 0 < \lambda \leq \frac{r}{2} \\[10pt]
\,  \displaystyle    \snr{D  v(x)} r + \snr{ v(x)}\leq c \nra{v}_{L^{2}(B_{r})}+c\, \tail(v;B_{r})
 \end{cases}
$$
where $c\equiv c (n,N,s,\Lambda, \ti{\beta})$. Noting that 
$
\nra{v}_{L^{2}(B_{r})}  + \tail(v;B_{r}) \lesssim \tx{E}_{u}(\ell;x,r)+ \nra{w}_{L^2(B_{r})}
$, by \rif{2.3.2} it follows  
 \eqn{eq:higherreg22}
$$
\nra{v}_{L^{2}(B_{r})}  + \tail(v;B_{r})\leq c \tx{E}_{u}(\ell;x,r) + c r^{1+\beta} \nr{g}_{\mathcal{M}^{\frac{n}{2s-1-\beta}}(B_{r})}. 
$$
We shall also use 
 \eqn{eq:higherreg2222}
$$
 \tail(u_{\elly};B_{r/2})\lesssim_{n,s}
\tail(u_{\elly};B_{r})+\nra{u_{\elly}}_{L^2(B_{r})}. 
$$ 
We now fix a real number $\texttt{t}\in (0, 1/16)$ 
and estimate
\begin{eqnarray*} 
	\tx{E}_{u}(\elly + \ell_{x};x,\texttt{t} r)
	&\leq & \nra{u_{\elly}- \ell_{x}}_{L^{2}(B_{\texttt{t} r})} + \tail(u_{\elly}- \ell_{x};B_{\texttt{t} r})\nonumber \\ 
	&\stackrel{\eqref{scatailancora}}{\le}&c \nra{w}_{L^2(B_{\texttt{t}r}) } +c \nra{v-\ell_{x}}_{L^2(B_{\texttt{t} r}) } \nonumber \\
	&& +c\texttt{t}^{2s}\tail(u_{\elly};B_{r/2}) + c\texttt{t}^{2s}\tail(\ell_{x};B_{r/2}) \nonumber \\
	&&+c \texttt{t}^{2s} \nra{u_{\elly}}_{L^{2}(B_{r/2})} + c \texttt{t}^{2s} \nra{\ell_{x}}_{L^{2}(B_{r/2})} \nonumber \\
	&& +c\int_{\texttt{t} r}^{r/2}\left(\frac{\texttt{t} r}{\lambda}\right)^{2s}\nra{u_{\elly}- \ell_{x}}_{L^{1}(B_{\lambda} )}\frac{\dlam}{\lambda} \nonumber \\
	&\stackrel{\eqref{tritri}, \eqref{eq:higherreg2222}}{\le}&c \nra{w}_{L^2(B_{\texttt{t}r}) } +c \nra{v-\ell_{x}}_{L^2(B_{\texttt{t} r}) } \nonumber \\
	&&+c \texttt{t}^{2s} \nra{u_{\elly}}_{L^{2}(B_{r})} +c\texttt{t}^{2s}\tail(u_{\elly};B_{r}) + c\texttt{t}^{2s}\left(\snr{D  v(x)} r + \snr{ v(x)}\right)  \nonumber \\
		&& +c\int_{\texttt{t} r}^{r/2}\left(\frac{\texttt{t} r}{\lambda}\right)^{2s}\nra{w}_{L^{2}(B_{\lambda} )}\frac{\dlam}{\lambda}  \nonumber \\
	&& +c\int_{\texttt{t} r}^{r/2}\left(\frac{\texttt{t} r}{\lambda}\right)^{2s}\nra{v- \ell_{x}}_{L^{2}(B_{\lambda} )}\frac{\dlam}{\lambda}  \nonumber \\
	&\stackrel{\eqref{eq:higherreg}}{\le}&c\texttt{t}^{-n/2} \nra{w}_{L^2(B_{r})} +c\texttt{t}^{1+\ti{\beta}} \left ( \nra{v}_{L^{2}(B_{r})} + \tail(v;B_{r})\right ) \nonumber \\
	&&+ c\texttt{t}^{2s}\tx{E}_{u}(\elly;x,r) +c\texttt{t}^{2s}\left ( \nra{v}_{L^{2}(B_{r})} + \tail(v;B_{r})\right ) \nonumber \\ &&+c\texttt{t}^{2s}r^{2s}  \int_{\texttt{t} r}^{r/2} \lambda^{-2s}\nra{w}_{L^2(B_{\lambda}) } \frac{\dlam}{\lambda}\nonumber \\
	&&+c\texttt{t}^{2s} r^{2s-1-\ti{\beta}} \left (\nra{v}_{L^{2}(B_{r})}  + \tail(v;B_{r}) \right ) \int_{\texttt{t} r}^{r/2} \lambda^{1+\ti{\beta}-2s} \frac{\dlam}{\lambda}  \nonumber \\
	&\stackrel{\eqref{2.3.2},\eqref{eq:higherreg}}{\le} &c \texttt{t}^{-n/2} r^{1+\beta} \nr{g}_{\mathcal{M}^{\frac{n}{2s-1-\beta}}(B_{r})} +c \texttt{t}^{1+\ti{\beta}} \left ( \nra{v}_{L^{2}(B_{r})} + \tail(v;B_{r}) \right ) \nonumber \\ 
	&&+ c\texttt{t}^{1+\ti{\beta}}\tx{E}_{u}(\elly;x,r)  \\ &&
	 +c\texttt{t}^{2s} r^{n/2+2s+1+\beta}  \int_{\texttt{t}r}^{r/2}\lambda^{-2s-n/2}\frac{\dlam}{\lambda}\nr{g}_{\mathcal{M}^{\frac{n}{2s-1-\beta}}(B_{r})} \,.
\end{eqnarray*}
Using \eqref{eq:higherreg22} we conclude with 
$$
\tx{E}_{u}(\elly + \ell_{x};x,\texttt{t} r) \leq c_1\texttt{t}^{1+\ti{\beta}}\tx{E}_{u}(\elly;x,r) +c_2 \texttt{t}^{-n/2} r^{1+\beta} \nr{g}_{\mathcal{M}^{\frac{n}{2s-1-\beta}}(B_{r})}\,,
$$
where $c_1, c_2\equiv c_1, c_2(\data,\beta)>0$. We choose $\texttt t\in(0,1/16)$, depending only on $\data$ and $\beta$, such that
$
c_1\texttt{t}^{\ti{\beta}-\beta} \leq 1/2
$
and this yields, after some manipulations also involving \rif{riduzione1}
\eqn{firstorderex}
$$(\texttt{t}r)^{-1-\beta}\tx{E}_{u}(\elly + \ell_{x};x,\texttt{t}r) \leq \frac{r^{-1-\beta}}{2}\tx{E}_{u}(\elly;x,r) +c_3 $$
with $c_3:= c_2\texttt{t}^{-n/2-1-\beta}$, and therefore it is $c_3\equiv c_3(\data, \beta)$. 
We now define
$$ \mathcal{E}_{1+\beta}(x,r):= r^{-1-\beta} \inf_{\elly \textnormal{ affine}} \tx{E}_{u}(\elly;x,r)$$ so that \rif{firstorderex} translates into 
$$ \mathcal{E}_{1+\beta}(x,\texttt{t}r)\leq \frac{r^{-1-\beta}}{2}\tx{E}_{u}(\elly;x,r) +c_3.$$ 
Recalling that the affine map $\elly$ was arbitrary we conclude with 
\eqn{montal}
$$ \mathcal{E}_{1+\beta}(x,\texttt{t}r)\leq \frac{1}{2}\mathcal{E}_{1+\beta}(x,r) +c_3 \,.$$ 
We now define the sequence $\{r_j:=\texttt{t}^jr\}_{j\geq 0}$ and the following maximal type operators 
$$
 \,   \MMM^{\#}_{k}(x):= \sup_{0\leq j \leq k}\, \mathcal{E}_{1+\beta}(x,r_{j}),  \qquad 
  \,   \MMM^{\#}_{\infty}(x):=  \sup_{j \geq 0}\,  \mathcal{E}_{1+\beta}(x,r_j)
$$
for any $k \in \en_0$. 
With the above definitions from \rif{montal} it easily follows that 
$$
\MMM^{\#}_{k+1}(x) \leq \frac 12 \MMM^{\#}_{k+1}(x) + \mathcal{E}_{1+\beta}(x,r) + c_3
$$
holds for every non-negative integer $k$, where $c_3\equiv c_3(\data, \beta)$. 
This last estimate implies that $\MMM^{\#}_{k+1}(x) \leq   2\mathcal{E}_{1+\beta}(x,r) + 2c_3 $ for every $k$ so that, letting $k\to \infty$ we find $\MMM^{\#}_{\infty}(x) \leq  2\mathcal{E}_{1+\beta}(x,r) + 2c_3, $ 
and, ultimately 
\eqn{ulti}
$$
 \inf_{\elly \textnormal{ affine}}  \nra{u-\elly}_{L^2(B_{r_j}(x))}  \leq    \left(2\mathcal{E}_{1+\beta}(x,r) + 2c_3\right)r_j ^{1+\beta}
$$
for every integer $j \geq 0$. 
With $x\in \ttB_{1/2}$ we have $B_{1/2}(x)\subset \ttB_1$, and therefore, using \eqref{scatail.1} and \eqref{riduzione1}
\begin{flalign*}
\mathcal E_{1+\beta}(x,1/2)
& \leq 4
\tx E_u(0;x,1/2)
\leq 
4\nra{u}_{L^2(B_{1/2}(x))}+4\tail(u;B_{1/2}(x))
\\ & \lesssim
\nra{u}_{L^2(\ttB_1)}+\tail(u;\ttB_1)
\lesssim 1.
\end{flalign*}
We now use \rif{ulti} with $x \in \ttB_{1/2}$ and $r=1/2$ and obtain 
$$
 \inf_{\elly \textnormal{ affine}}  \nra{u-\elly}_{L^2(B_{r_j}(x))}  \leq c     r_j ^{1+\beta}\,.
$$
From this last inequality a simple interpolation argument through different scales yields
$$
\sup_{x\in \ttB_{1/2}}\,   \inf_{\elly \textnormal{ affine}}  \nra{u-\elly}_{L^2(B_{\sigma}(x))} \leq   c \sigma^{1+\beta} 
$$ 
for every $\sigma \leq 1/2$, where $c$ depends only on $\data, \beta$ (at this stage we are also using that $\texttt{t}$ depends on $\data, \beta$). Thanks to this last inequality, the standard integral characterization of H\"older continuity due to Campanato and Meyers (see for instance \cite{cam00}) applies  and leads to  \rif{riduzione2}. 
\end{proof}
The following standard lemma has already been used in the previous proof. It can be found for instance in \cite[Remark 3.4]{kns}.  
\begin{lemma}\label{remarkino34} Let $\mathds{B} \colon  \er^{n} \to \er^{N\times N}$ be a tensor field satisfying the second condition in \eqref{condib} and let $s \in (1/2,1)$. Then $-\mathcal{L}_{\mathds{B}} \elly=0$ holds for every affine map $\elly$.   
  \end{lemma}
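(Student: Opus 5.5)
By linearity it suffices to check that for every $\varphi\in C^\infty(\er^n;\er^N)$ with compact support
$$
\int_{\er^n}\int_{\er^n}\langle \mathds{B}(x-y)\frac{\elly(x)-\elly(y)}{|x-y|^s},\frac{\varphi(x)-\varphi(y)}{|x-y|^s}\rangle\frac{\dxy}{|x-y|^{n}}=0 .
$$
For $\elly(x)=\bbt x+\aaat$ the constant $\aaat$ cancels in the difference $\elly(x)-\elly(y)=\bbt(x-y)$, so the claim concerns only the linear part $\bbt$.

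The first step is integrability, and it is the only point where $s>1/2$ is used. The integrand is bounded by $\Lambda|\bbt||x-y|^{1-2s-n}|\varphi(x)-\varphi(y)|$. Suppose $\operatorname{spt}\varphi\subset B_R$. If $x\in B_R$, then for $|x-y|\le 1$ we use $|\varphi(x)-\varphi(y)|\le \|D\varphi\|_\infty|x-y|$, which gives the locally integrable kernel $|x-y|^{2-2s-n}$. For $|x-y|>1$ we use the bound $2\|\varphi\|_\infty$, which gives $|x-y|^{1-2s-n}$; this is integrable at infinity exactly because $2s>1$.

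The same estimate applies when $y\in B_R$, while the region where both $x,y\notin B_R$ contributes nothing. Hence the double integral converges absolutely, and we may split it and apply Fubini freely.

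Next we split the integral into the two pieces containing $\varphi(x)$ and $\varphi(y)$. Taken separately, these pieces are not absolutely integrable near the diagonal, since the kernel there is $|x-y|^{1-2s-n}$ with $2s-1>0$. So we first truncate and set
$$
I_\eps:=\iint_{|x-y|>\eps}\langle \mathds{B}(x-y)\bbt(x-y),\varphi(x)-\varphi(y)\rangle\frac{\dxy}{|x-y|^{n+2s}}.
$$
By dominated convergence, using the bound from the first step, $I_\eps$ tends to the full integral as $\eps\to 0$. For fixed $\eps$ each piece is absolutely integrable: for fixed $x$ the inner integral in $y$ over $|x-y|>\eps$ is bounded by $\int_{|z|>\eps}|z|^{1-2s-n}\dz<\infty$, again because $s>1/2$.

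In the $\varphi(y)$-piece we exchange the roles of $x$ and $y$. Since $\mathds{B}(z)=\mathds{B}(-z)$ and $\bbt(y-x)=-\bbt(x-y)$, this piece equals minus the $\varphi(x)$-piece, so $I_\eps=2\int\langle J_\eps,\varphi(x)\rangle\dx$ with
$$
J_\eps:=\int_{|z|>\eps}\mathds{B}(z)\bbt z\,\frac{\dz}{|z|^{n+2s}}.
$$
The substitution $z\mapsto -z$ together with $\mathds{B}(-z)=\mathds{B}(z)$ gives $J_\eps=-J_\eps$, so $J_\eps=0$. Therefore $I_\eps=0$ for every $\eps$, and letting $\eps\to0$ proves the claim.

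The main point requiring care is exactly this truncation. Without it, the splitting into the two pieces is not justified, because each piece separately diverges logarithmically or worse near the diagonal. With the truncation in place, all the manipulations above are legitimate and the proof is complete.
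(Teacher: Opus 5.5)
Your truncation argument is correct for even fields, but it does not prove the lemma as stated. The hypothesis is only the second condition in \eqref{condib}, namely $\snr{\mathds{B}(z)}\le\Lambda$; the paper itself calls the evenness $\mathds{B}(z)=\mathds{B}(-z)$ the third condition. You use evenness twice: first to swap $x$ and $y$ in the $\varphi(y)$-piece, and then to conclude $J_\eps=-J_\eps$. The second use genuinely fails without evenness, because $J_\eps=\int_{\snr{z}>\eps}\mathds{B}(z)\bbt z\,\snr{z}^{-n-2s}\dz$ need not vanish. For example, take $\bbt z=z_1e$ with $e\in\er^N$ a unit vector, and let $\mathds{B}$ be the identity on $\{z_1>0\}$ and twice the identity on $\{z_1<0\}$; then $J_\eps$ is a negative multiple of $e$. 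The paper gives no argument of its own here and simply defers to the literature.

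The repair is easy and shortens your proof. In the $\varphi(y)$-piece, do not exchange the variables. Instead, for fixed $y$ integrate in $x$ and substitute $z=x-y$; the inner integral is again exactly $J_\eps$. Hence both pieces equal $\int\langle J_\eps,\varphi\rangle\dx$, and $I_\eps=0$ whether or not $J_\eps$ vanishes. This uses only the translation invariance of the kernel, the bound $\snr{\mathds{B}}\le\Lambda$, and $s>1/2$ for integrability.

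Alternatively, your first step already gives absolute integrability of the full integrand. Exchanging $x$ and $y$ in the whole double integral then shows that $\mathds{B}$ and $\mathds{B}_{\textnormal{sym}}(z)=\frac12[\mathds{B}(z)+\mathds{B}(-z)]$ give the same value. So you may first reduce to the even case and then run your argument unchanged.

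A minor point concerns the test functions. In the applications, for instance \eqref{0.1.} and the proof of Theorem \ref{cdg}, the identity is needed for compactly supported $\varphi\in W^{s,2}(\er^n;\er^N)$. For these, near the diagonal replace $\|D\varphi\|_\infty\snr{x-y}$ by Cauchy--Schwarz against $[\varphi]_{s,2;\er^n}$, using that $\snr{x-y}^{2-2s-n}$ is locally integrable. Away from the diagonal, replace $2\|\varphi\|_\infty$ by $\snr{\varphi(x)}+\snr{\varphi(y)}$, which is controlled because $\varphi\in L^1$.
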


\section{Higher differentiability}\label{highsec}
In this section we derive higher differentiability for weak solutions to \eqref{nonlocaleqn}. In particular, we prove \eqref{higherss}.
\begin{theorem}\label{maggiore}
Under assumptions \eqref{bs.1} with 
$1/2\leq s <1$, 
let $u$ be a weak solution to \eqref{nonlocaleqn}. There exists $\delta_{0}\equiv \delta_{0}(\data)\in (0,1-s)$ such that $u\in W^{1+ \mathfrak s,2}_{\loc}(\Omega;\er^{N})$ for all $ \mathfrak s\in (0,s+\delta_{0})$. Moreover, if $0<\gamma<1$, then
\eqn{wdown}
$$ 
\rrr^{1+ \mathfrak s}\snra{Du}_{ \mathfrak s,2;\gamma \BBB}+\rrr\nra{Du}_{L^2(\gamma \BBB)}\lesssim_{\data,  \mathfrak s,\gamma}  \rrr^{s}\snra{u}_{s,2;\BBB}+\nra{u}_{L^{2}(\BBB)}+\tail(u;\BBB) 
$$
holds whenever $\BBB\equiv \BBB_{\rrr}\Subset \Omega$ is a ball. 
\end{theorem}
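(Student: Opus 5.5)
We plan to run the difference quotient scheme outlined in Section \ref{strasec}, namely \eqref{approssima}--\eqref{proge}, after scaling to $\BBB\equiv\ttB_1$ as in Section \ref{scalasezione}. For $h\in\er^n$ with $|h|$ small, we test \eqref{weaksol} with $\tau_{-h}\varphi$ and use \eqref{matobo1pre}. This shows that $\tau_h u$ solves a linear system $-\mathcal L_{\mathds A_h}\tau_h u=0$ in $\ttB_{1/2}$, with coefficients
$$
\mathds A_h(x,y):=\int_0^1\partial\sAA\left(\frac{u(x)-u(y)+\lambda[\tau_hu(x)-\tau_hu(y)]}{|x-y|^s}\right)\dlam,
$$
which satisfy \eqref{linearell} by \eqref{bs.1}. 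To place ourselves in the setting of Lemma \ref{shar} and \cite{KMS}, we localize with a cut-off $\eta$ supported in $\ttB_{1/2}$ and equal to $1$ on $\ttB_{1/4}$. We then set $\texttt{w}_h:=\eta\tau_h u/|h|^{\beta}$. This map solves $-\mathcal L_{\mathds A_h}\texttt{w}_h=\texttt{f}_h$, where $\texttt{f}_h$ collects the commutator terms $(\eta(x)-\eta(y))\tau_hu(y)$ and the far-away interactions. The latter are bounded in $L^\infty(\ttB_{1/8})$ through $|h|^{-\beta}\tail(\tau_h u;\cdot)$ and Lemma \ref{sl1}. The commutator terms are bounded in terms of $\nr{\tau_hu}_{L^2}/|h|^{\beta}$.

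Next we apply the self-improving result of \cite{KMS} to $\texttt{w}_h$. Since the coefficients are merely measurable but satisfy \eqref{linearell}, it gives $\delta_0\equiv\delta_0(\data)$ and the bound
$$
[\texttt{w}_h]_{s+\delta_0,2;\ttB_{1/16}}\lesssim \nr{\texttt{w}_h}_{W^{s,2}}+\tail(\texttt{w}_h;\ttB_{1/8})+\nr{\texttt{f}_h}_{L^\infty}.
$$
We first take $\beta=s$. Lemma \ref{prop:embedding0} together with Caccioppoli (Lemma \ref{caccioppola} for $\tau_hu$) then controls the right-hand side uniformly in $h$ by $\nr{u}_{W^{s,2}(\ttB_1)}+\tail(u;\ttB_1)$. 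Applying Lemma \ref{prop:embedding0} once more to $\texttt{w}_h$, now with increment $h$, gives
$$
\nr{\tau_h^2u}_{L^2}\lesssim |h|^{2s+\delta_0-\eps}.
$$
Since $2s+\delta_0>1$, Lemma \ref{l4dd} applies with $\tet=2s+\delta_0-\eps\in(1,2)$. It yields $Du\in L^2$ and $[Du]_{\mathfrak s',2}$ bounded for $\mathfrak s'<2s+\delta_0-1$. When $s=1/2$ we simply stop at this stage.

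To reach every $\mathfrak s<s+\delta_0$, we bootstrap, in the spirit of Lemmas \ref{step1lemma}--\ref{step2lemma}. Once $Du\in W^{\mathfrak s',2}$ is known, we take $\beta=1$, so that $\texttt{w}_h\approx\eta\,\tau_hu/|h|$ is uniformly bounded in $W^{\min\{\mathfrak s',s\},2}$. The tails are controlled via Lemma \ref{dedicata}-type arguments and $s>1/2$. We then repeat the procedure on the increments $\tau_h u/|h|$, treated as solutions of the linearized system with coefficients $\mathds A_h$, and this time take second differences. Each round raises the differentiability of $Du$ by a fixed amount, capped by $s+\delta_0$, the threshold at which the higher differentiability from \cite{KMS} saturates. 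Lemma \ref{l4dd}, \eqref{immersione2b}, converts the resulting Besov information on $Du$ into the fractional Sobolev bound. Scaling back and covering via Lemma \ref{covlemma} then gives \eqref{wdown} with the stated dependence.

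The main obstacle is the localization step. We need a uniform-in-$h$ bound for $\texttt{f}_h$ in $L^\infty$, or at least in a dual space admissible for \cite{KMS}. This must hold even though $\tau_hu/|h|^\beta$ is not globally in $W^{s,2}$, and its tail must be handled using only $u\in L^1_{2s}$. We would first try to split $\texttt{f}_h$ into the part with $|x-y|\geq 1/8$ and a commutator part. The first part is handled by direct estimation through Lemma \ref{sl1}. The commutator part uses $|\eta(x)-\eta(y)|\lesssim|x-y|$, which makes its kernel integrable.
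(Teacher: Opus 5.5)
There is a genuine gap, and it sits exactly at the step you flag as the main obstacle. You take difference quotients of the unlocalized equation first and cut off only afterwards. As a result, the right-hand side $\texttt{f}_h$ for $\eta\,\tau_hu/|h|^{\beta}$ contains the far-away term
\[
2|h|^{-\beta}\int_{\er^n\setminus \ttB_{1/4}}(1-\eta(y))\,\mathds A_h(x,y)\,\tau_hu(y)\,\frac{\dy}{|x-y|^{n+2s}}.
\]
The Caccioppoli and Gehring estimates you apply to $\tau_hu/|h|^\beta$ likewise involve $|h|^{-\beta}\tail(\tau_hu;\cdot)$. Lemma \ref{sl1} only gives $\tail(\tau_hu;B)\lesssim\tail(u;B')$, with no factor $|h|^{\beta}$, and no such factor can exist. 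Outside $\Omega$ the map $u$ is merely in $L^1_{2s}$, so its translates converge with no rate. Nor can you shift the difference operator onto the kernel, because $\mathds A_h(x,y)$ has no regularity in $y$. Hence neither $\nr{\texttt{f}_h}_{L^\infty}$ nor the tail terms are bounded uniformly in $h$. The scheme therefore fails already at $\beta=s$, and the claimed uniform bound in terms of $\nr{u}_{W^{s,2}(\ttB_1)}+\tail(u;\ttB_1)$ is not justified.

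The missing idea is to reverse the order and localize the nonlinear equation first, as in Lemma \ref{el : lem.loc}. Then $w=\eta u\in W^{s,2}(\er^n)$ is compactly supported and solves $-\mathcal N_{\sAAA}w=f$ in $\ttB_2$. In $f(x)$ the exterior variable $y$ is never translated; only $u(x)$ and the kernel $|x-y|^{-n-2s}$, which is smooth away from the diagonal, depend on $x$. This yields \eqref{el : lem.loc.est}$_2$: the tail of $u$ enters $\tau_hf$ multiplied by $|h|$, and $|h|^{1-\beta}\le 1$ after dividing by $|h|^\beta$. Differencing the localized equation, $\tau_hw/|h|^\beta$ solves a linear system with right-hand side $\tau_hf/|h|^\beta$. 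Its tail is bounded by $\nr{\tau_hw/|h|^\beta}_{L^2(\er^n)}$, hence by $[w]_{s,2;\er^n}$ when $\beta=s$ and by $\nr{Dw}_{L^2(\er^n)}$ when $\beta=1$; see \eqref{estendi}.

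With this ordering no bootstrap is needed. The case $\beta=s$ together with Lemma \ref{l4dd} gives $Du\in L^2$. A single further step with $\beta=1$ then makes $\tau_h^2u/|h|^{1+s+\delta_0}$ bounded in $L^2$, and \eqref{immersione2b} gives every $\mathfrak s<s+\delta_0$. This also covers $s=1/2$. Your plan stops at $\mathfrak s'<2s+\delta_0-1=\delta_0$ in that case, short of the claimed range $\mathfrak s<1/2+\delta_0$, and the restriction $s>1/2$ you invoke for the tails is not needed.
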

We first need  a $W^{1,2}$-estimate. 
\begin{proposition}\label{gradpre}
Under assumptions \eqref{bs.1} with 
$1/2\leq s <1$, let $u$ be a weak solution to \eqref{nonlocaleqn}. Then $u \in W^{1,2}_{\loc}(\Omega;\er^{N})$. Moreover, if $0<\gamma<1$, then 
\eqn{wdown0} 
$$
\rrr\nra{Du}_{L^2(\gamma \BBB)}\lesssim_{\data,\gamma} \rrr^s\snra{u}_{s,2;\BBB}+\nra{u}_{L^{2}(\BBB)}+\tail(u;\BBB)
$$
holds whenever $\BBB\equiv \BBB_{\rrr}\Subset \Omega$. 
\end{proposition}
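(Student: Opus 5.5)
The plan is a difference quotient argument of the type sketched in \eqref{approssima}--\eqref{proge}, combined with the self-improving property of \cite{KMS}. By scaling (Section \ref{scalasezione}, adapted to the nonlinear operator, since $\sAA$ has linear structure with respect to dilations of the difference quotient only after rescaling $u$ as $u(x_0+\rrr x)/\rrr^{s}$, which preserves \eqref{bs.1}) and a covering argument via Lemma \ref{covlemma}, it suffices to work in $\ttB_1$ and prove $\nr{Du}_{L^2(\ttB_{\kappa})}\lesssim \snra{u}_{s,2;\ttB_1}+\nra{u}_{L^2(\ttB_1)}+\tail(u;\ttB_1)$ for some fixed $\kappa$.

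For $h$ small, test \eqref{weaksol} with $\tau_{-h}\varphi$ and use \eqref{matobo1pre}. Then $\tau_h u$ solves a linear system $-\mathcal{L}_{\mathds{A}_h}\tau_h u=0$ in $\ttB_{1/2}$, with
\[
\mathds{A}_h(x,y):=\int_0^1\partial\sAA\Bigl(\frac{u(x)-u(y)+\lambda(\tau_hu(x)-\tau_hu(y))}{|x-y|^s}\Bigr)\dlam .
\]
These coefficients satisfy \eqref{linearell} by \eqref{bs.1}, symmetry coming from \eqref{sim}. The equation holds only locally, so I localize: set $\texttt{w}_h:=\eta\,\tau_h u/|h|^{\beta}$. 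Then $\texttt{w}_h$ solves $-\mathcal{L}_{\mathds{A}_h}\texttt{w}_h=\texttt{f}_h$, where $\texttt{f}_h$ collects the far-away interactions and is bounded by tails of $\tau_hu/|h|^\beta$. The Caccioppoli inequality (Lemma \ref{caccioppola}) gives a uniform $W^{s,2}$ bound for $\texttt{w}_h$ provided $\nr{\tau_h u}_{L^2}\lesssim |h|^{\beta}$ uniformly. The Gehring-type higher differentiability of \cite{KMS}, applied to this linear system with bounded measurable coefficients, then upgrades this to a uniform $W^{s+\delta_0,2}$ bound.

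This yields a bootstrap in Besov scales. If $u\in B^{\beta}_{2,\infty}$ locally, then $\tau_h u/|h|^\beta\in W^{s+\delta_0,2}$ uniformly, and Lemma \ref{prop:embedding0} gives $\nr{\tau_h^2u}_{L^2}\lesssim |h|^{\beta+s+\delta_0'}$. Starting from $\beta=s$ (via \eqref{immergi2}), one step gives exponent $2s+\delta_0'\ge 1+\delta_0'$ since $s\ge 1/2$. Then \eqref{immersione2} of Lemma \ref{l4dd}, with $\tet\in(1,2)$, yields $Du\in L^2$ with the estimate \eqref{wdown0}. If one step overshoots $2$, I simply take $\tet<\min\{2,2s+\delta_0'\}$. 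The tails are handled by Lemma \ref{sl1} and \eqref{scatailancora2}, giving constants depending on $\data,\gamma$.

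I expect the main obstacle to be the localization term $\texttt{f}_h$. Its control requires bounding $\tail(\tau_h u/|h|^\beta;\cdot)$, but outside the ball $u$ is only in $L^1_{2s}$, so $\tau_hu/|h|^\beta$ is not small there. The first thing I would try is to split $\texttt{f}_h$ into a near part, estimated by $\nr{\tau_hu}_{L^2}/|h|^\beta$ on an annulus, and a far part. For the far part, I would move the difference $\tau_h$ onto the kernel $|x-y|^{-n-2s}$ and the cutoff. This gains a factor $|h|$, which, with $\beta\le 1$, absorbs $|h|^{-\beta}$ and leaves the far part bounded by $\tail(u;\ttB_1)$.
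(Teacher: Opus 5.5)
Your overall scheme matches the paper's: difference quotients solving a linear system with coefficients $\mathds{A}_h$, then Caccioppoli plus the fractional Gehring lemma, then a single step from $\beta=s$ to the exponent $2s+\delta_0\in(1,2)$, then Lemma \ref{l4dd}. However, the step you flag as the main obstacle is not resolved, and the remedy you suggest fails as stated. After cutting off $\tau_hu/|h|^\beta$, the localization term is
\[
\texttt{f}_h(x)=\frac{2}{|h|^{\beta}}\int_{\er^n}(1-\eta(y))\,\mathds{A}_h(x,y)\,\tau_hu(y)\,\frac{\dy}{|x-y|^{n+2s}} .
\]
Between the difference $\tau_hu(y)$ and the kernel sits the coefficient $\mathds{A}_h(x,y)$. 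It depends on $y$ through $u(y)$ and $u(y+h)$, so it is merely bounded and measurable in $y$. A discrete integration by parts in $y$ therefore produces $\tau_{-h}[\mathds{A}_h(x,\cdot)]$, which is only $O(1)$, not $O(|h|)$. You cannot move the difference onto the kernel and the cutoff alone. You are left with the naive bound $|h|^{-\beta}$ times a tail of $u$, which is not uniform in $h$. Hence the Caccioppoli/Gehring bound for $\texttt{w}_h$, whose right-hand side involves $\nr{\texttt{f}_h}_{L^2}$, is not justified.

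The missing idea is that the shift must be handled at the level of the nonlinear flux $\sAA$, not of the linearized integrand. Within your ordering, you can undo the linearization via
$\mathds{A}_h(x,y)\tau_hu(y)=\mathds{A}_h(x,y)\tau_hu(x)-|x-y|^{s}\,\tilde\tau_h\big[\sAA\big(\tfrac{u(x)-u(y)}{|x-y|^s}\big)\big]$
and change variables $y\mapsto y-h$ in the shifted term. The difference then falls on $x\mapsto x+h$ inside $\sAA$, bounded by $\Lambda|\tau_hu(x)|$ thanks to \eqref{lippinono}, and on the kernel and the cutoff, which give the factor $|h|$ because $y$ stays away from the diagonal.

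The paper instead reverses the order. It first localizes the nonlinear equation, so that $w=\eta u$ solves $-\mathcal{N}_{\sAAA}w=f$ in $\ttB_2$ with $f\in L^\infty$ (Lemma \ref{el : lem.loc}), and only then takes differences. Then $\tau_hw$ is compactly supported, its tail is trivially controlled by $L^2$ norms as in \eqref{asino}, and the right-hand side $\tau_hf/|h|^\beta$ is bounded pointwise by \eqref{el : lem.loc.est}$_2$, which is exactly the computation just described.

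In both versions the outcome is not a pure gain of $|h|$ leaving only $\tail(u;\ttB_1)$. There is an unavoidable local term $|h|^{-\beta}|\tau_hu(x)|$. It is harmless, because it is controlled by $\nr{u}_{B^{\beta}_{2,\infty}}$ (for $\beta=s$ via Lemma \ref{prop:embedding0}). With this repair the rest of your argument goes through.
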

\begin{proof} 
{\em Step 1: Higher fractional differentiability for difference quotients.} In this section we shall use the finite difference operators $\tau_{h}\equiv \tau_{h, \er^n}$ defined in \eqref{enfasi} and  \eqref{domania}. We start by considering the special case in which 
\eqn{reduce}
$$
u\in W^{s,2}(\ttB_5;\er^{N})\cap L^{1}_{2s}\quad \mbox{solves} \quad -\mathcal{N}_{\sAAA}u=0\quad  \mbox{in $\ttB_5$}.
$$ We can reduce to this case by a  standard scaling argument\footnote{Consider a ball $B_{5\rr}\equiv B_{5\rr}(x_0)\Subset \Omega$ and define $u_\rr(x):=\rr^{-s}u(x_0+\rr x)$, which satisfies \eqref{reduce}. By writing down \eqref{wdown0} for $u_\rr$, and scaling back we obtain 
$
\rr\nra{Du}_{L^2(B_{\rr/16})}\le c\rr^s\snra{u}_{s,2;B_{5\rr}}+c\nra{u}_{L^{2}(B_{5\rr})}+c\tail(u;B_{5\rr}), 
$
whenever $B_{5\rr}\Subset \Omega$. By using the covering argument in Lemma \ref{covlemma} we finally conclude with \eqref{wdown0}.}. We preliminarily use the procedure of Lemma \ref{el : lem.loc} below. This yields $w \in W^{s,2}(\er^{n};\er^{N})$, which is defined in \eqref{localizzata}, and solves  
$-\mathcal{N}_{\sAAA}w=f$ in $\ttB_{2}$, that is 
\eqn{theoned}
$$
\int_{\mathbb{R}^{n}}\int_{\mathbb{R}^{n}}\langle\sAA \left(\frac{w(x)-w(y)}{|x-y|^s}\right),\frac{\varphi(x)-\varphi(y)}{|x-y|^s}\rangle\frac{\dxy}{|x-y|^{n}}= \int_{\ttB_{2}} 
\langle f , \varphi 
\rangle\dx 
$$
holds for every $\varphi \in W^{s,2}(\er^n;\er^{N})$ with compact support in $\ttB_2$. Moreover 
\eqn{same}
$$ \mbox{$u\equiv w$ \ \ \ on $\ttB_3$ \ \ \ \ and \ \ \ \ $w\equiv 0$\ \ \  outside $\ttB_5$}\,.$$
Here  
$f$
 satisfies \eqref{eq : lem.loc}-\eqref{el : lem.loc.est} with $B_{3\rr}(x_0)\equiv \ttB_3$, for every $x \in \ttB_2$ and $ |h|< 1/4$. We now prove that there exist numbers $\delta_{0}\equiv \delta_{0}(\data)\in (0,1-s)$ and $c\equiv c(\data)$ such that 
 \eqn{hid.20}
$$ 
\left[\frac{\tau_{h}w}{|h|^{\beta}}\right]_{s+\delta_{0},2;\ttB_{1/4}}\le c\left\|\frac{\tau_{h}w}{|h|^\beta}\right\|_{L^{2}(\ttB_{1})}+c\tail\left(\frac{\tau_{h}w}{|h|^{\beta}};\ttB_{1}\right) +c\left\|\frac{\tau_{h}f}{|h|^\beta}\right\|_{L^{2}(\ttB_{1})}.
$$
 holds whenever 
 $h \in \er^n$, and $\beta \in \er$ such that 
 \eqn{limitazioni} 
$$0< \beta\leq 1\,, \qquad 0< |h|< \frac{1}{2^4}. $$ 
We stress that $\delta_0$ and $c$ appearing in \eqref{hid.20} are independent of $h$ and $\beta$. 
To this aim we fix $\beta,h$ as in \eqref{limitazioni}, recall the meaning of $\tilde{\tau}_{h}$, which has been introduced in \eqref{ttau2}, and define $\sA_{h}\colon \er^{2n} \to \er^{N\times N}$ as
\eqn{hid.0} 
$$
\sA_{h}(x,y):=\int_{0}^{1}\partial\sAA \left(\frac{w(x)-w(y)+\lambda(\tilde{\tau}_{h}(w(x)-w(y)))}{|x-y|^{s}}\right)\dd\lambda
$$
whenever $x, y \in  \er^n$. Also recalling \eqref{ttau2}-\eqref{matobo2}, we have 
\eqn{hid.02} 
$$
\tilde{\tau}_{h}\sAA \left(\frac{w(x)-w(y)}{|x-y|^{s}}\right)= \sA_{h}(x,y)\frac{\tilde{\tau}_{h}(w(x)-w(y))}{|x-y|^s}= \sA_{h}(x,y)\frac{\tau_{h}w(x)-\tau_{h}w(y)}{|x-y|^s}
$$ 
and, by \eqref{bs.1}
\eqn{hid.00} 
$$
\Lambda^{-1}\snr{\xi}^2 \leq \langle\sA_{h}(x,y) \xi, \xi \rangle, \quad  \snr{\sA_{h}(x,y)} \leq \Lambda
$$
holds for every $\xi\in \er^{N}$. Moreover, using \eqref{sim} it follows that 
\eqn{hid.000} 
$$
\sA_{h}(x,y)=\sA_{h}(y,x).
$$
With $\psi\in W^{s,2}(\er^n;\er^{N})$ having compact support in $\ttB_{1}$, we test \eqref{theoned} with $\varphi:=\tau_{-h}\psi$, which is an admissible test function. We want to use the material of Section \ref{doppidiff}. In particular, we use \eqref{matobo2}  and integrate by parts according to  \eqref{matobo1}, so that 
\begin{flalign*}
& \int_{\er^n}\int_{\er^n}\langle\tilde\tau_{h}\sAA \left(\frac{w(x)-w(y)}{|x-y|^s}\right),\frac{\psi(x)-\psi(y)}{|x-y|^{s}}\rangle \frac{\dxy}{|x-y|^{n}}\\
& \quad = \int_{\er^n}\int_{\er^n}\langle\sAA \left(\frac{w(x)-w(y)}{|x-y|^s}\right),\frac{\tilde\tau_{-h}(\psi(x)-\psi(y))}{|x-y|^{s}}\rangle \frac{\dxy}{|x-y|^{n}}\\
& \quad =\int_{\ttB_2} \langle f ,\tau_{-h}\psi\rangle\dx=\int_{\ttB_1} \langle   \tau_{h} f ,\psi\rangle\dx\,.
\end{flalign*}
Recalling \eqref{hid.0}-\eqref{hid.02}, dividing the above identity by $|h|^\beta$ and denoting
\eqn{diffwf} 
$$
\texttt{w}_h:=\frac{\tau_{h}w}{|h|^\beta}, \quad \texttt{f}_h:=\frac{\tau_{h}f}{|h|^\beta}
$$
we  arrive at 
$$
\int_{\er^n}\int_{\er^n}\langle\sA_{h}(x,y)\frac{\texttt{w}_h(x)-\texttt{w}_h(y)}{|x-y|^s}
,\frac{\psi(x)-\psi(y)}{|x-y|^s}\rangle \frac{\dxy}{|x-y|^{n}}=\int_{\ttB_{1}}\langle \texttt{f}_h,\psi\rangle\dx.
$$
As specified above, this holds whenever $\psi\in W^{s,2}(\er^n;\er^{N})$ has compact support in $\ttB_{1}$, that is, 
\eqn{linni}
$$
 -\mathcal{L}_{\mathds{A}_{h}}\texttt{w}_h =\texttt{f}_h\ \ \mbox{holds in $\ttB_{1}$}
$$
in the sense of Definition \ref{def:weaksol2}. 
In fact, thanks to \eqref{hid.00}-\eqref{hid.000}, the one in \eqref{linni} is exactly a system of the type considered in \eqref{solutio}. Therefore we can apply 
the fractional version of Gehring's lemma developed in \cite[Theorem 5.5]{parte1}, see also \cite[Theorem 1.1]{KMS} and \cite[Theorem 1.1]{byunnon}. Specifically, there exists a universal higher differentiability constant $\delta_0$ such that $
0 < \delta_0\equiv \delta_0 (\data) <1-s$, $\texttt{w}_h \in W^{s+\delta_0,2}_{\loc}(\ttB_{1/2};\er^{N})$
and 
\eqn{altina}
$$
[\texttt{w}_h  ]_{s+\delta_{0},2;\ttB_{1/4}}\le c [\texttt{w}_h  ]_{s,2;\ttB_{1/2}}+c\tail(\texttt{w}_h ;\ttB_{1/2}) +c\nr{\texttt{f}_h }_{L^{2}(\ttB_{1/2})}
$$
holds with  $c\equiv c(\data)$. 
Again by \eqref{linni} we can apply Lemma \ref{caccioppola} to $\texttt{w}_h$ and this yields 
$$
[\texttt{w}_h]_{s,2;\ttB_{1/2}} \le c\nr{\texttt{w}_h}_{L^{2}(\ttB_{1})}+c\tail\left(\texttt{w}_h;\ttB_{1}\right) +c\nr{\texttt{f}_h}_{L^{2}(\ttB_{1})},  
$$
that holds with $c\equiv c(\data)$. Combining the content of the last two displays, using \eqref{scatailancora2} and recalling the definitions in \eqref{diffwf}  finally leads to \eqref{hid.20}.

\noindent {\em Step 2: A $W^{1,2}$-estimate.}  Here we prove  that
\eqn{hid.4}
$$
\nr{Du}_{L^2(\ttB_{1/16})}\le c[u]_{s,2;\ttB_5}+c\nr{u}_{L^{2}(\ttB_{5})}+c\tail(u;\ttB_{5}) 
$$
holds with $c\equiv c(\data)$. For the proof, we take  $\beta=s$ in \eqref{hid.20} and estimate the resulting right-hand side terms. 
We start by using Lemma \ref{prop:embedding0}, that gives
\eqn{meola}
$$
\left\|\frac{\tau_{h}w}{|h|^s}\right\|_{L^{2}(\ttB_{1})}\stackrel{\eqref{same}}{=} \left\|\frac{\tau_{h}u}{|h|^s}\right\|_{L^{2}(\ttB_{1})}\le c[u]_{s,2;\ttB_5}+c\nr{u}_{L^{2}(\ttB_{5})}
$$
with $c\equiv c(n,N,s)$. 
Recalling that $|h|<1/2^4$, again via Lemma \ref{prop:embedding0} we find
\begin{flalign}
\notag \tail\left(\frac{\tau_{h}w}{|h|^{s}};\ttB_{1}\right)&\le  \int_{\ttB_{6}\setminus \ttB_{1}}\left|\frac{\tau_{h}w}{|h|^{s}}\right|\frac{\dx}{\snr{x}^{n+2s}} \le   c \left\|\frac{\tau_{h}w}{|h|^s}\right\|_{L^{2}(\ttB_{6})}\nonumber \\
&\le  c[w]_{s,2;\er^n}+c\nr{w}_{L^{2}(\er^n)} \stackrel{\eqref{estendi}}{\le} c[u]_{s,2;\ttB_5}+c\nr{u}_{L^{2}(\ttB_{5})}\label{asino}
\end{flalign}
for $c\equiv c(n,N,s)$.
Finally, using first \eqref{el : lem.loc.est}  and then \eqref{meola} yields
\begin{flalign*}
\left\|\frac{\tau_{h}f}{|h|^s}\right\|_{L^{2}(\ttB_{1})} &\leq c\left\|\frac{\tau_{h}u}{|h|^s}\right\|_{L^{2}(\ttB_{1})}+c\nr{u}_{L^{2}(\ttB_{2})} +c \tail(u;\ttB_{5})\\
&  \leq c[u]_{s,2;\ttB_5}+c\nr{u}_{L^{2}(\ttB_{5})}+c \tail(u;\ttB_{5})
\end{flalign*}
with $c\equiv c(n,N,s)$. Note that we are using also \eqref{scatailancora2}, and we shall use it all the time. Plugging the content of the three above displays in \eqref{hid.20} (with $\beta=s$) we obtain
$$
\left[\frac{\tau_{h}u}{|h|^{s}}\right]_{s+\delta_{0},2;\ttB_{1/4}}\le c[u]_{s,2;\ttB_5}+c\nr{u}_{L^{2}(\ttB_{5})}+c\tail(u;\ttB_{5}),
$$
where $c\equiv c(\data)$. Next, by Lemma \ref{prop:embedding0} applied this time to $|h|^{-s}\tau_{h}u$, we have 
\begin{eqnarray*}
\left[\frac{\tau_{h}u}{|h|^{s}}\right]_{s+\delta_{0},2;\ttB_{1/4}}&\ge&\frac{1}{c}\left\|\frac{\tau_{h}^{2}u}{|h|^{2s+\delta_{0}}}\right\|_{L^{2}(\ttB_{1/8})}-c\left\|\frac{\tau_{h}u}{|h|^{s}}\right\|_{L^{2}(\ttB_{1/4})}\nonumber \\
&\stackrel{\eqref{meola}}{\ge}&\frac{1}{c}\left\|\frac{\tau_{h}^{2}u}{|h|^{2s+\delta_{0}}}\right\|_{L^{2}(\ttB_{1/8})}-c[u]_{s,2;\ttB_5}-c\nr{u}_{L^{2}(\ttB_{5})},
\end{eqnarray*}
with $c\equiv c(n,N,s)$. Merging the inequalities in the two previous displays, we obtain
\eqn{bico}
$$
\sup_{0<|h|\leq 1/2^4}\, \left\|\frac{\tau_{h}^{2}u}{|h|^{2s+\delta_{0}}}\right\|_{L^{2}(\ttB_{1/8})}\le c[u]_{s,2;\ttB_5}+c\nr{u}_{L^{2}(\ttB_{5})}+c\tail(u;\ttB_{5}) ,
$$
for $c\equiv c(\data)$. As $2s\geq 1$ and $0<\delta_0<1-s$ as in \eqref{altina}, we have $1<2s+\delta_0<2$ and using Lemma \ref{l4dd}, estimate \eqref{immersione2}, we get 
\eqref{hid.4}.  
\end{proof}
\begin{remark}{\em Unlike the other main results of the paper, in Theorem \ref{maggiore} the borderline case $s=1/2$ is allowed. This is ultimately a consequence of the higher fractional differentiability in \eqref{altina}. This serves to ensure that the exponent $2s+\delta_{0}$ appearing in \eqref{bico} is larger than one also when $s=1/2$. This allows us to apply Lemma \ref{l4dd}.}
\end{remark}
\begin{proof}[Proof of Theorem \ref{maggiore}] We capitalize on Proposition \ref{gradpre} and its proof. More precisely, we restart reducing the problem as in \eqref{reduce}, and perform the localization in  Lemma \ref{el : lem.loc}  but now with upgraded regularity given by Proposition \ref{gradpre}, that gives 
\eqn{hid.44}
$$
\nr{Du}_{L^2(\ttB_{\gamma})}\lesssim_{\data,\gamma} [u]_{s,2;\ttB_5}+\nr{u}_{L^{2}(\ttB_5)}+\tail(u;\ttB_5)
$$
whenever $0<\gamma <5$. 
In \eqref{hid.20} we choose $\beta=1$ 
and estimate the resulting right-hand side terms. By means of \eqref{hid.44} we have 
$$
\left\|\frac{\tau_{h}w}{|h|}\right\|_{L^{2}(\ttB_{1})}=\left\|\frac{\tau_{h}u}{|h|}\right\|_{L^{2}(\ttB_{1})}\le c\nr{Du}_{L^2(\ttB_{2})}\leq c[u]_{s,2;\ttB_5}+c\nr{u}_{L^{2}(\ttB_5)}+c\tail(u;\ttB_5).
$$
As done in \eqref{asino}
\begin{flalign*}
\notag & \tail\left(\frac{\tau_{h}w}{|h|};\ttB_{1}\right) \le   c \left\|\frac{\tau_{h}w}{|h|}\right\|_{L^{2}(\ttB_{5+1/2^4})} \leq  c\nr{Dw}_{L^{2}(\er^n)}\nonumber \\
&\quad  \stackrel{\eqref{estendi}_2}{\le}c\nr{Du}_{L^2(\ttB_{4})}+c\nr{u}_{L^{2}(\ttB_{4})}\stackleq{hid.44}  c[u]_{s,2;\ttB_5}+c\nr{u}_{L^{2}(\ttB_5)}+c\tail(u;\ttB_5).
\end{flalign*}
Finally, we obtain
\begin{eqnarray*}
\left\|\frac{\tau_{h}f}{|h|}\right\|_{L^{2}(\ttB_{1})}&\stackrel{\eqref{el : lem.loc.est}_2}{\le}&c\left\|\frac{\tau_{h}u}{|h|}\right\|_{L^{2}(\ttB_{1})} +c\nr{u}_{L^{2}(\ttB_{5})}+c\tail(u;\ttB_{5})\nonumber \\
&\leq &c\nr{Du}_{L^{2}(\ttB_{2})} +c\nr{u}_{L^{2}(\ttB_{5})}+c\tail(u;\ttB_{5})\nonumber \\
&\stackrel{\eqref{hid.44}}{\le}&c[u]_{s,2;\ttB_5}+c\nr{u}_{L^{2}(\ttB_{5})}+c\tail(u;\ttB_{5}).
\end{eqnarray*}
In all the above estimates $c$ depends only on $\data$. Using the content of the last three estimates in \eqref{hid.20}, used with $\beta=1$, we conclude with
$$
\left[\frac{\tau_{h}u}{|h|}\right]_{s+\delta_{0},2;\ttB_{1/4}} \le c[u]_{s,2;\ttB_5}+c\nr{u}_{L^{2}(\ttB_{5})}+c\tail(u;\ttB_{5}).
$$
Finally, using Lemma \ref{prop:embedding0} as for  \eqref{bico}, this time we bound
$$
\sup_{0<|h|\leq 1/2^4}\, \left\|\frac{\tau_{h}^{2}u}{|h|^{1+s+\delta_{0}}}\right\|_{L^{2}(\ttB_{1/8})}\le c[u]_{s,2;\ttB_5}+c\nr{u}_{L^{2}(\ttB_{5})}+c\tail(u;\ttB_{5}). 
$$
Then, Lemma \ref{l4dd}, estimate \eqref{immersione2b}, together with \rif{hid.44}, now give
$$
[Du]_{\mathfrak s, 2;\ttB_{1/16}}\le c[u]_{s,2;\ttB_5}+c\nr{u}_{L^{2}(\ttB_{5})}+c\tail(u;\ttB_{5})
$$ 
where $c\equiv c(\data,\mathfrak s)$, for every $\mathfrak s \in (0, s+\delta_0)$. In order to conclude with \eqref{wdown}  it is now sufficient to use the usual scaling and covering argument of Lemma \ref{covlemma}. 
\end{proof}
We end with the localization argument that has been used in the proof of Proposition \ref{gradpre}. It is essentially a straightforward modification of \cite[Lemma 3.2]{DKLN}. We report a sketch of the proof for completeness, emphasising the differences between this version and the original, both in the statement and in the proof. 
\begin{lemma}[Localization]
	\label{el : lem.loc}
	Let $u\in W^{s,2}(B_{5\rr};\er^{N})\cap L^{1}_{2s}$ be a weak solution to \eqref{nonlocaleqn} in  $\Omega\equiv B_{5\rr}\equiv B_{5\rr}(x_{0})\Subset\er^n$. Fix a cutoff function $\eta \in C_{0}^{\infty}\left(B_{4\rr}\right)$ with $0\le \eta\le1$, $\eta\equiv 1$ on $B_{3\rr}$ and $|D \eta| \lesssim 1/\rr$. Then 
	\eqn{localizzata}
	$$w:= \eta u\in W^{s,2}(\er^n;\er^{N})$$ 
	is a weak solution to
	\begin{equation}\label{eq : lem.loc}
		-\mathcal{N}_{\sAAA}w=f \quad\text{in }B_{2\rr}
	\end{equation}
	for some $f \in L^{\infty}\left(B_{2\rr}\right)$.  Moreover, for any  $h \in \er^n$ with $|h|\leq \rr/4$, it holds that 
\eqn{el : lem.loc.est}
$$
\begin{cases}					
|f(x)| \leq c  \rr^{-2s} {\tail(u;B_{3\rr})},\  \mbox{a.e.} \ x \in B_{2\rr} \\[1mm] 
			|\tau_hf(x)| \leq c\rr^{-2s} |\tau_hu(x)|\\[1mm] 
			\qquad \qquad \      +c\rr^{-2s-1}|h|\left[|u(x+h)|+|u(x)|+{\tail(u;B_{3\rr})}\right]
			,\mbox{a.e.} \ x \in B_{\rr} 
\end{cases}
$$
where  $c\equiv c(\data)$. Finally
\eqn{estendi}
$$
\begin{cases}
[w]_{s,2;\er^n}
\leq c[u]_{s,2;B_{5\rr}}+c\rr^{-s}\nr{u}_{L^{2}(B_{5\rr})}\\[3pt]
\nr{Dw}_{L^2(\er^n)}
\leq c\nr{Du}_{L^2(B_{4\rr})}+c\rr^{-1}\nr{u}_{L^{2}(B_{4\rr})}\,.
\end{cases}
$$
Note that the last inequality is meaningful whenever $u\in W^{1,2}(B_{4\rr};\er^{N})$. 
\end{lemma}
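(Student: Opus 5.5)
We plan to follow the scheme of \cite[Lemma 3.2]{DKLN}. We may assume $x_0=0$ and, by the scaling $u_\rr(x):=\rr^{-s}u(\rr x)$ (which preserves \eqref{bs.1} because the operator is homogeneous under it), also $\rr=1$. The bounds in \eqref{estendi} are routine and come first. For \eqref{estendi}$_1$, write $\eta(x)u(x)-\eta(y)u(y)=\eta(x)(u(x)-u(y))+u(y)(\eta(x)-\eta(y))$ and split $\er^n\times\er^n$ into $\ttB_5\times\ttB_5$ and its complement. In the complement the integrand vanishes unless one of the points lies in $\ttB_4$, and the other then lies at distance at least $1$, which gives an $L^2(\ttB_4)$ bound. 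The estimate \eqref{estendi}$_2$ is just the product rule.

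The core step is to identify $f$. Take $\varphi\in W^{s,2}$ supported in $\ttB_2$. Since $u\equiv w$ on $\ttB_3$, the two forms differ only on pairs with one point in $\ttB_2$ and the other in $\er^n\setminus\ttB_3$, where $\varphi$ vanishes at the second point. Using the oddness of $\sAA$ to symmetrize, we get
\[
\langle -\mathcal N_{\sAAA}w,\varphi\rangle=\langle -\mathcal N_{\sAAA}w,\varphi\rangle-\langle -\mathcal N_{\sAAA}u,\varphi\rangle=\int_{\ttB_2}\langle f,\varphi\rangle\dx,
\]
with
\[
f(x):=2\int_{\er^n\setminus \ttB_3}\left[\sAA\left(\frac{u(x)-\eta(y)u(y)}{|x-y|^s}\right)-\sAA\left(\frac{u(x)-u(y)}{|x-y|^s}\right)\right]\frac{\dy}{|x-y|^{n+s}}.
\]
Here we used $w(x)=u(x)$ for $x\in\ttB_2$. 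Since $\sAA$ is $\Lambda$-Lipschitz by \eqref{bs.1}, the bracket is bounded by $\Lambda(1-\eta(y))|u(y)||x-y|^{-s}$. Because $|x-y|\approx|y|$ for $x\in\ttB_2$ and $y\notin\ttB_3$, this gives $|f(x)|\le c\,\tail(u;\ttB_3)$, which is \eqref{el : lem.loc.est}$_1$. Absolute integrability of all the terms follows from $u\in L^1_{2s}$, as in Section \ref{equazionasec}.

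The main obstacle is \eqref{el : lem.loc.est}$_2$: it requires differencing $f$ in $x$ without the $C^1$ bounds on $\partial\sAA$ degenerating. For $x\in\ttB_1$ and $|h|\le1/4$, we change variables $y\mapsto y+h$ in the expression for $f(x+h)$. This keeps the kernel as $|x-y|^{-n-s}$ but moves the domain to $(\er^n\setminus\ttB_3)-h$ and replaces $u(y),\eta(y)$ by $u(y+h),\eta(y+h)$.

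We then write $\tau_hf(x)$ as a sum of three pieces. The first comes from the symmetric difference of the domains, a shell of width $|h|$ around $\partial\ttB_3$, on which $1-\eta\equiv0$ for $y$ near $\partial \ttB_3$. We therefore restrict the integration to $\operatorname{spt}(1-\eta)\subset\er^n\setminus\ttB_3$ from the start, so this term vanishes; it is also harmless either way. The second piece is the difference of the integrands evaluated at the arguments with $x+h,\,y+h$ versus $x,\,y$. Write $G(a,b):=\sAA(a-b)-\sAA(a-b')$ with $b$ and $b'$ the two $y$-arguments. By the fundamental theorem of calculus in the first argument, using $|\partial\sAA|\le\Lambda$, the change from $u(x)$ to $u(x+h)$ contributes at most $c\,|\tau_hu(x)|\int_{\er^n\setminus\ttB_3}|x-y|^{-n-2s}\dy\le c|\tau_hu(x)|$. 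The remaining change, from $y$ to $y+h$, contributes terms $\sAA(\cdot - u(y+h))-\sAA(\cdot-u(y))$, which appear in both parts of the bracket and cancel. The third piece involves only $\eta(y+h)-\eta(y)$, bounded by $c|h||u(y+h)||x-y|^{-s}$, together with the domain and kernel adjustments, bounded by $c|h|(|u(x)|+|u(x+h)|+|u(y)|)|x-y|^{-s-1}$. Integrating these against $|x-y|^{-n-s}$ gives $c|h|[\,|u(x)|+|u(x+h)|+\tail(u;\ttB_3)\,]$; here we use \eqref{scatailancora2} and Lemma \ref{sl1} to control the tail of the shifted function $u(\cdot+h)$.

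Scaling back introduces exactly the factors $\rr^{-2s}$ and $\rr^{-2s-1}$ in \eqref{el : lem.loc.est}.
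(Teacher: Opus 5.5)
Your identification of $f$, the bound \eqref{el : lem.loc.est}$_1$, the estimates \eqref{estendi} and the scaling reduction are fine. The gap is in \eqref{el : lem.loc.est}$_2$. It sits at the step where you claim that, after the substitution $y\mapsto y+h$ in $f(x+h)$, the contributions $\sAA(\cdot-u(y+h))-\sAA(\cdot-u(y))$ ``appear in both parts of the bracket and cancel''. They do not cancel.

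Already for $\sAA(z)=z$ you have $f(x)=2\int(1-\eta(y))u(y)|x-y|^{-n-2s}\dy$. Your representation then reads $\tau_hf(x)=2\int\tau_h[(1-\eta)u](y)\,|x-y|^{-n-2s}\dy$, whose integrand contains $(1-\eta(y))\,\tau_hu(y)$. For nonlinear $\sAA$ the problem is the same: for $|y|\geq 5$, where $\eta(y)=\eta(y+h)=0$, your integrand equals $\sAA(\frac{u(x+h)}{|x-y|^s})-\sAA(\frac{u(x)}{|x-y|^s})+\sAA(\frac{u(x)-u(y)}{|x-y|^s})-\sAA(\frac{u(x+h)-u(y+h)}{|x-y|^s})$. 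By the uniform monotonicity of $\sAA$, the last difference is comparable to $|\tau_hu(x)-\tau_hu(y)|/|x-y|^s$, and nothing cancels it. Such a term cannot be bounded pointwise by $|h|$ times anything, because outside $\ttB_3$ the map $u$ is only in $L^1_{2s}$ and has no regularity.

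The substitution has moved the $h$-dependence from the kernel, which is smooth for $y\notin\ttB_3$, onto $u(y)$, which is not smooth. After it, the only remaining source of a factor $|h|$ is $\eta(y+h)-\eta(y)$; the ``kernel adjustments'' you invoke in the third piece no longer exist in your representation. Relatedly, controlling the tail of $u(\cdot+h)$ via Lemma \ref{sl1} and \eqref{scatailancora2} would bring in $\nr{u}_{L^1(\ttB_3)}$, which is not on the right-hand side of \eqref{el : lem.loc.est}$_2$.

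The fix is not to change variables, which is what the paper does following \cite[Lemma 3.2]{DKLN}. For $x\in\ttB_1$ and $|h|\le 1/4$, both $x$ and $x+h$ lie in $\ttB_2$. So $f(x+h)$ and $f(x)$ are integrals over the same set $\er^n\setminus\ttB_3$ in the same variable $y$, and no domain discrepancy arises. Treat the $w$-term and the $u$-term separately (these are $J_1$ and $J_2$ in the paper) and telescope each in three steps:
\begin{itemize}
\item replace $w(x+h)$ by $w(x)$ with the kernel $|x+h-y|$ frozen; Lipschitz continuity of $\sAA$ gives $c|\tau_hu(x)|$;
\item replace $|x+h-y|^{-s}$ by $|x-y|^{-s}$ inside $\sAA$;
\item replace the outer factor $|x+h-y|^{-n-s}$ by $|x-y|^{-n-s}$, using $|\sAA(z)|\le\Lambda|z|$.
\end{itemize}
In the last two steps the inequality $\bigl||x+h-y|^{-a}-|x-y|^{-a}\bigr|\lesssim_a|h|\,|y|^{-a-1}$ for $y\notin\ttB_3$ supplies the factor $|h|$. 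Integrating $(|u(x)|+|u(y)|)|y|^{-n-2s-1}$ over $\er^n\setminus\ttB_3$ then gives $c|h|[\,|u(x)|+\tail(u;\ttB_3)]$. Only the values of $u$ outside $\ttB_3$ enter the tail, and no shifted tails appear.
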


\begin{proof} The proof follows from that of \cite[Lemma 3.2]{DKLN}, where $\sAA $ is denoted by $\Phi$ and the solution is scalar-valued, i.e., $N=1$. The proof of \eqref{eq : lem.loc} with $f \in L^\infty(B_{2\rr})$ and \eqref{el : lem.loc.est}$_1$ follows verbatim as in \cite{DKLN}. In this respect note that \eqref{bs.1}$_2$ also implies that 
\eqn{lippinono}
$$\snr{\sAA (w_{1})- \sAA (w_{2})}\le \Lambda \snr{w_{1}-w_{2}},$$
for every choice of $w_1, w_2 
\in \er^{N}$, which is used crucially everywhere in \cite[Lemma 3.2]{DKLN}. In order to prove \eqref{el : lem.loc.est}$_2$, we again follow \cite{DKLN}, where a similar result is provided using the fact that $u$ is locally H\"older continuous in that setting. We report the necessary modifications; in the following all the balls will be centred at $x_0$. Let us fix $x \in B_{\rr}$, $h \in \ttB_{\rr/4}$ so that 
\begin{equation}
		\label{el : ineq3.lem.loc}
		6 \min\{|x-y|,|x+h-y|\}\geq  |y-x_0|\quad\text{for any }y\in \mathbb{R}^{n}\setminus B_{3\rr}. 
	\end{equation}
	 Write \cite[(3.7)]{DKLN} with $x_1=x+h$, $x_2=x$ to estimate 
	 $
	 |\tau_h f(x)| \lesssim \snr{J_{1}}+ \snr{J_{2}}
	 $, 
	 and get the corresponding expressions $J_{1}= J_{1,1}+J_{1,2}+J_{1,3}$ as in  \cite[(3.8)]{DKLN} and subsequent identity.  
As for $J_{1,1}$, using \eqref{lippinono} and \eqref{el : ineq3.lem.loc}, we have
	\begin{align*}
		|J_{1,1}| &\leq c \int_{\mathbb{R}^{n}\setminus B_{3\rr}}\frac{|\tau_h w(x)|}{|x+h-y|^{n+2s}} \,dy\leq c |\tau_h w(x)| \int_{\mathbb{R}^{n}\setminus B_{3\rr}}\frac{1}{|y-x_{0}|^{n+2s}}\,dy\\
		&\leq c \rr^{-2s} |\tau_h w(x)|  =  c \rr^{-2s} |\tau_hu(x)|,
	\end{align*}
	with $c\equiv c(\data)$. The remaining terms are estimated using the elementary inequality
	$$
	\left|\frac{1}{\snr{x+h-y}^a}-\frac{1}{|x-y|^a}\right|\lesssim_{a} \frac{ |h|^\alpha
}{\snr{y-x_0}^{a+\alpha}}, \quad a>0, \  0< \alpha \leq 1.$$
Note that this uses \eqref{el : ineq3.lem.loc}.	
For $J_{1,2}$, we use \eqref{el : ineq3.lem.loc} and the above inequality 
	\begin{align*}
		|J_{1,2}| &\leq c \int_{\mathbb{R}^{n}\setminus B_{3\rr}}|w(x)-w(y)| \left|\frac{1}{|x+h-y|^{s}}-\frac{1}{|x-y|^{s}}\right|\frac{\,dy}{|x+h-y|^{n+s}} \\
		&\leq c|h|   \int_{\mathbb{R}^{n}\setminus B_{3\rr}} \frac{|w(x)| + |w(y)|}{|y-x_{0}|^{n+2s+1}}\,dy \leq c\rr^{-2s-1} |h| \left[|w(x)|+\tail(w;B_{3\rr})\right].
	\end{align*}
Similarly to   $J_{1,2}$, for $J_{1,3}$ we have
	\begin{align*}
		|J_{1,3}| &\leq \int_{\mathbb{R}^{n}\setminus B_{3\rr}} \frac{|w(x)|+|w(y)|}{|y-x_{0}|^{s}} \left|\frac{1}{|x+h-y|^{n+s}}-\frac{1}{|x-y|^{n+s}}\right|\,dy\\
		&\leq c\rr^{-2s-1} |h| \left[|w(x)|+{\tail(w;B_{3\rr})}\right], 
	\end{align*}
	for some $c\equiv c(\data)$. Therefore, recalling the definition of $w$ in \eqref{localizzata}, we finally gain
	\begin{align*}
		|J_{1}| \leq  c\rr^{-2s} |\tau_hu(x)| +c\rr^{-2s-1} |h|\left[|u(x+h)|+|u(x)|+{\tail(u;B_{3\rr})}\right]
	\end{align*}
	for some constant $c\equiv c(\data)$. Again as in \cite{DKLN} we can define the term $J_2$ and this can be estimated exactly with the modifications done here for $J_1$. The final outcome is again
$$
		|J_{2}| \leq  c \rr^{-2s} |\tau_hu(x)| +c\rr^{-2s-1} |h|\left[|u(x)|+{\tail(u;B_{3\rr})}\right]
$$
	for some constant $c\equiv c(n,s,\Lambda)$. The estimates for $J_1$ and $J_2$ imply \eqref{el : lem.loc.est}$_2$.  
\end{proof}

\section{Nonlocal linearization and flatness improvement}\label{coresec}
The main aim of this section is to prove
\begin{proposition}[Flatness improvement]\label{cor.1}
Under assumptions \eqref{bs.1} with $s>1/2$, let $u$ be a weak solution to \eqref{nonlocaleqn}, let $B_{\rr}(x_{0})\Subset \Omega$ be a ball and  $\alpha \in (0, 2s-1)$. There exists a positive threshold $\epsb{b}\equiv \epsb{b}(\data,\omega(\cdot), \alpha)\in (0,1)$,  such that if there exists an affine map $\ell$ such that 
\eqn{small}
$$
\rr^{-s}\tx{E}_{u}(\ell;x_{0},\rr)<\epsb{b}\,,
$$
then 
\eqn{exx.5}
$$
 \inf_{\elly \textnormal{ affine}}  \nra{u-\elly}_{L^2(B_{\sigma}(x_0))} \leq c 
\left(
 \nra{u}_{L^{2}(B_{\rr}(x_0))}+ \tail(u;B_{\rr}(x_0))\right)
\left(\frac{\sigma}{\rr}\right) ^{1+\alpha}
$$ 
holds for every $\sigma \leq \rr$, where $c\equiv c(\data,\omega(\cdot), \alpha)$. 
\end{proposition}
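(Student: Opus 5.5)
The plan is to follow the nonlocal linearization scheme sketched in Section \ref{strasec}. The core is a one-step excess decay lemma, which is then iterated. Fix $\tilde\alpha\in(\alpha,2s-1)$.

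\textbf{Step 1: one-step decay.} The target is: there exist $\tet\in(0,1/16)$ and $\epsb{b}$, both depending on $\data,\omega(\cdot),\alpha$, such that $\rr^{-s}\tx{E}_u(\ell;x_0,\rr)<\epsb{b}$ implies the existence of an affine $\ell'$ with
\[
\tx{E}_u(\ell';x_0,\tet\rr)\le \tet^{1+\alpha}\tx{E}_u(\ell;x_0,\rr),
\]
together with the control $\snr{D\ell'-D\ell}\rr+\snr{\ell'(x_0)-\ell(x_0)}\lesssim \tx{E}_u(\ell;x_0,\rr)$.

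To prove it, set $E:=\tx{E}_u(\ell;x_0,\rr)$ and blow up:
\[
v(x):=\frac{u(x_0+\rr x)-\ell(x_0+\rr x)}{E}.
\]
By Lemma \ref{remarkino34} and the equation, $v$ solves a linear system $-\mathcal L_{\mathds A}v=0$ in $\ttB_1$. Here $\mathds A(x,y)$ is the averaged derivative $\int_0^1\partial\sAA(\cdots)\,\dd\lambda$ along the segment between the difference quotients of $\ell$ and of $u$, with ellipticity \eqref{linearell}. The normalization gives $\nra{v}_{L^2(\ttB_1)}+\tail(v;\ttB_1)\le 1$.

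Localize with a cutoff $\eta$ as in Lemma \ref{el : lem.loc}, writing $\tilde v=\eta v$. This gives $-\mathcal L_{\mathds A}\tilde v=g_1$ in $\ttB_{1/2}$ with $g_1\in L^\infty$ bounded universally. Higher differentiability from \cite{KMS} (as in \eqref{altina}) bounds $\nr{\tilde v}_{W^{s+\delta_0,2}}$, so \eqref{sh.0} holds with a universal $c_0$.

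Next compare $\mathds A$ with the translation-invariant tensor
\[
\mathds B(x-y)=\partial\sAA\bigl(\rr^{1-s}D\ell\,(x-y)/\snr{x-y}^s\bigr).
\]
By \eqref{bs.1}$_3$, the difference $\snr{\mathds A-\mathds B}\le\Lambda\,\omega\bigl(\rr^{s}E\snr{v(x)-v(y)}/(\rr^s\snr{x-y}^s)\bigr)$ after scaling. Test against $\varphi\in C^{0,t}$ with $[\varphi]_{0,t}\le1$ supported in $\ttB_{1/16}$. Split the domain into the set where $\snr{\tilde v(x)-\tilde v(y)}/\snr{x-y}^s\le E^{-1/2}$ and its complement. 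On the first set, use concavity of $\omega$ and the smallness $\rr^{-s}E<\epsb{b}$ to get a factor $\omega(\epsb{b}^{1/2})$. On the second, use Chebyshev together with the higher integrability $p>2$ of the difference quotients, which the Gehring-type estimate provides. Since $t>s$ makes $\varphi$'s quotient square-integrable against the kernel with room to spare, this yields \eqref{vicini} with error $[\omega(\epsb{b})]^{\delta}$, up to renaming.

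\textbf{Step 2: approximation and decay of $h$.} Lemma \ref{shar} with $\mathds B$ and $g$ produces $h$ with $\nr{h-\tilde v}_{L^2(\ttB_{1/16})}\lesssim\eps$ and universal bounds \eqref{sh.5}. Theorem \ref{cdg}, with $g\in L^\infty\subset\mathcal M^{n/(2s-1-\tilde\alpha)}$, gives the decay $\nra{h-\ell_h}_{L^2(B_{\tet})}\lesssim \tet^{1+\tilde\alpha}$ for the tangent affine map $\ell_h$, whose coefficients are universally bounded.

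To pass this decay to $v$: the $L^2$ part on $\ttB_\tet$ is controlled by $\tet^{-n/2}\eps+c\tet^{1+\tilde\alpha}$. For the tail part of $v-\ell_h$ on $\ttB_\tet$, apply \eqref{scatailancora} between radii $\tet$ and $1/16$; this reduces it to $\tet^{2s}(\tail(v;\ttB_1)+1)$ plus the integral of $L^1$ norms of $v-\ell_h$ on intermediate balls, each bounded by the same two terms. The factor $\tet^{2s}$ beats $\tet^{1+\alpha}$ because $2s>1+\alpha$. Choose $\tet$ first, then $\eps$, then $\epsb{b}$ via Lemma \ref{shar}, and set $\ell'=\ell+E\,\ell_h((\cdot-x_0)/\rr)$.

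\textbf{Step 3: iteration and the main obstacle.} Iterate Step 1 on the balls $B_{\tet^i\rr}$. To keep the smallness hypothesis alive, note
\[
(\tet^{i}\rr)^{-s}\tx{E}_u(\ell_i;x_0,\tet^i\rr)\le \tet^{i(1+\alpha-s)}\rr^{-s}E<\epsb{b},
\]
using $1+\alpha>s$. The affine coefficient increments $\snr{D\ell_{i+1}-D\ell_i}\lesssim \tet^{i\alpha}E/\rr$ are summable, so the $\mathds B$'s stay admissible. This gives $\tx{E}_u(\ell_i;x_0,\tet^i\rr)\le\tet^{i(1+\alpha)}E$. Finally, bound $E$ by choosing $\ell\equiv0$-type competitors; more precisely, $E\lesssim \nra{u}_{L^2}+\tail(u)$ for $\ell$ with controlled coefficients, or one runs the argument with the given $\ell$ and bounds $\snr{D\ell}\rr+\snr{\ell(x_0)}$ by $\nra{u}+\tail(u)+E$ via \eqref{tritri}. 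Interpolating between scales then gives \eqref{exx.5}.

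The main obstacle is the linearization estimate \eqref{vicini}. The difficulty is that the modulus $\omega$ acts on difference quotients of unbounded size, and the tail region contributes nonlocally. Handling it requires the higher integrability of $\tilde v$ and a careful treatment of the interaction of the cutoff with the nonlinearity in the off-diagonal terms.
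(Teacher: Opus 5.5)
Your scheme is the paper's proof. Your $v$ is $v_\rr$ from \eqref{defivrr} and your $\mathds B$ is $\sA_{\rr,*}$ from \eqref{matrici02}. The cutoff, Lemma \ref{shar}, Theorem \ref{cdg} with an exponent strictly between $\alpha$ and $2s-1$, the tail splitting via \eqref{scatailancora}, the order of choices ($\tet$, then $\eps$, then $\epsb{b}$) and the update $\ell+E\,\ell_h((\cdot-x_0)/\rr)$ are exactly Lemma \ref{piccolina}.

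The only genuinely different step is the linearization estimate. Lemma \ref{notte} does not split. It applies H\"older with exponents $\bigl(\tfrac{2(n+s-t)}{t-s},2,\tfrac{2(n+s-t)}{n-2(t-s)}\bigr)$, absorbs the power of $\omega$ using $\omega\le 1$, and then applies Jensen to the concave $\omega$. This gives the explicit rate $[\omega(\eps_0)]^{(t-s)/(2(n+s-t))}$ from the $W^{s,2}$ bound alone. Your level-set split also works, and it too needs no higher integrability. On $\{U>\lambda\}$ just use $U\le U^2/\lambda$ and \eqref{cf.2}; $t>s$ is what makes the good part finite. The resulting error $\omega(\epsb{b}^{1/2})+\epsb{b}^{1/2}$ serves as well as a power of $\omega(\epsb{b})$ when fixing $\epsb{b}$.

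Several points need correcting.

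\textbf{Scaling.} By \eqref{modulo} the argument of $\omega$ is $\rr^{-s}E\,\snr{v(x)-v(y)}/\snr{x-y}^{s}$; you have a spurious factor $\rr^{s}$. The splitting level must therefore be $(\rr^{-s}E)^{-1/2}$. With level $E^{-1/2}$ the good set only gives $\omega(\rr^{-s}E^{1/2})$, which is not small uniformly in $\rr$.

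\textbf{Localization.} The localized equation holds with bounded right-hand side only in a ball well inside $\{\eta\equiv 1\}$, not in $\ttB_{1/2}$. The paper uses $\ttB_{1/8}$ and tests with $\varphi$ supported in $\ttB_{1/16}$. Off the block $\ttB_{1/8}\times\ttB_{1/8}$ the difference quotient of $\tilde v$ is no longer what $\omega$ sees. There you should split on $\snr{v(x)}+\snr{v(y)}$, which controls the argument of $\omega$ up to a factor $c\,\rr^{-s}E$. Then use the $L^2$ and tail bounds in \eqref{richiama}, as for the term (III) in the proof of Lemma \ref{notte}.

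\textbf{Endgame.} As written, your final step does not give \eqref{exx.5}. Iterating from the given $\ell$ yields $\inf_{\elly}\nra{u-\elly}_{L^2(B_\sigma(x_0))}\lesssim E(\sigma/\rr)^{1+\alpha}$. But $E$ can be much larger than $\nra{u}_{L^2(B_\rr(x_0))}+\tail(u;B_\rr(x_0))$: take $u$ tiny and any $\ell$ with $E<\epsb{b}\rr^s$. Your second alternative bounds the coefficients of $\ell$ in terms of $E$ itself, so it is circular. The fix is to start from whichever of $\ell$ and $0$ has the smaller excess at scale $\rr$. That map still satisfies \eqref{small}, and its excess is at most $\tx{E}_u(0;x_0,\rr)\le\nra{u}_{L^2(B_\rr(x_0))}+\tail(u;B_\rr(x_0))$. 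The paper gets this automatically by stating the one-step lemma for the infimum excess $\mathcal E_\gamma$ of \eqref{eccessone}, so that at the end $\mathcal E_{1+\alpha}(x_0,\rr)\le\rr^{-1-\alpha}\tx{E}_u(0;x_0,\rr)$.

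\textbf{Tracking slopes.} The summability of $\snr{D\ell_{i+1}-D\ell_i}$ is not needed. Since \eqref{bs.1} is global, $\mathds B$ satisfies \eqref{condib}, and \eqref{ass.1}$_3$ holds with the same $\omega$, for every slope. That is why the paper never tracks the affine maps.
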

The proof of Proposition \ref{cor.1} will be carried out in Sections \ref{blowsec}--\ref{completa}. \subsection{Nonlocal linearization and blow-up} \label{blowsec} 
Let $u\in W^{s,2}_{\loc}(\Omega;\er^{N})\cap L^1_{2s}$ be a weak solution to \eqref{nonlocaleqn} and let $B_{\rr}(x_{0})\Subset \Omega$ be a ball. Recalling Definition \ref{leccesso} we further introduce 
$$ 
\sE_{w}(\ell;x_{0},\rr):=\rr^{-s} \tx{E}_{w}(\ell;x_{0},\rr)\,.
$$
Moreover, with $\aaat\in \er^{N}$, $\bbt \in \mathbb{R}^{N\times n}$, we fix the generic affine map $\ell(x):=\bbt (x-x_{0})+\aaat$, $x \in \er^n$. In the following we shall assume without loss of generality that
$u\not\equiv \ell$ which implies that $\sE_{u}(\ell;x_{0},\rr)>0$ (otherwise $u$ would be an affine map making \eqref{exx.5} obvious). Then we let 
\eqn{matrix}
$$
u_{\rr}(x):=\frac{u(x_{0}+\rr x)}{\sE_{u}(\ell;x_{0},\rr)\rr^s},\quad \quad \ell_{\rr}(x):=\frac{\rr\bbt x+\aaat}{\sE_{u}(\ell;x_{0},\rr)\rr^s}\,,\quad x\in \mathbb{R}^{n}, 
$$
and
\eqn{matrici01}
$$
\sAA_{\rr}(w):=\frac{\sAA \left( \sE_{u}(\ell;x_{0},\rr)w\right)}{ \sE_{u}(\ell;x_{0},\rr)}\,,\quad w\in \er^{N}, 
$$
so that, by \eqref{sim}, we have 
 \eqn{basino}
 $$
 \sAA_{\rr}(0)= 0\,, \quad 
 \partial \sAA_{\rr}(w) =  \partial \sAA (\sE_{u}(\ell;x_{0},\rr)w)\Longrightarrow \partial \sAA_{\rr}(-w)=\partial \sAA_{\rr}(w)\,.
 $$
 We further define, for $ x,y\in \mathbb{R}^{n}$
\eqn{matrici02}
$$
 \sA_{\rr,*}(x,y)\equiv \sA_{\rr,*}(x-y):= \partial \sAA_{\rr}\left(\frac{\ell_{\rr}(x)-\ell_{\rr}(y)}{|x-y|^{s}}\right) \stackrel{\eqref{matrix},\eqref{basino}}{=}
 \partial \sAA \left(\frac{\rr\bbt( x-y)}{\rr^s|x-y|^{s}}\right)
$$
and \eqref{basino} implies that
\eqn{sim0}
$$
 \sA_{\rr,*}(x-y)=  \sA_{\rr,*}(y-x)
$$
holds whenever $x,y\in \er^n$. 
Note that, with 
$$ \Omega_{x_0;1/\rr}:=\left\{x\in \mathbb{R}^{n}\colon x_{0}+\rr x\in \Omega\right\}= \frac{\Omega-x_0}{\rr},$$ the map $u_{\rr}$ solves 
$
 -\mathcal{N}_{\scalebox{0.65}{$\sAA_\rr$}}u_{\rr}=0$ in $\Omega_{x_0;1/\rr}$, that is 
\eqn{riscalata}
$$
 \int_{\mathbb{R}^{n}}\int_{\mathbb{R}^{n}}\langle \sAA_{\rr}\left(\frac{u_{\rr}(x)-u_{\rr}(y)}{|x-y|^{s}}\right),\frac{\varphi(x)-\varphi(y)}{|x-y|^{s}}\rangle\frac{\dxy}{|x-y|^{n}}=0
$$
holds whenever $\varphi \in W^{s,2}(\er^n;\er^{N})$
 has compact support in $\Omega_{x_0;1/\rr}$. 
 \begin{lemma} The matrix $\hat{\sA}_{\rr} \colon\er^{n}\to \er^{N\times N}$, defined by 
 \eqn{nucleo}
 $$
\hat{\sA}_{\rr}(x-y):=\int_{0}^{1}\partial\sAA \left(\frac{\lambda \rr \bbt(x-y)}{\rr^s|x-y|^{s}}\right)\dd\lambda, \quad x,y \in \er^n, 
$$
is  translation-invariant and moreover it satisfies 
\eqn{nunu}
$$
 \Lambda^{-1}\snr{\xi}^{2} \leq \langle \hat{\sA}_{\rr}(x-y)\xi,\xi\rangle,\quad \snr{\hat{\sA}_{\rr}(x-y)}\le \Lambda, \quad 
\hat{\sA}_{\rr}(x-y)=\hat{\sA}_{\rr}(y-x)
$$
whenever $x, y \in \er^n$, $\xi \in \er^N$. Moreover 
\eqn{0.1.}
$$
\int_{\mathbb{R}^{n}}\int_{\mathbb{R}^{n}}\langle\hat{\sA}_{\rr}(x-y)\frac{\ell_{\rr}(x)-\ell_{\rr}(y)}{|x-y|^s},\frac{\varphi(x)-\varphi(y)}{|x-y|^s}\rangle\frac{\dxy}{|x-y|^{n}}=0
$$
holds for all $\varphi\in W^{s,2}(\mathbb{R}^{n};\er^{N})$ with compact support in $\Omega_{x_0;1/\rr}$.
 \end{lemma}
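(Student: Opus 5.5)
The plan is to verify the three properties in \eqref{nunu} directly from \eqref{bs.1} and \eqref{sim}, and then to deduce \eqref{0.1.} from an exact identity relating $\hat{\sA}_{\rr}$ and $\sAA$ on the difference quotients of $\ell_\rr$, combined with Lemma \ref{remarkino34}.

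\textbf{Translation invariance and \eqref{nunu}.} Translation invariance is immediate, since the integrand in \eqref{nucleo} depends on $(x,y)$ only through $z:=x-y$. For every $\lambda\in[0,1]$ and $z\neq 0$, the matrix $\partial\sAA(\lambda\rr^{1-s}\bbt z/|z|^s)$ satisfies the ellipticity and boundedness bounds in \eqref{bs.1}$_2$:
\begin{itemize}
\item ellipticity is given directly by $\Lambda^{-1}\snr{\xi}^2\le\langle\partial\sAA(w)\xi,\xi\rangle$;
\item boundedness follows from $\snr{\partial\sAA(w)}\snr{w}\le\Lambda\snr{w}$ for $w\neq0$, and for $w=0$ from the continuity of $\partial\sAA$.
\end{itemize}
Integrating in $\lambda$ over $[0,1]$ preserves both bounds, which gives the first two conditions in \eqref{nunu}. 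The symmetry $\hat{\sA}_{\rr}(z)=\hat{\sA}_{\rr}(-z)$ follows from $\partial\sAA(-w)=\partial\sAA(w)$ in \eqref{sim}.

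\textbf{The key identity.} By the fundamental theorem of calculus and $\sAA(0)=0$, we have for every $w\in\er^N$
\[
\int_0^1\partial\sAA(\lambda w)\,\dd\lambda\, w=\sAA(w).
\]
Apply this with $w=\rr^{1-s}\bbt z/|z|^s$. Note that
\[
\frac{\ell_\rr(x)-\ell_\rr(y)}{|x-y|^s}=\frac{w}{\sE_u(\ell;x_0,\rr)}.
\]
Hence the integrand in \eqref{0.1.} equals $\sE^{-1}\langle\sAA(w),\cdot\rangle$. This expresses \eqref{0.1.} as the statement that the constant-in-$\rr$ rescaled nonlinear operator annihilates affine maps. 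I find it cleaner, however, to avoid the nonlinearity and argue as in Lemma \ref{remarkino34}, treating $\hat{\sA}_{\rr}$ as a fixed translation-invariant kernel.

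\textbf{Proof of \eqref{0.1.}.} Since $\hat{\sA}_{\rr}$ satisfies \eqref{condib}, the second bound in \eqref{nunu} makes Lemma \ref{remarkino34} (with $s>1/2$) directly applicable to $\mathds{B}=\hat{\sA}_{\rr}$ and to the affine map $\ell_\rr$. This yields $-\mathcal{L}_{\hat{\sA}_{\rr}}\ell_\rr=0$, which is precisely \eqref{0.1.}.

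\textbf{Main obstacle: integrability.} If one wanted to check Lemma \ref{remarkino34} by hand, the argument would go as follows:
\begin{itemize}
\item The integrand is absolutely integrable. Indeed, $|\hat{\sA}_{\rr}(z)\bbt z|\,|\varphi(x)-\varphi(y)|\,|z|^{-n-2s}$ is integrable near the diagonal because $\varphi\in W^{s,2}$ and $|z|^{1-s}\in L^2_{\loc}$ against $|z|^{-n}$. Away from the diagonal, it is integrable because $|z|^{1-n-2s}$ is integrable at infinity when $2s>1$, and $\varphi$ has compact support.
\item Writing the integral as twice $\int\varphi(x)\int(\ldots)\,\dd y\,\dd x$, the inner integral in $y$ is the odd function $z\mapsto\hat{\sA}_{\rr}(z)\bbt z|z|^{-n-2s}$ integrated symmetrically.
\item That inner integral vanishes in the principal value sense. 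The principal-value limit is justified by dominated convergence, using the integrability established in the first step.
\end{itemize}
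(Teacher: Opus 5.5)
Your proposal is correct and follows the paper's own argument. Translation invariance and \eqref{nunu} are read off directly from \eqref{bs.1} and \eqref{sim}, and \eqref{0.1.} comes from applying Lemma \ref{remarkino34} to the kernel $\hat{\sA}_{\rr}$ and the affine map $\ell_\rr$; your hand-check of that lemma (truncating to $|x-y|>\varepsilon$, using oddness, then dominated convergence) is also sound.

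The identity $\int_0^1\partial\sAA(\lambda w)\,\dd\lambda\, w=\sAA(w)$ in your detour is not needed here. The paper uses it afterwards, in \eqref{0.1}, for Proposition \ref{lineaprop}.
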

 \begin{proof}
The translation invariance and \eqref{nunu} are a direct consequence of the definitions given  together with \eqref{bs.1} and \eqref{sim}. Finally \eqref{0.1.} follows by Lemma \ref{remarkino34}. 
 \end{proof}
\begin{proposition}[Nonlocal linearization]\label{lineaprop}The map
\eqn{defivrr} 
$$
v_{\rr}:=u_{\rr}-\ell_{\rr}
$$
belongs to $W^{s,2}_{\loc}(\Omega_{x_0;1/\rr};\er^{N})\cap L^1_{2s}$ and is a weak solution to the linearized equation 
\eqn{linearizzata}
$$
 -\mathcal{L}_{\scalebox{0.6}{$\sA_\rr$}}v_{\rr}=0\qquad \mbox{in} \ \ \Omega_{x_0;1/\rr}
 $$ 
 where
 $$
 \sA_{\rr}(x,y):=\int_{0}^{1}\partial \sAA_{\rr}\left(\frac{\ell_{\rr}(x)-\ell_{\rr}(y)}{|x-y|^{s}}+\lambda \frac{v_{\rr}(x)-v_{\rr}(y)}{|x-y|^{s}}\right)\dd\lambda, \quad x,y \in \er^n, 
 $$
that is
\eqn{elr}
$$
     \int_{\mathbb{R}^{n}}\int_{\mathbb{R}^{n}}\langle\sA_{\rr}(x,y)\frac{v_{\rr}(x)-v_{\rr}(y)}{|x-y|^s},\frac{\varphi(x)-\varphi(y)}{|x-y|^s}\rangle\frac{\dxy}{|x-y|^{n}}=0
$$
holds for every  $\varphi\in W^{s,2}(\er^n;\er^{N})$ with compact support in $\Omega_{x_0;1/\rr}$. Moreover, 
\eqn{ass.1}
$$ 
\left\{
\begin{array}{c}
\displaystyle 
 \sA_{\rr}(x,y)=\sA_{\rr}(y,x) \\[8pt]
\displaystyle 
\Lambda^{-1}\snr{\xi}^{2} \leq \langle \sA_{\rr}(x,y)\xi,\xi\rangle,\quad \snr{\sA_{\rr}(x,y)}\le \Lambda \\[8pt]\displaystyle
\ \snr{\sA_{\rr}(x,y)-\sA_{\rr,*}(x-y)}\le \Lambda \omega_{\rr}\left(\frac{\snr{v_{\rr}(x)-v_{\rr}(y)}}{|x-y|^{s}}\right)
\end{array}
\right.
$$ 
hold whenever $x,y\in \mathbb{R}^{n}$, $\xi\in \er^{N}$, where 
\eqn{modulo}
$$\omega_{\rr}(t):=\omega\left(\sE_{u}(\ell;x_{0},\rr)t \right)$$ for every $t \geq 0$. Moreover, $\tx E_{v_{\rr}}(0;0,1)=1$.  
\end{proposition}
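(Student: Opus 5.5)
The argument is a direct computation based on the rescaled equation \eqref{riscalata} and the affine identity \eqref{0.1.}. Write $\sE:=\sE_{u}(\ell;x_{0},\rr)>0$.

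\emph{Regularity and membership.} Since $u_\rr$ is a rescaling of $u$ and $\ell_\rr$ is affine, $v_\rr\in W^{s,2}_{\loc}(\Omega_{x_0;1/\rr};\er^N)$. Because $s>1/2$, affine maps lie in $L^1_{2s}$, and hence $v_\rr\in L^1_{2s}$ as well.

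\emph{The normalization $\tx E_{v_\rr}(0;0,1)=1$.} Note that $v_\rr(x)=(u-\ell)(x_0+\rr x)/(\sE\rr^s)$. The change of variables $y=x_0+\rr x$ gives $\nra{v_\rr}_{L^2(\ttB_1)}=\nra{u-\ell}_{L^2(B_\rr(x_0))}/(\sE\rr^s)$. In the tail, the factor $\rr^{2s}$ is absorbed by the Jacobian, so $\tail(v_\rr;\ttB_1)=\tail(u-\ell;B_\rr(x_0))/(\sE\rr^s)$. Therefore $\tx E_{v_\rr}(0;0,1)=\tx E_u(\ell;x_0,\rr)/(\sE\rr^s)=1$.

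\emph{The equation.} Subtract from \eqref{riscalata} the identity
$$\int\int\langle \sAA_\rr\left(\frac{\ell_\rr(x)-\ell_\rr(y)}{|x-y|^s}\right),\frac{\varphi(x)-\varphi(y)}{|x-y|^s}\rangle\frac{\dxy}{|x-y|^n}=0.$$
This identity follows from $\sAA_\rr(0)=0$ and the fundamental theorem of calculus, which give $\sAA_\rr(\cdot)=\hat{\sA}_\rr(x-y)\,\cdot$ on the affine difference quotient. Indeed,
$$\sAA_\rr(\lambda\,\cdot)\text{ has derivative }\partial\sAA(\sE\lambda\,\cdot)\,\cdot\,,$$
and $\sE\,(\ell_\rr(x)-\ell_\rr(y))/|x-y|^s=\rr\bbt(x-y)/(\rr^s|x-y|^s)$. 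The identity then reduces to \eqref{0.1.}. A second application of the fundamental theorem of calculus gives
$$\sAA_\rr(a+b)-\sAA_\rr(a)=\int_0^1\partial\sAA_\rr(a+\lambda b)\dd\lambda\; b,$$
with $a$ the affine difference quotient and $b$ the difference quotient of $v_\rr$. This yields \eqref{elr}.

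\emph{Integrability.} The subtraction needs the integrals to be finite. For the equation for $u_\rr$ this holds by Definition \ref{def:weaksol}. For the affine part, $|\sAA_\rr(a)|\le\Lambda|a|$ and $|a|\lesssim|x-y|^{1-s}$. On the region where $x\in\operatorname{spt}\varphi$ and $y$ is far away, the integrand is bounded by $|x-y|^{1-2s-n}|\varphi(x)|$, which is integrable precisely because $2s>1$. This is the only genuinely delicate point, and it is already contained in Lemma \ref{remarkino34}.

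\emph{Properties \eqref{ass.1}.} Symmetry follows from the oddness of $\sAA$: $\partial\sAA_\rr$ is even by \eqref{basino}, while $a$ and $b$ change sign under $x\leftrightarrow y$. Ellipticity and boundedness follow because $\partial\sAA_\rr(w)=\partial\sAA(\sE w)$ inherits the bounds in \eqref{bs.1}$_2$, and these are preserved by averaging in $\lambda$. For the last line, write
$$\sA_\rr(x,y)-\sA_{\rr,*}(x-y)=\int_0^1\bigl[\partial\sAA(\sE a+\lambda\sE b)-\partial\sAA(\sE a)\bigr]\dd\lambda.$$
By \eqref{bs.1}$_3$ and the monotonicity of $\omega$, this is bounded by $\Lambda\omega(\sE|b|)=\Lambda\omega_\rr(|b|)$.
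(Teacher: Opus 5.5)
Your proof is correct and follows the paper's argument. You use the fundamental theorem of calculus to split the rescaled equation into the linearized term $\sA_\rr(x,y)\frac{v_\rr(x)-v_\rr(y)}{|x-y|^s}$ plus the affine term $\hat{\sA}_\rr(x-y)\frac{\ell_\rr(x)-\ell_\rr(y)}{|x-y|^s}$, and the latter vanishes by \eqref{0.1.} (Lemma \ref{remarkino34}); \eqref{ass.1} then follows from \eqref{bs.1} and \eqref{basino}, and $\tx E_{v_\rr}(0;0,1)=1$ from the change of variables, exactly as you write. Beyond the paper, you also make explicit the absolute integrability needed to split the integral, which requires $2s>1$, and the scaling computation behind $\tx E_{v_\rr}(0;0,1)=1$; the paper leaves both implicit.
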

\begin{proof} It is easy to see that 
$v_{\rr} \in W^{s,2}_{\loc}(\Omega_{x_0;1/\rr};\er^{N})\cap L^1_{2s}$; this follows by the definition of $u_\rr$ and from the fact that every affine map belongs to $W^{s,2}_{\loc}(\Omega_{x_0;1/\rr};\er^{N}) \cap L^{1}_{2s}$ provided $2s>1$, that is the basic assumption considered in this paper. The properties in \eqref{ass.1} are an easy consequence of \eqref{bs.1}, \eqref{sim} and \eqref{matrici01}-\eqref{matrici02}. To proceed with the proof, we write
\eqn{linearizza}
$$ 
  \sAA_{\rr}\left(\frac{u_{\rr}(x)-u_{\rr}(y)}{|x-y|^{s}}\right)-\sAA_{\rr}\left(\frac{\ell_{\rr}(x)-\ell_{\rr}(y)}{|x-y|^{s}}\right)
  =\sA_{\rr}(x,y)\frac{v_{\rr}(x)-v_{\rr}(y)}{|x-y|^{s}}.
$$ Using the first information in \eqref{sim}, we find
\begin{eqnarray}\label{0.1}
\sAA_{\rr}\left(\frac{\ell_{\rr}(x)-\ell_{\rr}(y)}{|x-y|^{s}}\right)&\stackrel{\eqref{basino}}{=}&\sAA_{\rr}\left(\frac{\ell_{\rr}(x)-\ell_{\rr}(y)}{|x-y|^{s}}\right)-\sAA_{\rr}(0)\nonumber \\ 
&=& \int_{0}^{1}\partial\sAA_{\rr}\left(\frac{\lambda(\ell_{\rr}(x)-\ell_{\rr}(y))}{|x-y|^{s}}\right)\dd\lambda\, \frac{\ell_{\rr}(x)-\ell_{\rr}(y)}{|x-y|^{s}} \nonumber \\
&\stackrel{\eqref{basino}}{=}&\int_{0}^{1}\partial\sAA \left(\frac{\lambda \rr\bbt(x-y)}{\rr^s|x-y|^{s}}\right)\dd\lambda\,  \frac{\ell_{\rr}(x)-\ell_{\rr}(y)}{|x-y|^{s}} \nonumber \\ 
&\stackrel{\eqref{nucleo}}{=}&\hat{\sA}_{\rr}(x-y) \frac{\ell_{\rr}(x)-\ell_{\rr}(y)}{|x-y|^{s}} \,.
\end{eqnarray} For every  $\varphi\in W^{s,2}(\er^n;\er^{N})$ with compact support in $\Omega_{x_0;1/\rr}$ we now have
\begin{eqnarray*}
0
&\stackrel{\eqref{riscalata}}{=}&\int_{\mathbb{R}^{n}}\int_{\mathbb{R}^{n}} \langle \sAA_{\rr}\left(\frac{u_{\rr}(x)-u_{\rr}(y)}{|x-y|^{s}}\right)-
  \sAA_{\rr}\left(\frac{\ell_{\rr}(x)-\ell_{\rr}(y)}{|x-y|^{s}}\right),\frac{\varphi(x)-\varphi(y)}{|x-y|^s}\rangle\frac{\dxy}{|x-y|^{n}}\nonumber \\
&&+ \int_{\mathbb{R}^{n}}\int_{\mathbb{R}^{n}}\langle \sAA_{\rr}\left(\frac{\ell_{\rr}(x)-\ell_{\rr}(y)}{|x-y|^{s}}\right),\frac{\varphi(x)-\varphi(y)}{|x-y|^s}\rangle\frac{\dxy}{|x-y|^{n}}\nonumber \\ 
&\stackrel{\eqref{linearizza},\eqref{0.1}}{=}&\int_{\mathbb{R}^{n}}\int_{\mathbb{R}^{n}} \langle \sA_{\rr}(x,y)\frac{v_{\rr}(x)-v_{\rr}(y)}{|x-y|^s},\frac{\varphi(x)-\varphi(y)}{|x-y|^s}\rangle\frac{\dxy}{|x-y|^{n}}\nonumber \\
&&+\int_{\mathbb{R}^{n}}\int_{\mathbb{R}^{n}}\langle \hat{\sA}_{\rr}(x-y)\frac{\ell_{\rr}(x)-\ell_{\rr}(y)}{|x-y|^s},\frac{\varphi(x)-\varphi(y)}{|x-y|^s}\rangle\frac{\dxy}{|x-y|^{n}} 
\end{eqnarray*}
recalling \eqref{0.1.} the last integral vanishes, which implies  \eqref{linearizzata} and the proof is complete. 
\end{proof}
\subsection{Basic properties of the linearized equation}
Continuing from Section \ref{blowsec}, here we derive  a few consequences of Proposition \ref{lineaprop}. These can mostly be inferred from the material in \cite{parte1} provided some modifications are made, and therefore we shall in the following describe how to adapt to   \eqref{elr} the methods developed in \cite[Sections 7.1.1, 7.1.2]{parte1}. We first state a couple of basic results. Lemma \ref{caccioppola} applies in our setting and gives that 
\eqn{2cacc}
$$
\gamma^{n}\snra{v_{\rr}}_{s,2;\gamma\BBB}^2\le \frac{c}{(1-\gamma)^{2(n+s)}\snr{\BBB}^{2s/n}}\nra{v_{\rr}}_{L^{2}(\BBB)}^2+\frac{c}{(1-\gamma)^{2(n+s)}\snr{\BBB}^{2s/n}}\tail(v_{\rr};\BBB) \nra{v_{\rr}}_{L^{1}(\BBB)} 
$$
holds with $c\equiv c(\data)$, whenever $\BBB \Subset \Omega_{x_0;1/\rr}$ and $0 < \gamma <1$, $\rad(\BBB)\leq 1$. A second result, which has actually already been used in the proof of Proposition \ref{gradpre}, is concerned with the higher differentiability results derived in \cite{KMS}, and claims that
\eqn{stimapp}
$$
\begin{cases}
\, v_{\rr}\in W^{s+\delta_0,2}_{\loc}(\Omega_{x_0;1/\rr};\er^{N}) \  \ \mbox{for some $\delta_0\equiv \delta_0(\data) \in (0, 1-s)$}\\[4pt]
\,  |\BBB|^{\frac{s+\delta_{0}}{n}} \snra{v_{\rr}}_{s+\delta_{0}, 2;\BBB/2} \leq c 
|\BBB|^{\frac{s}{n}} \snra{v_{\rr}}_{s, 2;\BBB}+c\tail(v_{\rr};2\BBB)
\end{cases}
$$
holds with $c \equiv c (\data)$ and for every ball $\BBB \Subset \Omega_{x_0;1/\rr}$; for this precise statement see \cite[Theorem 5.5]{parte1}. Observing that $\ttB_{1}\Subset \Omega_{x_0;1/\rr}$, a consequence of \eqref{matrix} and \eqref{2cacc}-\eqref{stimapp} is 
\eqn{richiama}
$$
\begin{cases}
\displaystyle
\nra{v_{\rr}}_{L^{2}(\ttB_{1})}^2+\tail(v_{\rr};\ttB_{1})^2=1\\[4pt]
[v_{\rr}]_{s,2;\ttB_{4/5} }\le c\equiv c(\data)\\[4pt]  [v_{\rr}]_{s+\delta_0,2;\ttB_{3/4} } \le c\equiv c(\data)\,. 
\end{cases}
$$
Indeed, \eqref{richiama}$_1$ follows by the very definition of $v_{\rr}$ in \eqref{defivrr} and in turn this implies \eqref{richiama}$_2$ via \eqref{2cacc}.  Once \eqref{richiama}$_2$  is achieved, \eqref{richiama}$_3$ follows by \eqref{stimapp}$_2$ and a standard covering argument as in Lemma \ref{covlemma}.  
\begin{lemma} 
Let 
\eqn{cutty}
$$
\ti{v}_{\rr}:=\eta v_{\rr}, \ 
\mbox{where $\eta \in C^{\infty}(\er^n)$, \  $\mathds{1}_{\ttB_{1/4} }\le \eta \le \mathds{1}_{\ttB_{1/2} }$ \ and \ $\nr{D\eta }_{L^{\infty}(\ttB_{1/2})}\lesssim 1$}.
$$
Then  
\begin{itemize} 
\item
$\ti{v}_{\rr}\in W^{s,2}(\er^n; \er^{N})\cap W^{s+\delta_0,2}(\er^n; \er^{N})$ with 
\eqn{cf.2}
$$
\begin{cases}
\, \nr{\ti{v}_{\rr}}_{W^{s,2}(\mathbb{R}^{n})}+\nr{\ti{v}_{\rr}}_{W^{s+\delta_0,2}(\mathbb{R}^{n})}\le c\equiv c(\data)\\
\, \tail(\ti{v}_{\rr};\ttB_{1})\leq 1
\end{cases}
$$
where $\delta_0\equiv \delta_0(\data)\in (0, 1-s)$ appears in \eqref{stimapp}.  The constant $c$ appearing in \eqref{cf.2} depends directly on the constant appearing in \eqref{richiama}. 
\item There exist $g_{\rr}\in L^{\infty}(\mathbb{R}^{n};\er^{N})$ such that $\ti{v}_{\rr}$ solves
$$
-\mathcal{L}_{\scalebox{0.6}{$\sA_\rr$}}\ti{v}_{\rr}=g_{\rr}\qquad \mbox{in} \ \ \ttB_{1/8}\,,
$$
that is 
\eqn{cutoffata}
$$
\int_{\er^{n}} \int_{\er^{n}} \langle \sA_{\rr}(x,y)\frac{\ti{v}_{\rr}(x)-\ti{v}_{\rr}(y)}{{|x-y|^s}},\frac{\varphi(x)-\varphi(y)}{|x-y|^s} \rangle \frac{\dxy}{|x-y|^{n}}= \int_{\ttB_{1/8}} \langle g_{\rr}, \varphi \rangle\dx
$$
holds for every $\varphi \in W^{s,2}(\er^n;\er^{N})$ with compact support in $\ttB_{1/8}$. 
\item Specifically, 
\eqn{nuovag}
$$
 g_{\rr}(x) =\mathds{1}_{\ttB_{1/8}}(x)2\int_{\er^n\setminus \ttB_{1/4}}(1-\eta(y))\sA_{\rr}(x, y)v_{\rr}(y)\frac{\dy}{|x-y|^{n+2s}}
$$
so that 
\eqn{cf.1}
$$ 
\nr{g_{\rr}}_{L^{\infty}(\mathbb{R}^{n})}\le c\nr{v_{\rr}}_{L^1(\ttB_{1/2})} + c \tail(v_{\rr}; \ttB_{1/4})\leq c \equiv c(\data)
$$
and, taking \eqref{cf.2} into account, all in all it is 
\eqn{lastella}
$$
\nr{\ti{v}_\rr}_{W^{s,2}(\mathbb{R}^{n})}+\nr{\ti{v}_\rr}_{W^{s+\delta_{0},2}(\er^n)}+ \tail(\ti{v}_\rr;\ttB_1) + \nr{g_\rr}_{L^{\infty}(\mathbb{R}^{n})}\le c_*\equiv c_*(\data).
$$
\end{itemize}
\end{lemma}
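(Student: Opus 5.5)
We split the proof into the three bullet points of the lemma, in order: the global bounds \eqref{cf.2}, the equation \eqref{cutoffata} with the explicit $g_\rr$ in \eqref{nuovag}, and the $L^\infty$ bound \eqref{cf.1}. The estimate \eqref{lastella} is then just the sum of \eqref{cf.2} and \eqref{cf.1}.

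\emph{Global bounds.} Since $\eta$ is supported in $\ttB_{1/2}$, the $L^2$ norm of $\ti v_\rr$ is controlled by $\nr{v_\rr}_{L^2(\ttB_{1/2})}\lesssim 1$, using \eqref{richiama}$_1$. For the Gagliardo seminorms we use the standard product-rule splitting
\[
\ti v_\rr(x)-\ti v_\rr(y)=\eta(x)(v_\rr(x)-v_\rr(y))+v_\rr(y)(\eta(x)-\eta(y)).
\]
We treat pairs with both points in $\ttB_{3/4}$ and pairs with one point outside $\ttB_{3/4}$ separately.
\begin{itemize}
\item On $\ttB_{3/4}\times\ttB_{3/4}$, the first piece is bounded by $[v_\rr]_{t,2;\ttB_{3/4}}$ for $t\in\{s,s+\delta_0\}$, which is controlled by \eqref{richiama}$_{2,3}$. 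The second piece is bounded by $\nr{D\eta}_\infty^2\int\!\!\int |v_\rr(y)|^2|x-y|^{2-n-2t}$, which is finite because $t<1$.
\item If, say, $x\notin\ttB_{3/4}$ and $\ti v_\rr(y)\neq0$, then $y\in\ttB_{1/2}$ and $|x-y|\ge 1/4$. The integrand is then bounded by $c|v_\rr(y)|^2$, and the integral is again bounded by $\nr{v_\rr}_{L^2(\ttB_{1/2})}^2$.
\end{itemize}
For the tail, $\ti v_\rr\equiv 0$ outside $\ttB_{1/2}$, so $\tail(\ti v_\rr;\ttB_1)=0\le1$.

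\emph{The equation.} Fix $\varphi$ supported in $\ttB_{1/8}$ and write $\ti v_\rr=v_\rr-(1-\eta)v_\rr$. Testing \eqref{elr} with $\varphi$ gives zero for the $v_\rr$ part, so it remains to compute the bilinear form evaluated on $(1-\eta)v_\rr$. This function vanishes on $\ttB_{1/4}$ and $\varphi$ vanishes outside $\ttB_{1/8}$. Hence only the regions $x\in\ttB_{1/8},\,y\notin\ttB_{1/4}$ and the symmetric one contribute, and there the integrand reduces to $-\langle \sA_\rr(x,y)(1-\eta(y))v_\rr(y),\varphi(x)\rangle$, with the roles of $x$ and $y$ exchanged in the symmetric region. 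Using the symmetry $\sA_\rr(x,y)=\sA_\rr(y,x)$ from \eqref{ass.1} and Fubini, the two regions combine into
\[
-2\int_{\ttB_{1/8}}\int_{\er^n\setminus\ttB_{1/4}}\langle\sA_\rr(x,y)(1-\eta(y))v_\rr(y),\varphi(x)\rangle\frac{\dy\,\dx}{|x-y|^{n+2s}}.
\]
Moving this term to the right-hand side yields exactly \eqref{cutoffata} with $g_\rr$ as in \eqref{nuovag}. Fubini is justified because $|x-y|\ge 1/8$ on the region of integration and $v_\rr\in L^1_{2s}$.

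\emph{The $L^\infty$ bound and the main check.} For $x\in\ttB_{1/8}$ and $y\notin\ttB_{1/4}$ we have $|x-y|\ge|y|/2$. Together with $\snr{\sA_\rr}\le\Lambda$ this gives
\[
|g_\rr(x)|\le c\int_{\er^n\setminus\ttB_{1/4}}\frac{|v_\rr(y)|}{|y|^{n+2s}}\,{\rm d}y\le c\,\tail(v_\rr;\ttB_{1/4}),
\]
which is \eqref{cf.1}. To bound the right-hand side by a constant, apply \eqref{scatailancora2} to get $\tail(v_\rr;\ttB_{1/4})\lesssim \tail(v_\rr;\ttB_1)+\nra{v_\rr}_{L^1(\ttB_1)}$, and then use H\"older's inequality together with \eqref{richiama}$_1$.

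The only delicate point is the integrability needed to justify the bilinear-form manipulations. This requires $\ti v_\rr\in W^{s,2}(\er^n)$, which was established in the first step, and the tail finiteness of $v_\rr$, which is immediate from \eqref{richiama}$_1$. The rest is routine.
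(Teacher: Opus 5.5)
Your proposal is correct and follows essentially the same route as the paper. The formula for $g_\rr$ rests on the same three facts: $\eta\equiv 1$ on $\ttB_{1/4}$, $\varphi$ vanishes outside $\ttB_{1/8}$, and $\sA_\rr$ is symmetric. Where the paper splits the domain into $\ttB_{1/4}\times\ttB_{1/4}$ plus the off-diagonal parts and adds and subtracts $v_\rr$, you apply bilinearity to $\ti{v}_\rr=v_\rr-(1-\eta)v_\rr$. The only differences are that you prove \eqref{cf.2} directly from \eqref{richiama} instead of citing Part I, and that your $L^\infty$ bound for $g_\rr$ needs only the tail term, which is slightly sharper than \eqref{cf.1}.
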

\begin{proof} This is essentially \cite[Lemma 7.2]{parte1} when applied to the present situation. We report the proof in some detail as it entails some simplification, which in fact also applies to the original \cite[Lemma 7.2]{parte1}.  Estimate \rif{cf.2} follows exactly as \cite[Lemma 7.2]{parte1}. As for \rif{cutoffata}-\eqref{nuovag}, using that $\varphi$ is supported in $\ttB_{1/8}\subset\ttB_{1/4}$ and that
$\sA_\rr(x,y)=\sA_\rr(y,x)$, we decompose
\begin{flalign*} 
&\hspace{-4mm} \int_{\mathbb{R}^{n}}\int_{\mathbb{R}^{n}}\langle \sA_{\rr}(x,y)\frac{\ti{v}_{\rr}(x)-\ti{v}_{\rr}(y)}{\snr{x-y}^{s}},\frac{\varphi(x)-\varphi(y)}{\snr{x-y}^{s}}\rangle\frac{\dxy}{\snr{x-y}^{n}}\nonumber \\
&=\int_{\ttB_{1/4}}\int_{\ttB_{1/4}}\langle \sA_{\rr}(x,y)\frac{\ti{v}_{\rr}(x)-\ti{v}_{\rr}(y)}{\snr{x-y}^{s}},\frac{\varphi(x)-\varphi(y)}{\snr{x-y}^{s}}\rangle\frac{\dxy}{\snr{x-y}^{n}}\nonumber \\
&\quad +2\int_{\mathbb{R}^{n}\setminus \ttB_{1/4}}\int_{\ttB_{1/4}}\langle \sA_{\rr}(x,y)\frac{\ti{v}_{\rr}(x)-\ti{v}_{\rr}(y)}{\snr{x-y}^{s}},\frac{\varphi(x)-\varphi(y)}{\snr{x-y}^{s}}\rangle\frac{\dxy}{\snr{x-y}^{n}}\nonumber \\
&=\int_{\ttB_{1/4}}\int_{\ttB_{1/4}}\langle \sA_{\rr}(x,y)\frac{v_{\rr}(x)-v_{\rr}(y)}{\snr{x-y}^{s}},\frac{\varphi(x)-\varphi(y)}{\snr{x-y}^{s}}\rangle\frac{\dxy}{\snr{x-y}^{n}}\nonumber \\
&\quad +2\int_{\mathbb{R}^{n}\setminus \ttB_{1/4}}\int_{\ttB_{1/4}}\langle \sA_{\rr}(x,y)\frac{v_{\rr}(x)-v_{\rr}(y)}{\snr{x-y}^{s}},\frac{\varphi(x)-\varphi(y)}{\snr{x-y}^{s}}\rangle\frac{\dxy}{\snr{x-y}^{n}}\nonumber \\
&\quad +2\int_{\mathbb{R}^{n}\setminus \ttB_{1/4}}\int_{\ttB_{1/4}}\langle \sA_{\rr}(x,y)((\ti{v}_{\rr}(x)-v_{\rr}(x))-(\ti{v}_{\rr}(y)-v_{\rr}(y)),\varphi(x)\rangle\frac{\dxy}{\snr{x-y}^{n+2s}} \\
&=2\int_{\ttB_{1/4}}\int_{\er^n\setminus \ttB_{1/4}}\langle \sA_{\rr}(x,y)(v_{\rr}(y)-\ti{v}_{\rr}(y)),\varphi(x)\rangle\frac{\dyx}{\snr{x-y}^{n+2s}}\\
&=  \int_{\ttB_{1/8}} \langle g_{\rr}, \varphi \rangle\dx
\end{flalign*}
Note that the adaptation of \cite[Lemma 7.2]{parte1} relies on the fact that \eqref{stimapp} and \eqref{richiama} are now available (these are the analogs of \cite[(7.13)]{parte1}). 
\end{proof}

\subsection{Flatness improvement}

The proof of Proposition \ref{cor.1} uses a number of preparatory results. The inequalities in \eqref{richiama}$_{2,3}$, and in particular the determination of the exponent $\delta_0\equiv \delta_0(\data)$ made in \eqref{stimapp}, allow to determine the new higher differentiability and integrability exponents, $t$ and $p$ respectively, coming from \eqref{tpnumeri} of Lemma \ref{shar}
\eqn{tep}
$$
s< t \equiv t(\data, \delta_0)\equiv t(\data)< 1\, \quad \mbox{and} \quad p\equiv  p(\data, \delta_0)\equiv p(\data)>2
$$
as quantities globally depending only on $\data$. These quantities appear in the statement and in the proof of the next 
\begin{lemma}\label{notte}
Assume that 
\eqn{small0}
$$
\sE_{u}(\ell;x_{0},\rr)<\eps_{0}
$$
holds for some $\eps_{0} \in (0,1)$. Then 
\begin{flalign}
\notag & \left|\int_{\mathbb{R}^{n}}\int_{\mathbb{R}^{n}}\langle\sA_{\rr,*}(x-y)\frac{\ti{v}_{\rr}(x)-\ti{v}_{\rr}(y)}{|x-y|^s},\frac{\varphi(x)-\varphi(y)}{|x-y|^s}\rangle\frac{\dxy}{|x-y|^{n}}-\int_{\ttB_{1/16}}\langle g_{\rr},\varphi\rangle\dx\right|\\
& \quad  \le \ti{c}[\omega(\eps_{0})]^{\frac{t-s}{2(n+s-t)}}\label{aha}
\end{flalign}
holds whenever  $\varphi \in \mathbb{X}^{s,2}_{0}(\ttB_{1/16},\ttB_{1/8})\cap C^{0,t}(\er^n;\er^{N})$ with  $[\varphi]_{0,t;\er^n}\leq 1$, 
where $ \ti{c}\equiv  \ti{c}(\data)$. 
\end{lemma}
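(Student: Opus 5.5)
Since $\varphi$ is supported in $\ttB_{1/16}\subset\ttB_{1/8}$, it is admissible in \eqref{cutoffata}, and $g_\rr$ is supported in $\ttB_{1/8}$. Subtracting \eqref{cutoffata} from the quantity inside the absolute value in \eqref{aha} therefore leaves
$$
\mathcal I:=\int_{\mathbb{R}^{n}}\int_{\mathbb{R}^{n}}\langle[\sA_{\rr,*}(x-y)-\sA_{\rr}(x,y)]\frac{\ti{v}_{\rr}(x)-\ti{v}_{\rr}(y)}{|x-y|^s},\frac{\varphi(x)-\varphi(y)}{|x-y|^s}\rangle\frac{\dxy}{|x-y|^{n}}.
$$
The integrand vanishes unless $x$ or $y$ lies in $\ttB_{1/16}$. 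By symmetry, I split $\mathcal I$ into a near part on $\ttB_{1/8}\times\ttB_{1/8}$ and a far part where $x\in\ttB_{1/16}$ and $y\notin\ttB_{1/8}$. On the far part $|x-y|\geq 1/16$, $\snr{\varphi(x)}\le c$ (Hölder continuity with vanishing outside $\ttB_{1/16}$), and $\ti v_\rr$ is globally in $L^2$ with compact support. So the far part is bounded by $c\int_{\ttB_{1/16}}\int_{\er^n\setminus\ttB_{1/8}}\min\{\omega_\rr(\cdots),1\}(|v_\rr(x)|+|\ti v_\rr(y)|)\,|x-y|^{-n-2s}\dyx$, and it can be treated with the same truncation idea as the near part below.

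The key is \eqref{ass.1}$_3$: $\snr{\sA_{\rr}-\sA_{\rr,*}}\le\Lambda\min\{\omega(\eps_0 D),2\}$, where $D:=|v_\rr(x)-v_\rr(y)|/|x-y|^s$, because $\sE_u(\ell;x_0,\rr)<\eps_0$ and $\omega$ is nondecreasing. On $\ttB_{1/8}\times\ttB_{1/8}$ one has $v_\rr=\ti v_\rr$, so $D=\ti D:=|\ti v_\rr(x)-\ti v_\rr(y)|/|x-y|^s$. I fix a threshold $M\ge1$ and split into $\{\ti D\le M\}$ and $\{\ti D>M\}$.

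On $\{\ti D\le M\}$ the kernel difference is at most $\Lambda\omega(\eps_0 M)\le\Lambda M\omega(\eps_0)$ by concavity. Cauchy–Schwarz then bounds the contribution by $cM\omega(\eps_0)[\ti v_\rr]_{s,2}[\varphi]_{s,2;\er^n}$. The factor $[\varphi]_{s,2}$ is bounded because $t>s$ and $\varphi$ is $t$-Hölder with $[\varphi]_{0,t}\le1$ and support in $\ttB_{1/16}$.

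On $\{\ti D>M\}$ I use the higher differentiability \eqref{cf.2}, $[\ti v_\rr]_{s+\delta_0,2}\le c$. Writing $\ti D=\ti D_{s+\delta_0}|x-y|^{\delta_0}$, I split further according to whether $|x-y|\le r$ or $|x-y|>r$.
\begin{itemize}
\item On $|x-y|\le r$, I use $|\varphi(x)-\varphi(y)|\le|x-y|^t$. The gain $|x-y|^{t-s}$ against $\ti D$ in $L^2$ produces a factor of order $r^{t-s}$, with the usual $r^{n/2}$-type volume bookkeeping.
\item On $|x-y|>r$, the weight is bounded and the set $\{\ti D>M\}$ has small measure by Chebyshev, of order $M^{-2}r^{-n-2s}$ in the weighted sense. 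Equivalently, the estimate $\ti D^2\le\ti D^2(\ti D/M)$ fails to help directly, so instead I bound $\int_{\ti D>M,|x-y|>r}\ti D\,|x-y|^{t-s}\,|x-y|^{-n}$ by $M^{-1}\int\ti D^2$ times powers of $r$.
\end{itemize}
An alternative to this region splitting, which I would try first, is to interpolate $[\varphi]_{s',2}$ with $s'=(s+t)/2$ and pair it with the dual smoothness of $\ti v_\rr$.

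Collecting, the bound has the form $cM\omega(\eps_0)+cM^{-1}r^{-a}+cr^{t-s}$, which I then optimize over $M$ and $r$. The exponent $\frac{t-s}{2(n+s-t)}$ in \eqref{aha} suggests choosing $r=\omega(\eps_0)^{1/(2(n+s-t))}$ and $M=\omega(\eps_0)^{-1/2}$, or a similar balancing. I expect this calibration to be the main obstacle: making the tail and cross terms yield exactly a power of $\omega(\eps_0)$ using only $t>s$ and the data-controlled bounds \eqref{lastella}, without ever needing $\delta_0>t-s$.
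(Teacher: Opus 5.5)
You have the right ingredients, and a truncation argument of this type is a legitimate alternative to the paper's route. Specifically, the reduction to $\mathcal I$, the near/far splitting, and the bound on $\{\ti D\le M\}$ via $\omega(\eps_0 M)\le M\omega(\eps_0)$ are all correct.

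\textbf{Where the proof fails to close.} On $\{\ti D>M\}$ you end with an undetermined loss $cM^{-1}r^{-a}$. Your proposed choice $M=\omega(\eps_0)^{-1/2}$, $r=\omega(\eps_0)^{1/(2(n+s-t))}$ turns this term into $\omega(\eps_0)^{\frac{n+s-t-a}{2(n+s-t)}}$. That is acceptable only if $a\le n-2(t-s)$. With a loss of the size suggested by your own count $M^{-2}r^{-n-2s}$, i.e.\ $a=n+2s$, the exponent is negative and the bound blows up as $\omega(\eps_0)\to 0$.

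\textbf{The missing observation.} No $r$-splitting and no higher differentiability are needed. On $\ttB_{1/8}\times\ttB_{1/8}$ one has $|\varphi(x)-\varphi(y)|/|x-y|^{s}\le |x-y|^{t-s}\le 1$. Hence Chebyshev with respect to $|x-y|^{-n}\dxy$, combined with \eqref{cf.2}, gives directly
\[
\int_{\ttB_{1/8}}\int_{\ttB_{1/8}}\mathds{1}_{\{\ti D>M\}}\,\ti D\,\frac{|\varphi(x)-\varphi(y)|}{|x-y|^{s}}\,\frac{\dxy}{|x-y|^{n}}\le \frac{1}{M}\,[\ti v_{\rr}]_{s,2;\ttB_{1/8}}^{2}\le \frac{c}{M}.
\]
The last sentence of your second bullet is essentially this inequality, but it holds on all of $\{\ti D>M\}$ with no power of $r$.

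The near part is then at most $c(M\omega(\eps_0)+M^{-1})$. Choosing $M=\omega(\eps_0)^{-1/2}$ (or letting $M\to\infty$ if $\omega(\eps_0)=0$) gives $c\,\omega(\eps_0)^{1/2}$. This implies \eqref{aha}, since $\omega(\eps_0)\le 1$ and $\frac{t-s}{2(n+s-t)}\le\frac12$ (as $2(t-s)<2\le n$).

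\textbf{The far part needs more than ``the same truncation''.} There the truncation variable is $D=|v_\rr(x)-v_\rr(y)|/|x-y|^s\le 16^s(|v_\rr(x)|+|v_\rr(y)|)$. It is built from $v_\rr$, not $\ti v_\rr$, and for $|y|\ge 1$ you control $v_\rr$ only through $\tail(v_\rr;\ttB_1)\le 1$. To handle it:
\begin{itemize}
\item Split $\{D>M\}$ into $\{|v_\rr(x)|\gtrsim M\}\cup\{|v_\rr(y)|\gtrsim M\}$.
\item Use Chebyshev in $L^2(\ttB_1)$ in the $x$ variable, and in $L^1(\er^n\setminus\ttB_{1/8},|y|^{-n-2s}\dy)$ in the $y$ variable, together with \eqref{richiama}.
\end{itemize}
This again gives $cM^{-1}$, while $\{D\le M\}$ gives $cM\omega(\eps_0)$.

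\textbf{Comparison with the paper.} The paper never truncates. Its steps are:
\begin{itemize}
\item It bounds $\omega_\rr(D)$ by $\omega_\rr$ evaluated at $(|v_\rr(x)|+|v_\rr(y)|)/|x-y|^s$.
\item It uses $\omega_\rr\le 1$ to pass to the power $q=\frac{2(n+s-t)}{t-s}$.
\item It controls the double integral of $\omega_\rr(\cdot)^q$ by $c\,\omega(\eps_0)$ via Jensen's inequality for the concave $\omega_\rr$.
\item It applies H\"older with exponents $\bigl(q,2,\frac{2(n+s-t)}{n-2(t-s)}\bigr)$; here $t>s$ is exactly what makes $|x-y|^{-(n+s-t)}$ integrable.
\item It handles the far part by Jensen with respect to $|y|^{-n-2s}\dy$.
\end{itemize}
That H\"older split is the source of the exponent $\frac{t-s}{2(n+s-t)}$. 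Your truncation uses concavity only through $\omega(\eps_0M)\le M\omega(\eps_0)$ and, once repaired as above, yields the better exponent $1/2$. Neither argument uses $\delta_0$, so your worry about needing $\delta_0>t-s$ is unfounded.
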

\begin{proof} In the following estimate, and in the rest of the proof, we shall use the basic properties of $\omega(\cdot)$  (it is concave, non-decreasing, $\omega(0)=0$, and therefore sublinear, i.e., $\omega(a+b)\leq \omega(a)+\omega(b)$, $\omega(\cdot)\leq 1$, $\omega(ct)\leq c  \omega(t)$ for $c\geq 1$) so that
\begin{flalign*}
\notag \int_{\ttB_{1/8}}\int_{\ttB_{1/8}}\omega_{\rr}\left(\frac{\snr{v_{\rr}(x)-v_{\rr}(y)}}{|x-y|^{s}}\right)^{\frac{2(n+s-t)}{t-s}}\dxy & \leq c\mint_{\ttB_{1/8}}\mint_{\ttB_{1/8}}\omega_{\rr}\left(\frac{\snr{v_{\rr}(x)}+\snr{v_{\rr}(y)}}{|x-y|^{s}}\right)\dxy\\ \notag & \leq c\mint_{\ttB_{1/8}}\mint_{\ttB_{1/8}}\omega_{\rr}\left(\frac{\snr{v_{\rr}(x)}}{|x-y|^{s}}\right)\dxy\\
& \notag \leq c\omega_{\rr}\left(\mint_{\ttB_{1/8}}\mint_{\ttB_{1/8}}\frac{\snr{v_{\rr}(x)}}{|x-y|^{s}}\dxy\right)\\
& \leq c\omega_{\rr}\left(c\int_{\ttB_{1}}\frac{\dz}{\snr{z}^{s}}\nr{v_\rr}_{L^1(\ttB_1)}\right) \notag \\
& \leq c \omega_{\rr}\left(\nra{v_{\rr}}_{L^2(\ttB_1)}\right) \notag \\
&\hspace{-2mm}\stackleq{richiama}  c\omega_{\rr} (1)\stackrel{\eqref{modulo}}{=}  c \omega(\sE_{u}(\ell;x_{0},\rr) )
\end{flalign*}
where we also used Jensen inequality. Note that we have used that since $\omega_\rr\le 1$ and $(2(n+s-t))/(t-s)>1$, then 
$\omega_\rr^{{2(n+s-t)}/{(t-s)}}\le \omega_\rr$.
Using the content of the last display in conjunction with \eqref{small0} we conclude with 
\eqn{ommega}
$$ 
\int_{\ttB_{1/8}}\int_{\ttB_{1/8}}\omega_{\rr}\left(\frac{\snr{v_{\rr}(x)-v_{\rr}(y)}}{|x-y|^{s}}\right)^{\frac{2(n+s-t)}{t-s}}\dxy \leq c\omega(\eps_{0}) 
$$
where $c$ only depends on $n$ and $s$. 
 With $\varphi$ as in the statement of the lemma\footnote{Note that $\varphi$ as considered in the statement of Lemma \ref{notte} are admissible in \eqref{cutoffata} as they can be extended to $W^{s,2}(\er^n;\er^N)$ following the discussion before \eqref{ht.10}.}, using \eqref{cutoffata} we split
\begin{flalign*}
\mbox{(I)}&:=\left|\int_{\mathbb{R}^{n}}\int_{\mathbb{R}^{n}}\langle\sA_{\rr,*}(x-y)\frac{\ti{v}_{\rr}(x)-\ti{v}_{\rr}(y)}{|x-y|^s},\frac{\varphi(x)-\varphi(y)}{|x-y|^s}\rangle\frac{\dxy}{|x-y|^{n}}-\int_{\ttB_{1/16}}\langle g_{\rr},\varphi\rangle\dx\right|\nonumber \\
&=\left|\int_{\mathbb{R}^{n}}\int_{\mathbb{R}^{n}}\langle(\sA_{\rr,*}(x-y)- \sA_{\rr}(x,y))\frac{\ti{v}_{\rr}(x)-\ti{v}_{\rr}(y)}{{|x-y|^s}},\frac{\varphi(x)-\varphi(y)}{|x-y|^s}\rangle\frac{\dxy}{|x-y|^{n}}\right|\nonumber \\
&\leq \int_{\ttB_{1/8}}\int_{\ttB_{1/8}}\snr{\sA_{\rr,*}(x-y)-\sA_{\rr}(x,y)}\frac{\snr{\ti{v}_{\rr}(x)-\ti{v}_{\rr}(y)}}{|x-y|^s}\frac{\snr{\varphi(x)-\varphi(y)}}{|x-y|^s}\frac{\dxy}{|x-y|^{n}}\nonumber \\
&\quad  +2\int_{\mathbb{R}^{n}\setminus \ttB_{1/8}}\int_{\ttB_{1/8}}\snr{\sA_{\rr,*}(x-y)-\sA_{\rr}(x,y)}\snr{\ti{v}_{\rr}(x)-\ti{v}_{\rr}(y)}\snr{\varphi(x)}\frac{\dxy}{|x-y|^{n+2s}}\\
&=:\mbox{(II)}+\mbox{(III)}.
\end{flalign*} 
Note that we have used the symmetry properties in \eqref{sim0} and \eqref{ass.1}$_1$. 
We continue estimating via H\"older and Jensen inequalities as follows:
\begin{eqnarray*}
\mbox{(II)}&\stackrel{\eqref{ass.1}_{3}}{\le}&c [\varphi]_{0,t;\er^n}\int_{\ttB_{1/8}}\int_{\ttB_{1/8}}\omega_{\rr}\left(\frac{\snr{v_{\rr}(x)-v_{\rr}(y)}}{|x-y|^{s}}\right)\frac{\snr{\ti{v}_{\rr}(x)-\ti{v}_{\rr}(y)}}{|x-y|^{n/2+s}}\frac{\dxy}{|x-y|^{n/2+s-t}}\nonumber \\
&\notag\leq &c[\ti{v}_{\rr}]_{s,2;\ttB_{1/8}}\left(\int_{\ttB_{1/8}}\int_{\ttB_{1/8}}\omega_{\rr}\left(\frac{\snr{v_{\rr}(x)-v_{\rr}(y)}}{|x-y|^{s}}\right)^{\frac{2(n+s-t)}{t-s}}\dxy\right)^{\frac{t-s}{2(n+s-t)}}\nonumber \\
&&\notag\quad \times\left(\int_{\ttB_{1/8}}\int_{\ttB_{1/8}}\frac{\dxy}{|x-y|^{n+s-t}}\right)^{\frac{n-2(t-s)}{2(n+s-t)}}\nonumber \\
&\stackrel{\eqref{cf.2}_1}{\le} &c\left(\frac{1}{t-s}\right)^{\frac{n-2(t-s)}{2(n+s-t)}}\left(\int_{\ttB_{1/8}}\int_{\ttB_{1/8}}\omega_{\rr}\left(\frac{\snr{v_{\rr}(x)-v_{\rr}(y)}}{|x-y|^{s}}\right)^{\frac{2(n+s-t)}{t-s}}\dxy\right)^{\frac{t-s}{2(n+s-t)}}\\
&\stackrel{\eqref{ommega}}{\le}& c \omega(\eps_{0})^{\frac{t-s}{2(n+s-t)}}
\nonumber 
\end{eqnarray*}
where $c$ depends on $\data$. Note that here we have used H\"older inequality with conjugate exponents 
$$\left(\frac{2(n+s-t)}{t-s},2,\frac{2(n+s-t)}{n-2(t-s)}\right).$$ 
In order to estimate the remaining term $\mbox{(III)}$ we note that 
$$
x\in \ttB_{1/16}, \ y\in \mathbb{R}^{n}\setminus \ttB_{1/8}\ \Longrightarrow \ \frac{\snr{y}}{|x-y|}\le 2,
$$ 
so, recalling that $\ti{v}_{\rr}\equiv 0$ outside $\ttB_{1/2}$ and that $\varphi\equiv 0$ outside $\ttB_{1/16}$\footnote{Since $\varphi\equiv 0$ outside $\ttB_{1/16}$ and $[\varphi]_{0,t;\R^n}\le 1$, we also have $\|\varphi\|_{L^\infty(\R^n)}\le c(n,t)$.}, we control

\begin{flalign*} 
\mbox{(III)}&\leq c \int_{\mathbb{R}^{n}\setminus \ttB_{1/8}}\int_{\ttB_{1/16}}\omega_{\rr}\left(\frac{\snr{v_{\rr}(x)}+\snr{v_{\rr}(y)}}{\snr{y}^{s}}\right)(\snr{\ti{v}_{\rr}(x)}+\snr{\ti{v}_{\rr}(y)})\frac{\dxy}{\snr{y}^{n+2s}}\nonumber \\
&\leq c \int_{\mathbb{R}^{n}\setminus \ttB_{1/8}}\int_{\ttB_{1/16}}\left(\omega_{\rr}\left(\snr{v_{\rr}(x)}+\snr{v_{\rr}(y)}\right)\right)(\snr{\ti{v}_{\rr}(x)}+\snr{\ti{v}_{\rr}(y)})\frac{\dxy}{\snr{y}^{n+2s}}\nonumber \\
&\leq c \int_{\er^n\setminus \ttB_{1/8}}\int_{\ttB_{1/16}}\left(\omega_{\rr}\left(\snr{v_{\rr}(x)}\right)+\omega_{\rr}\left(\snr{v_{\rr}(y)}\right)\right)(\snr{\ti{v}_{\rr}(x)}+\snr{\ti{v}_{\rr}(y)})\frac{\dxy}{\snr{y}^{n+2s}}\nonumber \\
&=c  \int_{\er^n\setminus \ttB_{1/8}}\int_{\ttB_{1/16}}\frac{\omega_{\rr}(\snr{v_{\rr}(x)})\snr{\ti{v}_{\rr}(x)}}{\snr{y}^{n+2s}}\dxy+c \int_{\er^n\setminus \ttB_{1/8}}\int_{\ttB_{1/16}}\frac{\omega_{\rr}(\snr{v_{\rr}(y)})\snr{\ti{v}_{\rr}(x)}}{\snr{y}^{n+2s}}\dxy\nonumber \\
&\quad +c \int_{\ttB_{1/2}\setminus \ttB_{1/8}}\int_{\ttB_{1/16}}\frac{\omega_{\rr}(\snr{v_{\rr}(x)})\snr{\ti{v}_{\rr}(y)}}{\snr{y}^{n+2s}}\dxy+c \int_{\ttB_{1/2}\setminus \ttB_{1/8}}\int_{\ttB_{1/16}}\frac{\omega_{\rr}(\snr{v_{\rr}(y)})\snr{\ti{v}_{\rr}(y)}}{\snr{y}^{n+2s}}\dxy\nonumber \\
&=:c  \sum_{k=1}^{ 4} \mbox{(III)}_k\,.
\end{flalign*}
Recalling \eqref{richiama} and arguing as in \eqref{ommega}, we get
\begin{flalign*}
\mbox{(III)}_1+\mbox{(III)}_4 &\leq c \int_{\ttB_{1}} \omega_{\rr}(\snr{v_{\rr}})\snr{\ti{v}_{\rr}}\dx\\ & \leq c 
\nr{v_{\rr}}_{L^2(\ttB_1)}\left(\mint_{\ttB_{1}}\omega_{\rr}(\snr{v_{\rr}})\dx\right)^{1/2}\\
& \leq c \nr{v_{\rr}}_{L^2(\ttB_1)}\omega_{\rr}\left(\nra{v_{\rr}}_{L^2(\ttB_1)}\right)^{1/2}\\
& \leq c \sqrt{\omega_{\rr}\left(1\right)}\leq c \sqrt{\omega\left(\eps_{0}\right)}.
\end{flalign*}
Using Jensen inequality with respect to the measure $\snr{y}^{-n-2s}\dy$ and again recalling  \eqref{scatailancora2} and  \eqref{richiama}, we find 
\begin{flalign*}
\mbox{(III)}_2 &\leq c \nr{v_{\rr}}_{L^1(\ttB_1)} \int_{\er^n\setminus \ttB_{1/8}}\frac{\omega_{\rr}(\snr{v_{\rr}(y)})}{\snr{y}^{n+2s}}\dy\\ & 
\leq c  \nr{v_{\rr}}_{L^1(\ttB_1)} \omega_{\rr}\left(c \tail(v_{\rr};\ttB_{1/8})\right)\\
& \leq c \nr{v_{\rr}}_{L^2(\ttB_1)} \omega_{\rr}\left( c\nra{v_{\rr}}_{L^1(\ttB_1)}+c\tail(v_{\rr};\ttB_{1})\right) 
\\ & \leq c\omega_{\rr}(1) \leq c 
\omega(\eps_{0})
\end{flalign*}
and 
\begin{flalign*}
\mbox{(III)}_3 & \leq c \nr{v_{\rr}}_{L^1(\ttB_1)} \mint_{\ttB_{1}}\omega_{\rr}(\snr{v_{\rr}})\dx \\
& \leq c
\nr{v_{\rr}}_{L^2(\ttB_1)}\omega_{\rr}\left(\nra{v_{\rr}}_{L^2(\ttB_1)}\right) \leq c 
\omega(\eps_{0})\,.
\end{flalign*}
All the constants mentioned above depend on $\data$. 
Collecting all the previous estimates finally leads to \eqref{aha}. 
\end{proof} 
 With $\gamma \geq 0$, in the following we shall need the Campanato type excess
 \eqn{eccessone}
  $$\mathcal{E}_{\gamma}(x_{0},\rr)\equiv \mathcal{E}_{u,\gamma}(x_{0},\rr):=\rr^{-\gamma}\inf_{\elly \textnormal{ affine}} \tx{E}_{u}(\elly;x_0,\rr)  \geq 0$$
  that in fact we already employed in the proof of Theorem \ref{cdg}. 
\begin{lemma}\label{piccolina} Given $\alpha \in (0,2s-1)$, there exist $\epsb{b}\equiv \epsb{b}( \data, \omega(\cdot), \alpha) \in (0,1)$ and $\tet\equiv \tet( \data,\alpha) \in (0,1/32)$, such that if
\eqn{smallee}
$$
\mathcal{E}_{s}(x_{0},\rr) < \epsb{b}\,,
$$
then 
\eqn{padr}
$$
 \mathcal{E}_{1+\alpha}(x_0,\tet \rr) \leq    \mathcal{E}_{1+\alpha}(x_{0},\rr) \quad \mbox{and}\quad \mathcal{E}_{s}(x_{0},\tet\rr) < \epsb{b} .
$$
\end{lemma}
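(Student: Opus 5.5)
The plan is a one-step excess decay obtained by linearization, harmonic approximation, and the a priori estimate of Theorem \ref{cdg}. First I fix an affine map $\ell$ that almost realizes the infimum in $\mathcal{E}_{s}(x_0,\rr)$, so that $\sE_u(\ell;x_0,\rr)\le 2\mathcal E_s(x_0,\rr)<2\epsb{b}$ and $\tx{E}_u(\ell;x_0,\rr)\le 2\rr^{1+\alpha}\mathcal{E}_{1+\alpha}(x_0,\rr)$; if the infimum is attained or is zero, the claim is trivial. I then perform the blow-up of Section \ref{blowsec}, with $v_\rr=u_\rr-\ell_\rr$ solving \eqref{elr}, $\tx{E}_{v_\rr}(0;0,1)=1$, and I localize to $\ti v_\rr$ as in \eqref{cutty}. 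By \eqref{lastella} the hypotheses \eqref{sh.0} of Lemma \ref{shar} hold with $c_0=c_*(\data)$, with the translation-invariant matrix $\sA_{\rr,*}$ in place of $\sA$, with $B=\ttB_{1/16}$, $\ti B=\ttB_{1/8}$ and $g=g_\rr$. Lemma \ref{notte} with $\eps_0=2\epsb{b}$ gives \eqref{sh.1} as soon as $\ti c\,[\omega(2\epsb{b})]^{\frac{t-s}{2(n+s-t)}}\le (\eps/\cchh)^{2p/(p-2)}$. I will choose $\eps$ first, in terms of $\data,\alpha$, and then $\epsb{b}$ small, in terms of $\data,\omega(\cdot),\alpha$. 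This yields $h$ with $-\mathcal L_{\sA_{\rr,*}}h=g_\rr$ in $\ttB_{1/16}$, with \eqref{sh.5}, and with $\nr{h-v_\rr}_{L^2(\ttB_{1/16})}\le c\eps$.

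Next I apply Theorem \ref{cdg} to $h$ on $\ttB_{1/16}$, with some exponent $\beta\in(\alpha,2s-1)$, say $\beta=(\alpha+2s-1)/2$. Since $g_\rr\in L^\infty$, its $\mathcal M^{n/(2s-1-\beta)}$ norm is bounded by $c_*$, and \eqref{sh.5} bounds the remaining terms on the right-hand side of \eqref{dg.2bis}. Hence $Dh\in C^{0,\beta}(\ttB_{1/32})$ with universal bounds. Setting $\ell_h(x)=Dh(0)x+h(0)$, I get $\nra{h-\ell_h}_{L^2(\ttB_\tau)}\le c\tau^{1+\beta}$ for $\tau\le 1/32$. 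For the tail part of the excess at scale $\tet$, I use \eqref{scatailancora}/\eqref{scatail} applied to $v_\rr-\ell_h$. This splits the tail into the term $\tet^{2s}\tail(v_\rr-\ell_h;\ttB_{1/32})$, which is at most $c\tet^{2s}$ by \eqref{richiama} and \eqref{tritri}, and the integral $\int_{\tet}^{1/32}(\tet/\lambda)^{2s}\nra{v_\rr-\ell_h}_{L^1(B_\lambda)}\dlam/\lambda$. In that integral I bound $\nra{v_\rr-\ell_h}_{L^1(B_\lambda)}\le c\lambda^{-n/2}\eps+c\lambda^{1+\beta}$, and integrating gives $c\tet^{-n/2}\eps+c\tet^{1+\beta}$, where $1+\beta<2s$ is used. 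Altogether I obtain
$$\tx{E}_{v_\rr}(\ell_h;0,\tet)\le c_1\left(\tet^{1+\beta}+\tet^{-n/2}\eps\right),$$
with the $\tet^{2s}$ term absorbed into $\tet^{1+\beta}$.

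Scaling back multiplies by $\sE_u(\ell;x_0,\rr)\rr^s=\tx{E}_u(\ell;x_0,\rr)$, so that
$$\tx{E}_u(\ell+\tilde\ell;x_0,\tet\rr)\le c_1(\tet^{1+\beta}+\tet^{-n/2}\eps)\,\tx{E}_u(\ell;x_0,\rr),$$
where $\tilde\ell$ is the rescaled $\ell_h$. I choose $\tet$ so that $2c_1\tet^{\beta-\alpha}\le 1/4$, and then $\eps=\tet^{1+\alpha+n/2}$, which gives $\mathcal E_{1+\alpha}(x_0,\tet\rr)\le \tfrac12\cdot 2\mathcal E_{1+\alpha}(x_0,\rr)$ after accounting for the factor $2$ from the near-minimality of $\ell$. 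For the second claim, the same inequality gives $\mathcal E_s(x_0,\tet\rr)\le \tet^{-s}\tet^{1+\alpha}\,2\mathcal E_s(x_0,\rr)/4<\epsb{b}$, because $1+\alpha>s$ and $\tet<1$.

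I expect the main obstacle to be the careful tail estimate in the second step. There the $L^2$ closeness is available only on $\ttB_{1/16}$, while the tail of $v_\rr-\ell_h$ must be controlled at scale $\tet$; this requires combining \eqref{scatailancora} with the global bounds \eqref{richiama} and with $s>1/2$ to handle the affine part through \eqref{tritri}. A secondary check is that $\eps$ can be fixed before $\epsb{b}$, with $\cchh$ depending only on $\data$.
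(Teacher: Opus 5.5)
Your proposal follows the paper's proof step by step. You blow up around a near-optimal $\ell$, apply Lemma \ref{notte} and Lemma \ref{shar} to $\ti v_\rr$ with the frozen kernel $\sA_{\rr,*}$, and use Theorem \ref{cdg} with $\beta=(2s-1+\alpha)/2$. You then control the tail at scale $\tet$ via \eqref{scatailancora} using $1+\beta<2s$, and fix the constants in the order $\tet\to\eps\to\epsb{b}$. The only variation is that you take a factor-$2$ near-minimizer instead of the paper's $\delta$-near-minimizer with $\delta\to0$. That is harmless: if $\inf_\ell \tx{E}_u(\ell;x_0,\rr)=0$, then $u$ is affine and both claims are trivial.

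There is, however, a slip in the final calibration. With $\eps=\tet^{1+\alpha+n/2}$, the second term in your bound is
$$c_1\tet^{-n/2}\eps=c_1\tet^{1+\alpha}.$$
So the factor multiplying $\mathcal E_{1+\alpha}(x_0,\rr)$ is
$$2c_1\bigl(\tet^{\beta-\alpha}+\tet^{-n/2-1-\alpha}\eps\bigr)\geq 2c_1.$$
This is not $\le 1$ for a generic constant $c_1\geq 1$. Hence neither $\mathcal E_{1+\alpha}(x_0,\tet\rr)\le\mathcal E_{1+\alpha}(x_0,\rr)$ nor your bound on $\mathcal E_s(x_0,\tet\rr)$ follows as written.

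The constant has to be built into $\eps$, for instance $\eps:=\tet^{1+\alpha+n/2}/(8c_1)$. This is what the paper does in \eqref{dja2} with the factor $(2c_2+1)^{-1}$. The choice is admissible because $c_1$ depends only on $\data,\alpha$ and not on $\eps$: the constant in \eqref{sh.5}$_2$ does not depend on $\eps$. Thus $\eps$ still depends only on $\data,\alpha$, and $\epsb{b}$ is then fixed through the smallness condition on $\omega(2\epsb{b})$. With this correction the factor above is at most $1/2$, and both conclusions of the lemma follow as you indicate.
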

\begin{proof} The main point here is to apply Lemma \ref{shar} to $\ti{v}_{\rr}$.  
With $\alpha$ as in the statement and with $\eps>0$ to be determined later, we start by determining the threshold $\epsb{b}>0$  needed to formulate \eqref{smallee} as a function of $\data, \omega(\cdot), \eps$. We will then make a suitable choice of $\eps$ reflected in the final dependence of the various constants displayed in the statement of the Lemma. The choice of $\epsb{b}$ will be finalized later as a function of $\data, \omega(\cdot), \alpha$, as prescribed in the statement towards the end of the proof, when we shall determine $\eps$ and $\tet $ (see \eqref{dja2} below). Specifically, we choose $\epsb{b}$ such that 
\eqn{sceltina}
$$
 \ti{c}[\omega(\epsb{b})]^{\frac{t-s}{2(n+s-t)}}\leq \left(\frac{\eps}{\cchh}\right)^{\frac{2p}{p-2}}\Longrightarrow   \epsb{b}\equiv \epsb{b}(\eps, \data, \omega(\cdot)).
$$ 
In the inequality in \eqref{sceltina}, the left-hand side appears in Lemma \ref{notte}, \eqref{aha}, and the right-hand side comes from Lemma \ref{shar}, \eqref{sh.1}. These can be applied thanks to \eqref{cf.2}; the numbers $t,p$ are taken from \eqref{tep}. For the reader’s convenience we summarize the choice and the quantitative tracking of the constants as follows:
\begin{itemize}
\item \mbox{Determine $\delta_0\equiv \delta_0(\data)\in (0, 1-s)$ as in \eqref{stimapp}, via fractional Gehring's lemma \cite{KMS}}.
\item \mbox{Use the constant $c_{*}\equiv c_{*}(\data)$ from \eqref{lastella} to determine $c_0\equiv c_0(\data):=c_{*}$ in \eqref{sh.0}}. 
\item Use  $\delta_0, c_0$ and $\texttt{d}=1/16$ to determine the constant $\cchh\equiv \cchh(\data)$ appearing in \eqref{chconst}.
\item Use $\delta_0$ to determine $t,p\equiv t,p(\data)$ as from Lemma \ref{shar} (done in \eqref{tep}). 
\item Use $\tilde c\equiv \tilde c (\data) $ from Lemma \ref{notte}, \eqref{aha}, to finally determine $\epsb{b}$ via \eqref{sceltina}. 
\end{itemize}
By the very definition in \eqref{eccessone}, \eqref{smallee} implies that for every $\delta>0$ such that $\mathcal{E}_{s}(x_{0},\rr) +\delta < \epsb{b}$  there exists an affine map $\ell$ such that
\eqn{infy}
$$
\sE_{u}(\ell;x_{0},\rr)<\mathcal{E}_{s}(x_{0},\rr) +\delta <\epsb{b} \,.$$
Applying Lemma \ref{notte} with $\eps_0=\epsb{b}$, and using \eqref{sceltina} in \eqref{aha} we obtain that 
$$
 \left|\int_{\mathbb{R}^{n}}\int_{\mathbb{R}^{n}}\langle\sA_{\rr,*}(x-y)\frac{\ti{v}_{\rr}(x)-\ti{v}_{\rr}(y)}{|x-y|^s},\frac{\varphi(x)-\varphi(y)}{|x-y|^s}\rangle\frac{\dxy}{|x-y|^{n}}-\int_{\ttB_{1/16}}\langle g_{\rr},\varphi\rangle\dx\right| \le \left(\frac{\eps}{\cchh}\right)^{\frac{2p}{p-2}}
$$
holds whenever $\varphi \in \mathbb{X}^{s,2}_{0}(\ttB_{1/16},\ttB_{1/8})\cap C^{0,t}(\er^n;\er^{N})$ with  $[\varphi]_{0,t;\er^n}\leq 1$. 
Therefore  
Lemma \ref{shar} applies as follows\footnote{The tail term required in \eqref{sh.0} with $\ti B=\ttB_{1/8}$ is controlled by
\eqref{cf.2} and \eqref{scatailancora2}.}: if $h\in \mathbb{X}^{s,2}_{\ti{v}_{\rr}}(\ttB_{1/16},\ttB_{1/8})$ solves (as usual, existence of $h$ comes from Lemma \ref{esiste})
$$
\begin{cases}
\ -\mathcal{L}_{\scalebox{0.6}{$\sA_{\rr,*}$}}h=g_{\rr}\quad &\mbox{in} \ \ \ttB_{1/16}\\
\ h=\ti{v}_{\rr}\quad &\mbox{in} \ \ \mathbb{R}^{n}\setminus \ttB_{1/16}, 
\end{cases}
$$
then \eqref{sh.5} yields
\eqn{confronti}
$$
\begin{cases}
\, \nr{h}_{W^{s,2}(\ttB_{1/8})}+\tail(h;\ttB_{1/16})\le c\equiv c (\data) \\[4pt]
 \nr{\ti{v}_{\rr}-h}_{L^2(\ttB_{1/16})}=\nr{v_{\rr}-h}_{L^2(\ttB_{1/16})}\leq c(\data)\eps\,.
\end{cases}
$$ 
The first line follows from \eqref{sh.5}, since the estimate of
$\tail(h-(h)_{\ttB_{1/16}};\ttB_{1/16})$, together with
$\|h\|_{L^2(\ttB_{1/8})}\le c$, also controls $\tail(h;\ttB_{1/16})$.
Applying \eqref{dg.2bis} with 
\eqn{ilbeta}
$$
\beta\equiv \beta(s, \alpha):= \frac{2s-1+\alpha}{2} \in (\alpha, 2s-1) 
$$
and then also using \rif{cf.1} and \eqref{confronti}, we find
\eqn{cabi}
$$  
\nr{Dh}_{L^\infty(\ttB_{1/32})} +[Dh]_{0,\beta; \ttB_{1/32}} \leq c  \nra{h}_{L^2(\ttB_{1/16})}+c\tail(h; \ttB_{1/16})+c \leq c(\data,\alpha).
$$
Defining the affine map $\ell_{h}$ as 
$\ell_{h}(x):=D h(0)x + h(0)$, and using Mean Value Theorem together with \rif{cabi}, we find that 
\eqn{cabi2} 
$$
\begin{cases}
 \nr{h-\ell_{h}}_{L^\infty(\ttB_{\lambda}) } \leq c(\data, \alpha)  \lambda ^{1+\beta} \\[6pt]
   \snr{D  h(0)}  + \snr{ h(0)} \leq c(\data,\alpha)
   \end{cases} $$
 holds for every $\lambda \in (0,1/32)$ (the dependence on $\beta$ has been incorporated in the dependence on $\alpha$ according to \eqref{ilbeta}).  The bound for $h(0)$ follows from the $L^2$ estimate for $h$ in \eqref{confronti} and the $L^\infty$ estimate for $Dh$ in \eqref{cabi}. 
  Note that (see also \cite[(3.24)]{parte1})
    \eqn{cabi4}
  $$
\begin{cases}
  \nra{\ell_{h}}_{L^2(\ttB_{\lambda})} \approx_{n,N}  \snr{D  h(0)} \lambda + \snr{h(0)} \leq  c (\data,\alpha)(\lambda +1) \\[6pt]
\tail(\ell_{h};\ttB_{\lambda}) \lesssim_{n,N} \frac{   \snr{D  h(0)}\lambda }{2s-1} + \frac{\snr{ h(0)}}{2s}\lesssim_{n,N,s}\snr{D  h(0)} \lambda  +\snr{ h(0)} \leq c (\data,\alpha)(\lambda +1)
\end{cases}
$$
holds for every $\lambda>  0$. In the second inequality we have clearly used that $s>1/2$. Combining \rif{richiama} and \rif{cabi4}, and recalling \rif{scatailancora2}, we obtain 
\eqn{cabi44}
$$
 \nra{v_{\rr}- \ell_{h}}_{L^2(\ttB_{\lambda}) }+\tail(v_{\rr}- \ell_{h};\ttB_{\lambda}) \leq c (\data,\alpha)\lambda^{-n}
$$
for every $\lambda \in (0,1]$. 
Using \rif{confronti}$_2$, \rif{cabi2}$_1$ and that $v_{\rr}\equiv \ti{v}_{\rr}$ on $\ttB_{1/4}$, see \eqref{cutty}, we have 
 \begin{flalign*}
	\tx{E}_{v_{\rr}}(\ell_{h};0,\tet )&\leq \nra{v_{\rr}- \ell_{h}}_{L^{2}(\ttB_{\tet })} + \tail(v_{\rr}- \ell_{h};\ttB_{\tet })\\	
	&\leq c \nra{h-\ell_{h}}_{L^2(\ttB_{\tet }) } + c \tet^{-n/2} \nra{v_{\rr}- h}_{L^2(\ttB_{1/16}) } + \tail(v_{\rr}- \ell_{h};\ttB_{\tet })\nonumber \\
	&\leq c \tet^{1+\beta} +c \tet ^{-n/2} \eps + c\tail(v_{\rr}- \ell_{h};\ttB_{\tet }).
	 \end{flalign*} 
We estimate the last $\tail$ term above as follows:
 \begin{eqnarray*}
 \tail(v_{\rr}- \ell_{h};\ttB_{\tet }) & \stackleq{scatailancora} &c\tet^{2s} \tail(v_{\rr}- \ell_{h};\ttB_{1/32}) +  
 \tet^{2s}  \nra{v_{\rr}- \ell_{h}}_{L^2(\ttB_{1/32}) }\\
 & &\quad + c\int_{\tet}^{1/32}\left(\frac{\tet}{\lambda}\right)^{2s}\nra{v_{\rr}- \ell_{h}}_{L^1(B_{\lambda})}\frac{\dlam}{\lambda}\\
 &  \stackleq{cabi44} & c \tet^{2s} + c\tet^{2s}\int_{\tet}^{1/32}\left(\frac{1}{\lambda}\right)^{2s+n/2}\frac{\dlam}{\lambda}\, \nra{v_{\rr}- h}_{L^2(B_{1/16})}\\
  & & \quad +c\tet^{2s}\int_{\tet}^{1/32}\left(\frac{1}{\lambda}\right)^{2s}\nra{h- \ell_{h}}_{L^2(B_{\lambda})}\frac{\dlam}{\lambda}\\
 & \stackrel{\eqref{confronti}_2, \eqref{cabi2}}{\leq} & c \tet^{2s} + c \eps\tet^{-n/2} + c \tet^{2s}\int_{\tet}^{1/32}\left(\frac{1}{\lambda}\right)^{2s-1-\beta}\frac{\dlam}{\lambda}. 
  \end{eqnarray*}	 
Matching the content of the last two displays, at recalling $1+\beta <2s$, we arrive at
\eqn{dja0}
$$
	\tx{E}_{v_{\rr}}(\ell_{h};0,\tet )\leq c_{1}\tet ^{1+\beta} +c_{2}\tet^{-n/2}\eps 
$$
where $c_{1},c_{2}$ depend on $\data$ and $\alpha$. Determine  $\tet \in (0, 1/32)$ such that 
\eqn{dja1}
$$c_{1}\tet ^{\beta-\alpha}< \frac 12 \Longrightarrow \tet \equiv \tet (\data, \alpha)$$ and then choose \eqn{dja2} 
$$
\eps := \frac{\tet ^{n/2+1+\alpha}}{2c_{2}+1} \Longrightarrow \eps\equiv \eps(\data, \alpha).
$$
Note that this choice finally fixes $\eps$ and therefore finally fixes $\epsb{b}$ as a function of $\data, \omega(\cdot)$ and $\alpha$ via \eqref{sceltina} as described at the beginning of the proof. 
Using \eqref{dja1}-\eqref{dja2} in \eqref{dja0} allows us to conclude with 
\eqn{tran}
$$
\tx{E}_{v_{\rr}}(\ell_{h};0,\tet ) < \tet ^{1+\alpha}.
$$
With $\tilde{\ell}$ being the affine map defined by 
$$
\tilde{\ell}(x):=\ell(x)+\tx{E}_{u}(\ell;x_{0},\rr)\,\ell_{h}\left(\frac{x-x_0}{\rr}\right)
$$
changing variables gives 
$$
\tx{E}_{v_{\rr}}(\ell_{h};0,\tet )= \frac{\tx{E}_{u}(\tilde{\ell};x_0,\tet \rr)}{\tx{E}_{u}(\ell;x_{0},\rr)}
$$ 
so that \eqref{tran} turns into
$$  
 \frac{\tx{E}_{u}(\tilde{\ell};x_0,\tet \rr)}{(\tet \rr)^{1+\alpha}} <   \frac{\tx{E}_{u}(\ell;x_{0},\rr)}{\rr^{1+\alpha}}
$$ 
and therefore, recalling the definition in \eqref{eccessone}, we have
$$
 \mathcal{E}_{1+\alpha}(x_0,\tet \rr) <   \frac{\tx{E}_{u}(\ell;x_{0},\rr)}{\rr^{1+\alpha}}.
$$
By using \eqref{infy} and again the definition in \eqref{eccessone}, we find
$$
 \frac{\tx{E}_{u}(\ell;x_{0},\rr)}{\rr^{1+\alpha}}<\frac 1{\rr^{1+\alpha-s}}\left(\mathcal{E}_{s}(x_{0},\rr)+\delta\right) = 
 \mathcal{E}_{1+\alpha}(x_{0},\rr)+\frac{\delta}{\rr^{1+\alpha-s}} 
$$
so that the first inequality in \eqref{padr} follows merging the content of the last two displays and letting $\delta \to 0$. This allows to prove the second inequality in \eqref{padr} as follows:
\begin{flalign*}
\mathcal{E}_{s}(x_{0},\tet \rr) &= (\tet \rr)^{1+\alpha-s} \mathcal{E}_{1+\alpha}(x_{0},\tet \rr)
\\ &\leq  (\tet \rr)^{1+\alpha-s} \mathcal{E}_{1+\alpha}(x_{0},\rr) \\
& = 
\tet ^{1+\alpha-s} \mathcal{E}_{s}(x_{0},\rr) <\tet ^{1+\alpha-s}\epsb{b} <\epsb{b}
\end{flalign*}
and the proof is complete. 
\end{proof}
\subsection{Proof of Proposition \ref{cor.1} completed.}\label{completa} We fix a positive $\alpha<2s-1$ and determine 
$\epsb{b}\equiv\epsb{b}(\data,\omega(\cdot),\alpha)$ and 
$\tet\equiv\tet(\data,\alpha)$ as prescribed in Lemma \ref{piccolina}. We assume that \eqref{small} is satisfied with such an $\epsb{b}$; letting $\rr_j:= \tet ^j\rr$ for every $j\geq 0$,  Lemma \ref{piccolina} applies and gives, by induction 
$$
 \mathcal{E}_{1+\alpha}(x_0, \rr_{j+1}) \leq    \mathcal{E}_{1+\alpha}(x_{0},\rr_j) \quad \mbox{and}\quad \mathcal{E}_{s}(x_{0},\rr_j) < \epsb{b} 
$$
for every $j\geq 0$. Therefore 
$\mathcal{E}_{1+\alpha}(x_0,\rr_j) \leq \mathcal{E}_{1+\alpha}(x_{0},\rr)
$ holds for every $j\geq 0$. 
This means that 
\begin{flalign*}
\inf_{\elly \textnormal{ affine}}  \nra{u-\elly}_{L^2(B_{\rr_j}(x_0))}&\leq \inf_{\elly \textnormal{ affine}} \tx{E}_{u}(\elly;x_0,\rr_j)  \\ &\leq  \mathcal{E}_{1+\alpha}(x_{0},\rr) \rr_j^{1+\alpha}\\
  &\leq 
 \rr^{-1-\alpha}\tx{E}_{u}(0;x_0,\rr)\rr_j^{1+\alpha}\\ & \leq \rr^{-1-\alpha}\left(
 \nra{u}_{L^{2}(B_{\rr}(x_0))}+ c\, \tail(u;B_{\rr}(x_0))
 \right)  \rr_j ^{1+\alpha} \,.
\end{flalign*}
With $\sigma \in (0,\rr]$, find $j\geq 0$ such that $\rr_{j+1}< \sigma \leq \rr_j$, and estimate 
$$
 \inf_{\elly \textnormal{ affine}}  \nra{u-\elly}_{L^2(B_{\sigma}(x_0))} \leq \tet^{-n/2}   
  \inf_{\elly \textnormal{ affine}}  \nra{u-\elly}_{L^2(B_{\rr_j}(x_0))} .
$$
Merging the content of the last two displays easily  yields \eqref{exx.5} with a constant $c\equiv c(\data, \omega(\cdot), \alpha)\approx  \tet ^{-n/2-1-\alpha} $.

\section{Partial regularity and Proof of Theorems \ref{mainth}, \ref{mainth2} and \ref{mainth3}}\label{rs.sec} 
We first give the proof of Theorem \ref{mainth3}, and then, using the notation and the arguments developed there, we obtain the proofs of the remaining results. 
\subsection{Proof of Theorem \ref{mainth3}}
Fix a positive $\alpha < 2s-1$. The smallness threshold $\epsb{b}$ is determined as in Proposition \ref{cor.1}. Let us take a point $x\in \Omega$ such that there exists a ball $B_{2\rr}(x)\Subset \Omega$ such that \eqref{condreg} holds. The continuity property observed in \eqref{contexcess} provides the existence of a (small) radius $r_{x}$ (with no loss of generality we can assume that $r_{x} \leq \rr/4$) 
such that
$$
\rr^{-s}\tx{E}_{u}(\ell;y,\rr)< \epsb{b}, \quad \mbox{for every $y\in B_{r_{x}}(x)$}.
$$
We have therefore checked that the smallness condition \eqref{small}, required in Proposition \ref{cor.1}, is satisfied in the whole ball $B_{r_{x}}(x)$ and therefore 
\eqn{exx.55}
$$
 \inf_{\elly \textnormal{ affine}}  \nra{u-\elly}_{L^2(B_{\sigma}(y))} \leq c\left(
 \nra{u}_{L^{2}(B_{\rr}(y))}+ \tail(u;B_{\rr}(y))\right)
\left(\frac{\sigma}{\rr}\right) ^{1+\alpha}  
$$
for every $y\in B_{r_{x}}(x)$ and $\sigma \leq \rr$, where $c\equiv c (\data, \omega(\cdot), \alpha)$. Using that $r_{x} \leq \rr/4$ and \eqref{scatail.1}, it easily follows (see also the proof after \rif{ulti}) that 
$$
\nra{u}_{L^{2}(B_{\rr}(y))}+\tail(u;B_{\rr}(y)) \lesssim_{n,s} \nra{u}_{L^{2}(B_{2\rr}(x))}+\tail(u;B_{2\rr}(x)) 
$$ 
holds whenever $y \in  B_{r_{x}}(x)$. At this stage \eqref{exx.55} implies 
$$
 \sup_{0<\sigma \leq \rr, y \in B_{r_{x}}(x)} \sigma^{-1-\alpha }\inf_{\elly \textnormal{ affine}}  \nra{u-\elly}_{L^2(B_{\sigma}(y))} \leq \frac{c}{\rr^{1+\alpha}}\left(
\nra{u}_{L^{2}(B_{2\rr}(x))}+\tail(u;B_{2\rr}(x))\right)
$$
Note that the constant $c$ does not depend on the point $y\in B_{r_{x}}(x)$. Applying Campanato's characterization of H\"older continuity \cite{cam00} it follows that 
$$
[Du]_{0, \alpha;B_{r_{x}}(x)}\leq  \frac{c}{\rr^{1+\alpha}}\left(
 \nra{u}_{L^{2}(B_{2\rr}(x))}+ \tail(u;B_{2\rr}(x))\right)
$$
where the involved constant  $c$ depends on $\data, \omega(\cdot)$ and $\alpha$. We have therefore proved that \eqref{condreg} implies \eqref{condreg2}. For the remaining implication, which is the easy one,  note that \eqref{condreg2} obviously implies
\eqn{limitino}
$$
\lim_{\sigma\to 0}\,  \sigma^{1-s}\nra{Du}_{L^2(B_{\sigma}(x))}=0
$$
which in turn implies, via Lemma \ref{prelirid}, that 
$$
\lim_{\sigma\to 0}\,   \sigma^{-s}  \tx{E}_{u}(\ell_{x, \sigma}^u;x,\sigma) =0, 
$$
where, according to the definition in \eqref{affinecanonica}, it is $\ell_{x, \sigma}^u(y):=(Du)_{B_{\sigma}(x)}(y-x)+ (u)_{B_{\sigma}(x)}$, whenever $y \in \er^n$ and $B_{\sigma}(x)\Subset \Omega$.   
It follows that \eqref{condreg} can now be satisfied for a suitably small ball $B_{2\rr}(x)$, and with $\ell \equiv \ell_{x, \rr}^u$. 
This completes the proof of Theorem \ref{mainth3}. 
\begin{remark}\label{pronto}\em {From the above proof it follows that if $x \in \Omega$ is such that \eqref{limitino} holds, then for every $\alpha < 2s-1$ there exists a radius $r_x$, depending also on $\alpha$, such that $Du\in C^{0, \alpha}(B_{r_{x}}(x);\er^{N\times n})$.}
\end{remark}
\subsection{Proof of Theorems \ref{mainth} and \ref{mainth2}} Denote
$$
\tilde \Omega_{u}:= \Big\{
x \in \Omega \, \colon\, 
 \lim_{\rr\to 0}\,  \rr^{1-s}\nra{Du}_{L^2(B_{\rr}(x))}=0 \Big\}.
$$
By Remark \ref{pronto} it follows that $\tilde \Omega_u \subset \Omega_u$, where $\Omega_u$ is defined in \rif{regularset}. On the other hand the reverse inclusion $ \Omega_u \subset \tilde \Omega_u$ is trivial and therefore we conclude that $\tilde \Omega_u = \Omega_u$, which proves that $u\in C^{1,\alpha}_{\loc}(\Omega_{u};\er^{N})$ for every $\alpha < 2s-1$, again by Remark \ref{pronto} (recall that $\Omega_u$ is by definition open, and therefore so is $\tilde \Omega_u$). This also completes the proof of Theorem \ref{mainth2} as \eqref{higherss} is in fact the content of Theorem \ref{maggiore}. We finally  prove that 
\eqn{mainresultino}
$$
\begin{cases}
\,  \ddim(\Omega\setminus \tilde \Omega_u)\leq n-2-\theta \ \  \mbox{when $n\geq 3$}\\
\, \Omega =\tilde \Omega_u  \quad \mbox{when $n=2$}
\end{cases}
$$
where $\theta\equiv \theta (\data)$ is as in \eqref{mainresult} and this completes the proof of 
\eqref{mainresult} and therefore of Theorem \ref{mainth}. For this we use \eqref{higherss}. Observe that, eventually decreasing $\mathfrak{s}$, we can always assume that $ 2(1-s+\mathfrak{s})<n$ holds  when $n\geq 3$. At this stage \eqref{mainresultino}$_1$ follows with $\theta := 2(\mathfrak{s}-s)$ by using Lemma \ref{riducilemma} with the choices $w=D_iu$, $i\in \{1, \ldots, n\}$, $\gamma\equiv 1-s$, $t\equiv \mathfrak{s}$ and $p\equiv 2$. On the other hand, when $n=2$, it trivially follows that $2(1-s+\mathfrak{s})>n$ and therefore  \eqref{mainresultino}$_2$ again follows from  Lemma \ref{riducilemma} applied with the same choices made in the previous lines.

\end{document}